\documentclass[10pt,letterpaper]{amsart}
\usepackage{latexsym, amscd, amsfonts, eucal, mathrsfs, amsmath, amssymb, amsthm, tikz-cd, xr, stmaryrd, amsxtra, MnSymbol,bbm,mathrsfs,mathtools}
\usepackage[alphabetic]{amsrefs}
\usepackage{enumitem}
\usepackage[hidelinks]{hyperref}
\usepackage{chngcntr}

\counterwithin{equation}{section}
\setcounter{tocdepth}{1}

\newtheorem{theorem}[equation]{Theorem}
\newtheorem{lemma}[equation]{Lemma}

\newtheorem{proposition}[equation]{Proposition}
\newtheorem{corollary}[equation]{Corollary}

\newtheorem*{claim}{Claim}

\theoremstyle{definition}
\newtheorem{sub}[equation]{}

\newtheorem{prop-con}[equation]{Proposition--Construction}
\newtheorem{notation}[equation]{Notation}

\newtheorem{construction}[equation]{Construction}

\newtheorem{definition}[equation]{Definition}
\newtheorem{definition/lemma}[equation]{Definition--Lemma}

\theoremstyle{remark}
\newtheorem{remark}[equation]{Remark}
\title[Degenerating product of flag varieties]{Degenerating products of flag varieties and applications to the Breuil--M\'ezard conjecture}

\author{Robin Bartlett}
\date{\today}
\subjclass{Primary 11F80, Secondary 14M15}
\begin{document}
	
	\maketitle
	\begin{abstract}
		We consider closed subschemes in the affine grassmannian obtained by degenerating $e$-fold products of flag varieties, embedded via a tuple of dominant cocharacters. For $G= \operatorname{GL}_2$, and cocharacters small relative to the characteristic, we relate the cycles of these degenerations to the representation theory of $G$. We then show that these degenerations smoothly model the geometry of (the special fibre of) low weight crystalline subspaces inside the Emerton--Gee stack classifying $p$-adic representations of the Galois group of a finite extension of $\mathbb{Q}_p$. As an application we prove new cases of the Breuil--M\'ezard conjecture in dimension two.
	\end{abstract}
	\setcounter{tocdepth}{1}
	\tableofcontents
	
	\section{Introduction}
	
	\subsection*{Overview} Let $K$ be a finite extension of $\mathbb{Q}_p$ with residue field $k$ and let $\mathcal{X}_d$ denote the Emerton--Gee stack classifying $d$-dimensional $p$-adic representations of $G_K$. Inside $\mathcal{X}_d$ there are closed substacks $\mathcal{X}_d^{\mu,\tau}$ classifying potentially crystalline representations of type $(\mu,\tau)$, for $\mu$ and $\tau$ respectively Hodge and inertial types. When $\mu$ is regular (i.e. consists of distinct integers) these closed substacks have maximal dimension and the Breuil--M\'ezard conjecture \cite{BM02,EG14,EG19} predicts the existence of top dimensional cycles $\mathcal{C}_\lambda$ in the special fibre $\overline{\mathcal{X}}_d$ such that
	\begin{equation}\label{BMconj}
	[\overline{\mathcal{X}}_d^{\mu,\tau}] = \sum_\lambda m(\lambda,\mu,\tau) \mathcal{C}_\lambda	
\end{equation}

	where
	\begin{itemize}
		\item $\lambda$ runs over irreducible $\overline{\mathbb{F}}_p$-representations of $\operatorname{GL}_d(k)$.
		\item $m(\lambda,\mu,\tau)$ denotes the multiplicity with which $\lambda$ appears in an explicit $\mathbb{F}$-representation $V(\mu,\tau)$ of $\operatorname{GL}_d(k)$ attached to $\mu$ and $\tau$.
	\end{itemize}
	(there is also a version of the conjecture for substacks of potentially semistable representations; the conjecture has the same shape but with altered $V(\mu,\tau)$). These identities have been verified in only a small number of cases:
	\begin{enumerate}
		\item When $K= \mathbb{Q}_p$ and $d=2$, using the $p$-adic Langlands correspondence. See \cite{KisFM,Pas15,HT15,Sand,Tun21}.
		\item When $d=2$ and $\mu = (1,0)$, as consequence of certain modularity lifting theorems. See \cite{GK15}.
		\item When $K$ is unramified over $\mathbb{Q}_p$, $d$ is arbitrary, and both $p$ and $\tau$ are \emph{generic} relative to $\mu$. See \cite{LLBBM20}. Again modularity lifting technique play an important role.
	\end{enumerate}
In this paper we construct Breuil--M\'ezard identities in a fourth setting: we are interested in the two dimensional case where $\tau=1$ (i.e. we consider only crystalline rather than potentially crystalline representations) and $\mu$ is bounded so that the representation theory of $\operatorname{GL}_2(k)$ in the conjecture behaves as it does in characteristic zero. We do this by constructing analogous identities involving certain degenerations of products of flag varieties embedded in the affine grassmannian, and then relating the geometry of these degenerations to the geometry of the $\overline{\mathcal{X}}^{\lambda,1}_d$.

\subsection*{Main result} First we describe a bound on the Hodge types considered above, which is natural in the sense that the $\operatorname{GL}_d(k)$-representation theory appearing in the conjecture changes markedly once the bound is passed. Recall that a Hodge type $\mu$ consists of a $d$-tuple of integers 
$$
\mu_\kappa = (\mu_{\kappa,1} \geq \ldots \geq \mu_{\kappa,d})
$$
for each embedding $\kappa:K \rightarrow \overline{\mathbb{Q}}_p$. If one assumes that 
\begin{equation}\label{naturalbound}
	\sum_{\kappa|_{k} = \kappa_0} \mu_{\kappa,1}- \mu_{\kappa,d} \leq e+p-1
\end{equation}
for each embedding $\kappa_0:k \hookrightarrow \overline{\mathbb{F}}_p$ then:
\begin{itemize}
	\item $V(\mu,1)$ is a tensor product over the embeddings $\kappa$ of representations of highest weight $\mu_\kappa$ and the Jordan--Holder factors of this tensor product are computed in characteristic $p$ just as they are in characteristic zero, by Littlewood--Richardson coefficients.
	\item Each Jordan--Holder factor $\lambda$ of $V(\mu,1)$ can be written as $V(\widetilde{\lambda},1)$ for some Hodge type $\widetilde{\lambda}$ uniquely determined up to an ordering of the embeddings $\kappa$.
\end{itemize}
In particular, the cycles $\mathcal{C}_\lambda$ appearing in \eqref{BMconj} for these small $\mu$ are uniquely determined by the conjectured identity for $\mu = \widetilde{\lambda}$; one has $[\overline{\mathcal{X}}_d^{\widetilde{\lambda}}] = \mathcal{C}_{\lambda}$. Thus, the following theorem establishes new cases of the conjecture:

\begin{theorem}\label{A}
	Assume that $d=2$, $p>2$, $\mu$ is regular, and that
	$$
	\sum_{\kappa|_{k} = \kappa_0} \mu_{\kappa,1}- \mu_{\kappa,d} \leq p
	$$
	for each embedding $\kappa_0:k \rightarrow \overline{\mathbb{F}}_p$. Then
	\begin{equation}\label{BMconj2}
		[\overline{\mathcal{X}}_2^{\mu,1}] = \sum_\lambda m(\lambda,\mu,1) [\overline{\mathcal{X}}^{\widetilde{\lambda},1}_2]
	\end{equation}
\end{theorem}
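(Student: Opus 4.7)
The proof would follow the two-step strategy advertised in the introduction: first establish an analogue of \eqref{BMconj2} as a cycle identity in the affine Grassmannian for $\operatorname{GL}_2$ over $\mathcal{O}_K$, and then transfer this identity to $\overline{\mathcal{X}}_2$ via a smooth model.

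\textbf{Step 1: the affine-Grassmannian side.} For each regular Hodge type $\mu$ satisfying the hypothesis, let $Z_\mu$ denote the degeneration of the $e$-fold product of flag varieties embedded in the affine Grassmannian via the cocharacters $(\mu_\kappa)$, as constructed in the body of the paper. The first goal is to prove a cycle identity
\begin{equation*}
[\overline{Z}_\mu] = \sum_\lambda m(\lambda,\mu,1)\, [\overline{Z}_{\widetilde{\lambda}}]
\end{equation*}
on the special fibre, where the sum is over Jordan--H\"older factors $\lambda$ of $V(\mu,1)$. The bound $\sum_{\kappa|_k = \kappa_0}(\mu_{\kappa,1}-\mu_{\kappa,2}) \leq p$ ensures that $V(\mu,1)$ factors over embeddings as a tensor product whose Jordan--H\"older decomposition is controlled by characteristic-zero Littlewood--Richardson combinatorics. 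The strategy is to match this representation-theoretic decomposition with the geometric decomposition of $\overline{Z}_\mu$: each component should correspond to a choice of Littlewood--Richardson datum, with the multiplicity $m(\lambda,\mu,1)$ arising from the weight combinatorics.

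\textbf{Step 2: transfer to the Emerton--Gee stack.} The harder step is to upgrade this flag-variety identity to an identity of cycles on $\overline{\mathcal{X}}_2$. Here I would construct a smooth chart around a generic point of $\overline{\mathcal{X}}_2^{\mu,1}$: objects of $\mathcal{X}_2^{\mu,1}$ correspond, via integral $p$-adic Hodge theory, to Breuil--Kisin modules of Hodge type $\mu$, and these are cut out (locally and up to smooth maps) by filtration data classified by the product of flag varieties. Allowing the Frobenius structure to vary extends this to a smooth morphism from an atlas of $\overline{\mathcal{X}}_2^{\mu,1}$ to $\overline{Z}_\mu$, mapping the closed substacks $\overline{\mathcal{X}}_2^{\widetilde{\lambda},1}$ onto the subschemes $\overline{Z}_{\widetilde{\lambda}}$. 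Once this compatibility is in hand, pulling back the identity of Step 1 along the smooth chart produces \eqref{BMconj2}.

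\textbf{Main obstacle.} The principal difficulty is the second step: proving that the degeneration $\overline{Z}_\mu$ is a faithful smooth local model for $\overline{\mathcal{X}}_2^{\mu,1}$, and that the identification sends $\overline{Z}_{\widetilde{\lambda}}$ to $[\overline{\mathcal{X}}_2^{\widetilde{\lambda},1}]$. This requires a careful comparison of crystalline Breuil--Kisin moduli with the geometric degeneration, tracking Frobenius-twisted products across all $e$ embeddings simultaneously. The tighter hypothesis $\leq p$ (rather than the looser $e+p-1$) is likely essential to keep the Breuil--Kisin data in the range where the flag-variety model is smooth and the component structure on both sides matches exactly; the assumption $p>2$ plays its usual role of avoiding pathologies for $\operatorname{GL}_2$ at the prime $2$.
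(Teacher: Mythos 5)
Your proposal captures the advertised \emph{shape} of the argument (local model on the affine Grassmannian, then transfer to $\overline{\mathcal{X}}_2$ via a smooth comparison), but it misses the essential logical skeleton and in fact Step~1 as you describe it cannot work. The paper's cycle identity on the special fibre of $M_\mu$ (Proposition~\ref{linearcomb}) produces \emph{a priori unknown} integers $n(\lambda,\mu)$, not $m(\lambda,\mu,1)$. It is precisely remarked in the paper that the explicit $\nabla$-moduli description is ``only a good topological approximation'' and the components ``typically appear with much too high multiplicity,'' so there is no hope of reading off the Littlewood--Richardson multiplicities $m(\lambda,\mu,1)$ directly from the geometric degeneration. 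What the paper instead proves (Proposition~\ref{upperbound}, via Hilbert polynomials and the determinant line bundle $\mathcal{L}_{\det}$) is a conditional statement: \emph{if} $n(\lambda,\mu)\geq m(\lambda,\mu,1)$ for every $\lambda$, then equality holds. You have no mechanism for supplying that inequality.

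The mechanism the paper uses is a third ingredient which is absent from your proposal entirely: the lower bound $[\overline{\mathcal{X}}_2^{\mu,1}] \geq \sum_\lambda m(\lambda,\mu,1)[\overline{\mathcal{X}}_2^{\widetilde\lambda,1}]$ coming from global automorphy-lifting techniques and potential diagonalisability (Proposition~\ref{global}, drawing on \cite{GK15}, \cite{EG19}, \cite{B19b}). It is this input --- not anything at the prime $2$ for $\operatorname{GL}_2$ --- that forces the hypothesis $p>2$. Once one combines the pushed-forward local-model \emph{upper} bound $[\overline{\mathcal{X}}_2^{\mu,1}] \leq \sum_\lambda n(\lambda,\mu)[\overline{\mathcal{X}}_2^{\widetilde\lambda,1}]$ with this global lower bound, one obtains $n(\lambda,\mu)\geq m(\lambda,\mu,1)$ and hence equality. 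Note also that the transfer in your Step~2 is subtler than described: the comparison morphism only gives a factorisation $\overline{Y}^\mu_2 \times_{Z^{\leq h}_2}\widetilde{Z}^{\leq h,N}_2 \hookrightarrow Y^{\mu,\operatorname{flag}}_2$ and hence an inequality of cycles, not an isomorphism, for general $\mu$; equality is established only for $\mu=\widetilde\lambda$ using the irreducibility/generic reducedness of $M_{\widetilde\lambda}\otimes_\mathcal{O}\mathbb{F}$ proved via an explicit resolution (this is where $d=2$ is truly needed). Without the global lower bound and the conditional uniqueness argument your proof has a genuine gap that cannot be closed by a more careful version of the affine-Grassmannian computation.Your proposal captures the advertised \emph{shape} of the argument (local model in the affine Grassmannian, then transfer to $\overline{\mathcal{X}}_2$ via a smooth comparison), but it misses the essential logical skeleton and in fact Step~1 as you describe it cannot work. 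The cycle identity on the special fibre of the local model $M_\mu$ (Proposition~\ref{linearcomb}) produces \emph{a priori unknown} integers $n(\lambda,\mu)$, not $m(\lambda,\mu,1)$. The paper explicitly warns that the $\nabla$-moduli description is ``only a good topological approximation'' of $M_\mu\otimes_{\mathcal{O}}\mathbb{F}$ and that ``typically the components appear with much too high multiplicity,'' so there is no hope of reading off the Littlewood--Richardson multiplicities $m(\lambda,\mu,1)$ directly from the geometric degeneration. What the paper proves instead (Proposition~\ref{upperbound}, via Hilbert polynomials for the determinant line bundle $\mathcal{L}_{\det}$) is a conditional statement: \emph{if} $n(\lambda,\mu)\geq m(\lambda,\mu,1)$ for all $\lambda$, then equality holds. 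Your proposal has no mechanism for supplying that inequality.

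The mechanism the paper uses is a third ingredient absent from your proposal entirely: the lower bound $[\overline{\mathcal{X}}_2^{\mu,1}] \geq \sum_\lambda m(\lambda,\mu,1)\,\mathcal{C}_\lambda$, with $\mathcal{C}_\lambda = [\overline{\mathcal{X}}_2^{\widetilde\lambda,1}]$ for non-Steinberg $\lambda$, coming from global automorphy-lifting techniques and potential diagonalisability (Proposition~\ref{global}, drawing on \cite{GK15}, \cite{EG19}, \cite{B19b}). This is the single place the hypothesis $p>2$ enters --- it is not, as you suggest, about ``avoiding pathologies for $\operatorname{GL}_2$ at the prime $2$.'' Combining this global lower bound with the local-model \emph{upper} bound, pushed forward from $Y^{\mu,\operatorname{flag}}_2$ down to $\operatorname{Spec}R_{\overline{\rho}}\otimes_{\mathcal{O}}\mathbb{F}$, gives $n(\lambda,\mu)\geq m(\lambda,\mu,1)$ and hence equality via the conditional uniqueness. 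Note also that the transfer in your Step~2 is more delicate than described: the comparison morphism only gives a \emph{containment} $\overline{Y}^\mu_2\times_{Z^{\leq h}_2}\widetilde{Z}^{\leq h,N}_2 \hookrightarrow Y^{\mu,\operatorname{flag}}_2$ (Theorem~\ref{factor}), hence an inequality of cycles, not an isomorphism; equality holds only for $\mu=\widetilde\lambda$, where one uses the irreducibility and generic reducedness of $M_{\widetilde\lambda}\otimes_{\mathcal{O}}\mathbb{F}$ established via an explicit smooth resolution --- and this is where the restriction $d=2$ is really needed. Without the global lower bound and the sandwiching argument, your proof has a genuine gap that no refinement of the affine-Grassmannian computation can close.
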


There are some comments to make before we discuss what goes into the proof. Firstly, the theorem has two clear limitations: the assumption that $d=2$ and the fact that the bound on $\mu$ is stronger than that in \eqref{naturalbound} (we have no expectation whatever that the methods in this paper apply beyond \eqref{naturalbound}).

As we explain in more detail below, the proof of the theorem has two key inputs. The first involves relating the $\overline{\mathcal{X}}^{\mu,1}_d$ with certain local models we define inside the affine grassmannian. This can be done without any restriction on $d$ but our current argument requires the stronger bound on $\mu$. The second key input is a lower bound on the Breuil--M\'ezard identities which has been established when $d=2$ using global techniques from \cite{GK15} (this is also where the assumption $p>2$ appears).

Finally, taking $\widetilde{\lambda} = \mu$ shows that the cycle $[\overline{X}^{\widetilde{\lambda}}_2]$ is independent of the choice of ``lift'' $\widetilde{\lambda}$ of $\lambda$. We also show that each of these cycles consists of a single irreducible component occurring with multiplicity one.

\subsection*{Method}
The proof of the theorem divides into three parts:

\subsubsection*{Part 1: Local models in the affine grassmannian}

The starting point of the proof is the construction of certain projective schemes whose special fibres give upper bounds on the multiplicities appearing in Theorem~\ref{A}. To explain their construction we fix a sufficiently large extension $E$ of $\mathbb{Q}_p$, with ring of integers $\mathcal{O}$ and residue field $\mathbb{F}$, and consider a mixed characteristic version of the affine grassmannian $\operatorname{Gr}_{\mathcal{O}}$ over $\mathcal{O}$ whose special and generic fibres are given by
$$
\operatorname{Gr}_{\mathcal{O}} \otimes_{\mathcal{O}} \mathbb{F} \cong \prod_{\kappa_0:k\rightarrow \mathbb{F}} \operatorname{Gr} \otimes_{\mathcal{O}_K,\kappa_0} k, \qquad \operatorname{Gr}_{\mathcal{O}} \otimes_{\mathcal{O}} E \cong \prod_{\kappa:K\rightarrow E} \operatorname{Gr} \otimes_{\mathcal{O}_K,\kappa} E
$$
Here $\kappa_0$ and $\kappa$ are embeddings and $\operatorname{Gr}$ is the affine grassmannian over $\mathcal{O}_K$ whose $A$ points, for $A$ a $p$-adically complete $\mathcal{O}_K$-algebra, classify rank $d$-projective $A[[u]]$-modules satisfying
$$
(u-\pi)^aA[[u]]^d \subset \mathcal{E} \subset (u-\pi)^{-a}A[[u]]^d
$$
for some $a\in \mathbb{Z}_{\geq 0}$ and $\pi \in K$ a fixed choice of uniformiser. For each dominant cocharacter $\lambda$ of $G=\operatorname{GL}_d$ there is a closed immersion of the flag variety $G/P_\lambda \rightarrow \operatorname{Gr}$ ($P_\lambda \subset G$ being the parabolic corresponding to $\lambda$). This allows us to define, for any Hodge type $\mu = (\mu_\kappa)$, an $\mathcal{O}$-flat closed subscheme $M_\mu$ in $\operatorname{Gr}_{\mathcal{O}}$ by taking the closure in $\operatorname{Gr}_{\mathcal{O}}$ of
$$
\prod_\kappa (G/P_{\mu_\kappa} \otimes_{\mathcal{O}_K,\kappa} E) \hookrightarrow \prod_{\kappa} (\operatorname{Gr} \otimes_{\mathcal{O}_K,\kappa} E) = \operatorname{Gr}_{\mathcal{O}} \otimes_{\mathcal{O}} E
$$
The following summarises the key results we prove regarding these $M_\mu$'s

\begin{proposition}\label{prop1}
	Assume that $\mu$ is regular. 
	\begin{enumerate}
		\item If $\mu$ satisfies \eqref{naturalbound} then there exist $n(\lambda,\mu) \in \mathbb{Z}$ such that in the group of $\sum_\kappa \operatorname{dim}G/P_{\mu_\kappa}$-dimensional cycles
		$$
		[M_\mu \otimes_{\mathcal{O}} \mathbb{F}] = \sum_{\lambda} n(\lambda,\mu) [M_{\widetilde{\lambda}} \otimes_{\mathcal{O}} \mathbb{F}]
		$$
		with the sum running those irreducible $\operatorname{GL}_d(k)$-representations for which the Hodge type $\widetilde{\lambda}$ also satisfies \eqref{naturalbound}.
		\item If $d=2$ then the $M_{\widetilde{\lambda}} \otimes_{\mathcal{O}} \mathbb{F}$ appearing in (1) are irreducible, generically reduced, and produce pairwise distinct cycles. In particular, $n(\lambda,\mu) \geq 0$ in this case.
		\item If, for every $\lambda$, one has $n(\lambda,\mu) \geq m(\lambda,\mu,1)$ where $m(\lambda,\mu,1)$ denotes the multiplicity from the Breuil--M\'ezard conjecture, then $n(\lambda,\mu) = m(\lambda,\mu,1)$.
	\end{enumerate}
\end{proposition}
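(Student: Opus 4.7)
I would attack the three parts in order, with the geometric heart in (1), an explicit $d=2$ analysis in (2), and a numerical comparison in (3).

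For part (1), the task is to decompose the special fibre cycle $[M_\mu \otimes_{\mathcal{O}} \mathbb{F}]$ as a $\mathbb{Z}$-linear combination of the $[M_{\widetilde{\lambda}} \otimes_{\mathcal{O}} \mathbb{F}]$. By $\mathcal{O}$-flatness and the smoothness of the generic fibre, the special fibre is equidimensional of dimension $\sum_\kappa \dim G/P_{\mu_\kappa}$, so its cycle class lives in the group of top-dimensional cycles inside the union of Schubert subvarieties of $\operatorname{Gr}_{\mathcal{O}} \otimes \mathbb{F}$ of that dimension. The plan is to work in the Chow (or Grothendieck) group of these top-dimensional cycles and to exhibit the classes $[M_{\widetilde{\lambda}} \otimes_{\mathcal{O}} \mathbb{F}]$, as $\widetilde{\lambda}$ ranges over lifts of irreducible $\operatorname{GL}_d(k)$-representations satisfying \eqref{naturalbound}, as a triangular set with respect to the Schubert basis. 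The existence of the $n(\lambda,\mu)$ then follows from a triangularity argument, and the fact that the sum runs exactly over $\widetilde{\lambda}$'s coming from irreducible $\operatorname{GL}_d(k)$-representations is forced by the same geometric content of \eqref{naturalbound} that makes the Jordan--H\"older decomposition of $V(\mu,1)$ agree with its characteristic zero counterpart.

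For part (2), specializing to $d=2$ each $G/P_{\mu_\kappa}$ is either a point or $\mathbb{P}^1$. I would analyze the degeneration in concrete local coordinates on $\operatorname{Gr}_{\mathcal{O}}$: the defining condition $(u-\pi) \cdot A[[u]]^d \subset \mathcal{E}$ in the generic fibre specializes to a condition in $u$ alone on the special fibre. I expect each $M_{\widetilde{\lambda}} \otimes_{\mathcal{O}} \mathbb{F}$ to be realized as an iterated $\mathbb{P}^1$-bundle (or an explicit variant thereof), from which irreducibility is manifest; generic reducedness follows since $M_{\widetilde{\lambda}}$ is $\mathcal{O}$-flat with smooth generic fibre; and pairwise distinctness of cycles follows by showing that different $\widetilde{\lambda}$'s put the generic points of $M_{\widetilde{\lambda}} \otimes_{\mathcal{O}} \mathbb{F}$ in different Schubert cells of $\operatorname{Gr}_{\mathcal{O}} \otimes \mathbb{F}$.

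For part (3), the key input is flatness of $M_\mu$ over $\mathcal{O}$. Fix a determinant line bundle $\mathcal{L}$ on $\operatorname{Gr}_{\mathcal{O}}$; flatness yields $\deg_{\mathcal{L}}[M_\mu \otimes E] = \deg_{\mathcal{L}}[M_\mu \otimes \mathbb{F}]$. The generic fibre degree equals $\prod_\kappa \dim V_{\mu_\kappa} = \dim V(\mu,1)$, where $V_{\mu_\kappa}$ is the algebraic representation of highest weight $\mu_\kappa$ and the second equality uses the tensor-product description of $V(\mu,1)$ coming from the natural bound. Applied to each $\widetilde{\lambda}$ the same calculation gives $\deg_{\mathcal{L}}[M_{\widetilde{\lambda}} \otimes \mathbb{F}] = \dim V(\widetilde{\lambda},1) = \dim \lambda$, since under the bound $V(\widetilde{\lambda},1)$ is irreducible equal to $\lambda$. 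Combining with the decomposition from (1) and the Jordan--H\"older decomposition of $V(\mu,1)$ gives
$$
\sum_\lambda n(\lambda,\mu)\dim \lambda = \deg_{\mathcal{L}}[M_\mu \otimes \mathbb{F}] = \dim V(\mu,1) = \sum_\lambda m(\lambda,\mu,1)\dim \lambda.
$$
With the hypothesis $n(\lambda,\mu) \geq m(\lambda,\mu,1)$ and positivity of $\dim \lambda$, this forces termwise equality.

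The main obstacle will be part (1): identifying the right geometric basis inside the Chow group of $M_\mu \otimes \mathbb{F}$ in terms of the $M_{\widetilde{\lambda}}$'s, and verifying that \eqref{naturalbound} isolates exactly those $\widetilde{\lambda}$'s coming from irreducible $\operatorname{GL}_d(k)$-representations. Part (2) then becomes an explicit calculation in rank two, and part (3) a soft flatness and positivity argument once (1) is in place.
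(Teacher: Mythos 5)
Your proposal identifies the correct high-level skeleton (a geometric decomposition in (1), an explicit $d=2$ resolution in (2), a Hilbert-polynomial/degree comparison in (3)), but there are concrete gaps in each part.

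For part (1), the gap is the missing geometric device that controls the dimension. You place $M_\mu \otimes \mathbb{F}$ inside a ``union of Schubert subvarieties of the right dimension,'' but the Schubert varieties $\prod_{\kappa_0}\overline{\operatorname{Gr}}_{\leq \sum_{\kappa|_k=\kappa_0}\mu_\kappa}$ that naturally contain $M_\mu \otimes \mathbb{F}$ have dimension controlled by $\langle \sum\mu_\kappa, 2\rho\rangle$, which is typically far bigger than $\sum_\kappa \dim G/P_{\mu_\kappa}$. The paper's key move is to intersect with the closed locus $\operatorname{Gr}^\nabla_{\mathcal{O}}$ cut out by the condition $E(u)\nabla(\mathcal{E})\subset u\mathcal{E}$ (Proposition~\ref{nablacontain} and Lemma~\ref{dimC}): only after intersecting with this $\nabla$-locus do the resulting cycles $\mathcal{C}_\lambda$ land in the correct dimension, and it is with respect to these $\mathcal{C}_\lambda$, not Schubert classes, that the $[M_{\widetilde{\lambda}}\otimes\mathbb{F}]$ form a unitriangular system. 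Without the $\nabla$-locus or some substitute, the triangularity you invoke has no concrete basis to be triangular against.

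For part (2), the irreducibility argument via an iterated bundle over flag varieties is essentially the paper's (the smooth $X\to M_{\widetilde{\lambda}}$). However, your claim that generic reducedness follows from $\mathcal{O}$-flatness plus a smooth generic fibre is false: for example $\mathcal{O}[x,y]/(x^2-py)$ is $\mathcal{O}$-flat with smooth generic fibre but the special fibre $\mathbb{F}[x,y]/(x^2)$ is nowhere reduced. The paper gets generic reducedness from the normalisation established in the proof of (1): the coefficient of the top $\mathcal{C}_{\lambda+e\rho}$ in $[M_{\widetilde{\lambda}}\otimes\mathbb{F}]$ is $1$, so once $M_{\widetilde{\lambda}}\otimes\mathbb{F}$ is shown irreducible it must be generically reduced.

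For part (3), you confuse the degree with the dimension of $H^0$ at level $1$. The degree of $\prod_\kappa G/P_{\mu_\kappa}$ with respect to $\mathcal{L}_{\operatorname{det}}$ is $m!$ times the \emph{leading coefficient} of $n\mapsto \prod_\kappa \dim H^0(n\mu_\kappa)$, which by the Weyl dimension formula is $\prod_\kappa \dim H^0(\mu_\kappa-\rho)$, not $\prod_\kappa\dim H^0(\mu_\kappa)$. Your two intermediate claims, $\deg = \prod_\kappa\dim V_{\mu_\kappa}$ and $\prod_\kappa\dim V_{\mu_\kappa}=\dim V(\mu,1)$, are both false, though by the $\rho$-shift coincidence they compensate so that your displayed identity happens to be correct. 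Once corrected, the degree-only argument is a mild streamlining of the paper's approach (which works instead with the full Hilbert polynomial and bounds the error terms by a polynomial of degree $<\sum\dim G/P_{\mu_\kappa}$ via Lemma~\ref{polynomialsss}); both ultimately extract only the leading term. But as written your chain of equalities does not prove the identity you need.
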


The first part is proved by constructing an explicit closed locus $\operatorname{Gr}^\nabla_{\mathcal{O}} \subset \operatorname{Gr}_{\mathcal{O}}$ defined in terms of a differential operator $\nabla$ (this is a variant of locus considered in \cite{LLBBM20}). A direct computation shows that if we bound the height according to \eqref{naturalbound} then the resulting closed subscheme of $\operatorname{Gr}^\nabla_{\mathcal{O}} \otimes_{\mathcal{O}} \mathbb{F}$ consists of irreducible components of dimension $\leq \operatorname{dim}M_\mu$. Furthermore, those components with maximal dimension are labelled by the $\lambda$'s appearing in (1). One can also show that $M_{\mu} \otimes_{\mathcal{O}} \mathbb{F}$ is contained in this closed subscheme. From these observations we are able to prove (1). 

\begin{remark}
	Unfortunately, this explicit moduli interpretation is only a good topological approximation of $M_\mu \otimes_{\mathcal{O}} \mathbb{F}$; typically the components appear with much too high multiplicity. 
\end{remark}

Part (2) is proved by constructing an explicit resolution of $X \rightarrow M_{\widetilde{\lambda}}$ with $X$ smooth and which is an isomorphism on the generic fibre. Unfortunately, we do not know how to construct such resolutions when $d>2$ (or whether they are likely to exist).

For part (3) we consider the restriction of the determinant line bundle on $\operatorname{Gr}_{\mathcal{O}}$ to $M_\mu$. Since the generic fibre of $M_\mu$ is a product of flag varieties it is easy to compute that for
\begin{equation}\label{globalssss}
H^0(M_\mu \otimes_{\mathcal{O}} E,\mathcal{L}_{\operatorname{det}}) = \bigotimes_\kappa H^0(\mu_\kappa) \otimes_{K,\kappa} E	
\end{equation}
where $H^0(\mu_\kappa)$ denotes the algebraic representation of $G$  over $K$ of highest weight $\mu_\kappa$. We point out that this tensor product differs from the $V(\mu,1)$ appearing in the Breuil--M\'ezard conjecture in that $V(\mu,1)$ is obtained as the reduction modulo $p$ of such a tensor product, but in which $\mu_\kappa$ is replaced by $\mu_\kappa - \rho$ for $\rho = (d-1,d-2,\ldots,1,0)$. Nevertheless, these multiplicities are approximately the same, in the sense that if, in the Grothendieck group of $E$-representations, one has
$$
[\bigotimes_{\kappa} H^0(\mu_\kappa- \rho) \otimes_{K,\kappa} E] = \sum_\lambda m(\lambda,\mu) [\bigotimes_\kappa H^0(\widetilde{\lambda}_\kappa- \rho) \otimes_{K,\kappa} E]
$$
then, for $n>0$,
$$
\operatorname{dim}\left( \bigotimes_{\kappa} H^0(n\mu_\kappa) \otimes_{K,\kappa} E \right) - \sum_\lambda m(\lambda,\mu) \operatorname{dim}\left(\bigotimes_\kappa H^0(n\widetilde{\lambda}_\kappa) \otimes_{K,\kappa} E \right)
$$
equals the value at $n$ of a polynomial of degree $< \operatorname{dim} M_\mu$. Since the representations $\bigotimes_{\kappa} H^0(n\mu_\kappa) \otimes_{\mathcal{O}_K,\kappa} E$ can be obtained by replacing $\mathcal{L}_{\operatorname{det}}$ with $\mathcal{L}_{\operatorname{det}}^{\otimes n}$ in \eqref{globalssss}, the identity of cycles in part (1) implies that
$$
\operatorname{dim}\left( \bigotimes_{\kappa} H^0(n\mu_\kappa) \otimes_{K,\kappa} E \right) - \sum_\lambda n(\lambda,\mu) \operatorname{dim}\left(\bigotimes_\kappa H^0(n\widetilde{\lambda}_\kappa) \otimes_{K,\kappa} E \right)
$$
is also the value of a polynomial in $n$ of degree $<\operatorname{dim} M_\mu$, at least for $n>>0$. Taking the difference shows that
\begin{equation}\label{dimdiffer}
\sum_\lambda (n(\lambda,\mu)-m(\lambda,\mu)) \operatorname{dim}\left(\bigotimes_\kappa H^0(n\widetilde{\lambda}_\kappa) \otimes_{K,\kappa} E \right)
\end{equation}
is polynomail in $n$ of degree $<\operatorname{dim}M_\mu$ for $n>>0$. For $\mu$ satisfying \eqref{naturalbound} the multiplicities $m(\lambda,\mu)$ computed in characteristic zero coincide with the $m(\lambda,\mu,1)$ computed in characteristic $p$. Thus, the assumption in (3) is that $n(\lambda,\mu)-m(\lambda,\mu) \geq 0$. Each term in \eqref{dimdiffer} is a polynomial in $n$ of degree $\operatorname{dim}M_\mu$ and positive leading term. Therefore we must have $n(\lambda,\mu)= m(\lambda,\mu)$.

\subsubsection*{Part 2: From local models to moduli of crystalline Galois representations} The second step is to relate the $M_\mu$'s to the geometry of $\mathcal{X}_d$. The basic strategy is to study the geometry of $\mathcal{X}_d$ via a resolution
$$
Y_d \rightarrow \mathcal{X}_d
$$
with $Y_d$ a stack whose $A$-points classify Breuil--Kisin modules with $A$-coefficients (i.e. projective $(W(k)\otimes_{\mathbb{Z}_p} A)[[u]]$-modules equipped with a semilinear endomorphism $\varphi$). A local version of this construction was first made in \cite{KisFF} (with $\mathcal{X}_d$ replaced by Spec of a deformation ring) and its globalisation to stacks first appeared in \cite{PR09}, before being built upon in \cite{EG19}.

In our case, we take $Y_d$ as the stack classifying pairs $(\mathfrak{M},\sigma)$ with $\mathfrak{M}$ a rank $d$ Breuil--Kisin module and $\sigma$ a $\varphi$-equivariant action of $G_K$ on $\mathfrak{M} \otimes_{W(k)[[u]]} A_{\operatorname{inf}}$ satisfying a ``crystalline'' condition (which means that $\sigma-1$ is sufficiently divisible). Inside $Y_d$ there are $\mathbb{Z}_p$-flat closed substacks $Y^{\mu}_d$ whose $\mathcal{O}$-valued points correspond to Breuil--Kisin modules associated to crystalline representations of Hodge type $\mu$ whenever $\mathcal{O}$ is the ring of integers in a finite extension of $\mathbb{Q}_p$. Then $\mathcal{X}_d^{\mu,1}$ is, by definition, the scheme theoretic image of the morphism $Y^\mu_d \rightarrow \mathcal{X}_d$.

To relate $Y_d$ to the affine grassmannian we use the following diagram:
\begin{equation}\label{locelmodel}
	Y_d \xleftarrow{\Gamma} \widetilde{Y}_d \xrightarrow{\Psi}  \operatorname{Gr}_{\mathcal{O}}
\end{equation}
Here $\widetilde{Y}_d$ classifies Breuil--Kisin modules in $Y_d$ together with a choice of basis (to stay in the world of finite type stacks this basis is taken  modulo $u^N$ for $N>>0$) and, using this choice of basis, the morphism $\Psi$ takes a Breuil--Kisin module $\mathfrak{M}$ to the relative position of $\mathfrak{M}$ and its image of Frobenius. The morphism $\Gamma$ forgets this choice of basis. The key result we prove is then

\begin{proposition}\label{prop2}
	\begin{enumerate} 
		\item	If $\mu$ satisfies the bound from Theorem~\ref{A} then the restriction of $\widetilde{Y}_d \rightarrow \operatorname{Gr}_{\mathcal{O}}$ to $\widetilde{Y}^\mu_d \otimes_{\mathcal{O}} \mathbb{F}$ (for $\widetilde{Y}^\mu_d$ the preimage of $Y^\mu_d$ in $\widetilde{Y}_d$) factors through $M_{-w_0\mu} \otimes_{\mathcal{O}} \mathbb{F}$ for $w_0 \in W$ the longest element.
		\item For such $\mu$, the morphism $\widetilde{Y}_d \rightarrow \operatorname{Gr}_{\mathcal{O}}$ is smooth over $M_\mu \otimes_{\mathcal{O}} \mathbb{F}$ with irreducible fibres of dimension equal the relative dimension of $\widetilde{Y}_d \rightarrow Y_d$.
	\end{enumerate}
\end{proposition}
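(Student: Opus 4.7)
My plan is to treat the two parts separately. For part (1), I would exploit the flatness of $Y^\mu_d$ to reduce to a generic-fibre calculation. Since $\widetilde{Y}_d \to Y_d$ is smooth surjective (a torsor under the group classifying bases modulo $u^N$), the preimage $\widetilde{Y}^\mu_d$ inherits the $\mathcal{O}$-flatness of $Y^\mu_d$. By flatness, the scheme-theoretic image of $\widetilde{Y}^\mu_d$ under $\Psi$ coincides with the scheme-theoretic closure of the image of its generic fibre in $\operatorname{Gr}_{\mathcal{O}}$. A $\overline{\mathbb{Q}}_p$-point of the generic fibre of $\widetilde{Y}^\mu_d$ is the Breuil--Kisin module attached to a crystalline representation of Hodge type $\mu$, equipped with a basis; standard Kisin theory identifies the relative position of $\mathfrak{M}$ and $\varphi^*\mathfrak{M}$ in this basis, at each embedding $\kappa:K\hookrightarrow E$, with the cocharacter $-w_0\mu_\kappa$. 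Consequently the image of the generic fibre lies in $\prod_\kappa G/P_{-w_0\mu_\kappa}$, whose closure in $\operatorname{Gr}_{\mathcal{O}}$ is by definition $M_{-w_0\mu}$. Passing to the special fibre yields (1).

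For part (2), I would study the fibre of $\Psi$ over a point $x \in M_\mu \otimes_{\mathcal{O}} \mathbb{F}$. Once $x$ is fixed, the Breuil--Kisin module $(\mathfrak{M},\varphi)$ together with its basis modulo $u^N$ is essentially pinned down, so the remaining moduli in $\widetilde{Y}_d$ is the crystalline Galois datum $\sigma$. To see that the space of compatible $\sigma$-structures is smooth and irreducible of dimension equal to the relative dimension of $\Gamma$, I would trade $\sigma$ for an equivalent differential datum: a $\varphi$-compatible connection-type operator $\nabla$ on $\mathfrak{M}$, via a refinement of Kisin's correspondence between crystalline representations and their associated Breuil--Kisin modules with descent data. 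Under the height bound of Theorem~\ref{A} the condition that $\nabla$ preserves $\mathfrak{M}$ is already (essentially) the condition cutting out $\operatorname{Gr}^\nabla_{\mathcal{O}}$ from part~1, and $M_\mu \otimes_{\mathcal{O}}\mathbb{F}$ sits inside its most generic stratum; an explicit deformation-theoretic calculation should then exhibit the fibres of $\Psi$ as smooth irreducible affine spaces of the predicted dimension.

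The main obstacle is controlling smoothness across the boundary strata of $M_\mu$. Over the open flag variety locus $\prod_\kappa G/P_{\mu_\kappa}$ the Hodge filtration varies smoothly and the $\sigma$-moduli is visibly an affine torsor of the expected dimension. Over the degenerated strata of $M_\mu \otimes_{\mathcal{O}}\mathbb{F}$ the filtration collapses, and one must verify both that no new obstruction classes to deforming $\sigma$ appear and that the fibre dimension does not jump; these are nontrivial precisely because the local geometry of $M_\mu$ itself is singular away from the open stratum (as already reflected in Proposition~\ref{prop1}(2), which required an explicit resolution). This is where the sharp bound $\leq p$, rather than the natural bound $e+p-1$, is used: it should ensure that the relevant obstructions vanish uniformly on $M_\mu \otimes_{\mathcal{O}}\mathbb{F}$, so that the explicit linear description of the $\sigma$-moduli valid on the open stratum extends, by continuity in the family, to its closure.
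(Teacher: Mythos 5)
Your argument for part (1) has a genuine gap, and it is precisely the place where the bound on $\mu$ must enter; the fact that the bound plays no role in your argument should already be a warning sign.

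The problematic step is the inference ``standard Kisin theory identifies the relative position of $\mathfrak{M}$ and $\varphi^*\mathfrak{M}$\ldots with the cocharacter $-w_0\mu_\kappa$. Consequently the image of the generic fibre lies in $\prod_\kappa G/P_{-w_0\mu_\kappa}$.'' Knowing the relative position only places $\Psi(\mathfrak{M},\beta)$ in the affine Schubert variety $\operatorname{Gr}_{\leq -w_0\mu_\kappa}$. When $\mu_\kappa$ is not minuscule, $G/P_{-w_0\mu_\kappa}$ (the $G$-orbit of $\mathcal{E}_{-w_0\mu_\kappa}$) is a \emph{strictly smaller} closed subscheme than the Schubert cell $\operatorname{Gr}_{-w_0\mu_\kappa}$ (the $L^+G$-orbit): the latter is an affine bundle over the former of positive relative dimension. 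Concretely, to exhibit $\Psi(\mathfrak{M},\beta)$ as a point of $G/P_{-w_0\mu_\kappa}$ one must write the lattice $\varphi(\beta)^{-1}\mathfrak{M}$ in the form $\sum_n E_\kappa(u)^{-n}\mathfrak{S}_A\operatorname{Fil}^n_\kappa$ for some filtration $\operatorname{Fil}^\bullet_\kappa$ of $A^d$ of type $-w_0\mu_\kappa$. The natural candidate is the Hodge filtration transported via the section $s$ induced by $\varphi(\beta)$, but $s(\operatorname{Fil}^n_\kappa)$ lands in $\mathfrak{M}^\varphi\cap E_\kappa(u)^n\mathfrak{M}$ only \emph{modulo} error terms in $E_\kappa(u)\mathfrak{M}^\varphi$ that depend on the choice of basis, and in general these errors do not vanish even after inverting $p$. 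Thus the generic-fibre image of $\widetilde{Y}^\mu_d$ is \emph{not} contained in $\prod_\kappa G/P_{-w_0\mu_\kappa}$, and the flat-closure/scheme-theoretic-image argument collapses. What is actually true, and is the entire content of Proposition~\ref{GLS}, is that the error terms lie in $\sum_{l=1}^{p-1}\pi^{p-l}E_\kappa(u)^l\mathfrak{M}^\varphi$; the bound on $\mu$ (together with the ramification invariants $\nu_{\kappa_0}$, see Lemma~\ref{claim}) is exactly what guarantees that after gathering the errors across embeddings the result is divisible by $\pi$, so that the corrected lattice $\mathcal{E}\in M_{-w_0\mu}(A)$ \emph{agrees with $\mathfrak{M}$ modulo $\pi$} --- even though $\mathcal{E}\neq\mathfrak{M}$ and $\Psi(\mathfrak{M},\beta)\notin M_{-w_0\mu}(A)$. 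There is also a secondary issue: $\widetilde{Y}^\mu_d$ is a $p$-adic formal algebraic stack over $\operatorname{Spf}\mathcal{O}$, so ``its generic fibre'' is not a scheme and the scheme-theoretic-image reduction is not literally available; the paper instead reduces (via Corollary~\ref{factorising} and Lemma~\ref{lift}) to checking the factorisation on finite local Artinian $\mathbb{F}$-points and lifting those individually to finite flat $\mathcal{O}$-algebras.

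For part (2), your sketch does not match the paper's argument, which factors $\widetilde{Y}_d\to\operatorname{Gr}_{\mathcal{O}}$ through the stack $\widetilde{Z}_d$ of Breuil--Kisin modules without Galois descent data (whose map to $\operatorname{Gr}_{\mathcal{O}}$ is visibly smooth with the right fibres by Proposition~\ref{basiclocalmodel}), and then shows $\widetilde{Y}_d\to\widetilde{Z}_d$ is an \emph{isomorphism} over $M_\mu\otimes_\mathcal{O}\mathbb{F}$. The mechanism is not a deformation of a connection: it is the contraction argument of Proposition~\ref{isom}, where one starts from the ``naive'' Galois action $\sigma_{\operatorname{naive},\beta}$ fixing $\varphi(\beta)$ and shows that $\varphi^n\circ\sigma_{\operatorname{naive},\beta}\circ\varphi^{-n}$ converges (using topological nilpotence of $\varphi_{\operatorname{Hom}}$ on a suitable $A_{\operatorname{inf}}$-module of matrices, enforced by the height bound $\sum_{\kappa|_k=\kappa_0}r_\kappa\leq e+p-1$ with one strict inequality) to the unique compatible crystalline $G_K$-action. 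Your worry about ``new obstruction classes on boundary strata'' is not the issue the paper confronts --- the smoothness of $\widetilde{Z}_d\to\operatorname{Gr}_\mathcal{O}$ is unconditional, and what must be verified is the closedness of the locus where the naive action is already sufficiently divisible (via $\operatorname{Gr}^{\nabla_\sigma,r}_\mathcal{O}$ from \ref{Grnablasigmar}) and that $M_\mu\otimes_\mathcal{O}\mathbb{F}$ sits inside it (Proposition~\ref{nablacontain}).
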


To prove (1) it suffices to show this factorisation for $\overline{A}$-points for every finite $\mathbb{F}$-algebra $\overline{A}$. For simplicity, we sketch the argument only in the case where $\overline{A}= \mathbb{F}$. The general case requires only minor technical changes. We also assume $k = \mathbb{F}_p$ as this greatly simplifies the notation. If $e=1$ then $M_\mu=G/P_\mu$ is just a single flag variety and the claimed factorisation comes down to showing that for any $\overline{\mathfrak{M}} \in Y^{\mu,1}_d(\overline{A})$ and any basis $\overline{\beta}$ the module $\overline{\mathfrak{M}}$ is generated by
$$
\varphi(\overline{\beta})g\begin{pmatrix}
	u^{-\mu_1} & & \\ & \ddots & \\ & & u^{-\mu_d}
\end{pmatrix}
$$
for some $g \in \operatorname{GL}_d(\overline{A})$. This follows from results in \cite{GLS} where it is shown, for any lift of $\overline{\mathfrak{M}}$ to $\mathfrak{M} \in Y^\mu_d(A)$, with $A$ the ring of integers in a finite extension of $E$, and any basis $\beta$ that $(u-\pi)^p\mathfrak{M}$ is generated by
$$
\varphi(\beta)g\left[ \begin{pmatrix}
	(u-\pi)^{p-\mu_1} & & \\ & \ddots & \\ & & (u-\pi)^{p-\mu_d}
\end{pmatrix} + X_{\operatorname{err}}\right]
$$
for a matrix $X_{\operatorname{err}}$ divisible by a power of $\pi^{p -\mu_1+\mu_d+1}$ and $g \in \operatorname{GL}_d(A)$. Here $\pi \in K$ is a fixed uniformiser. This result does not directly extend to the case $e>1$. However, a variant of the method is able to show that, for each embedding $\kappa:K\rightarrow E$, the module $\mathfrak{M}^\varphi \cap (u-\kappa(\pi))^p \mathfrak{M}$ can be generated by
$$
\varphi(\beta)g_\kappa\left[ \begin{pmatrix}
	(u-\kappa(\pi))^{p-\mu_{\kappa,1}} & & \\ & \ddots & \\ & & (u-\kappa(\pi))^{p-\mu_{\kappa,d}}
\end{pmatrix} + X_{\operatorname{err},\kappa}\right]
$$
for some $g_\kappa \in \operatorname{GL}_d(A)$ and $X_{\operatorname{err},\kappa}$ a matrix divisible by $\pi^{p-\mu_{\kappa,1}+\mu_{\kappa,d}+1}$. This was done in \cite{GLS15} (actually they only consider the case $d=2$ but it is straightforward to extend their arguments to higher dimensions). If the $X_{\operatorname{err},\kappa}$'s are divisible by a high enough power of $\pi$ then it follows that
$$
\prod_\kappa (u-\kappa(\pi))^p \mathfrak{M} = \bigcap \left( \mathfrak{M}^\varphi \cap (u-\kappa(\pi))^p\mathfrak{M} \right) 
$$
is congruent modulo $\pi$ to the intersection of the submodules generated by 
$$
\varphi(\beta)g_\kappa \begin{pmatrix}
	(u-\kappa(\pi))^{p-\mu_{\kappa,1}} & & \\ & \ddots & \\ & & (u-\kappa(\pi))^{p-\mu_{\kappa,d}}
\end{pmatrix}
$$
This sufficient divisibility is ensured by the bound on $\mu$ from Theorem~\ref{A} and this congruence is precisely what it means of $\overline{\mathfrak{M}}$ to be mapped onto an element of $M_{-w_0\mu} \otimes_{\mathcal{O}} \mathbb{F}$ by $\Psi$. 

For the proof of (2) we factor the morphism $\Psi$ as $\widetilde{Y}_d \rightarrow \widetilde{Z}_d \rightarrow \operatorname{Gr}_{\mathcal{O}}$ where $\widetilde{Z}_d$ denotes the moduli stack of Breuil--Kisin modules (without a crystalline Galois action) and $\widetilde{Y}_d \rightarrow \widetilde{Z}_d$ forgets the Galois action. An easy calculation shows that over the special fibre $\widetilde{Z}_d \rightarrow \operatorname{Gr}_{\mathcal{O}}$ is smooth with irreducible fibres of dimension equal the relative dimension of $\widetilde{Y}_d \rightarrow Y_d$. Part (2) therefore reduces to understanding when $\widetilde{Y}_d \rightarrow \widetilde{Z}_d$ is an isomorphism. To address this we note that for any Breuil--Kisin module $\mathfrak{M}$ with basis $\beta$ we can define a naive Galois action $\sigma_{\operatorname{naive},\beta}$ on $\mathfrak{M}$ by semilinearly extending the trivial $G_K$-action on $\varphi(\beta)$. Usually $\sigma_{\operatorname{naive},\beta}$ will not be $\varphi$-equivariant or crystalline. However, we show that if $\sigma_{\operatorname{naive},\beta}-1$ is suitably divisible and if $\mathfrak{M}$ satisfies height conditions imposed by \eqref{naturalbound} (actually a very slight strengthening of this bound is required to avoid certain ``Steinberg'' situations) then 
$$
\operatorname{lim}_{n\rightarrow \infty} \varphi^n \circ \sigma_{\operatorname{naive},\beta} \circ \varphi^{-n}
$$
converges to a unique $\varphi$-equivariant  crystalline $G_K$-action. It turns out that the locus of $\widetilde{Z}_d$ on which $\sigma_{\operatorname{naive},\beta} -1$ is sufficiently divisible is closed, and obtained as the preimage of a closed subscheme in $\operatorname{Gr}_{\mathcal{O}}$. Part (2) is then proved by showing that $M_\mu \otimes_{\mathcal{O}} \mathbb{F}$ is contained in this closed subscheme.
\subsubsection*{Part 3: Upper and lower multiplicity bounds}

The final ingredient which goes into the proof of Theorem~\ref{A} is a lower bound on the multiplicities appearing in the Breuil--M\'ezard conjecture. The is the most critical place where we require $d=2$. It is also where we use that $p>2$. Under these assumptions it is shown in \cite[8.6]{EG19} (using global automorphy lifting techniques from \cite{GK15}) that one always has
$$
[\overline{\mathcal{X}}_2^{\mu,\tau}] \geq \sum_\lambda m(\lambda,\mu,\tau) \mathcal{C}_{\lambda}
$$
This holds without any assumption on $\mu$ or $\tau$. Combining \cite{GK15} with the potential diagaonalisability established in \cite{B19b} one also obtains that $\mathcal{C}_{\lambda} = [\overline{\mathcal{X}}^{\widetilde{\lambda},1}_2]$ so long as $\lambda$ is not Steinberg (for $d=2$ this means $\lambda$ is not a twist of $\bigotimes_{\kappa_0:k\rightarrow \mathbb{F}}\operatorname{Sym}^{p-1}\mathbb{F}^2$). The bounds on $\mu$ ensure Steinberg $\lambda$ do not appear in Theorem~\ref{A} (except if $K/\mathbb{Q}_p$ is unramified, but in this case the theorem is trivial). Therefore, for $\mu$ as in Theorem~\ref{A}, we have
$$
[\overline{\mathcal{X}}_2^{\mu,1}] \geq \sum_\lambda m(\lambda,\mu,1)[\overline{\mathcal{X}}_2^{\widetilde{\lambda},1}]
$$
To finish the proof we have to show that the results from parts 1. and 2. can be combined to give equality. 

First we consider the identity $[M_\mu \otimes_{\mathcal{O}} \mathbb{F}] = \sum_{\lambda} n(\lambda,\mu) [M_{\widetilde{\lambda}} \otimes_{\mathcal{O}} \mathbb{F}]$ from Proposition~\ref{prop1}. Applying an involution of $\operatorname{Gr}_{\mathcal{O}}$ which sends a lattice onto its dual allows us to replace $\mu$ and each $\widetilde{\lambda}$ in this identity with $-w_0\mu$ and $-w_0\widetilde{\lambda}$. Thus
$$
[M_{-w_0\mu} \otimes_{\mathcal{O}} \mathbb{F}] = \sum_{\lambda} n(\lambda,\mu) [M_{-w_0\widetilde{\lambda}} \otimes_{\mathcal{O}} \mathbb{F}]
$$
Part (2) of Proposition~\ref{prop2} ensures $\widetilde{Y}_2 \rightarrow \operatorname{Gr}_\mathcal{O}$ is smooth over the closed subschemes appearing in this identity of cycles. This allows us to pull the identity back to $\widetilde{Y}_d$ to obtain 
$$
[Y_2^{\mu,\operatorname{flag}}] = \sum_\lambda n(\lambda,\mu) [Y_2^{\widetilde{\lambda},\operatorname{flag}}]
$$
where $Y_2^{\mu,\operatorname{flag}}$ equals the preimage of $M_{-w_0\mu} \otimes_{\mathcal{O}} \mathbb{F}$ under this map. Part (1) of Proposition~\ref{prop2} (together with a dimension comparison) implies $[\widetilde{Y}_d^\mu \otimes_{\mathcal{O}} \mathbb{F}] \leq [Y_2^{\mu,\operatorname{flag}}]$. Using part (2) of Proposition~\ref{prop1} we are even able to deduce this is an equality when $\mu = \widetilde{\lambda}$. Therefore
$$
[\widetilde{Y}^\mu_2 \otimes_{\mathcal{O}} \mathbb{F}] \leq \sum_\lambda n(\lambda,\mu)[\widetilde{Y}^{\widetilde{\lambda}}_2 \otimes_{\mathcal{O}} \mathbb{F}]
$$
Since $\widetilde{Y}_d \rightarrow Y_d$ is smooth and surjective it follows that also
$$
[Y^\mu_2 \otimes_{\mathcal{O}} \mathbb{F}] \leq \sum_\lambda n(\lambda,\mu)[Y^{\widetilde{\lambda}}_2 \otimes_{\mathcal{O}} \mathbb{F}]
$$
Pushing this identity forward along the proper morphism $Y_2 \rightarrow \mathcal{X}_2$ gives an inequality $[\overline{\mathcal{X}}^\mu_2] \leq \sum_\lambda n(\lambda,\mu)[\overline{\mathcal{X}}_2^{\widetilde{\lambda}}]$. Combining this with the lower bound we obtain $n(\lambda,\mu) \geq m(\lambda,\mu,1)$. By part (3) of Proposition~\ref{prop1} this must be an equality, which proves Theorem~\ref{A}. Actually, in the paper we follow the same argument, but for the final step we prefer to work with deformation rings rather than $\overline{\mathcal{X}}_2$. This allows us to avoid dealing with stacks. As explained in \cite[8.3]{EG19}, Theorem~\ref{A} is implied by its analogue in the setting of deformation rings.

\subsection*{Acknowledgements}
I would like to thank Toby Gee, Eugen Hellmann, Brandon Levin and Timo Richarz for helpful conversations and correspondences. I would also like to thank the Max Plank Institute for Mathematics in Bonn, where parts of this work were done, for providing an excellent working environment. 
\section{Notation}

\begin{sub}\label{begin}
We fix a finite extension $K$ of $\mathbb{Q}_p$ with residue field $k$ of degree $f$ over $\mathbb{F}_p$ and ramification degree $e$. Let $C$ denote the completed algebraic closure of $K$, with ring of integers $\mathcal{O}_C$, and fix a compatible system $\pi^{1/p^\infty}$ of $p$-th power roots of a fixed choice of uniformiser $\pi \in K$ in $\mathcal{O}_C$. Set $K_\infty = K(\pi^{1/p^\infty})$. Write $E(u) \in W(k)[u]$ for the minimal polynomial of $\pi$. Thus $E(u)$ is Eisenstein of degree equal to the ramification degree $e$ of $K$ over $\mathbb{Q}_p$. 

We also fix another finite extension $E$ of $\mathbb{Q}_p$ with ring of integers $\mathcal{O}$ and residue field $\mathbb{F}$. We assume that $E$ contains a Galois closure of $K$. We typically use $\kappa$ and $\kappa_0$ respectively to denote embeddings $K\rightarrow E$ and $k\rightarrow \mathbb{F}$. For each $\kappa_0$ we fix an embedding $\widetilde{\kappa_0}:K\rightarrow E$ with $\widetilde{\kappa_0}|_k = \kappa_0$.
\end{sub}

\begin{sub}\label{mathfrak}
	For any $\mathbb{Z}_p$-algebra $A$ we write $\mathfrak{S}_A = (W(k)\otimes_{\mathbb{Z}_p} A)[[u]]$. This comes equipped with the $A$-linear endomorphism $\varphi$ which on $W(k)$ acts as the lift of the $p$-th power map on $k$ and sends $u \mapsto u^p$. We also consider
$$
A_{\operatorname{inf},A} = \varprojlim_a \varprojlim_i( W(\mathcal{O}_{C^\flat})/p^a \otimes_{\mathbb{Z}_p} A)/u^i
$$
where $\mathcal{O}_{C^\flat} = \varprojlim_{x\mapsto x^p} \mathcal{O}_C/p$ and $u = [(\pi,\pi^{1/p},\pi^{1/p^2},\ldots)] \in W(\mathcal{O}_{C^\flat})$. We view $A_{\operatorname{inf},A}$ as an $\mathfrak{S}_A$-algebra via $u$. Note that the lift of Frobenius on $W(\mathcal{O}_{C^\flat})$ induces a Frobenius $\varphi$ on $A_{\operatorname{inf},A}$ which is compatible with that on $\mathfrak{S}_A$. The natural $G_K$-action on $\mathcal{O}_C$ also induces a continuous (for the $(u,p)$-adic topology) $G_K$-action on $A_{\operatorname{inf},A}$ commuting with $\varphi$. Write 
$$
W(C^\flat)_A = \varprojlim_a A_{\operatorname{inf},A}[\tfrac{1}{u}]/p^a
$$
If $A$ is topologically of finite type (i.e. $A \otimes_{\mathbb{Z}_p} \mathbb{F}_p$ is of finite type) then $\mathfrak{S}_A \rightarrow A_{\operatorname{inf},A}$ is faithfully flat (in particular injective) \cite[2.2.13]{EG19}.

We also fix a compatible system $(1,\epsilon_1,\epsilon_2,\ldots)$ of $p$-th power roots of unity in $\mathcal{O}_C$ which we view as an element $\epsilon \in \mathcal{O}_{C^\flat}$. We write $\mu = [\epsilon] -1 \in A_{\operatorname{inf},A}$.
\end{sub}
\begin{lemma}\label{mu}
	Let $\Theta:A_{\operatorname{inf}} \rightarrow \mathcal{O}_C$ denote the surjection given by $\sum p^ix_i \mapsto \sum p^i x_i^{(0)} $ with $x_i = [(x_i^{(j)})_{j\geq 1}]$ and extend this to a surjection $A_{\operatorname{inf},\mathcal{O}} \rightarrow \mathcal{O} \otimes_{\mathbb{Z}_p} \mathcal{O}_C$. Then
	$$
	\mu A_{\operatorname{inf},\mathcal{O}} = \lbrace x \in A_{\operatorname{inf},\mathcal{O}} \mid \Theta(\varphi^n(x)) = 0 \text{ for all $n\geq 0$} \rbrace
	$$
\end{lemma}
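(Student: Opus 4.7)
The inclusion $\mu A_{\operatorname{inf},\mathcal{O}} \subseteq \lbrace x : \Theta(\varphi^n(x)) = 0 \text{ for all } n\geq 0 \rbrace$ is immediate, since $\varphi^n(\mu) = [\epsilon^{p^n}] - 1$ and $(\epsilon^{p^n})^{(0)} = (\epsilon^{(0)})^{p^n} = 1$. For the reverse inclusion, since $\mathcal{O}$ is free of finite rank over $\mathbb{Z}_p$, the ring $A_{\operatorname{inf},\mathcal{O}}$ is a free module of the same rank over $A_{\operatorname{inf}} := A_{\operatorname{inf},\mathbb{Z}_p}$, and both $\varphi$ and $\Theta$ act coefficient-wise in such a basis. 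Writing $x = \sum_i a_i e_i$, the vanishing $\Theta(\varphi^n(x)) = 0$ forces $\Theta(\varphi^n(a_i)) = 0$ for each $i$, so it suffices to handle the case $\mathcal{O} = \mathbb{Z}_p$.

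The key input for the $A_{\operatorname{inf}}$-case is the element
\[
\xi := \mu/\varphi^{-1}(\mu) = 1 + [\epsilon^{1/p}] + [\epsilon^{1/p}]^2 + \cdots + [\epsilon^{1/p}]^{p-1},
\]
a cyclotomic generator of $\ker\Theta$ that additionally satisfies $\Theta(\varphi^m(\xi)) = p$ for every $m \geq 1$ (both facts are direct evaluations of $1 + X + \cdots + X^{p-1}$ at primitive $p$-th roots of unity, respectively at $1$). Iterating $\mu = \xi \cdot \varphi^{-1}(\mu)$ yields $\mu = \bigl(\prod_{n=0}^{N-1}\varphi^{-n}(\xi)\bigr)\cdot\varphi^{-N}(\mu)$ for every $N \geq 0$. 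My first step is to prove by induction on $N$ the ``coprime'' identity
\[
\bigcap_{n=0}^{N-1}\ker(\Theta\circ\varphi^n) \,=\, \biggl(\prod_{n=0}^{N-1}\varphi^{-n}(\xi)\biggr)A_{\operatorname{inf}} \,=\, \biggl(\frac{\mu}{\varphi^{-N}(\mu)}\biggr)A_{\operatorname{inf}}.
\]
The inductive step: if $x = (\mu/\varphi^{-N}(\mu))\cdot z$ also satisfies $\Theta(\varphi^N(x)) = 0$, then $0 = p^N\Theta(\varphi^N(z))$ in $\mathcal{O}_C$, so $\Theta(\varphi^N(z)) = 0$ by $p$-torsion-freeness of $\mathcal{O}_C$; thus $z \in \ker(\Theta\circ\varphi^N) = (\varphi^{-N}(\xi))$, giving $x \in (\mu/\varphi^{-(N+1)}(\mu))A_{\operatorname{inf}}$.

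Given this, any $x$ with $\Theta(\varphi^n(x)) = 0$ for all $n$ lies in $(\mu/\varphi^{-N}(\mu))A_{\operatorname{inf}}$ for every $N$. To conclude $x \in \mu A_{\operatorname{inf}}$, I would argue by induction on $k \geq 1$ that $x \in \mu A_{\operatorname{inf}} + p^k A_{\operatorname{inf}}$, and then invoke $p$-adic completeness of $A_{\operatorname{inf}}$ together with the closedness of $\mu A_{\operatorname{inf}}$ in the $p$-adic topology, which in turn follows because $\mathcal{O}_{C^\flat} = A_{\operatorname{inf}}/p$ is a domain in which $\mu \bmod p = \epsilon - 1$ is nonzero (so $\mu y \in p^k A_{\operatorname{inf}}$ forces $y \in p^k A_{\operatorname{inf}}$). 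The inductive step is self-powering: writing $x = \mu w + p^k r$, the vanishing $0 = \Theta(\varphi^n(x)) = p^k\Theta(\varphi^n(r))$ forces $\Theta(\varphi^n(r)) = 0$, so $r$ satisfies the same hypothesis as $x$, and applying the base case $k=1$ to $r$ advances the induction.

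The only substantive computation is therefore the base case $k=1$, i.e., the mod-$p$ identity $\bigcap_{N}\bigl(\prod_{n=0}^{N-1}\varphi^{-n}(\xi)\bigr)\mathcal{O}_{C^\flat} = (\epsilon-1)\mathcal{O}_{C^\flat}$, a valuation comparison in the rank-one valuation ring $\mathcal{O}_{C^\flat}$. Using $y^p - 1 = (y-1)^p$ in characteristic $p$ one finds $\varphi^{-n}(\xi) \equiv (\epsilon^{1/p^{n+1}} - 1)^{p-1}\pmod{p}$, with $\flat$-valuation $1/p^n$; the partial product then has $\flat$-valuation $\sum_{n=0}^{N-1}1/p^n$, which strictly increases to the limit $p/(p-1) = v_\flat(\epsilon - 1)$. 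The intersection of the corresponding principal ideals in the valuation ring $\mathcal{O}_{C^\flat}$ is accordingly $(\epsilon-1)\mathcal{O}_{C^\flat}$, as required. The main delicate point is coordinating the two nested inductions and the various $p$-torsion-freeness claims, but no deeper obstacle is present.
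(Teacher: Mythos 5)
Your proposal is correct. The paper's own proof is a one-liner: after the same reduction to $\mathcal{O}=\mathbb{Z}_p$ (using a $\mathbb{Z}_p$-basis, exactly as you do), it simply cites \cite{Fon94}*{5.1.3} for the resulting assertion $\mu A_{\operatorname{inf}} = \lbrace x : \Theta(\varphi^n(x))=0 \text{ for all } n\rbrace$. You instead supply a complete self-contained argument for that Fontaine input, so the substance of your write-up is precisely the part the paper outsources. The structure you use—the telescoping factorisation $\mu = \bigl(\prod_{n<N}\varphi^{-n}(\xi)\bigr)\varphi^{-N}(\mu)$, the inductive identification $\bigcap_{n<N}\ker(\Theta\circ\varphi^n) = (\mu/\varphi^{-N}(\mu))$ via $\Theta(\varphi^m(\xi))=p$ for $m\geq 1$ and $p$-torsion-freeness of $\mathcal{O}_C$, the $\flat$-valuation computation $v_\flat(\varphi^{-n}(\xi)\bmod p)=1/p^n$ summing to $p/(p-1)=v_\flat(\epsilon-1)$, and finally the passage from $x\in \mu A_{\operatorname{inf}}+p^kA_{\operatorname{inf}}$ for all $k$ to $x\in\mu A_{\operatorname{inf}}$ using $p$-adic completeness and the fact that $A_{\operatorname{inf}}/p=\mathcal{O}_{C^\flat}$ is a domain—is sound; I checked each step and found no gaps. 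What your approach buys is transparency and independence from the reference; what the paper's approach buys is brevity, which is the appropriate choice in context since this is a standard fact. If you were to actually include your proof, it could be streamlined by observing that the $k=1$ case plus the self-powering step can be packaged as a single convergent series $w=\sum_{j\geq 0}p^j w_j$ with $x=\mu w$, avoiding the two nested inductions.
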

\begin{proof}
	By choosing a $\mathbb{Z}_p$-basis of $\mathcal{O}$ this follows immediately from the assertion that $\mu A_{\operatorname{inf}} = \lbrace x \in A_{\operatorname{inf}} \mid \Theta(\varphi^n(x)) = 0 \text{ for all $n\geq 0$} \rbrace$ which is \cite{Fon94}[5.1.3].
\end{proof}
\begin{sub}\label{embeddings}
We frequently consider modules as in \ref{mathfrak} defined over $\mathcal{O} \otimes_{\mathbb{Z}_p} W(k)$ for an $\mathcal{O}$-algebra $A$. Using the isomorphism
$$
\mathcal{O} \otimes_{\mathbb{Z}_p} W(k) \xrightarrow{\sim} \prod_{\kappa_0:k\rightarrow \mathbb{F}} \mathcal{O} \cong \prod_{\kappa_0} \mathcal{O} \otimes_{W(k),\kappa_0} W(k)
$$
given by $a\otimes b \mapsto (a\kappa_0(b))_{\kappa_0}$ (here we write $\kappa_0$ to its extension to an embedding $W(k)\rightarrow \mathcal{O}$) we see that any such module $M$ can be expressed as a product
$$
M = \prod_{\kappa_0} M_{\kappa_0}
$$
where $M_{\kappa_0}$ can be identifies with the submodule of $M$ on which the two actions of $W(k)$ given by $ (1\otimes a)m$ and $a\mapsto (\kappa_0(a)\otimes 1)m$ coincide. Similarly, there is an isomorphism
\begin{equation}\label{prod1/p}
E \otimes_{\mathbb{Z}_p} \mathcal{O}_K \xrightarrow{\sim} \prod_{\kappa:K\rightarrow E} E \cong \prod_\kappa E \otimes_{\mathcal{O}_K,\kappa} \mathcal{O}_K	
\end{equation}
given by $a\otimes b \mapsto (\kappa(b)a)_\kappa$ which allows us to write an $E \otimes_{\mathbb{Z}_p} \mathcal{O}_K$-module $M$ as
$$
M = \prod_\kappa M_\kappa
$$
where again $M_\kappa$ can be identified with the submodule consisting of $m \in M$ with $(1\otimes a)m = (\kappa(a)\otimes 1)m$ for all $a \in \mathcal{O}_K$. We warn the reader that the idempotents in \eqref{prod1/p} will not be contained in $\mathcal{O} \otimes_{\mathbb{Z}_p} \mathcal{O}_K$ whenever $K/\mathbb{Q}_p$ is ramifies and so the product decomposition $M = \prod_\kappa M_\kappa$ is not valid integrally, i.e. when $M$ is an $\mathcal{O} \otimes_{\mathbb{Z}_p} \mathcal{O}_K$-module.
\end{sub}
\begin{sub}\label{Ekappa}
	Applying the previous discussion to $(A\otimes_{\mathbb{Z}_p} W(k))[u]$ allows us to write
	$$
	(A\otimes_{\mathbb{Z}_p} W(k))[u] = \prod_{\kappa_0} A[u]
	$$ 
	Using this identification we define $E_\kappa(u) \in (A\otimes_{\mathbb{Z}_p} W(k))[u]$ for every embedding $\kappa:K\rightarrow E$ as the element corresponding to
	$$
	(1,\ldots,1,u -\pi_\kappa,1,\ldots,1) \in \prod_{\kappa_0} A[u]
	$$
	where $\pi_\kappa:= \kappa(\pi)$ and the $u-\pi_\kappa$ appears in the $\kappa|_k$-th factor in the product. Notice that $E(u) = \prod_\kappa E_\kappa(u)$ inside $(A\otimes_{\mathbb{Z}_p} W(k))[u]$.
\end{sub}

\section{Cycles}

\begin{sub}
	For a Noetherian scheme $X$ let $Z_m(X)$ denote the free abelian group generated by integral closed subschemes $Z \subset X$ of dimension $m$. If $\mathcal{F}$ is a coherent sheaf on $X$ with support of dimension $\leq m$ then we define
$$
[\mathcal{F}] = \sum_Z \operatorname{length}_{\mathcal{O}_{X,\xi}} (\mathcal{F}_\xi) [Z] \in Z_m(X)
$$
for $\xi \in Z$ the generic point. If $i:Y\rightarrow X$ is a closed immersion write $[Y] = [i_*\mathcal{O}_Y]$. Any flat morphism $f:X \rightarrow Y$ of relative dimension $d$ produces a homomorphism $f^*: Z_m(Y) \rightarrow Z_{m+d}(X)$ with $f^*[\mathcal{F}] = [f^*\mathcal{F}]$. See \cite[02RE]{stacks-project}. If instead $f$ is proper then there is a pushforward homomorphism $f_*:Z_m(X)\rightarrow Z_m(Y)$ with $f_*[\mathcal{F}] = [f_*\mathcal{F}]$. See \cite[02R6]{stacks-project}.
\end{sub}
\begin{lemma}\label{hilbert}
Let $X$ be a projective scheme over $k$ equipped with an ample line bundle $\mathcal{L}$. Suppose that $Y,Y_1,\ldots,Y_s$ are $m$-dimensional closed subschemes in $X$ and that
$$
[Y] = \sum n_i [Y_i] \in Z_m(X)
$$
Then
$$
\operatorname{dim}H^0(Y,\mathcal{L}^{\otimes n}) - \sum n_i \operatorname{dim}H^0(Y_i,\mathcal{L}^{\otimes n})
$$
is, for large $n$, the value at $n$ of a polynomial of degree $<m$.
\end{lemma}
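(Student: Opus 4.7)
The plan is to reduce the statement about dimensions of sections to one about Euler characteristics, and then show that the leading term of the resulting polynomial difference vanishes. First, since $\mathcal{L}$ is ample on the projective scheme $X$, Serre vanishing gives $H^{i}(Z,\mathcal{L}^{\otimes n})=0$ for $i>0$ and $n\gg 0$ whenever $Z\subset X$ is a closed subscheme. Applied to $Y$ and to each $Y_i$, this shows that for large $n$ the expression in the lemma equals
$$
\chi(Y,\mathcal{L}^{\otimes n}) - \sum_{i} n_i\, \chi(Y_i,\mathcal{L}^{\otimes n}).
$$
By Snapper's theorem each Euler characteristic is a polynomial in $n$ of degree at most $m$ (since $Y$ and the $Y_i$ have dimension $\le m$), so the difference is a polynomial in $n$, and it remains to show that its degree-$m$ coefficient vanishes.

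This last point follows from the classical fact that the leading coefficient of the Hilbert polynomial of a coherent sheaf only depends on its top-dimensional cycle class. Concretely, for any coherent sheaf $\mathcal{F}$ on $X$ with support of dimension $\le m$, the degree-$m$ coefficient of $n\mapsto \chi(X,\mathcal{F}\otimes\mathcal{L}^{\otimes n})$ equals $\tfrac{1}{m!}\deg_{\mathcal{L}}[\mathcal{F}]$, where $\deg_{\mathcal{L}}\colon Z_m(X)\to \mathbb{Z}$ is the linear functional sending an integral $m$-dimensional closed subscheme $Z$ to the intersection number $(\mathcal{L}|_Z)^m$. I would prove this by dévissage: choose a finite filtration of $\mathcal{F}$ whose graded pieces have the form $\mathcal{O}_W$ for $W\subset X$ integral. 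Pieces with $\dim W<m$ contribute polynomials of degree $<m$; pieces with $\dim W=m$ contribute $\tfrac{1}{m!}\deg_{\mathcal{L}}(W)n^m$ to the top term, appearing with total multiplicity equal to the coefficient of $[W]$ in $[\mathcal{F}]$ by definition of the latter. Additivity of $\chi$ in short exact sequences then yields the formula. Applying this to $\mathcal{O}_Y$ and to each $\mathcal{O}_{Y_i}$, and using the hypothesis $[Y]=\sum n_i [Y_i]$ in $Z_m(X)$, the degree-$m$ coefficients of $\chi(Y,\mathcal{L}^{\otimes n})$ and $\sum n_i \chi(Y_i,\mathcal{L}^{\otimes n})$ agree, so their difference has degree $<m$.

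The only real obstacle is the additivity statement for leading coefficients in the previous paragraph, which is entirely standard (see for instance EGA III, Kleiman's treatment of numerical equivalence, or the relevant tags in the Stacks project); the rest of the argument is bookkeeping.
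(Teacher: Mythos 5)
Your proposal is correct and follows essentially the same route as the paper. The paper's proof is a one-liner citing two Stacks project tags: \cite[0B5U]{stacks-project} for Serre vanishing (your step reducing $\dim H^0$ to $\chi$ for $n\gg 0$) and \cite[0BEN]{stacks-project}, which is precisely the statement that the degree-$m$ coefficient of the Hilbert polynomial of a coherent sheaf depends only on its $m$-dimensional cycle class — exactly what you re-derive by dévissage and additivity of $\chi$ in short exact sequences. The only nitpick is that the standard filtration lemma gives graded pieces of the form $i_*\mathcal{I}$ for $\mathcal{I}\subset\mathcal{O}_W$ an ideal sheaf on an integral closed subscheme $W$, not literally $\mathcal{O}_W$; this does not affect the argument, since $\mathcal{I}$ and $\mathcal{O}_W$ have the same $m$-dimensional cycle class and their Euler characteristics differ by a polynomial of degree $<m$.
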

\begin{proof}
	This follows from \cite[0BEN]{stacks-project} and the fact that, since $\mathcal{L}$ is ample, the higher cohomologies of $\mathcal{L}^{\otimes n}$ vanish for $n>>0$ \cite[0B5U]{stacks-project}.
\end{proof}
\begin{lemma}\label{spec}
	Suppose that $f:X \rightarrow Y$ is a proper morphism between equidimensional flat $\mathcal{O}$-schemes which becomes an isomorphism after applying $\otimes_{\mathcal{O}} E$. Suppose $Z_X \subset X,Z_Y \subset Y$ are $\mathcal{O}$-flat top dimensional closed subschemes for which $f$ restricts to an isomorphism
	$$
	f: Z_X \otimes_{\mathcal{O}} E \xrightarrow{\sim} Z_Y \otimes_{\mathcal{O}} E
	$$ Then $f_*[Z_X \otimes_{\mathcal{O}} \mathbb{F}] = [Z_Y \otimes_{\mathcal{O}} \mathbb{F}]$.
\end{lemma}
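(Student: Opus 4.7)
The plan is to reduce the statement to the projection formula applied to the Cartier divisor cut out by a uniformiser $\pi \in \mathcal{O}$.

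First, I would observe that $f|_{Z_X}$ factors through a morphism $g \colon Z_X \to Z_Y$. Indeed, the scheme-theoretic image of $Z_X$ in $Y$ is the closure of $f(Z_X \otimes_{\mathcal{O}} E) = Z_Y \otimes_{\mathcal{O}} E$, and this closure coincides with $Z_Y$ itself because $Z_Y$ is $\mathcal{O}$-flat. By construction $g$ is proper, both $Z_X$ and $Z_Y$ are $\mathcal{O}$-flat and equidimensional of the same dimension $m+1$, and the generic fibre $g_E$ is an isomorphism.

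Next I would verify the cycle identity $g_*[Z_X] = [Z_Y]$ in $Z_{m+1}(Z_Y)$. The quickest route is coherent-sheaf-theoretic: the unit $\mathcal{O}_{Z_Y} \to g_*\mathcal{O}_{Z_X}$ is injective, because $Z_Y$ is $\mathcal{O}$-flat and so embeds in its generic fibre, and it is an isomorphism after $\otimes_{\mathcal{O}} E$ by flat base change applied to $g_E$. Its cokernel is therefore $\pi$-power torsion and supported on $Z_Y \otimes \mathbb{F}$, hence has cycle-theoretic dimension $m$ and contributes nothing to $Z_{m+1}$. Combined with the identity $g_*[Z_X] = [g_*\mathcal{O}_{Z_X}]$ from \cite[02R6]{stacks-project}, this gives $g_*[Z_X] = [Z_Y]$.

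Finally, let $D = \operatorname{div}(\pi)$, a Cartier divisor on $Z_Y$ (well-defined because $\pi$ is a regular element by $\mathcal{O}$-flatness). Its pullback $g^*D = \operatorname{div}(\pi)$ on $Z_X$ is Cartier for the same reason. The projection formula for proper pushforward against a Cartier divisor then gives
\[
g_*\bigl( g^*D \cdot [Z_X] \bigr) \;=\; D \cdot g_*[Z_X] \;=\; D \cdot [Z_Y].
\]
For any $\mathcal{O}$-flat closed subscheme $W$ the intersection $\operatorname{div}(\pi) \cdot [W]$ equals $[W \otimes_{\mathcal{O}} \mathbb{F}]$, so the two sides read $g_*[Z_X \otimes \mathbb{F}]$ and $[Z_Y \otimes \mathbb{F}]$. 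Pushing forward along the closed immersion $Z_Y \hookrightarrow Y$ then recovers the claim $f_*[Z_X \otimes_{\mathcal{O}} \mathbb{F}] = [Z_Y \otimes_{\mathcal{O}} \mathbb{F}]$.

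The main obstacle I foresee is step two: one has to ensure that the generic-fibre isomorphism transports cycle multiplicities correctly across possibly non-reduced, non-irreducible closed subschemes. The coherent-sheaf formulation above sidesteps a bookkeeping case-analysis on irreducible components; a direct attack would instead decompose $[Z_X]$ and $[Z_Y]$ into integral components, argue that $g_E$ matches them pairwise with equal multiplicities, and then apply the definition of proper pushforward on cycles from \cite[02R6]{stacks-project} to each component.
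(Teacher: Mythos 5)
Your proof is correct, and it takes a genuinely different route from the paper. The paper's proof invokes the specialization homomorphism $\sigma_X\colon A_m(X\otimes_{\mathcal O}E)\to A_m(X\otimes_{\mathcal O}\mathbb F)$ from \cite[Ch.~20.3]{Ful} as a black box, using two of its stated properties (it sends $[Z\otimes E]$ to $[Z\otimes\mathbb F]$ for $\mathcal O$-flat closed subschemes, and it commutes with proper pushforward) and then passes from Chow groups to cycle groups via $A_m=Z_m$ in top dimension \cite[1.3.2]{Ful}. You instead reprove the relevant specialization compatibilities from scratch: you factor $f|_{Z_X}$ through $g\colon Z_X\to Z_Y$, establish $g_*[Z_X]=[Z_Y]$ by a coherent-sheaf argument, and then pass to special fibres by the projection formula against $\operatorname{div}(\pi)$. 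Since the specialization map is itself constructed as the Gysin pullback along $X\otimes\mathbb F\hookrightarrow X$ (i.e.\ intersection with $\operatorname{div}(\pi)$), the two arguments are morally the same, but yours is self-contained modulo the more elementary facts of \cite[Ch.~1--2]{Ful} (cycle of a Cartier divisor, projection formula), at the cost of more steps to verify.

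Two points that deserve to be made explicit. First, in your factorization step you appeal only to the $\mathcal O$-flatness of $Z_Y$; you also need the $\mathcal O$-flatness of $Z_X$, so that the scheme-theoretic image of $Z_X\to Y$ coincides with that of $Z_X\otimes E\to Y$ (equivalently, so that the $\pi$-torsion-free image sheaf is forced to equal $\mathcal O_{Z_Y}$ once the generic fibres agree). Second, the projection formula $g_*(g^*D\cdot[Z_X])=D\cdot g_*[Z_X]$ is a priori an identity in the Chow group $A_m(Z_Y\otimes_{\mathcal O}\mathbb F)$, not in the cycle group; to upgrade it to a cycle identity you still need $A_m=Z_m$ in top dimension, which is exactly the observation the paper makes at the end of its proof. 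Likewise the identification $\operatorname{div}(\pi)\cdot[W]=[W\otimes_{\mathcal O}\mathbb F]$ for $\mathcal O$-flat $W$ should be attributed to \cite[1.7.2]{Ful}, applied after noting that every irreducible component of an $\mathcal O$-flat $W$ dominates $\operatorname{Spec}\mathcal O$ and is hence itself $\mathcal O$-flat. With these references filled in, the argument is complete.
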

\begin{proof}
	Let $A_m(X)$ denote the quotient of $Z_m(X)$ by rational equivalence. Since $X$ is $\mathcal{O}$-flat there is a specialisation homomorphism
	$$
	\sigma_X: A_m(X \otimes_{\mathcal{O}} E) \rightarrow A_m(X \otimes_{\mathcal{O}} \mathbb{F})
	$$
	with $\sigma_X([Z_X \otimes_{\mathcal{O}} E]) = [Z_X \otimes_{\mathcal{O}} \mathbb{F}]$ whenever $Z_X \subset X$ is a closed $\mathcal{O}$-flat subscheme of relative dimension $m$. Furthermore $\sigma_X$ commutes with proper pushforward. All this is explained in \cite[20.3]{Ful}.
	
	If $m = \operatorname{dim}X$ then $A_m(X) = Z_m(X)$ \cite[1.3.2]{Ful}. Therefore, the two stated properties of the specialisation map give
	$$
	f_*[Z_X \otimes_{\mathcal{O}} \mathbb{F}] = f_*\sigma_X([Z_X \otimes_{\mathcal{O}} E]) = \sigma_Y(f_*[Z_X \otimes_{\mathcal{O}} E]) = \sigma_Y([Z_Y \otimes_{\mathcal{O}} E]) = [Z_Y \otimes_{\mathcal{O}} \mathbb{F}]
	$$ 
	in $Z_{\operatorname{dim}Y \otimes_{\mathcal{O}} \mathbb{F}}(Y \otimes_{\mathcal{O}} \mathbb{F})$.
\end{proof}
\section{Local models}

\begin{sub}
	We begin by defining an ind-scheme $\operatorname{Gr}$ over $\mathcal{O}_K$ whose $A$-points classify rank $d$-projective $A[u]$-modules satisfying
\begin{equation}\label{Gr}
(u-\pi)^a A[u]^d \subset \mathcal{E} \subset 	\prod (u-\pi)^{-a} A[u]^d
\end{equation}
for some $a\geq 0$. For each $\kappa_0:k\rightarrow \mathbb{F}$, which we extend to an embedding $W(k) \rightarrow \mathcal{O}$, we also define $\operatorname{Gr}_{\kappa_0}$ as the ind-scheme over $\mathcal{O}$ whose $A$-points classify 
rank $d$-projective $A[u]$-modules satisfying
\begin{equation*}\label{Grkappa}
\kappa_0(E(u))^a A[u]^d \subset \mathcal{E} \subset 	\prod \kappa_0(E(u))^{-a} A[u]^d
\end{equation*}
for some $a\geq 0$. 

Note that, for each $\kappa:K\rightarrow E$, we can view an $A$-valued point of $\operatorname{Gr} \otimes_{\mathcal{O}_K,\kappa} \mathcal{O}$ as a rank $d$ projective $A[u]$-module satisfying $(u-\kappa(\pi))^a A[u]^d\subset \mathcal{E} \subset (u-\kappa(\pi))^{-a}A[u]^d$ for some $a\geq 0$. Therefore, if $\kappa|_k = \kappa_0$ then there is a natural closed embedding
$$
\operatorname{Gr} \otimes_{\mathcal{O}_K,\kappa} \mathcal{O} \rightarrow \operatorname{Gr}_{\kappa_0}
$$
\end{sub}
\begin{remark}
	Recall that an $A[u]$-submodule as in \eqref{Gr} is $A[u]$-projective of rank $d$ if and only if $(u-\pi)^{-a} A[u]^d/\mathcal{E}$ is $A$-projective. In particular, this illustrates the ind-representability of the functor; the locus of $\mathcal{E}$ as in \eqref{Gr} identifies with a closed subscheme of the usual grassmannian classifying submodules of $(u-\pi)^{-a}A[u]^d/(u-\pi)^aA[u]^d$.
\end{remark}
\begin{sub}\label{flags}
Write $X(T)$ for the group of characters of $\operatorname{GL}_d$ relative to $T$, the diagonal torus, and identify $X(T) = \mathbb{Z}^d$ as usual. We say an element $\mu = (\mu_1,\ldots,\mu_d) \in X(T)$ is dominant if $\mu_i \geq \mu_{i+1}$ and for any such dominant $\mu$ we write $\mathcal{E}_{\mu} \in \operatorname{Gr}_K$ for the $\mathcal{O}_K$-point generated by
$$
\begin{pmatrix}
	(u-\pi)^{\mu_1} & & \\ & \ddots & \\ & & (u-\pi)^{\mu_d}
\end{pmatrix}(e_1,\ldots,e_d)
$$
for $(e_1,\ldots,e_d)$ the standard basis in $\mathcal{O}_K[u]^d$. There is an obvious action of $G = \operatorname{GL}_d$ on $\operatorname{Gr}$ and, since the stabiliser of $\mathcal{E}_{\mu}$ under this action is a parabolic subgroup $P_{\mu} \subset G$, the orbit map induces a proper monomorphism
$$
G/P_{\mu} \rightarrow \operatorname{Gr}
$$
i.e. a closed immersion. If we interpret the $A$-points of $G/P_{\mu}$ as filtrations 
$$
 \ldots \subset \operatorname{Fil}^{n+1}\subset \operatorname{Fil}^n \subset \operatorname{Fil}^{n-1}\subset \ldots
$$ 
of type $\mu$ on $A^d$ (which means the $n$-th graded piece is $A$-projective of rank equal to the multiplicity of $-n$ in $\mu$) then on $A$-points this closed immersion is given by
$$
\operatorname{Fil}^\bullet \mapsto \sum_{i \geq\lambda_d} (u-\pi)^{i} A[u]\operatorname{Fil}^{-i}
$$ 
where we view $A^d$ as a submodule of $A[u]^d$ in the obvious way.
\end{sub}

\begin{lemma}\label{beueville}
	If $A$ is a $p$-adically complete $\mathcal{O}$-algebra then $\operatorname{Gr}$-identifies with the set of rank $d$ projective $A[[u]]$-modules satisfying
	$$
	(u-\pi)^aA[[u]] \subset \mathcal{E} \subset(u-\pi)^{-a}A[[u]]
	$$
	for some $a\geq 0$. Similarly, for each $\operatorname{Gr}_{\kappa_0}$.
\end{lemma}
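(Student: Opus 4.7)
The plan is to write down an explicit bijection between the two descriptions of $\operatorname{Gr}(A)$, exploiting the fact that any $A[u]$-submodule $\mathcal{E}$ of $(u-\pi)^{-a} A[u]^d$ containing $(u-\pi)^a A[u]^d$ is uniquely determined by its image in the quotient $(u-\pi)^{-a} A[u]^d / (u-\pi)^a A[u]^d$, and likewise for $A[[u]]$.

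First I would establish that for any $p$-adically complete $\mathcal{O}$-algebra $A$ the element $\pi$ is topologically nilpotent: since $E(u)$ is Eisenstein one has $\pi^e = p \cdot v$ for some $v \in \mathcal{O}_K^\times$, so $\pi^{eN} \in p^N A$ tends to zero and hence $\pi^n \to 0$ in $A$. Consequently the $u$-adic and $(u-\pi)$-adic topologies on $A[u]$ coincide, and $A[[u]]$ may be identified with the $(u-\pi)$-adic completion of $A[u]$. In particular, for each $a \geq 0$, the natural map
\begin{equation*}
(u-\pi)^{-a} A[u]^d / (u-\pi)^a A[u]^d \xrightarrow{\sim} (u-\pi)^{-a} A[[u]]^d / (u-\pi)^a A[[u]]^d
\end{equation*}
is an isomorphism, since both sides are annihilated by $(u-\pi)^{2a}$ and so are unchanged by further $(u-\pi)$-adic completion.

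Next I would use this to set up mutually inverse bijections between $A[u]$-submodules of $(u-\pi)^{-a} A[u]^d$ containing $(u-\pi)^a A[u]^d$ and $A[[u]]$-submodules of $(u-\pi)^{-a} A[[u]]^d$ containing $(u-\pi)^a A[[u]]^d$, by taking preimages of the image in the common quotient. These bijections preserve the sandwich conditions by construction, and they induce canonical isomorphisms
\begin{equation*}
(u-\pi)^{-a} A[u]^d / \mathcal{E} \;\cong\; (u-\pi)^{-a} A[[u]]^d / \mathcal{E}'
\end{equation*}
as $A$-modules. By the remark preceding the lemma (together with its evident $A[[u]]$-analogue, proved the same way), $A[u]$-projectivity of rank $d$ of $\mathcal{E}$ is equivalent to $A$-projectivity of the top quotient, which is in turn equivalent to $A[[u]]$-projectivity of rank $d$ of $\mathcal{E}'$. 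Therefore the bijection restricts to the desired identification of $\operatorname{Gr}(A)$ with the set of $A[[u]]$-lattices.

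The only real content is the identification of topologies in the second paragraph, which is the main obstacle; once the completions are identified, everything else is bookkeeping with the common quotient. The same argument applies to each $\operatorname{Gr}_{\kappa_0}$ after replacing $(u-\pi)$ with $\kappa_0(E(u))$: the constant term $\kappa_0(E(0))$ lies in $p\mathcal{O}^\times$ and hence is topologically nilpotent in $A$, which is all that the topologies-coincide argument requires.
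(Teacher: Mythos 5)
Your argument is a valid direct proof, different in flavor from the paper's, which simply cites the Beauville--Laszlo gluing lemma. Your approach exploits the bounded "sandwich" condition to reduce everything to the finite common quotient $\bigl((u-\pi)^{-a}A[u]^d\bigr)/\bigl((u-\pi)^aA[u]^d\bigr)$, which is an elementary and self-contained alternative; what Beauville--Laszlo buys is the gluing statement in much greater generality, but for bounded lattices your finite-quotient argument does the job, and the projectivity transfer via the $A$-projectivity of the top quotient is exactly the right mechanism.

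One precision issue: the phrase "the $u$-adic and $(u-\pi)$-adic topologies on $A[u]$ coincide" is not literally true (since $\pi$ is only topologically nilpotent, not nilpotent, $(u-\pi)^m$ never lies in $(u)$, and vice versa); what you actually need, and what is true, is that the $(u-\pi)$-adic \emph{completion} of $A[u]$ is $A[[u]]$ — equivalently, that $A[u]/(u-\pi)^n \to A[[u]]/(u-\pi)^n$ is an isomorphism for each $n$, which follows from $A[[u]] \cong A[[u-\pi]]$ by re-expanding coefficients $p$-adically. That part of your argument is fine in substance.

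The more serious gap is in the final sentence about $\operatorname{Gr}_{\kappa_0}$. You claim that topological nilpotence of the constant term $\kappa_0(E(0))$ "is all that the topologies-coincide argument requires." This is false. Take $A=\mathbb{Z}_p$ and $f=u^2+u=u(u+1)$: the constant term is $0$, but since $u$ and $u+1$ are comaximal in $A[u]$, the $f$-adic completion of $A[u]$ is $\mathbb{Z}_p[[u]]\times\mathbb{Z}_p[[u+1]]$, not $\mathbb{Z}_p[[u]]$, and $A[u]/f^n\to A[[u]]/f^n$ is not an isomorphism. What is actually needed is that \emph{every} non-leading coefficient of $\kappa_0(E(u))$ lies in $p\mathcal{O}$, so that $\kappa_0(E(u))\equiv u^e\pmod{p}$ is a distinguished (Weierstrass) polynomial for the $(p,u)$-adically complete ring $A[[u]]$; Weierstrass division then gives $A[[u]]/\kappa_0(E(u))^n\cong A^{en}\cong A[u]/\kappa_0(E(u))^n$. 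This stronger hypothesis is of course satisfied here because $E(u)$ is Eisenstein, so the lemma is still correct, but the criterion you stated is not the correct one and your counter\-example-prone formulation should be replaced by the full Eisenstein condition.
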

\begin{proof}
	This follows from the Beauville--Laszlo gluing lemma \cite{BL}.
\end{proof}

\begin{lemma}\label{generic} For each $\kappa_0:k\rightarrow \mathbb{F}$ there is an isomorphism 
	$$
	\operatorname{Gr}_{\kappa_0} \otimes_{\mathcal{O}} E \rightarrow \prod_{\kappa|_k = \kappa_0} \left( \operatorname{Gr} \otimes_{\mathcal{O}_K,\kappa} E \right)
	$$
	with inverse given by $(\mathcal{E}_{\kappa}) \mapsto \bigcap_{\kappa|_k = \kappa_0} \mathcal{E}_{\kappa}$.
\end{lemma}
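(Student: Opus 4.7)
The plan is to identify both sides with the same Beauville--Laszlo data attached to the polynomial $\kappa_0(E(u)) \in A[u]$, for $A$ an $E$-algebra. First I would observe that over $E$ the polynomial $\kappa_0(E(u))$ factors as $\prod_{\kappa|_k=\kappa_0}(u-\kappa(\pi))$, whose linear factors are pairwise coprime in $A[u]$ since their roots are the distinct $W(k)_{\kappa_0}$-conjugates of $\pi$ (by separability of the Eisenstein polynomial $E(u)$). The Chinese remainder theorem then gives
$$
\varprojlim_n A[u]/\kappa_0(E(u))^n A[u] \;\cong\; \prod_{\kappa|_k=\kappa_0} A[[u-\kappa(\pi)]].
$$

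Next I would invoke the Beauville--Laszlo lemma twice, in the same spirit as Lemma~\ref{beueville}. Applied to the polynomial $\kappa_0(E(u))$, it identifies the $A$-points of $\operatorname{Gr}_{\kappa_0}\otimes_{\mathcal{O}}E$ with rank $d$ projective modules over $\prod_\kappa A[[u-\kappa(\pi)]]$ sandwiched between $\kappa_0(E(u))^{\pm a}$ times the standard lattice. Such a module breaks up via the product into a tuple of rank $d$ projective $A[[u-\kappa(\pi)]]$-modules, each sandwiched between $(u-\kappa(\pi))^{\pm a}$. Applied now to the single polynomial $u-\kappa(\pi)$, Beauville--Laszlo identifies each such local lattice with an $A$-point of $\operatorname{Gr}\otimes_{\mathcal{O}_K,\kappa}E$, yielding the displayed bijection.

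To check the inverse is given by intersection, given $(\mathcal{E}_\kappa)$ let $\mathcal{E}$ be the lattice produced above and set $\mathcal{E}':=\bigcap_\kappa \mathcal{E}_\kappa \subset A[u,1/\kappa_0(E(u))]^d$. Coprimality combined with a Bezout identity yields $\bigcap_\kappa (u-\kappa(\pi))^{-a}A[u]^d = A[u]^d$, so $\mathcal{E}'$ is still bounded between $\kappa_0(E(u))^{\pm a}$. Localising at $(u-\kappa(\pi))$ trivialises every $\mathcal{E}_{\kappa'}$ with $\kappa' \neq \kappa$ (the other factors $u-\kappa'(\pi)$ become units), so the $(u-\kappa(\pi))$-adic completion of $\mathcal{E}'$ agrees with that of $\mathcal{E}_\kappa$, and hence with that of $\mathcal{E}$ by construction. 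The uniqueness clause of Beauville--Laszlo then forces $\mathcal{E}=\mathcal{E}'$. The main obstacle will be pinning down this last compatibility---that intersection in $A[u,1/\kappa_0(E(u))]^d$ commutes with the product decomposition of the $\kappa_0(E(u))$-adic completion---but once coprimality is used it reduces to a Bezout-style computation.
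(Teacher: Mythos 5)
Your route is genuinely different from the paper's: you work with the $\kappa_0(E(u))$-adic completion of $A[u]$, apply Beauville--Laszlo plus the Chinese Remainder Theorem, whereas the paper works Zariski-locally on $\mathbb{A}^1_A$, using the open cover $\{U_\kappa\}$ of $\mathbb{A}^1_A$ where $U_\kappa$ inverts $\prod_{\kappa'\neq\kappa}(u-\kappa'(\pi))$. The overlaps $U_\kappa\cap U_{\kappa'}$ all equal $U=\operatorname{Spec}A[u][1/\kappa_0(E(u))]$, so ordinary Zariski descent for quasi-coherent sheaves delivers both the bijection and the intersection description in one step, and no completion is needed. Your first half --- the identification of $\operatorname{Gr}_{\kappa_0}(A)$ with $\prod_\kappa\operatorname{Gr}(A)$ via CRT applied to the two-sided Beauville--Laszlo data --- is correct and is a perfectly good alternative; it trades the elementary Zariski gluing for formal gluing plus the CRT splitting of $\varprojlim A[u]/\kappa_0(E(u))^n$.

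The gap is in the inverse-is-intersection step, and it is not where you flag it. Completion at $(u-\kappa(\pi))$ is flat over $A[u]$, so it commutes with finite intersections of submodules --- that compatibility is immediate, not the obstacle. The actual problem is that, computing inside the ambient $A[[u-\kappa(\pi)]][1/(u-\kappa(\pi))]^d$, one finds $\widehat{\mathcal{E}'} = \widehat{\mathcal{E}_\kappa}\cap A[[u-\kappa(\pi)]]^d$, not $\widehat{\mathcal{E}_\kappa}$, because for $\kappa'\neq\kappa$ the completion of $\mathcal{E}_{\kappa'}\cap A[u]^d$ is the standard lattice $A[[u-\kappa(\pi)]]^d$. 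These agree only when $\mathcal{E}_\kappa\subset A[u]^d$. (Your own Bezout computation $\bigcap_\kappa(u-\kappa(\pi))^{-a}A[u]^d = A[u]^d$ already shows $\mathcal{E}'\subset A[u]^d$, so the intersection cannot possibly recover a lattice containing $A[u]^d$ strictly: e.g. for $d=1$, $e=2$, $\mathcal{E}=\kappa_0(E(u))^{-1}A[u]$, one gets $\mathcal{E}_\kappa=(u-\kappa(\pi))^{-1}A[u]$ and $\bigcap_\kappa\mathcal{E}_\kappa=A[u]\neq\mathcal{E}$.) The paper has the same unstated normalisation --- in the only place the lemma is applied (Lemma~\ref{points}) it first translates so that all $\mu_{\kappa,d}\geq 0$ and hence $\mathcal{E}_\kappa\subset A[u]^d$ --- so you have faithfully reproduced the statement, but a careful write-up of either proof should either impose that normalisation or replace $\bigcap_\kappa\mathcal{E}_\kappa$ with $\bigcap_\kappa\mathcal{E}_\kappa[1/\prod_{\kappa'\neq\kappa}(u-\kappa'(\pi))]$. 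Finally, invoking ``the uniqueness clause of Beauville--Laszlo'' for $\mathcal{E}'$ presupposes $\mathcal{E}'$ is finite projective, which has not been shown; the cleaner way to finish is to note that for $f$-power-torsion quotients $A[u]^d/\mathcal{E}$ and $A[u]^d/\mathcal{E}'$ the completion map is an isomorphism, so equality of completions of $\mathcal{E}$ and $\mathcal{E}'$ inside $\widehat{A[u]}^d$ already forces $\mathcal{E}=\mathcal{E}'$.
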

\begin{proof}
Let $U \subset \mathbb{A}^1_A$ denote the open obtained by inverting $\kappa_0(E(u))$ and write $U_\kappa \subset \mathbb{A}^1_A$ for the open obtained by inverting $(u-\kappa'(\pi))$ for each $\kappa' \neq \kappa$ with $\kappa'|_k = \kappa_0$. Then $U = \bigcap U_\kappa$ and if $A$ is an $E$-algebra then the $U_\kappa$ form an open cover of $\mathbb{A}^1_A$.

Note that an $A$-valued point of $\operatorname{Gr}_{\kappa_0}$ is the same thing as a rank $d$ vector bundle on $\mathbb{A}^1_A$ which is trivial over $U$ while an $A$-valued point of $\operatorname{Gr} \otimes_{\mathcal{O}_K,\kappa} \mathcal{O}$ is likewise vector bundle trivial over $\bigcup_{\kappa' \neq \kappa} U_\kappa$. 

The map in the lemma can therefore be expressed as $\mathcal{E} \mapsto (\mathcal{E}_\kappa)$ where $\mathcal{E}_{\kappa}$ is the vector bundle obtained by glueing $\mathcal{E}|_{U_\kappa}$ with the trivial bundle on $\bigcup_{\kappa'\neq \kappa} U_{\kappa'}$. The inverse of this map sends $(\mathcal{E}_{\kappa})$ onto the vector bundle obtained by glueing the $\mathcal{E}_{\kappa}|_{U_\kappa}$. Concretely, this glueing corresponds to taking the intersection of each of the $\mathcal{E}_{\kappa}$'s which gives the lemma.
\end{proof}

\begin{sub}
	We define one last ind-scheme $\operatorname{Gr}_{\mathcal{O}}$ whose $A$-points now classify rank $d$ projective $(A\otimes_{\mathbb{Z}_p} W(k))[u]$-modules satisfying
	$$
	E(u)^a(A\otimes_{\mathbb{Z}_p} W(k))[u]^d \subset \mathcal{E} \subset E(u)^{-a}(A\otimes_{\mathbb{Z}_p} W(k))[u]^d
	$$
	for some $a\geq 0$. From \ref{embeddings} we see that
	$$
	\operatorname{Gr}_{\mathcal{O}} \cong \prod_{\kappa_0} \operatorname{Gr}_{\kappa_0}
	$$
	Lemma~\ref{generic} implies that the generic fibre of $\operatorname{Gr}_{\mathcal{O}}$ identifies with $\prod_{\kappa} \left( \operatorname{Gr}\otimes_{\mathcal{O}_K,\kappa} E \right)$ with the product running over all embeddings $\kappa:K \rightarrow E$. Note also that the analogue of Lemma~\ref{beueville} applies to $\operatorname{Gr}_{\mathcal{O}}$ and identifies its points valued in $p$-adically complete $\mathcal{O}$-algebras $A$ with rank $d$ projective $\mathfrak{S}_A$-modules satisfying 
	$$
	E(u)^{a}\mathfrak{S}_A^d \subset \mathcal{E} \subset E(u)^{-a}\mathfrak{S}_A^d
	$$
\end{sub}
\begin{definition}
	Let $\mu = (\mu_\kappa)$ be a Hodge type, i.e. a collection of dominant $\mu_\kappa \in X(T)$ indexed by embeddings $\kappa:K \rightarrow E$. Then we define $M_\mu$ as the closure in $\operatorname{Gr}_{\mathcal{O}}$ of
	$$
	\prod_{\kappa} (G/P_{\mu_\kappa} \otimes_{\mathcal{O}_K,\kappa} E) \hookrightarrow \prod_{\kappa} \left( \operatorname{Gr}\otimes_{\mathcal{O}_K,\kappa} E \right) \cong \operatorname{Gr}_{\mathcal{O}} \otimes_{\mathcal{O}} E
	$$
\end{definition}

\begin{lemma}\label{points}
Let $\mu$ be a Hodge type and suppose $n_\kappa \geq 0$ so that $\mu_{\kappa,d} \geq -n_\kappa$ for every $\kappa$. 
\begin{enumerate}
	\item Let $A$ be an $E$-algebra. Then $\mathcal{E} \in \operatorname{Gr}_{\mathcal{O}}(A)$ is contained in $M_\mu$ if and only if there are filtrations $\operatorname{Fil}_{\kappa}^\bullet$ on $A^d$ of type $\kappa$ so that
	$$
	\left( \prod_\kappa E_\kappa(u)^{n_\kappa} \right) \mathcal{E} = \bigcap_\kappa \left( \sum_{i\geq \mu_{\kappa,d}+n_{\kappa}} E_{\kappa}(u)^{i+n_\kappa} (A\otimes_{\mathbb{Z}_p} W(k))[u] \operatorname{Fil}^{-i}_\kappa \right)
	$$
	(recall the elements $E_\kappa(u)$ from \ref{embeddings}).
	\item Let $A$ be a $p$-adically complete Noetherian flat $\mathcal{O}$-algebra and suppose there are $A$-submodules 
	$$
	\ldots \subset \operatorname{Fil}^{i+1}_{\kappa }\subset \operatorname{Fil}^i_{\kappa} \subset \operatorname{Fil}^{i-1}_{\kappa} \subset \ldots \subset A^d
	$$
	for each $\kappa$ such that $\operatorname{Fil}^i_\kappa/\operatorname{Fil}^{i+1}_\kappa$ is $p$-torsionfree and becomes $A[\frac{1}{p}]$-projective of constant rank after inverting $p$. If $\mathcal{E} \in \operatorname{Gr}(A)$ can be expressed as 
	$$
	\left( \prod_\kappa E_\kappa(u)^{n_\kappa} \right)\mathcal{E} = \bigcap_\kappa \left( \sum_{i\geq \mu_{\kappa,d}+n_\kappa} E_{\kappa}(u)^{i+n_\kappa} \mathfrak{S}_A \operatorname{Fil}^{-i}_\kappa \right)
	$$
	 then $\mathcal{E} \in M_{\mu}(A)$ for $\mu_\kappa$ the type of $\operatorname{Fil}_\kappa[\frac{1}{p}]^\bullet$.
	\item If $A$ is the ring of integers in a finite extension of $E$ then every $A$-valued point of $M_\mu$ is as in (2).
	\end{enumerate}	
\end{lemma}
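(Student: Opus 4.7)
The plan is to attack the three parts in order, with (1) providing the key generic-fibre description, (2) using flatness to promote this to an integral statement, and (3) extracting an integral filtration from an integral point via the DVR structure of $A$.

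For part (1), I would use Lemma~\ref{generic} to identify $\operatorname{Gr}_{\mathcal{O}} \otimes_{\mathcal{O}} E \cong \prod_\kappa (\operatorname{Gr} \otimes_{\mathcal{O}_K,\kappa} E)$, with the inverse sending $(\mathcal{E}_\kappa)$ to $\bigcap_\kappa \mathcal{E}_\kappa$. Since $M_\mu$ is defined as the closure of $\prod_\kappa (G/P_{\mu_\kappa} \otimes_{\mathcal{O}_K,\kappa} E)$, this product is precisely $M_\mu \otimes_{\mathcal{O}} E$. For $A$ an $E$-algebra, $\mathcal{E} \in M_\mu(A)$ is equivalent to each projection $\mathcal{E}_\kappa$ lying in $G/P_{\mu_\kappa}$, which by \ref{flags} is equivalent to the existence of a filtration $\operatorname{Fil}^\bullet_\kappa$ of type $\mu_\kappa$ with $\mathcal{E}_\kappa = \sum_{i \geq \mu_{\kappa,d}} (u-\kappa(\pi))^i A[u] \operatorname{Fil}^{-i}_\kappa$. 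Substituting this into $\bigcap_\kappa \mathcal{E}_\kappa$ and using that $E_\kappa(u)$ projects to $(u-\kappa(\pi))$ in the $\kappa|_k$-factor of $\prod_{\kappa_0} A[u]$ and to $1$ in the other factors (so that the inner term for $\kappa' \neq \kappa$ with $\kappa'|_k = \kappa|_k$ contributes only trivially after saturating), the displayed identity follows once one multiplies through by $\prod_\kappa E_\kappa(u)^{n_\kappa}$ to clear potential negative powers.

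For part (2), I would first verify that the right-hand side of the formula defines an honest $A$-point of $\operatorname{Gr}_{\mathcal{O}}$: the boundedness is immediate from the shape of the sum, and for projectivity of the quotient $E(u)^{-a} \mathfrak{S}_A^d / \mathcal{E}$ as an $A$-module one filters through the graded pieces $\operatorname{Fil}^i_\kappa / \operatorname{Fil}^{i+1}_\kappa$, each of which is $A$-projective by the hypothesis (being $p$-torsionfree and becoming projective after inverting $p$ over a Noetherian flat $\mathcal{O}$-algebra). Having produced $\mathcal{E} \in \operatorname{Gr}_{\mathcal{O}}(A)$, apply part (1) to $A[\tfrac{1}{p}]$ to see $\mathcal{E} \otimes_A A[\tfrac{1}{p}] \in M_\mu(A[\tfrac{1}{p}])$. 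Since $M_\mu$ is $\mathcal{O}$-flat (as the scheme-theoretic closure of its generic fibre) and $A \hookrightarrow A[\tfrac{1}{p}]$ by flatness of $A$ over $\mathcal{O}$, any $A$-point of $\operatorname{Gr}_{\mathcal{O}}$ factoring through $M_\mu$ after inverting $p$ must factor through $M_\mu$ itself.

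For part (3), the ring $A$ is a DVR, so every finitely generated torsion-free $A$-module is free. Given $\mathcal{E} \in M_\mu(A)$, invert $p$ and apply (1) to obtain filtrations $\operatorname{Fil}^\bullet_{\kappa,\mathrm{gen}}$ of type $\mu_\kappa$ on $A[\tfrac{1}{p}]^d$. Set $\operatorname{Fil}^i_\kappa := \operatorname{Fil}^i_{\kappa,\mathrm{gen}} \cap A^d$; a routine check (if $x \in \operatorname{Fil}^i_\kappa$ and $px \in \operatorname{Fil}^{i+1}_\kappa$, then $x = p^{-1}(px) \in \operatorname{Fil}^{i+1}_{\kappa,\mathrm{gen}} \cap A^d$) shows the graded pieces are $p$-torsionfree and recover the generic graded pieces after inverting $p$. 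Plug these integral filtrations into the formula of (2) to obtain a second $A$-point $\mathcal{E}'$ of $M_\mu$. By construction $\mathcal{E}$ and $\mathcal{E}'$ agree after inverting $p$. To promote this to integral equality, I would use that for any $A$-point $\mathcal{F}$ of $\operatorname{Gr}_{\mathcal{O}}$, the ambient quotient $\mathfrak{S}_A^d[\tfrac{1}{E}]/\mathcal{F}$ is $p$-torsionfree (being a direct limit of the $A$-projective modules $E(u)^{-a} \mathfrak{S}_A^d / \mathcal{F}$), so $\mathcal{F}$ equals its $p$-saturation inside $\mathfrak{S}_A^d[\tfrac{1}{E}]$; applying this to both $\mathcal{E}$ and $\mathcal{E}'$ forces $\mathcal{E} = \mathcal{E}'$.

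The main obstacle I anticipate is the bookkeeping in (1): matching the closed immersion $G/P_{\mu_\kappa} \hookrightarrow \operatorname{Gr}$ from \ref{flags} with the intersection inverse from Lemma~\ref{generic}, and tracking which factor of $\prod_{\kappa_0} A[u]$ each $E_\kappa(u)^{n_\kappa}$ lives in, is delicate but does not require new ideas. The integrality step in (3) is conceptually clean and rests entirely on the $p$-torsion-freeness of $\mathfrak{S}_A^d[\tfrac{1}{E}]/\mathcal{F}$, which is the only substantive input beyond (1) and (2).
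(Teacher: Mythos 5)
Your overall structure mirrors the paper's: (1) via Lemma~\ref{generic} and \ref{flags}, (2) by passing to $A[\tfrac{1}{p}]$ and invoking (1), (3) by intersecting the generic filtration with $A^d$ and comparing generic fibres. Part (1) is fine. In (3) your appeal to $p$-saturation of submodules of $\mathfrak{S}_A^d[\tfrac{1}{E}]$ is a clean alternative to the paper's use of the valuative criterion of properness, and I think this is a genuine (if small) improvement in transparency. However, there are two concrete problems.

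First, in (2) the claim that each graded piece $\operatorname{Fil}^i_\kappa/\operatorname{Fil}^{i+1}_\kappa$ is $A$-projective ``being $p$-torsionfree and becoming projective after inverting $p$ over a Noetherian flat $\mathcal{O}$-algebra'' is false: take $A = \mathcal{O}[x]$ and the ideal $(p,x)$, which is $p$-torsionfree with free localization $A[\tfrac{1}{p}]$, yet is not projective. For (2) itself this is harmless because $\mathcal{E} \in \operatorname{Gr}(A)$ is already a hypothesis, so the attempted verification is redundant. But the false statement becomes load-bearing in your (3), where you ``plug the integral filtrations into the formula of (2)'' and need the resulting $\mathcal{E}'$ to be $p$-saturated (for the final comparison). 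Here the paper does something slightly different that actually works: the graded pieces being $p$-torsionfree is equivalent to $\mathfrak{S}_A^d/\bigl(\sum_i E_\kappa(u)^{i}\mathfrak{S}_A\operatorname{Fil}^{-i}_\kappa\bigr)$ being $p$-torsionfree, whence the quotient by the intersection is $p$-torsionfree since it injects into the product of these; over a DVR this simultaneously gives $p$-saturation and $\mathfrak{S}_A$-projectivity of $\mathcal{E}'$. You should replace the appeal to (2) by this direct argument, which also makes it explicit where the DVR hypothesis enters.

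Second, in (2) you apply part (1) to $A[\tfrac{1}{p}]$, but part (1) is phrased over $(A[\tfrac{1}{p}]\otimes_{\mathbb{Z}_p}W(k))[u]$ while the hypothesis of (2) is over $\mathfrak{S}_A$. The transition is not automatic: one needs to know the displayed intersection over $\mathfrak{S}_A$ is the base change of the corresponding intersection over $(A\otimes_{\mathbb{Z}_p}W(k))[u]$, and for that the paper uses flatness of $(A\otimes_{\mathbb{Z}_p}W(k))[u]\to\mathfrak{S}_A$ (which is why $A$ is assumed Noetherian in the statement). This step is genuinely missing from your write-up, and the Noetherian hypothesis is otherwise unexplained by your argument.
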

\begin{proof}
	Note that multiplication by $\left( \prod_\kappa E_\kappa(u)^{n_\kappa} \right)$ identifies $M_{\mu}$ with $M_{\mu'}$ for $\mu'_\kappa = \mu_\kappa + (n_\kappa)$. Thus we can assume $n_\kappa=0$ throughout.
	
	For (1) we first decompose $\mathcal{E} = \prod_{\kappa_0} \mathcal{E}_{\kappa_0} \in \prod_{\kappa_0} \operatorname{Gr}_{\kappa_0}$ according to the action of $W(k)$. Then Lemma~\ref{generic} and the description of $G/P_{\mu_\kappa} \hookrightarrow \operatorname{Gr}$ from \ref{flags} implies $\mathcal{E} \in M_\mu$ if and only if, for each $\kappa_0$,
	$$
	\mathcal{E}_{\kappa_0} = \bigcap_{\kappa_{\kappa|_k =\kappa_0}} \left( \sum_{i \geq \lambda_d} (u-\kappa(\pi))^iA[u] \operatorname{Fil}^{-i}_\kappa \right)
	$$
	for filtrations $\operatorname{Fil}^\bullet_\kappa$ on $A^d$ of type $\mu_\kappa$. Since $\lambda_d \geq 0$ we have $\mathcal{E}_{\kappa_0} \subset A[u]^d$ for each $\kappa_0$ and so 
	$$
	\mathcal{E} = \bigcap_{\kappa_0} \left( A[u]^d \times \ldots \times A[u]^d \times \underbrace{\mathcal{E}_{\kappa_0}}_{\kappa_0\text{-th position}} \times A[u]^d \times \ldots \times A[u]^d \right) 
	$$
 	Thus, to prove (1) we just need to identify the $\kappa_0$-th term inside this intersection with $ \bigcap_{\kappa_{\kappa|_k =\kappa_0}} \left( \sum_{i\geq \mu_{\kappa,d}} E_{\kappa}(u)^{i} (A \otimes_{\mathbb{Z}_p} W(k))[u] \operatorname{Fil}^{-i}_\kappa\right)$. This is clear since $E_\kappa(u)$ corresponds to $(1,\ldots,1,(u-\kappa(\pi)),1,\ldots,1)$ under the identification $(\mathcal{O}\otimes_{\mathbb{Z}_p} W(k))[u] \cong \prod_{\kappa_0} A[u]$.

For part (2) we use that $A$ is Noetherian to ensure $(A\otimes_{\mathbb{Z}_p} W(k))[u] \rightarrow \mathfrak{S}_A$ is flat. Thus $\otimes_{(A\otimes_{\mathbb{Z}_p} W(k))[u]} \mathfrak{S}_A$ commutes with finite intersections and so 
$$
\mathcal{E} = \left( \bigcap_\kappa \left( \sum_{i\geq \mu_{\kappa,d}} E_{\kappa}(u)^{i+n} (A\otimes_{\mathbb{Z}_p} W(k))[u]\operatorname{Fil}^{-i}_\kappa \right) \right)\otimes_{(A\otimes_{\mathbb{Z}_p} W(k))[u]} \mathfrak{S}_A
$$
As a consequence of (1) it follows that $\mathcal{E}[\frac{1}{p}] \in M_{\mu}$ and so $\mathcal{E} \in M_\mu$ also.

Part (3) relies on the fact that, for $A$ as in the proposition, being $A$-projective is equivalent to being $p$-torsion free and finitely generated. Applying (1) to $\mathcal{E}[\frac{1}{p}]$ produces filtrations on $A[\frac{1}{p}]^d$ for each $\kappa$. If $\operatorname{Fil}^i_\kappa$ denotes the intersection of this filtration with $A^d$ then the graded pieces are $p$-torsionfree. This is equivalent to asking that each $\mathfrak{S}_A^d / \left( \sum_{i\geq \mu_{\kappa,d}} E_{ij}(u)^{i+n} \mathfrak{S}_A \operatorname{Fil}^{-i}_\kappa \right)$
is $p$-torsionfree. Therefore $\mathfrak{S}_A^d / \bigcap_\kappa \left( \sum_{i\geq \mu_{\kappa,d}} E_{ij}(u)^{i+n} \mathfrak{S}_A \operatorname{Fil}^{-i}_\kappa \right)$ is also $p$-torstionfree, and so
$$
\mathcal{E}' = \bigcap_\kappa \left( \sum_{i\geq \mu_{\kappa,d}} E_{ij}(u)^{i+n} \mathfrak{S}_A \operatorname{Fil}^{-i}_\kappa \right)
$$
is $\mathfrak{S}_A$-projective and $\mathcal{E}' \in \operatorname{Gr}_{\mathcal{O}}$. Since $\mathcal{E}'[\frac{1}{p}] = \mathcal{E}[\frac{1}{p}]$ the valuative criterion for properness implies $\mathcal{E}' = \mathcal{E}$.
\end{proof}
\section{Cohomology}

\begin{sub}\label{linebundle}
	Recall $G = \operatorname{GL}_d$ viewed as an algebraic group over $\mathcal{O}_K$. Let $\lambda \in X(T)$ be dominant and set 
	$$
	\ldots \subset \operatorname{Fil}^{n+1} \subset \operatorname{Fil}^n \subset \operatorname{Fil}^{n-1} \subset \ldots \subset \mathcal{O}_{G/P_\lambda}^d
	$$
	equal to the universal filtration on $G/P_{\lambda}$ of type $\lambda$. Then 
	$$
	\mathcal{L}(\lambda) := \bigotimes_n \operatorname{det}(\operatorname{Fil}^{-n}/\operatorname{Fil}^{-n+1})^{ \otimes n}
	$$
	is a $G$-equivariant line bundle on $G/P_\lambda$ and $H^0(G/P_{\lambda},\mathcal{L}(\lambda)^{\otimes n})$ can be viewed as an algebraic representation of $G$ on a flat $\mathcal{O}_K$-module whose generic fibre identifies with $H^0(n\lambda)$, the algebraic representation over $K$ of highest weight $n\lambda$. See for example \cite[p.143-144]{FulY}.
\end{sub}
\begin{sub}
	On $\operatorname{Gr}$ there is an ample $G$-equivariant line bundle $\mathcal{L}_{\operatorname{det}}$ whose fibre over any $A$-valued point $\mathcal{E} \in \operatorname{Gr}(A)$ with $\mathcal{E} \subset (u-\pi)^{-a}\mathfrak{S}_A^d$ for $a \geq 0$ is given by
	$$
	\operatorname{det}(\mathcal{E}_0/\mathcal{E}) \otimes \operatorname{det}(\mathcal{E}_0/A[u]^d)^{-1}
	$$
	for any $\mathcal{E}_0 \in \operatorname{Gr}$ with $\mathcal{E},A[u]^d \subset \mathcal{E}_0$. Note this is $G$-equivariantly independent of $\mathcal{E}_0$. We also write $\mathcal{L}_{\operatorname{det}}$ for the $G$-equivariant ample line bundle constructed analogously on $\operatorname{Gr}_{\mathcal{O}}$.
\end{sub}
\begin{lemma}\label{equal}
	The restriction of $\mathcal{L}_{\operatorname{det}}$ to $G/P_{\lambda}$ inside $\operatorname{Gr}$ identifies $G$-equivariantly with $\mathcal{L}(\lambda)$.
\end{lemma}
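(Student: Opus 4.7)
The plan is to use the standard fact that on a homogeneous space $G/P$, a $G$-equivariant line bundle is determined up to isomorphism by the $P$-representation on its fiber over the base point $[P_\lambda]$, and that any one-dimensional representation of the (connected) parabolic $P_\lambda$ is detected by its restriction to the diagonal torus $T$ (the unipotent radical acts trivially and the Levi contributes through its abelianisation). Both $\mathcal{L}(\lambda)$ and the restriction of $\mathcal{L}_{\operatorname{det}}$ to $G/P_\lambda$ are $G$-equivariant line bundles by construction, so it suffices to identify each one with the $T$-character $\lambda \in X(T)$ at the base point. The proof therefore reduces to a parallel weight computation on each side.

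For $\mathcal{L}(\lambda)$, I would use the standard filtration $\operatorname{Fil}^{n}_{\operatorname{std}} = \operatorname{span}\{e_i : \lambda_i \leq -n\}$ compatible with the chosen embedding $G/P_\lambda \hookrightarrow \operatorname{Gr}$ from \ref{flags}. Writing $\epsilon_i$ for the $i$-th standard character of $T$, each graded piece $\operatorname{Fil}^{-n}/\operatorname{Fil}^{-n+1}$ is spanned by the $e_i$ with $\lambda_i = n$, so $\det$ of this graded piece carries $T$-weight $\sum_{i:\lambda_i = n}\epsilon_i$; assembling $\bigotimes_n \det(\operatorname{Fil}^{-n}/\operatorname{Fil}^{-n+1})^{\otimes n}$ and switching summation order yields total $T$-weight $\sum_i \lambda_i \epsilon_i = \lambda$. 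For $\mathcal{L}_{\operatorname{det}}|_{G/P_\lambda}$, the base point $[P_\lambda]$ corresponds to the lattice $\mathcal{E}_\lambda = \bigoplus_i (u-\pi)^{\lambda_i} A[u]\, e_i$; I would then choose the auxiliary lattice $\mathcal{E}_0 = (u-\pi)^{-N} A[u]^d$ for $N$ large enough that $\mathcal{E}_0$ contains both $\mathcal{E}_\lambda$ and $A[u]^d$. Both quotients $\mathcal{E}_0/\mathcal{E}_\lambda$ and $\mathcal{E}_0/A[u]^d$ split as direct sums along the standard basis; since $T$ acts trivially on the polynomial variable $u$ and by $\epsilon_i$ on $e_i$, the $i$-th summand of $\mathcal{E}_0/\mathcal{E}_\lambda$ contributes $T$-weight $(N+\lambda_i)\epsilon_i$ while that of $\mathcal{E}_0/A[u]^d$ contributes $N\epsilon_i$. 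The difference $\det(\mathcal{E}_0/\mathcal{E}_\lambda) \otimes \det(\mathcal{E}_0/A[u]^d)^{-1}$ thus has $T$-weight $\sum_i \lambda_i \epsilon_i = \lambda$, and the $G$-equivariant independence of $\mathcal{E}_0$ built into the construction ensures the resulting character is well defined.

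The computation has no conceptual obstruction; the only place one must be careful is the bookkeeping imposed by the sign and indexing conventions of \ref{flags} and \ref{linebundle} --- in particular, that a type-$\lambda$ filtration has $n$-th graded piece of rank equal to the multiplicity of $-n$ in $\lambda$, and that the embedding $G/P_\lambda \hookrightarrow \operatorname{Gr}$ sends $\operatorname{Fil}^\bullet \mapsto \sum_{i\geq \lambda_d}(u-\pi)^i A[u]\operatorname{Fil}^{-i}$. Once these conventions are pinned down, both calculations reduce to one-line manipulations of $T$-weights and agree on the nose, so the two $G$-equivariant line bundles must coincide. As a sanity check, the identification is consistent with the statement \eqref{globalssss} of the introduction, where $H^0(M_\mu\otimes_{\mathcal{O}} E,\mathcal{L}_{\operatorname{det}})$ is identified with $\bigotimes_\kappa H^0(\mu_\kappa)\otimes_{K,\kappa} E$, i.e.\ precisely what one obtains from \ref{linebundle} applied to the generic fiber $\prod_\kappa G/P_{\mu_\kappa}$.
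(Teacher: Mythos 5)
Your proof is correct, but it takes a genuinely different route from the paper. The paper's proof is a direct, universal computation: for an arbitrary $A$-valued filtration $\operatorname{Fil}^\bullet$ of type $\lambda$ it writes $\mathcal{E} = \sum_{i\geq\lambda_d}(u-\pi)^iA[u]\operatorname{Fil}^{-i}$, observes that $(u-\pi)^{\lambda_d}A[u]^d/\mathcal{E}$ splits as $\bigoplus_{i\geq\lambda_d}A^d/\operatorname{Fil}^{-i}$, and then assembles the determinant line $\operatorname{det}(\mathcal{E}_0/\mathcal{E})\otimes\operatorname{det}(\mathcal{E}_0/A[u]^d)^{-1}$ from these graded pieces, matching it term by term against $\mathcal{L}(\lambda)$. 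No appeal is made to the classification of equivariant line bundles; one just sees the natural isomorphism directly. You instead invoke the structural fact that $G$-equivariant line bundles on $G/P_\lambda$ are classified by characters of $P_\lambda$ (which factor through $T$), and then verify that both bundles give the $T$-character $\lambda$ at the base point $\mathcal{E}_\lambda = \bigoplus_i(u-\pi)^{\lambda_i}A[u]e_i$. Your weight computations on both sides are correct, and the indexing and sign conventions are all handled consistently. What your argument buys is that it reduces the bookkeeping to a single point and so is shorter to check; what the paper's argument buys is that it avoids invoking the classification of equivariant line bundles and works by a manifestly natural identification of sheaves rather than by recognizing an isomorphism class. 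One small remark worth flagging explicitly in your version: the classification of $G$-equivariant line bundles by $P$-characters is being used over the base $\mathcal{O}_K$, not over a field; for a split group like $\operatorname{GL}_d$ with $P_\lambda$ a standard parabolic this is unproblematic, but it is a slightly stronger input than is usually stated, and the paper's hands-on computation sidesteps the issue entirely.
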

\begin{proof}
	Suppose $\mathcal{E} \in \operatorname{Gr}_K$ corresponds to an $A$-valued point in the image of $G/P_{\lambda} \rightarrow \operatorname{Gr}_K$. Then $\mathcal{E} = \sum_{i \geq \lambda_d} (u-\pi)^{i}A[u]\operatorname{Fil}^{-i}$ for a filtration $\operatorname{Fil}^\bullet$ of type $\lambda$ on $A^d$ and so, as $A$-modules, $(u-\pi)^{\lambda_d}A[u]/\mathcal{E} = \bigoplus_{i\geq \lambda_d} A^d/\operatorname{Fil}^{-i}$.
	Thus,
	$$
	\operatorname{det}\left((u-\pi)^{\lambda_d}A[u]/\mathcal{E}\right) = \bigotimes_{i \geq \lambda_d} \operatorname{det}(\operatorname{Fil}^{-i}/\operatorname{Fil}^{-i+1})^{\otimes (i-\lambda_d)}
	$$
	and so the fibre of $\mathcal{L}_{\operatorname{det}}$ over $\mathcal{E}$ equals
	$$
	\begin{cases}
		\bigotimes_{i \geq \lambda_d} \operatorname{det}(\operatorname{Fil}^{-i}/\operatorname{Fil}^{-i+1})^{\otimes (i-\lambda_d)} \otimes \operatorname{det}((A[u]^d/(u-\pi)^{\lambda_d}A[u]^d)) & \text{if $\lambda_d \geq 0$}\\
		
		\bigotimes_{i \geq \lambda_d} \operatorname{det}(\operatorname{Fil}^{-i}/\operatorname{Fil}^{-i+1})^{\otimes (i-\lambda_d)} \otimes \operatorname{det}(((u-\pi)^{\lambda_d}A[u]^d/A[u]^d))^{-1} & \text{if $\lambda_d \geq 0$}
	\end{cases} 
	$$
	In either case, the second factor in these tensor products identifies with $\bigotimes_{i\geq \lambda_d} \operatorname{det}(\operatorname{Fil}^{-i}/\operatorname{Fil}^{-i+1})^{\lambda_d}$ which finishes the proof.
\end{proof}

\begin{corollary}\label{globalsec}
	For any $n >0$, there is an identification
	$$
	H^0(M_{\mu} \otimes_{\mathcal{O}} E,\mathcal{L}_{\operatorname{det}}^{\otimes n}) = \bigotimes_{\kappa} (H^0(n \mu_\kappa) \otimes_{K,\kappa} E)
	$$
	of $G$-representations.
\end{corollary}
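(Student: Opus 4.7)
The plan is to reduce the statement to a product of independent computations on flag varieties, one for each embedding $\kappa:K\to E$, and then invoke Künneth and the standard description of global sections of $\mathcal{L}(\lambda)$ already recalled in \ref{linebundle}.

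First, I would unpack the definition of $M_\mu$. By construction, $M_\mu\otimes_{\mathcal{O}} E$ is the scheme-theoretic image of $\prod_{\kappa}(G/P_{\mu_\kappa}\otimes_{\mathcal{O}_K,\kappa}E)$ in $\operatorname{Gr}_{\mathcal{O}}\otimes_{\mathcal{O}}E$; but each $G/P_{\mu_\kappa}$ is proper, hence so is the product, and the map is a closed immersion. So $M_\mu\otimes_{\mathcal{O}}E \cong \prod_\kappa (G/P_{\mu_\kappa}\otimes_{\mathcal{O}_K,\kappa} E)$ already on the nose, via the isomorphism of Lemma~\ref{generic} (applied to all $\kappa_0$ at once, using $\operatorname{Gr}_{\mathcal{O}}\cong\prod_{\kappa_0}\operatorname{Gr}_{\kappa_0}$).

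Second, I would show that under this product decomposition the line bundle $\mathcal{L}_{\operatorname{det}}$ pulls back to the external tensor product $\boxtimes_\kappa \mathcal{L}_{\operatorname{det}}$. For an $E$-algebra $A$ and an $A$-point $\mathcal{E}\in\operatorname{Gr}_{\mathcal{O}}(A)$ corresponding to $(\mathcal{E}_\kappa)_\kappa$ under Lemma~\ref{generic}, the decomposition $(A\otimes_{\mathbb{Z}_p}\mathcal{O}_K)[u]\otimes_{\mathcal{O}} E\cong \prod_\kappa A[u]$ from \ref{embeddings} gives a direct product decomposition of the finite locally free $A$-module $\mathcal{E}_0/\mathcal{E}$ as $\prod_\kappa (\mathcal{E}_{0,\kappa}/\mathcal{E}_\kappa)$ (for a suitable common $\mathcal{E}_0$), and likewise for $\mathcal{E}_0/A[u]^d$. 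Since determinants are multiplicative across such product decompositions, the fibre of $\mathcal{L}_{\operatorname{det}}$ at $\mathcal{E}$ is canonically (and $G$-equivariantly) the tensor product of the fibres of $\mathcal{L}_{\operatorname{det}}$ at the $\mathcal{E}_\kappa$. Combined with Lemma~\ref{equal} (applied after base change along $\kappa$), this identifies $\mathcal{L}_{\operatorname{det}}^{\otimes n}|_{M_\mu\otimes_{\mathcal{O}}E}$ with $\boxtimes_\kappa \mathcal{L}(\mu_\kappa)^{\otimes n}$.

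Third, I would apply the Künneth formula for coherent cohomology on products of proper schemes over a field (using that the higher cohomology of each ample $\mathcal{L}(\mu_\kappa)^{\otimes n}$ vanishes for $n>0$ on the flag variety $G/P_{\mu_\kappa}$, so the derived Künneth collapses). This gives
\[
H^0\bigl(M_\mu\otimes_{\mathcal{O}}E,\mathcal{L}_{\operatorname{det}}^{\otimes n}\bigr)\;\cong\;\bigotimes_{\kappa} H^0\bigl(G/P_{\mu_\kappa}\otimes_{\mathcal{O}_K,\kappa}E,\mathcal{L}(\mu_\kappa)^{\otimes n}\bigr),
\]
as $G$-representations, where the tensor product is over $E$. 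Finally, \ref{linebundle} identifies each factor with $H^0(n\mu_\kappa)\otimes_{K,\kappa}E$, giving the claim.

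The only genuinely non-formal step is the second one, the compatibility of $\mathcal{L}_{\operatorname{det}}$ with the product decomposition; everything else is either the definition of $M_\mu$, Lemmas~\ref{generic} and~\ref{equal}, Künneth, or the Borel--Weil-type identification already cited in \ref{linebundle}. I expect the bookkeeping around the auxiliary lattice $\mathcal{E}_0$ (ensuring it is chosen compatibly with the product decomposition, for instance by taking $\mathcal{E}_0=E(u)^{-a}\mathfrak{S}_A^d$) to be the only point requiring real care.
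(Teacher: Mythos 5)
Your proposal is correct and follows essentially the same route as the paper: identify $M_\mu\otimes_{\mathcal{O}}E$ with the product of flag varieties, check that $\mathcal{L}_{\operatorname{det}}$ decomposes as the external tensor product of the determinant bundles on the factors, apply K\"unneth, and invoke Lemma~\ref{equal} together with \ref{linebundle}. The paper asserts the line-bundle decomposition without detail, so your second step fills in the multiplicativity-of-determinants argument; also note that the $H^0$ part of K\"unneth holds over a field without any higher-cohomology vanishing, so that extra remark, while harmless, is not needed.
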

\begin{proof}
	Let $p_\kappa: M_{\mu} \otimes_{\mathcal{O}} E \rightarrow G/P_{\mu_\kappa} \otimes_{\mathcal{O}_K,\kappa} E$ be the $\kappa$-th projection. Then the restriction of $\mathcal{L}_{\operatorname{det}}$ on $\operatorname{Gr}_{\mathcal{O}}$ to $M_\mu \otimes_{\mathcal{O}} E$ coincides with $\bigotimes_{\kappa} p_\kappa^* (\mathcal{L}_{\operatorname{det}} \otimes_{\mathcal{O}_K,\kappa} E)$ where $\mathcal{L}_{\operatorname{det}}$ here denotes the restriction to $G/P_{\mu_\kappa}$ of the determinant line bundle on $\operatorname{Gr}$. The Kunneth formula \cite[0BED]{stacks-project} gives
	$$
	H^0(M_{\mu} \otimes_{\mathcal{O}} E,\mathcal{L}_{\operatorname{det}}^{\otimes n}) = \bigotimes_{\kappa} H^0(G/P_{\mu_\kappa},\mathcal{L}_{\operatorname{det}}^{\otimes n}) \otimes_{\mathcal{O}_K,\kappa} E
	$$
	as $G$-representations. Therefore, we just have to show $H^0(G/P_{\mu_\kappa},\mathcal{L}_{\operatorname{det}}^{\otimes n}) \otimes_{\mathcal{O}_K} K = H^0(n \mu_\kappa)$ as $G$-representations, and this follows from Lemma~\ref{equal}.
\end{proof}
\section{Multiplicity bounds}
\begin{sub}
	The formal character of an algebraic representation $V$ of $G$ on a finite dimensional vector space is defined as
	$$
	\operatorname{ch}(V) = \sum_{\lambda} V_\lambda e(\lambda) \in \mathbb{Z}[X(T)]
	$$
	where $V_{\lambda}$ is the $\lambda$-weight space of $V$ and $e(\lambda)$ denotes $\lambda$ viewed as an element of the group ring $\mathbb{Z}[X(T)]$. This induces an isomorphism between the Grothendieck group of such representations and $\mathbb{Z}[X(T)]^W$ where $W$ denotes the Weyl group of $G$ \cite[II.5.7]{Janbook}.
\end{sub}
\begin{sub}\label{Weyl}
	For $\lambda \in X(T)$ dominant recall the $G$-representation $H^0(\lambda)$ over $K$ from \ref{linebundle}. Weyl's character formula gives
	$$
	\operatorname{ch}(H^0(\lambda))  = \frac{A(\lambda+\rho)}{A(\rho)},\qquad \rho := (d-1,d-2,\ldots,1,0) \in X(T)
	$$
	where $A(\lambda) := \sum_{w\in W} \operatorname{det}(w) e(w\lambda)$ \cite[II.5.10]{Janbook}. If we write $\operatorname{dim}:\mathbb{Z}[X(T)] \rightarrow \mathbb{Z}$ for the map $\sum_{\lambda} a_\lambda e(\lambda) \mapsto \sum a_{\lambda}$ then one also has
	$$
	\operatorname{dim}\operatorname{ch}(H^0(\lambda)) = \operatorname{dim} H^0(\lambda) = \prod_{i>j} \frac{\lambda_j-\lambda_i +i-j}{i-j}
	$$
	Though here $H^0(\lambda)$ is defined over a field of characteristic zero, all the above goes through with $H^0(\lambda)$ replaced by the representation over a field of characteristic $p$ of highest weight $\lambda$. What differs in characteristic $p$ is that this highest weight representation may not be irreducible.
\end{sub}
\begin{lemma}\label{polynomialsss}
	Let $\mu_1,\ldots,\mu_e \in X(T)$ with $\mu_i -\rho$ dominant for each $i$ and suppose that
	$$
	\operatorname{ch}\left( \bigotimes_{i=1}^e H^0(\mu_i-\rho) \right) = \sum_{\lambda \in X(T)} m(\lambda,\mu) \operatorname{ch}H^0(\lambda)
	$$
	for $m(\lambda,\mu) \in \mathbb{Z}$. Then
	$$
	\operatorname{dim}\left( \bigotimes_{i=1}^e H^0(n\mu_i) \right) - \sum_{\lambda \in X(T)} m(\lambda,\mu) \operatorname{dim}H^0(n(\lambda+\rho)) \operatorname{ch}\left( H^0(n\rho)^{\otimes (e-1)} \right)
	$$
	is a polynomial in $n$ of degree $<\sum_{i} \operatorname{dim} G/P_{\mu_i}$.
\end{lemma}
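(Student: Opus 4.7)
The plan is to identify the leading coefficients in $n$ on both sides of the claimed identity using the Weyl dimension formula recalled in \ref{Weyl}, and to use the hypothesized character identity to force them to cancel. Since $\mu_i - \rho$ is dominant, each $\mu_i$ is strictly dominant (regular), so $P_{\mu_i}$ is a Borel and $N := \dim G/P_{\mu_i} = \binom{d}{2}$ is the same integer for every $i$; the target degree bound thus reads $< eN$.

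First I would expand $\dim H^0(n\mu_i)$ as the Weyl dimension polynomial evaluated at $n\mu_i$. Regularity of $\mu_i$ makes this a polynomial in $n$ of degree exactly $N$, and an inspection of $\prod_{a<b}\frac{n((\mu_i)_a - (\mu_i)_b) + (b-a)}{b-a}$ shows that its leading coefficient equals $\prod_{a<b}\frac{(\mu_i)_a - (\mu_i)_b}{b-a} = \dim H^0(\mu_i - \rho)$. Multiplying over $i$ then yields a polynomial of degree $eN$ whose leading coefficient is $\prod_i \dim H^0(\mu_i - \rho) = \dim \bigotimes_i H^0(\mu_i - \rho)$.

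For the right-hand side I would first reduce to dominant $\lambda$: if $\lambda + \rho$ is singular then $\operatorname{ch} H^0(\lambda) = 0$ and the contribution is zero, while otherwise the identity $A(w\mu) = \det(w) A(\mu)$ absorbs a Weyl sign into $m(\lambda, \mu)$ so that one may assume $\lambda$ dominant. The same polynomial analysis then shows $\dim H^0(n(\lambda+\rho))$ has degree $N$ with leading coefficient $\dim H^0(\lambda)$, and $\dim H^0(n\rho) = (n+1)^N$ has leading coefficient $1$. Hence the coefficient of $n^{eN}$ in $\sum_\lambda m(\lambda,\mu)\dim H^0(n(\lambda+\rho))\dim H^0(n\rho)^{e-1}$ equals $\sum_\lambda m(\lambda,\mu)\dim H^0(\lambda)$, which by applying the dimension map $\mathbb{Z}[X(T)]^W \to \mathbb{Z}$ to the hypothesized character identity is precisely $\dim \bigotimes_i H^0(\mu_i - \rho)$.

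These two leading coefficients agree, so the difference is a polynomial in $n$ of degree $< eN = \sum_i \dim G/P_{\mu_i}$, as required. No step poses a real obstacle; the only subtlety is the sign bookkeeping for non-dominant $\lambda$, which is standard and follows immediately from the Weyl character formula.
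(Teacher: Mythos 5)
Your proof is correct. It uses the same underlying fact from the Weyl dimension formula (that for strictly dominant $\nu$, $\dim H^0(n\nu)$ is a polynomial in $n$ of degree $N=\binom{d}{2}$ whose leading coefficient is $\dim H^0(\nu-\rho)$), but takes a somewhat different route than the paper. The paper manipulates the character identity abstractly: it multiplies by $A(\rho)^e$ to clear denominators, applies the ring endomorphism of $\mathbb{Z}[X(T)]$ induced by scaling weights by $n$ (observing that $A$ commutes with this), divides again, and thereby produces an exact dimension identity at the shifted weights $n\mu_i-\rho$, $n(\lambda+\rho)-\rho$, $n\rho-\rho$; it then compares to the target expression via the single observation that $\dim H^0(n\nu)-\dim H^0(n\nu-\rho)$ has degree $<N$. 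Your version avoids the scaling-endomorphism maneuver entirely: you extract the leading coefficient of each side directly, recognise it as $\dim H^0(\nu-\rho)$ via the closed-form of the dimension polynomial, and then use the hypothesis (applied through the dimension homomorphism on $\mathbb{Z}[X(T)]^W$) to see that the two top coefficients agree. This is shorter and more elementary; what it gives up is the intermediate exact identity, which the paper's argument produces as a byproduct, but that identity is not needed for the stated conclusion. One small point of care: since the lemma's sum is really over dominant $\lambda$ (as the remark following it clarifies), your sign-bookkeeping for nondominant $\lambda$ is not strictly needed, though it does no harm.
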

\begin{proof}
	Using Weyls character formula from  \ref{Weyl} and multiplying by $A(\rho)^e$ gives $\prod_i A(\mu_i) = \sum_\lambda m(\lambda,\mu) A(\lambda+\rho) A(\rho)^{e-1}$. Taking the image of this identity under the endomorphism of $\mathbb{Z}[X(T)]$ induced by multiplication by $n$ on $X(T)$ gives
	$$
	\prod_i A(n\mu_i) = \sum_{\lambda} m(\lambda,\mu) A(n(\lambda+\rho))A(n\rho)^{e-1}
	$$
	(because the formation of $A$ commutes with this endomorphism). Dividing by $A(\rho)^e$ then gives 
	$$
	\prod_i \operatorname{ch}(H^0(n\mu_i-\rho) )= \sum_{\lambda} m(\lambda,\mu) \operatorname{ch}(H^0(n(\lambda+\rho)-\rho)) \left( \operatorname{ch}(H^0(n\rho-\rho))\right) ^{e-1}
	$$
	The lemma therefore follows by taking the dimension and observing that $\operatorname{dim}H^0(n\lambda) - \operatorname{dim}H^0(n\lambda -\rho)$ is a polynomial in $n$ of degree $<\operatorname{dim}G/P_\lambda$ for any dominant $\lambda \in X(T)$ (use the last equation from \ref{Weyl}).
\end{proof}
\begin{remark}
	Since $K$ has characteristic zero each $H^0(\lambda)$ is irreducible. Moreover, every irreducible $G$-representation is isomorphic to one such $H^0(\lambda)$. The observation from \ref{Weyl} that $\operatorname{ch}$ induces an identification between $\mathbb{Z}[X(T)]$ and the Grothendieck group of $G$-representations shows that the integers $m(\lambda,\mu)$ in the previous lemma are $\geq 0$ and are uniquely determined.
\end{remark}
\begin{notation}\label{tilde}
	Recall the fixed elements $\widetilde{\kappa}_0:K \rightarrow E$ lifting the $\kappa_0$ from \ref{begin}. Then, for any tuple $\lambda = (\lambda_{\kappa_0})_{\kappa_0:k\rightarrow \mathbb{F}}$ of dominant $\lambda_{\kappa_0} \in X(T)$ write $\widetilde{\lambda} = (\widetilde{\lambda}_\kappa)$ for the Hodge type defined by 
	$$
	\widetilde{\lambda}_{\kappa} = \begin{cases}
		\lambda_{\kappa_0} + \rho & \text{if $\kappa = \widetilde{\kappa}_0$} \\ 
		\rho & \text{otherwise}
	\end{cases}
	$$
\end{notation}
\begin{proposition}\label{upperbound}
	Let $\mu$ be a Hodge type with $\mu_{\kappa}-\rho$ dominant for every $\kappa$ and suppose that
	$$
	[M_\mu \otimes_{\mathcal{O}} \mathbb{F}] = \sum n(\lambda,\mu) [M_{\widetilde{\lambda}} \otimes_{\mathcal{O}} \mathbb{F}]
	$$
	for integers 
	$$
	n(\lambda,\mu) \geq \prod_{\kappa_0} m(\lambda_{\kappa_0},(\mu_\kappa)_{\kappa|_k =\kappa_0})
	$$
	Then this inequality is an equality for each $\lambda$.
\end{proposition}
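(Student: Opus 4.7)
The plan is to compare two asymptotic expansions of $\dim H^0(M_\mu \otimes_{\mathcal{O}} \mathbb{F}, \mathcal{L}_{\operatorname{det}}^{\otimes n})$ for $n \gg 0$: one coming from the hypothesised cycle identity via Lemma \ref{hilbert}, and another from the representation-theoretic identities in Lemma \ref{polynomialsss}. Writing $m(\lambda,\mu) := \prod_{\kappa_0} m(\lambda_{\kappa_0},(\mu_\kappa)_{\kappa|_k=\kappa_0})$ for the lower bound from the hypothesis, matching the degree-$\dim M_\mu$ coefficients in the resulting combined identity will force $n(\lambda,\mu) = m(\lambda,\mu)$ for every $\lambda$.

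First, I apply Lemma \ref{hilbert} to $M_\mu \otimes \mathbb{F}$ together with the hypothesised cycle identity, using that the restriction of $\mathcal{L}_{\operatorname{det}}$ is ample, to obtain that
\[
\dim H^0(M_\mu \otimes \mathbb{F}, \mathcal{L}_{\operatorname{det}}^{\otimes n}) - \sum_\lambda n(\lambda,\mu)\, \dim H^0(M_{\widetilde{\lambda}} \otimes \mathbb{F}, \mathcal{L}_{\operatorname{det}}^{\otimes n})
\]
is the value at $n$ of a polynomial of degree $<\dim M_\mu$ for $n\gg 0$. Because $M_\mu$ and each $M_{\widetilde{\lambda}}$ are $\mathcal{O}$-flat and the higher cohomologies of $\mathcal{L}_{\operatorname{det}}^{\otimes n}$ vanish for $n\gg 0$ by ampleness, cohomology-and-base-change identifies these special fibre dimensions with their generic fibre counterparts; Corollary \ref{globalsec} then identifies the latter with $\dim \bigotimes_\kappa H^0(n\mu_\kappa)$ and $\dim \bigotimes_\kappa H^0(n\widetilde{\lambda}_\kappa)$ respectively.

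Next, I apply Lemma \ref{polynomialsss} separately to each $\kappa_0:k\rightarrow \mathbb{F}$, grouping the $e$ weights $\mu_\kappa$ with $\kappa|_k=\kappa_0$. Setting
\[
P_{\kappa_0}(n) = \dim \bigotimes_{\kappa|_k=\kappa_0} H^0(n\mu_\kappa), \qquad Q_{\kappa_0}(n) = \sum_{\lambda_{\kappa_0}} m(\lambda_{\kappa_0},(\mu_\kappa)_{\kappa|_k=\kappa_0})\, \dim H^0(n(\lambda_{\kappa_0}+\rho))\, (\dim H^0(n\rho))^{e-1},
\]
the lemma gives $\deg(P_{\kappa_0}-Q_{\kappa_0}) < e\dim G/B$. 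Since each $\mu_\kappa$ and each $\widetilde{\lambda}_\kappa$ is regular dominant, both $P_{\kappa_0}$ and $Q_{\kappa_0}$ have degree exactly $e\dim G/B$. Expanding $\prod_{\kappa_0} P_{\kappa_0} = \prod_{\kappa_0}(Q_{\kappa_0} + (P_{\kappa_0}-Q_{\kappa_0}))$ shows that $\prod_{\kappa_0} P_{\kappa_0}(n) - \prod_{\kappa_0} Q_{\kappa_0}(n)$ has degree $<fe\dim G/B = \dim M_\mu$, and distributing the $\kappa_0$-product through the $\lambda_{\kappa_0}$-sums identifies $\prod_{\kappa_0} Q_{\kappa_0}(n)$ with $\sum_\lambda m(\lambda,\mu)\, \dim \bigotimes_\kappa H^0(n\widetilde{\lambda}_\kappa)$.

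Subtracting the two polynomial identities yields that
\[
\sum_\lambda (n(\lambda,\mu) - m(\lambda,\mu))\, \dim \bigotimes_\kappa H^0(n\widetilde{\lambda}_\kappa)
\]
is a polynomial in $n$ of degree $<\dim M_\mu$ for $n \gg 0$. Each $\widetilde{\lambda}_\kappa$ is regular dominant, so the formula recalled in \ref{Weyl} shows $\dim H^0(n\widetilde{\lambda}_\kappa)$ is a polynomial of degree $\dim G/B$ in $n$ with positive leading coefficient, whence $\dim \bigotimes_\kappa H^0(n\widetilde{\lambda}_\kappa)$ is a polynomial of degree exactly $\dim M_\mu$ with positive leading coefficient. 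By the standing hypothesis $n(\lambda,\mu) - m(\lambda,\mu) \geq 0$, so the coefficient of $n^{\dim M_\mu}$ on the left is a non-negative combination of strictly positive numbers that must equal zero; this forces $n(\lambda,\mu) = m(\lambda,\mu)$ for every $\lambda$. The main obstacle is the polynomial bookkeeping required to combine the per-$\kappa_0$ estimates from Lemma \ref{polynomialsss} into a single global estimate; this works precisely because each regular factor contributes the full $e\dim G/B$ to the degree, making all cross-terms strictly subleading.
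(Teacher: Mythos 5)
Your proof is correct and follows essentially the same route as the paper: combine Lemma \ref{hilbert} with Corollary \ref{globalsec} (plus flatness and Serre vanishing to pass between fibres) and Lemma \ref{polynomialsss}, then compare leading coefficients using $n(\lambda,\mu)-m(\lambda,\mu)\geq 0$. The only difference is that you make explicit two steps the paper leaves implicit — the cohomology-and-base-change identification and the cross-term bookkeeping needed to assemble the per-$\kappa_0$ estimates from Lemma \ref{polynomialsss} into the global degree bound — both of which you handle correctly.
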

\begin{proof}
We apply Lemma~\ref{hilbert} to the line bundle $\mathcal{L}_{\operatorname{det}}$. This gives that
$$
\operatorname{dim}H^0(\overline{M}_\mu,\mathcal{L}_{\operatorname{det}}^{\otimes n}) - \sum_\lambda n(\lambda,\mu) \operatorname{dim}H^0(\overline{M}_{\widetilde{\lambda}},\mathcal{L}_{\operatorname{det}}^{\otimes n})
$$
is, for large $n$, equal the value at $n$ of a polynomial of degree $< \sum_{\kappa} \operatorname{dim} G/P_{\mu_\kappa}$. Applying Corollary~\ref{globalsec} implies the same is true for
$$
\prod_{\kappa} \operatorname{dim}( H^0(n\mu_\kappa) ) -\sum_{\lambda = (\lambda_{\kappa_0})} n(\lambda,\mu)\prod_{\kappa_0}\left(  \operatorname{dim}( H^0(n(\lambda_{\kappa_0}+\rho)))\operatorname{dim}( H^0(n\rho))^{e-1} \right)
$$
Set $m_\lambda = \prod_{\kappa_0} m(\lambda_{\kappa_0},(\mu_\kappa)_{\kappa|_k =\kappa_0})$. Lemma~\ref{polynomialsss} gives that
$$
\prod_{\kappa} \operatorname{dim}( H^0(n\mu_\kappa) ) -\sum_{\lambda = (\lambda_{\kappa_0})} m_\lambda\prod_{\kappa_0}\left(  \operatorname{dim}( H^0(n(\lambda_{\kappa_0}+\rho)))\operatorname{dim}( H^0(n\rho))^{e-1} \right)
$$
is a polynomial of degree $< \sum_{\kappa} \operatorname{dim} G/P_{\mu_\kappa}$ in $n$. We conclude that the dimension of
$$
\sum_{\lambda = (\lambda_{\kappa_0})} \left( n(\lambda,\mu)-m(\lambda,\mu)\right)\prod_{\kappa_0}\left(  \operatorname{dim}( H^0(n(\lambda_{\kappa_0}+\rho)))\operatorname{dim}( H^0(n\rho))^{e-1} \right)
$$
is also polynomial of degree $< \sum_{\kappa} \operatorname{dim} G/P_{\mu_\kappa}$ in $n$ for $n>>0$. Since $n(\lambda,\mu)-m(\lambda,\mu) \geq 0$, each term in the above sum is a polynomial in $n$ of degree $\sum_{\kappa} \operatorname{dim} G/P_{\mu_\kappa}$ with non-negative leading term. We must therefore have $n(\lambda,\mu)= m(\lambda,\mu)$ for each $\lambda$. \end{proof}

\section{Topological descriptions}

\begin{sub}
	Recall that for $\lambda,\lambda' \in X(T)$ we write
	$$
	\lambda' \leq \lambda
	$$
	if $\lambda'_d +\ldots+\lambda_i' \leq \lambda_d +\ldots+\lambda_i$ for each $i$ with equality when $i=1$.
\end{sub}
\begin{proposition}\label{linearcomb}
	Let $\mu$ be a Hodge type with $\mu_\kappa-\rho$ dominant for each $\kappa$. Assume that
	$$
	\sum_{\kappa|_k = \kappa_0} \mu_{\kappa,1} - \mu_{\kappa,d} \leq e+p-1
	$$
	for each $\kappa_0:k \rightarrow \mathbb{F}$. Then:
	\begin{enumerate}
		\item There are integers $n(\lambda,\mu) \in \mathbb{Z}$ such that
		$$
		[M_\mu \otimes_{\mathcal{O}} \mathbb{F}] = \sum_\lambda n(\lambda,\mu) [M_{\widetilde{\lambda}} \otimes_{\mathcal{O}} \mathbb{F}]
		$$
		with the sum running over tuples $\lambda =(\lambda_{\kappa_0})$ with each $\lambda_{\kappa_0} \in X(T)$ dominant and satisfying $\lambda_{\kappa_0} \leq \sum_{\kappa|_{k} = \kappa_0}( \mu_\kappa-\rho)$.
		\item If $d=2$ then each $M_{\widetilde{\lambda}}\otimes_{\mathcal{O}} \mathbb{F}$ appearing in this sum is irreducible and generically reduced. Since the $[M_{\widetilde{\lambda}} \otimes_{\mathcal{O}} \mathbb{F}]$ are pairwise distinct this implies $n(\lambda,\mu) \geq 0$.
	\end{enumerate}
\end{proposition}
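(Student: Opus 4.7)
The plan is to realise both $M_\mu \otimes_{\mathcal{O}} \mathbb{F}$ and each relevant $M_{\widetilde{\lambda}} \otimes_{\mathcal{O}} \mathbb{F}$ inside a single ambient closed subscheme of $\operatorname{Gr}_{\mathcal{O}}$ whose top-dimensional components can be enumerated, and then extract the cycle identity from a linear-algebra comparison. Concretely, I would introduce a closed ind-subscheme $\operatorname{Gr}^\nabla_{\mathcal{O}} \subset \operatorname{Gr}_{\mathcal{O}}$ cut out by the condition that a suitable differential operator $\nabla$ stabilises the universal lattice modulo a prescribed power of $E(u)$, in the spirit of \cite{LLBBM20}. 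The key explicit computation, carried out in affine coordinate charts on $\operatorname{Gr}_{\mathcal{O}} \otimes_{\mathcal{O}} \mathbb{F}$ and using the bound $\sum_{\kappa|_k=\kappa_0}\mu_{\kappa,1}-\mu_{\kappa,d} \leq e+p-1$, should show that every irreducible component of $\operatorname{Gr}^\nabla_{\mathcal{O}} \otimes_{\mathcal{O}} \mathbb{F}$ meeting the $\mu$-bounded locus has dimension at most $\sum_\kappa \dim G/P_{\mu_\kappa}$, and that the maximal-dimensional ones are in bijection with tuples $\lambda = (\lambda_{\kappa_0})$ of dominant $\lambda_{\kappa_0}$ satisfying $\lambda_{\kappa_0} \leq \sum_{\kappa|_k=\kappa_0}(\mu_\kappa-\rho)$, the component labelled by $\lambda$ coinciding with the underlying reduced subscheme of $M_{\widetilde{\lambda}} \otimes_{\mathcal{O}} \mathbb{F}$.

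To deduce part (1), I would check that $M_\mu \otimes_{\mathcal{O}} \mathbb{F}$ lies in $\operatorname{Gr}^\nabla_{\mathcal{O}} \otimes_{\mathcal{O}} \mathbb{F}$: by $\mathcal{O}$-flatness of $M_\mu$ and closedness of $\operatorname{Gr}^\nabla_{\mathcal{O}}$ this reduces to the generic fibre, where $\nabla$-stability of the lattices from \ref{flags} is transparent from the explicit description of $G/P_{\mu_\kappa} \hookrightarrow \operatorname{Gr}$. Since the generic fibre of $M_\mu$ is the irreducible product $\prod_\kappa G/P_{\mu_\kappa}$ of dimension $\sum_\kappa \dim G/P_{\mu_\kappa}$, flatness forces $M_\mu \otimes_{\mathcal{O}} \mathbb{F}$ to be equidimensional of the same dimension, and the same reasoning applies to each $M_{\widetilde{\lambda}} \otimes_{\mathcal{O}} \mathbb{F}$. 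Hence both cycles lie in the $\mathbb{Z}_{\geq 0}$-span of the maximal-dimensional components of $\operatorname{Gr}^\nabla_{\mathcal{O}} \otimes_{\mathcal{O}} \mathbb{F}$. To convert this into the stated identity I would argue that the matrix expressing the $[M_{\widetilde{\lambda}} \otimes_{\mathcal{O}} \mathbb{F}]$ in terms of the basic component cycles is unipotent-triangular with respect to the coordinate-wise partial order $\leq$ on tuples: $[M_{\widetilde{\lambda}} \otimes_{\mathcal{O}} \mathbb{F}]$ contains the $\lambda$-labelled component with multiplicity one, while any other components that appear are indexed by some $\lambda'$ strictly smaller than $\lambda$. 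Such a matrix is invertible over $\mathbb{Z}$, and solving yields $[M_\mu \otimes_{\mathcal{O}} \mathbb{F}] = \sum_\lambda n(\lambda,\mu)[M_{\widetilde{\lambda}} \otimes_{\mathcal{O}} \mathbb{F}]$ with $n(\lambda,\mu) \in \mathbb{Z}$.

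For part (2), with $d=2$ every $\widetilde{\lambda}_\kappa$ is strictly decreasing and $G/P_{\widetilde{\lambda}_\kappa} = \mathbb{P}^1$. I would construct an explicit smooth projective $\mathcal{O}$-scheme $X$ together with a proper morphism $X \to M_{\widetilde{\lambda}}$ that is an isomorphism on the generic fibre, for example by realising $M_{\widetilde{\lambda}}$ as the image of a Demazure-type convolution space built from iterated $\mathbb{P}^1$-bundles inside $\operatorname{Gr}_{\mathcal{O}}$. Smoothness and irreducibility of $X$ then force $M_{\widetilde{\lambda}} \otimes_{\mathcal{O}} \mathbb{F}$ to be irreducible (as the image of an irreducible scheme under a proper surjection), and generic birationality on the special fibre gives generic reducedness. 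Pairwise distinctness of the resulting cycles is then built into the $\lambda$-labelling from part (1), and non-negativity of $n(\lambda,\mu)$ is automatic because the $[M_{\widetilde{\lambda}} \otimes_{\mathcal{O}} \mathbb{F}]$ now coincide with the distinct basic components. The hardest step is the explicit chart analysis of $\operatorname{Gr}^\nabla_{\mathcal{O}} \otimes_{\mathcal{O}} \mathbb{F}$ under the bound $\leq e+p-1$, which simultaneously yields the dimension inequality and pins down the bijective indexing of maximal components; a secondary delicate point is the smooth resolution in (2), which relies on the special feature that for $d=2$ every relevant partial flag variety collapses to $\mathbb{P}^1$.
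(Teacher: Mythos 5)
Your plan follows the paper's proof closely: introduce a closed locus $\operatorname{Gr}^\nabla_{\mathcal{O}}$ cut out by the differential operator, show $M_\mu \subset \operatorname{Gr}^\nabla_{\mathcal{O}}$ by $\mathcal{O}$-flatness plus the generic-fibre computation, enumerate the top-dimensional components of $\overline{\operatorname{Gr}}^\nabla$ intersected with the relevant Schubert stratum by an affine-chart dimension count (Lemma~\ref{dimC}), deduce the cycle identity by a unipotent-triangular change of basis with respect to $\leq$, and for $d=2$ establish irreducibility of $M_{\widetilde{\lambda}}\otimes_{\mathcal{O}}\mathbb{F}$ via a smooth convolution-type resolution $X \to M_{\widetilde{\lambda}}$. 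This is exactly the route taken in the paper.

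Two small imprecisions are worth flagging. First, your opening claim that the maximal component labelled by $\lambda$ ``coincides with the underlying reduced subscheme of $M_{\widetilde{\lambda}}\otimes_{\mathcal{O}}\mathbb{F}$'' is stronger than what is available at that point for general $d$; what one actually has is that $[M_{\widetilde{\lambda}}\otimes_{\mathcal{O}}\mathbb{F}]$ contains the component $\mathcal{C}_{\lambda+e\rho}$ with multiplicity one while any further components are indexed by $\lambda' < \lambda$ — which is precisely the unipotent triangularity you invoke later, so the argument survives, but the initial formulation should be weakened. Second, for generic reducedness in part (2), ``generic birationality on the special fibre'' is not self-evident and the paper does not argue this way. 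The resolution $X$ is only known to be an isomorphism on the \emph{generic} fibre; on the special fibre one knows $X\otimes_{\mathcal{O}}\mathbb{F}\to M_{\widetilde{\lambda}}\otimes_{\mathcal{O}}\mathbb{F}$ is a proper surjection from a smooth irreducible source, which gives irreducibility but not automatically multiplicity one. In the paper, multiplicity one (and hence generic reducedness, given irreducibility) comes instead from the openness of $\prod_{\kappa_0}\overline{\operatorname{Gr}}_{\sum_{\kappa|_k=\kappa_0}\mu_\kappa}$ inside $\prod_{\kappa_0}\overline{\operatorname{Gr}}_{\leq \sum_{\kappa|_k=\kappa_0}\mu_\kappa}$ applied with $\mu = \widetilde{\lambda}$, i.e.\ the same step that produces the unipotence. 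You already have that fact in hand, so no new ingredient is needed, but you should route the generic-reducedness claim through it rather than through an unjustified birationality assertion.
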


\begin{sub}
	To prove the proposition we will approximate $M_\mu \otimes_{\mathcal{O}} \mathbb{F}$ via explicit moduli conditions. In fact we give two such moduli interpretations, based on the following two operators:
	\begin{itemize}
		\item For any $\mathcal{O}$-algebra $A$ set
		$$
		\nabla:= u\frac{d}{du}: \mathfrak{S}_A[\frac{1}{E(u)}] \rightarrow \mathfrak{S}_A[\frac{1}{E(u)}]
		$$
		We also write $\nabla$ for the coordinate-wise extension to $\mathfrak{S}_A[\frac{1}{E(u)}]^d$.
		\item If $A$ is a $p$-adically complete $\mathcal{O}$-algebra of topologically finite type then for each $\sigma \in G_K$ we can also define
		$$
		\nabla_{\sigma}:= \frac{\sigma - \operatorname{Id}}{\mu} : A_{\operatorname{inf},A}[\frac{1}{\mu}] \rightarrow A_{\operatorname{inf},A}[\frac{1}{\mu}]
		$$
		Note this is well defined because $\sigma(\mu) \in \mu A_{\operatorname{inf}}$. We also note that $\frac{\mu}{\varphi^{-1}(\mu)`}$ and $E(u)$ generate the same ideal inside $A_{\operatorname{inf}}$ (as follows from Lemma~\ref{mu}) and so we can also view $\nabla_\sigma$ as an operator on $A_{\operatorname{inf},A}[\frac{1}{E(u)}]$. Again write $\nabla_{\sigma}$ also for the coordinate-wise extension to $A_{\operatorname{inf},A}[\frac{1}{E(u)}]^d$.
	\end{itemize}  
The advantage of $\nabla$ is that it is easier to compute with. The advantage of $\nabla_{\sigma}$ is that it is more directly related to Galois representations.
\end{sub}

\begin{lemma}
	There exist closed subfunctors $\operatorname{Gr}^{\nabla_{\sigma}}_{\mathcal{O}},\operatorname{Gr}^\nabla_{\mathcal{O}} \subset \operatorname{Gr}_{\mathcal{O}}$ such that
	\begin{enumerate}
		\item $\mathcal{E} \in \operatorname{Gr}^\nabla(A)$ if and only if
		$$
		E(u)\nabla(\mathcal{E}) \subset u\mathcal{E}
		$$
		as submodules of $\mathfrak{S}_A[\frac{1}{E(u)}]^d$.
		\item For any $p$-adically complete topologically finite type $\mathcal{O}$-algebra $A$, $\mathcal{E} \in \operatorname{Gr}^{\nabla_{\sigma}}(A)$ if and only if
		$$
		E(u)\nabla_{\sigma}(\mathcal{E}) \subset u\mathcal{E} \otimes_{\mathfrak{S}_A} A_{\operatorname{inf},A}
		$$
		as submodules of $A_{\operatorname{inf},A}[\frac{1}{E(u)}]^d$ for every $\sigma \in G_K$.
	\end{enumerate}
\end{lemma}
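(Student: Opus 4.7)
My plan is to realise each condition, restricted to a bounded piece of $\operatorname{Gr}_{\mathcal{O}}$, as the vanishing of a canonical $A$-linear map whose source and target are finitely generated $A$-modules; such vanishing is automatically a closed condition on $\operatorname{Spec}(A)$. Let $\operatorname{Gr}^{\leq a}_{\mathcal{O}} \subset \operatorname{Gr}_{\mathcal{O}}$ denote the closed sub-ind-scheme cut out by $E(u)^a\mathfrak{S}_A^d \subset \mathcal{E} \subset E(u)^{-a}\mathfrak{S}_A^d$. These pieces exhaust $\operatorname{Gr}_{\mathcal{O}}$ as $a \to \infty$, and on each of them the quotient $\mathcal{E}/E(u)^a\mathfrak{S}_A^d$ is a finite projective $A$-submodule of the finite free $A$-module $E(u)^{-a}\mathfrak{S}_A^d/E(u)^a\mathfrak{S}_A^d$.

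For part (1), I would start from the basic properties that $\nabla = u\tfrac{d}{du}$ is $A$-linear, satisfies the Leibniz rule, and sends $\mathfrak{S}_A^d$ into $u\mathfrak{S}_A^d$. A direct calculation using $\nabla(E(u)^a w) = a\,u E'(u) E(u)^{a-1} w + E(u)^a \nabla(w)$ then gives $E(u)\nabla(E(u)^a\mathfrak{S}_A^d) \subset u E(u)^a\mathfrak{S}_A^d \subset u\mathcal{E}$, so $E(u)\nabla$ descends to an $A$-linear map
\[
\overline{E(u)\nabla}\colon \mathcal{E}/E(u)^a\mathfrak{S}_A^d \;\longrightarrow\; E(u)^{-a}\mathfrak{S}_A^d/u\mathcal{E}.
\]
The source is finite projective over $A$; the target fits in a short exact sequence $0 \to u\mathcal{E}/uE(u)^a\mathfrak{S}_A^d \to E(u)^{-a}\mathfrak{S}_A^d/uE(u)^a\mathfrak{S}_A^d \to E(u)^{-a}\mathfrak{S}_A^d/u\mathcal{E} \to 0$ whose middle term is finite free over $A$ and whose kernel identifies via multiplication by $u$ with $\mathcal{E}/E(u)^a\mathfrak{S}_A^d$, so the target is finitely presented over $A$. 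Using the Leibniz rule once more one checks that the vanishing of $\overline{E(u)\nabla}$ is equivalent to the stated containment $E(u)\nabla(\mathcal{E}) \subset u\mathcal{E}$; this gives a closed subscheme of each $\operatorname{Gr}^{\leq a}_{\mathcal{O}}$, and these glue to the desired closed sub-ind-scheme $\operatorname{Gr}^\nabla_{\mathcal{O}}$.

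For part (2) I would run the same argument after base change along the faithfully flat map $\mathfrak{S}_A \to A_{\operatorname{inf},A}$ from \ref{mathfrak}. Using $\sigma(u) = u(1+\mu)^{c(\sigma)}$, the Leibniz rule for $\sigma - \operatorname{Id}$, and the identity $\mu = (\text{unit})\cdot \varphi^{-1}(\mu)\, E(u)$ extracted from Lemma~\ref{mu}, one verifies that for each fixed $\sigma$ the operator $E(u)\nabla_\sigma$ preserves the lattice $E(u)^a A_{\operatorname{inf},A}^d$ modulo $uE(u)^a A_{\operatorname{inf},A}^d$, exactly as in part (1). This yields an induced $A$-linear map out of the finite projective module $\mathcal{E}/E(u)^a\mathfrak{S}_A^d$ whose vanishing captures the condition for that single $\sigma$, giving a closed subscheme $\operatorname{Gr}^\sigma_{\mathcal{O}} \subset \operatorname{Gr}_{\mathcal{O}}$; then $\operatorname{Gr}^{\nabla_\sigma}_{\mathcal{O}} = \bigcap_{\sigma \in G_K}\operatorname{Gr}^\sigma_{\mathcal{O}}$ as an arbitrary intersection of closed sub-ind-schemes is closed. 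The main obstacle is this last reduction: since $A_{\operatorname{inf},A}$ is not finite over $A$, one has to ensure that the image of the induced map really lies in a finitely generated $A$-submodule of the $A_{\operatorname{inf},A}$-target, so that the vanishing is genuinely finite-type closed. The cleanest route is probably to rewrite the condition as $(\sigma - \operatorname{Id})(\mathcal{E}) \subset u\varphi^{-1}(\mu)\, \mathcal{E}\otimes_{\mathfrak{S}_A} A_{\operatorname{inf},A}$, which is visibly bounded in both $u$ and $\mu$, and then use faithful flatness together with the $(u,p)$-adic completeness of $A$ to descend the resulting bound to a condition on a finite $A$-presentation of $\mathcal{E}/E(u)^a\mathfrak{S}_A^d$.
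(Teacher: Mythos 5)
Your high-level strategy --- working on the bounded pieces $\operatorname{Gr}^{\leq a}_{\mathcal{O}}$ and realising each condition as a closed locus --- is the same as the paper's, which treats part (1) as immediate and for part (2) reduces to closedness on each Noetherian $\operatorname{Spec}A$ and then cites \cite[B.29]{EG19}. But there are gaps in what you have written. In part (1) you argue that the target $Q := E(u)^{-a}\mathfrak{S}_A^d/u\mathcal{E}$ is finitely presented and conclude that the vanishing of the induced map $\overline{E(u)\nabla}$ is a closed condition; finite presentation alone does not suffice (over $A=k[x]$ the element $1\in A/(x)$ dies in $A/(x)\otimes_A B$ exactly when $x$ becomes a unit in $B$, an \emph{open} locus). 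What is needed, and is in fact true, is that $Q$ is $A$-flat, hence finite projective since $A$ is Noetherian: this follows from the filtration $u\mathcal{E}\subset \mathcal{E}\subset E(u)^{-a}\mathfrak{S}_A^d$, whose successive quotients $\mathcal{E}/u\mathcal{E}$ and $E(u)^{-a}\mathfrak{S}_A^d/\mathcal{E}$ are both finite projective over $A$. Your exact sequence only shows finite presentation and projective dimension $\leq 1$, which do not give flatness (compare $0\to A\xrightarrow{x}A\to A/x\to 0$). Once the source $P$ and the target $Q$ are both finite projective, $\operatorname{Hom}_A(P,Q)$ is finite projective and the vanishing of a fixed element of it after base change is the closed condition cut out by its local coordinates.

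In part (2) you have correctly identified the serious obstacle: $A_{\operatorname{inf},A}$ is not of finite type over $A$, so the vanishing locus is not a priori a finite-type closed condition. However, your proposed resolution is only a sketch, and this is precisely the nontrivial step the paper delegates to \cite[B.29]{EG19}, a technical result designed for closedness of moduli conditions phrased over $A_{\operatorname{inf},A}$. Filling that step in from first principles requires substantially more than the faithful-flatness-plus-$(u,p)$-adic-completeness remark you offer, so as written the argument for (2) is incomplete.
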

\begin{proof}
	That $\operatorname{Gr}^\nabla_{\mathcal{O}}$ is a closed subfunctor if clear, so we focus on $\operatorname{Gr}^{\nabla_{\sigma}}_{\mathcal{O}}$. Since $\operatorname{Gr}_{\mathcal{O}}$ is an inductive system of proper Noetherian $\mathcal{O}$-schemes it suffices to show that for any $\mathcal{E} \in \operatorname{Gr}_{\mathcal{O}}(A)$ the condition
	$$
	E(u)\nabla_{\sigma}(\mathcal{E}) \subset u\mathcal{E} \otimes_{\mathfrak{S}_A} A_{\operatorname{inf},A}
	$$
	is closed on $\operatorname{Spec}A$ whenever $A$ is a $p$-adically complete topologically finite type $\mathcal{O}$-algebra. This follows from an application of \cite[B.29]{EG19}.
	\end{proof}
\begin{remark}\label{explanation}
	Since $E(u)$ and $\frac{\mu}{\varphi^{-1}(\mu)}$ generate the same ideal in $A_{\operatorname{inf}}$ the condition defining $\operatorname{Gr}^{\nabla_{\sigma}}_{\mathcal{O}}$ can also be expressed as
	$$
	(\sigma-1)(\mathcal{E}) \subset u \varphi^{-1}(\mu)\mathcal{E} \otimes_{\mathfrak{S}_A} A_{\operatorname{inf},A}
	$$
	This description may be more familiar from the point of view of crystalline Breuil--Kisin modules. 
\end{remark}
\begin{proposition}\label{nablacontain}
	For every Hodge type $\mu$ one has $M_\mu \subset \operatorname{Gr}^{\nabla_{\sigma}}_{\mathcal{O}}$ and $M_\mu \subset \operatorname{Gr}^\nabla_{\mathcal{O}}$.
\end{proposition}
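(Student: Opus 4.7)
I plan to reduce to a direct computation on the generic fibre, exploiting the product decomposition from Lemma~\ref{generic}. Since $M_\mu$ is $\mathcal{O}$-flat, defined as the scheme-theoretic closure of its generic fibre $\prod_\kappa(G/P_{\mu_\kappa}\otimes_{\mathcal{O}_K,\kappa}E)$, and since both $\operatorname{Gr}^{\nabla}_{\mathcal{O}}$ and $\operatorname{Gr}^{\nabla_\sigma}_{\mathcal{O}}$ are closed in $\operatorname{Gr}_{\mathcal{O}}$, it suffices to verify the two containments on the generic fibre. Lemma~\ref{generic} then identifies a point of $M_\mu\otimes_{\mathcal{O}}E$ with a tuple $(\mathcal{E}_\kappa)_\kappa$, where $\mathcal{E}_\kappa\in (G/P_{\mu_\kappa}\otimes_{\mathcal{O}_K,\kappa}E)(A)$ corresponds via \ref{flags} to a filtration $\operatorname{Fil}_\kappa^\bullet$ of type $\mu_\kappa$ on $A^d$, with
$$\mathcal{E}_\kappa = \sum_i(u-\kappa(\pi))^i(A\otimes_{\mathbb{Z}_p} W(k))[u]\cdot\operatorname{Fil}_\kappa^{-i},\qquad \mathcal{E} = \bigcap_\kappa \mathcal{E}_\kappa.$$
Since both operators commute with intersections (using that $u$ is a non-zero-divisor and, for $\nabla_\sigma$, that $\mathfrak{S}_A\to A_{\operatorname{inf},A}$ is flat), the two containments reduce to the analogous statements for each $\mathcal{E}_\kappa$ individually.

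For a single flag variety factor, elements of $A^d\subset\mathfrak{S}_A^d$ are constants annihilated by both $\nabla$ and $\nabla_\sigma$, so by the Leibniz rule everything reduces to computing the operators on $(u-\kappa(\pi))^i$. The $\nabla$-case is immediate: one has $\nabla((u-\kappa(\pi))^i v) = iu(u-\kappa(\pi))^{i-1}v$, and writing $E(u) = (u-\kappa(\pi))\,E'(u)$ over $E$-algebras (valid because $E$ contains a Galois closure of $K$) gives
$$E(u)\nabla((u-\kappa(\pi))^i v) = iu\, E'(u)\,(u-\kappa(\pi))^i v\in u\mathcal{E}_\kappa.$$
For $\nabla_\sigma$, the standard identity $\sigma(u) = [\epsilon]^{c(\sigma)}u$ (with $c(\sigma)\in\mathbb{Z}_p$ the Kummer cocycle attached to $\pi^{1/p^\infty}$) shows $(\sigma-1)u\in \mu u\, A_{\operatorname{inf}}$, since $[\epsilon]^{c(\sigma)}-1$ is divisible by $\mu = [\epsilon]-1$. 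Setting $\gamma := (\sigma(u)-u)/(\mu u)\in A_{\operatorname{inf}}$ and binomially expanding $\sigma((u-\kappa(\pi))^i) = ((u-\kappa(\pi))+(\sigma(u)-u))^i$ yields
$$\nabla_\sigma((u-\kappa(\pi))^i v)=\sum_{j=0}^{i-1}\binom{i}{j}(u-\kappa(\pi))^j\mu^{i-j-1}\gamma^{i-j}u^{i-j}v.$$

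To put $E(u)$ times this sum into $u\mathcal{E}_\kappa\otimes_{\mathfrak{S}_A}A_{\operatorname{inf},A}$, the key extra input is that $\mu$ and $\varphi^{-1}(\mu)E(u)$ generate the same ideal in $A_{\operatorname{inf}}$ (per Remark~\ref{explanation} and Lemma~\ref{mu}). Thus for $j<i-1$ one can replace each factor of $\mu^{i-j-1}$ by a unit multiple of $\varphi^{-1}(\mu)^{i-j-1}E(u)^{i-j-1} = \varphi^{-1}(\mu)^{i-j-1}(u-\kappa(\pi))^{i-j-1}E'(u)^{i-j-1}$, which, together with the single factor of $(u-\kappa(\pi))$ coming from $E(u)$ out front, assembles the full power $(u-\kappa(\pi))^i$ needed to realise $(u-\kappa(\pi))^i v\in \mathcal{E}_\kappa$; the remaining factor of $u$ is extracted from $u^{i-j}$. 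The $j=i-1$ term is handled directly: $E(u)\cdot(u-\kappa(\pi))^{i-1}\gamma u v = u\, E'(u)\gamma\cdot(u-\kappa(\pi))^i v\in u\mathcal{E}_\kappa\otimes A_{\operatorname{inf}}$.

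The main technical obstacle is this last bookkeeping in the $\nabla_\sigma$ case: terms with $j<i-1$ do not immediately lie in $u\mathcal{E}_\kappa\otimes A_{\operatorname{inf}}$ after the naive estimate, and one must convert each surplus factor of $\mu$ into a factor of $E(u)$ (and hence of $(u-\kappa(\pi))$) using the divisibility $\mu\in\varphi^{-1}(\mu)E(u)A_{\operatorname{inf}}$. The $\nabla$-case is by contrast a one-line computation, and all other ingredients — reduction to the generic fibre via $\mathcal{O}$-flatness, decomposition into single-flag-variety factors via Lemma~\ref{generic}, and the commutation of $\nabla$, $\nabla_\sigma$ with intersections — are formal.
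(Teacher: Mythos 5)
The proposal is correct for the $\nabla$ case but has a genuine gap for the $\nabla_\sigma$ case, precisely at the reduction to the generic fibre.

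You are right that since $\operatorname{Gr}^{\nabla_\sigma}_{\mathcal{O}}$ is closed and $M_\mu$ is the closure of its generic fibre, it is enough to show $M_\mu\otimes_{\mathcal{O}}E\subset\operatorname{Gr}^{\nabla_\sigma}_{\mathcal{O}}$. But you then attempt to \emph{verify} this inclusion by a direct computation with $\nabla_\sigma$ on an $A$-valued point for $A$ an $E$-algebra, and that computation is not available there: the explicit description of $\operatorname{Gr}^{\nabla_\sigma}_{\mathcal{O}}(A)$ via the condition $E(u)\nabla_\sigma(\mathcal{E})\subset u\mathcal{E}\otimes_{\mathfrak{S}_A}A_{\operatorname{inf},A}$ is given in the paper only for $A$ a $p$-adically complete $\mathcal{O}$-algebra of topologically finite type. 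When $A$ is an $E$-algebra the ring $A_{\operatorname{inf},A}$ (being an inverse limit of $W(\mathcal{O}_{C^\flat})/p^a\otimes_{\mathbb{Z}_p}A$) vanishes, so the displayed condition is meaningless. The ring you write as $A_{\operatorname{inf}}$ appears in your computation because you are implicitly using integral coefficients while officially claiming to argue over the generic fibre. The needed extra step — which the paper supplies and your argument omits — is this: since $M_\mu\otimes_{\mathcal{O}}E$ is reduced and of finite type, containment in the closed subscheme $\operatorname{Gr}^{\nabla_\sigma}_{\mathcal{O}}$ can be tested on $K'$-valued points for $K'$ ranging over finite extensions of $E$; by the valuative criterion of properness each such point extends to an $\mathcal{O}_{K'}$-valued point, and $\mathcal{O}_{K'}$ \emph{is} $p$-adically complete and topologically of finite type, so the moduli description of $\operatorname{Gr}^{\nabla_\sigma}_{\mathcal{O}}$ is in force. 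It is also precisely here that one needs part (3), not part (1), of Lemma~\ref{points} to exhibit $\mathcal{E}$ as an intersection of the $\mathcal{E}_\kappa$ integrally. Your use of flatness of $\mathfrak{S}_A\to A_{\operatorname{inf},A}$ to commute $\nabla_\sigma$ past the intersection similarly only makes sense in this integral setting.

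Two smaller remarks. Your binomial-expansion bookkeeping for $E(u)\nabla_\sigma((u-\kappa(\pi))^i v)$ is more elaborate than necessary; the paper instead establishes $\sigma(E_\kappa(u))-E_\kappa(u)\in u\mu A_{\operatorname{inf}}$ and then runs a short induction on $i$ via the twisted Leibniz rule $\nabla_\sigma(xy)=\nabla_\sigma(x)\sigma(y)+x\nabla_\sigma(y)$ to obtain $\nabla_\sigma(E_\kappa(u)^i)\in u E_\kappa(u)^{i-1}A_{\operatorname{inf},\mathcal{O}}$, then handles the denominator $E(u)^{-n}$ by a quotient-rule manipulation. And your $\nabla$-case is essentially identical to the paper's and is correct as written.
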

\begin{proof}
	Since $\operatorname{Gr}^\nabla_{\mathcal{O}}$ is closed it suffices to show $\mathcal{E} \in M_\mu(A)$ is contained in $\operatorname{Gr}^\nabla_{\mathcal{O}}$ for any $E$-algebra $A$. Lemma~\ref{points} allows us to write $\mathcal{E}$ as an intersection of $\mathcal{E}_\kappa = \frac{1}{E(u)^n} \sum E_{\kappa}(u)^{i +n} (A\otimes_{\mathbb{Z}_p} W(k))[u] \operatorname{Fil}^{-i}_\kappa$ for some $n\geq 0$. It is therefore enough to show that $E(u) \nabla(\mathcal{E}_\kappa) \subset u \mathcal{E}_\kappa$ and this follows since
	$$
	\nabla(\mathcal{E}_\kappa) = \sum_i \nabla(\frac{E_\kappa(u)^{i+n}}{E(u)^n})(A\otimes_{\mathbb{Z}_p} W(k))[u] \operatorname{Fil}^{-i}_\kappa
	$$ 
	and $E(u)\nabla(\frac{E_\kappa(u)^{i-n}}{E(u)^n}) \in u\frac{E_\kappa(u)^{i-n}}{E(u)^n} (A\otimes_{\mathbb{Z}_p} W(k))[u]$.
	
	There is a slight difficulty in giving an identical argument to show $M_\mu \subset \operatorname{Gr}^{\nabla_{\sigma}}_{\mathcal{O}}$ because the moduli description for $\operatorname{Gr}^{\nabla_{\sigma}}_{\mathcal{O}}$ does not apply when $A$ is an $E$-algebra. To address this we first note that the generic fibre of $M_\mu$ is reduced so to show $M_\mu \otimes_{\mathcal{O}} E \subset \operatorname{Gr}^{\nabla_{\sigma}}_{\mathcal{O}}$ it suffices to show this on $A$-points whenever $A$ is a finite extension of $E$. By the valuative criterion for properness, any such $A$-valued point is induced from a point valued in the ring of integers of $A$. Thus we are reduced to showing $M_\mu(A) \subset \operatorname{Gr}^{\nabla_{\sigma}}_{\mathcal{O}}(A)$ whenever $A$ is the ring of integers in a finite extension of $E$. Using part (3) of Lemma~\ref{points} this comes down to proving that 
	$$
	E(u)\nabla_{\sigma}(\mathcal{E}_\kappa) \subset u\mathcal{E}_\kappa \otimes_{\mathfrak{S}_A} A_{\operatorname{inf},A}
	$$
	for $\mathcal{E}_\kappa = \frac{1}{E(u)^n} \sum E_{\kappa}(u)^{i +n} \mathfrak{S}_A \operatorname{Fil}^{-i}_\kappa$. This would follow from the claim that
	$$
	E(u)\nabla_{\sigma}(\frac{E_\kappa(u)^{i+n}}{E(u)^n}) \in u\frac{E_\kappa(u)^{i+n}}{E(u)^{n}} A_{\operatorname{inf},\mathcal{O}}
	$$
To prove the claim first note that $\sigma(E_{\kappa}(u)) - E_{\kappa}(u) = \sigma(u) - u \in u \mu A_{\operatorname{inf}}$. Similarly $\sigma(E(u))-E(u) \in u\mu A_{\operatorname{inf}}$. Writing
$$
\nabla_{\sigma}(E_\kappa(u)^i) = \nabla_{\sigma}(E_\kappa(u)^{i-1})\sigma(E_\kappa(u)) + E_\kappa(u)\nabla_{\sigma}(E_\kappa(u))
$$
and arguing by induction on $i$ then gives that $\nabla_{\sigma}(E_\kappa(u)^i) \in u E_{\kappa(u)}^{i-1}A_{\operatorname{inf},\mathcal{O}}$. Similarly $\nabla_{\sigma}(E(u)^i) \in u E(u)^{i-1}A_{\operatorname{inf}}$. Since we can write
$$
E(u)^n (\sigma-1)(\frac{E_{\kappa}(u)^{i+n}}{E(u)^n}) = (\sigma-1)(E_{\kappa}(u)^{i+n}) - \frac{\sigma(E_{\kappa}(u)^{i+n})}{\sigma(E(u)^n)} (\sigma-1)(E(u)^n)
$$
the claim follows.
\end{proof}
\begin{remark}
	After possibly replacing the compatible system of primitive $p$-th power roots of unity $\epsilon$ we can choose $\sigma \in G_K$ so that $\sigma(u)/u = \epsilon$. Then
	$$
	\nabla_\sigma(u^i) = u^i \left(  \frac{\frac{\sigma(u)}{u} - 1}{[\epsilon] -1} \right) = u^i \left( \frac{[\epsilon^{i}] - 1}{[\epsilon]-1}\right) = u^i(1+ [\epsilon] + \ldots + [\epsilon]^{i-1})
	$$
	Thus $\nabla_\sigma = u \nabla_q$ where $\nabla_q$ is the $q$-derivation for $q = [\epsilon]$. In particular $\nabla_\sigma \equiv u\frac{d}{du} = \nabla$ modulo $[\epsilon]-1$. This illustrates the close relationship between the $\operatorname{Gr}^{\nabla_{\sigma}}_{\mathcal{O}}$ and the locus $\operatorname{Gr}^\nabla_{\mathcal{O}}$.
\end{remark}
\begin{sub}
For the rest of this section we focus on $\operatorname{Gr}^{\nabla}_{\mathcal{O}} \otimes_{\mathcal{O}} \mathbb{F}$. Note that since $\nabla$ is $W(k)$-linear we have
$$
\operatorname{Gr}^{\nabla}_{\mathcal{O}} = \prod_{\kappa_0} \operatorname{Gr}_{\kappa_0}^\nabla
$$ 
where $\operatorname{Gr}_{\kappa_0}^\nabla$ is defined similarly. Let us write $\overline{\operatorname{Gr}} = \operatorname{Gr}_{\kappa_0} \otimes_{\mathcal{O}} \mathbb{F}$ (note this is independent of $\kappa_0$) and $\overline{\operatorname{Gr}}^\nabla = \operatorname{Gr}^\nabla_{\kappa_0} \otimes_{\mathcal{O}} \mathbb{F}$. The description from Lemma~\ref{beueville} shows that the group scheme
$$
LG^+: A\mapsto \operatorname{GL}_d(A[[u]])
$$ 
acts on $\overline{\operatorname{Gr}}$. For $\lambda \in X(T)$ dominant we set $\overline{\operatorname{Gr}}_{\lambda}$ equal to the $LG^+$-orbit of $\mathcal{E}_{\lambda} \in \operatorname{Gr}$ (recall $\mathcal{E}_\lambda$ is defined in \ref{flags}) and we set $\overline{\operatorname{Gr}}_{\leq \lambda}$ equal to its reduced closure. Then
$$
\overline{\operatorname{Gr}}_{\leq \lambda} = \bigcup_{\lambda' \leq \lambda} \overline{\operatorname{Gr}}_{\lambda'}
$$

\end{sub}
\begin{lemma}\label{dimC}
	Suppose $\lambda \in X(T)$ is dominant with
	$$
	\lambda_{1} - \lambda_{d} \leq e+ p-1
	$$
	Set $\mathcal{C}_\lambda$ equal to the closure of $\overline{\operatorname{Gr}}_{\lambda} \cap \overline{\operatorname{Gr}}^{\nabla}$ in $\overline{\operatorname{Gr}}$. Then $\mathcal{C}_{\lambda}$ is reduced and irreducible of dimension
	$$
	\sum_{\kappa_0} \sum_{i <j} \operatorname{max}\lbrace\lambda_i-\lambda_j,e\rbrace
	$$
\end{lemma}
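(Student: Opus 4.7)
The plan is to compute $\mathcal{C}_\lambda$ by working on an explicit affine chart of the Schubert cell $\overline{\operatorname{Gr}}_\lambda$ on which the $\nabla$-condition becomes a tractable system of polynomial equations. Since $E(u)\equiv u^e\pmod{p}$, the defining condition $E(u)\nabla(\mathcal{E})\subset u\mathcal{E}$ reduces in the special fibre to $u^e\tfrac{d}{du}(\mathcal{E})\subset\mathcal{E}$.

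I would first parameterize a dense open subscheme of $\overline{\operatorname{Gr}}_\lambda$ by upper-unitriangular matrices $n=(n_{ij}(u))$ with $n_{ij}(u)\in \mathbb{F}[u]$ of degree $<\lambda_i-\lambda_j$ for $i<j$, sending $n$ to the lattice generated by the columns of $n\cdot u^\lambda$. This identifies the chart with $\mathbb{A}^{\sum_{i<j}(\lambda_i-\lambda_j)}$, and by $T(\mathbb{F}[[u]])$-equivariance of $\overline{\operatorname{Gr}}^\nabla$ it suffices to compute $\mathcal{C}_\lambda$ on this chart (together with analogous charts obtained by translating by Weyl group elements to cover $\overline{\operatorname{Gr}}_\lambda$).

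Next I would translate the $\nabla$-condition into equations on the coefficients $a_{ij}^{(m)}$ of $n_{ij}(u)=\sum_m a_{ij}^{(m)}u^m$. Applying $u^e\tfrac{d}{du}$ columnwise to $n\cdot u^\lambda$ and demanding the result lie in $\mathcal{E}$, one eliminates the diagonal contributions from each column $v_k$ and reads off, pair by pair in $i<j$, a linear equation on the $a_{ij}^{(m)}$. Since $u^e\tfrac{d}{du}$ sends $u^m$ to $mu^{m+e-1}$, the constraint is essentially that $(m+\lambda_j)\,a_{ij}^{(m)}=0$ for certain $m$; in characteristic $p$ the coefficients indexed by $m$ with $m+\lambda_j\equiv 0\pmod{p}$ are unconstrained. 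The dimension count then amounts to tallying, for each pair $i<j$, how many $m\in\{0,\ldots,\lambda_i-\lambda_j-1\}$ fall into the unconstrained (or already-absorbed) regime. The hypothesis $\lambda_1-\lambda_d\leq e+p-1$ is used here to guarantee that at most one multiple of $p$ appears in each such window, which is precisely what pins down the count given by the formula in the statement.

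Finally, irreducibility and reducedness follow because the equations are linear with nonzero coefficients (away from the accounted-for characteristic-$p$ kernel), so the intersection with the chart is a linear subspace of $\mathbb{A}^{\sum_{i<j}(\lambda_i-\lambda_j)}$; hence smooth and irreducible. That no lower Schubert stratum $\overline{\operatorname{Gr}}_{\lambda'}\cap\overline{\operatorname{Gr}}^\nabla$ with $\lambda'<\lambda$ contributes a component of equal dimension is seen by applying the formula inductively to $\lambda'$ and comparing with the strict inequality $\dim\overline{\operatorname{Gr}}_{\lambda'}<\dim\overline{\operatorname{Gr}}_\lambda$ in the Bruhat order.

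\textbf{Main obstacle.} The hardest step is the combinatorial dimension count: one has to extract the truly independent linear equations on the $a_{ij}^{(m)}$ after cascading cancellations from columns $v_k$ with $k<j$, and show that the number of characteristic-$p$ coincidences (where $m+\lambda_j\equiv 0\pmod p$) is controlled tightly by the bound $\lambda_1-\lambda_d\leq e+p-1$. A secondary subtlety is verifying that the $T(\mathbb{F}[[u]])$-translates of a single chart actually cover an open dense of $\mathcal{C}_\lambda$ and not just of $\overline{\operatorname{Gr}}_\lambda$, so that irreducibility on the chart implies irreducibility of the closure.
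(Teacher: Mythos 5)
Your overall plan matches the paper's proof: cover $\overline{\operatorname{Gr}}_\lambda$ by affine charts of unitriangular matrices with bounded-degree polynomial entries, translate the $\nabla$-condition into a system of constraints on the coefficients, and count free parameters. However, two of the technical steps as you describe them would not go through.

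First, the system of constraints is not linear. After conjugating away the diagonal $u^\lambda$, the paper's condition on the chart reads, entry by entry, $\nabla(a_{ij}) + \sum_{j < l < i}\nabla(a_{il})\,b_{lj} \in u^{\lambda_j - \lambda_i - e + 1}A[u]$, where $b_{lj}$ are the entries of $n^{-1}$; these are polynomial (nonlinear) in the $a$'s. The chart is still an affine space, but to see this one must exploit the triangular dependency that $b_{ij}$ depends only on $a_{lk}$ with $|k-l| < |i-j|$, so the nonlinear system can be solved iteratively. Your remark that "the equations are linear with nonzero coefficients" would be wrong without this inductive device, and irreducibility and reducedness do not follow as you state.

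Second, your accounting of characteristic-$p$ coincidences is off. Once the diagonal contribution has been absorbed into $\mathcal{E}$, the residual constraint on the coefficient $a_{ij}^{(m)}$ involves the factor $m$ (since $\nabla(u^m) = m\,u^m$), not the shifted quantity $m + \lambda_j$. The range of constrained $m$ is $1 \leq m \leq \lambda_i-\lambda_j - e$, and the bound $\lambda_1 - \lambda_d \leq e + p - 1$ forces all such $m$ to lie in $\{1,\ldots,p-1\}$, hence \emph{no} multiple of $p$ appears at all: each constrained coefficient is uniquely determined. Your version, with the shifted factor and "at most one multiple of $p$ in the window $\{\lambda_j,\ldots,\lambda_i-1\}$," is neither what the bound actually gives (that interval has length up to $e+p-1$, so it can contain two multiples of $p$ when $e$ is close to $p$) nor what is needed for the count, and it does not obviously reproduce the $\min\{e,\lambda_i-\lambda_j\}$ free parameters per pair. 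As an aside, the paper covers the Schubert cell by Weyl-group translates $w\mathcal{U}_\lambda$ and uses $\nabla(w)=0$ to transport the computation; since $\mathcal{C}_\lambda$ is defined as the closure of $\overline{\operatorname{Gr}}_\lambda\cap\overline{\operatorname{Gr}}^\nabla$, covering the open stratum $\overline{\operatorname{Gr}}_\lambda$ is sufficient and your "secondary subtlety" evaporates.
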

\begin{proof}
	We begin by giving an open cover of $\overline{\operatorname{Gr}}_\lambda$: let $\mathcal{U}_\lambda \subset L^+G$ denote the subfunctor whose $A$-points consist of unipotent upper triangular matrices 	
	$$
	\begin{pmatrix}
		1 & & a_{ij} \\ & \ddots & \\ & & 1
	\end{pmatrix} \in L^+G(A)
	$$ 
	where for each $i> j$, $a_{ij} \in A[u]$ has degree $< \lambda_j - \lambda_i$. Consider the morphism $\mathcal{U}_\lambda \rightarrow \overline{\operatorname{Gr}}_\lambda$ sending $g \mapsto g\mathcal{E}_\lambda$. Recall that $g\mathcal{E}_\lambda \mapsto g_0\mathcal{E}_{\lambda}$, for $g_0 = g$ modulo $u$, defines a morphism $\overline{\operatorname{Gr}}_{\lambda} \rightarrow G/P_{\lambda}$. Since the parabolic $P_{\lambda}$ is contained in the Borel of lower triangular matrices $B^- \subset G$ we can compose this map with $G/P_{\lambda} \rightarrow G/B^{-}$. Then the morphism $\mathcal{U} \rightarrow \overline{\operatorname{Gr}}_{\lambda}$ identities $\mathcal{U}_{\lambda}$ with the preimage under this composite of the open $U \subset G/B^-$ consisting of upper triangular unipotent matrices. In particular, $\mathcal{U}_{\lambda} \rightarrow \overline{\operatorname{Gr}}_{\lambda}$ is an open immersion and $\overline{\operatorname{Gr}}_{\lambda} = \bigcup_{w } w\mathcal{U}_{\lambda}$ with $w$ running over the permutation matrices in $G$ (as follows by considering the open cover $G/B^- = \bigcup w U$). 
	
	Since $\nabla(w) = 0$ we have $w \mathcal{U}_{\lambda} \cap\overline{\operatorname{Gr}}^\nabla = w(\mathcal{U}_{\lambda} \cap \overline{\operatorname{Gr}}^{\nabla})$. Therefore the lemma reduces to showing $\mathcal{U}_{\lambda} \cap\overline{ \operatorname{Gr}}^\nabla$ is an affine space of the claimed dimension. Observe that $g \in \mathcal{U}_{\lambda} \cap \operatorname{Gr}^\nabla$ if and only if 
	\begin{equation}\label{nabla}
		u^{e-1} g^{-1} \nabla(g) \in \left(\begin{smallmatrix}
			u^{\lambda_1} & & \\ & \ddots & \\ & & u^{\lambda_d}
		\end{smallmatrix}\right) \operatorname{Mat}(A[u])  \left(\begin{smallmatrix}
			u^{-\lambda_1} & & \\ & \ddots & \\ & & u^{-\lambda_d}
		\end{smallmatrix}\right)
	\end{equation}
	If we write $g^{-1} = (b_{ij})_{ij}$ then, using that $b_{jj} = 1$, $b_{lj} =0$ for $l<j$, and $\nabla(a_{ii}) =0$, we see that \eqref{nabla} is equivalent to asking that
	\begin{equation}\label{nabla2}
		\nabla(a_{ij}) +\sum_{j<l<i} \nabla(a_{il})  b_{lj} \in u^{\lambda_j-\lambda_i-e+1}A[u] 
	\end{equation}
	for every $i>j$. By assumption $\lambda_j -\lambda_i -e+1 \leq p$ and so $\sum_{j<l<i} \nabla(a_{il})  b_{lj}$ modulo $u^{\lambda_j-\lambda_i -e+1}$ admits an antiderivative; in other words, there exists a unique $X \in uA[u]$ of degree $< \lambda_j -\lambda_i -e+1$ with
	$$
	\nabla (X) \equiv -\sum_{j<l<i} \nabla(a_{il})  b_{lj} \quad \text{ modulo } u^{\lambda_j-\lambda_i -e+1}
	$$
	Since $a_{ij}$ has degree $< \lambda_j - \lambda_i$ it follows that 
	$$
	a_{ij}  = \begin{cases}
		X +  a_{ij}^{(0)} + u^{\lambda_j-\lambda_i  -e+1 }a_{ij}^{(1)}+ \ldots +  u^{\lambda_j - \lambda_i -1 }a_{ij}^{(e-1)} & \text{if $\lambda_j -\lambda_i \geq e$ } \\
		X+ a_{ij}^{(0)} + u a_{ij}^{(1)} + \ldots + u^{\lambda_j -\lambda_i -1} a_{ij}^{\lambda_i-\lambda_j-1} & \text{if $\lambda_j -\lambda_i < e$}
	\end{cases}
	$$
	for some $a_{ij}^{(l)} \in A$. Note that, for $i > j$, the $ij$-th entry of $gg^{-1} = 1$ is
	$$
	0 = \sum_{l=0}^d a_{il} b_{lj} = b_{ij} + a_{ij}+ \sum_{j<l<i} a_{il}b_{lj}
	$$
	This shows, by an inductive argument, that $b_{ij}$ is a function of $a_{lk}$ for $l<k$ with $k-l \leq i-j$. Therefore the element $X \in uA[u]$ considered above depends on $a_{lk}$ with $k-l < i-j$. As a consequence the morphism
	$$
	\mathcal{U}_{\lambda} \cap \overline{\operatorname{Gr}}^\nabla \rightarrow \prod_{ij} \mathbb{A}_{\mathbb{F}}^{\operatorname{min}\lbrace e,\lambda_j-\lambda_i\rbrace}
	$$
	given by $(a_{ij}) \mapsto ( a_{ij}^{(l)} )$ has a well-defined inverse which finishes the proof.
\end{proof}
\begin{proof}[Proof of Proposition]
	First observe that under the identification $\operatorname{Gr}_{\mathcal{O}} \otimes_{\mathcal{O}} \mathbb{F} \cong \prod_{\kappa_0} \overline{\operatorname{Gr}}$
	we have $(M_\mu \otimes_{\mathcal{O}} \mathbb{F})_{\operatorname{red}} \hookrightarrow \prod_{\kappa_0} \overline{\operatorname{Gr}}_{\leq \sum_{\kappa|_{k} = \kappa_0} \mu_\kappa}$. Thus $(M_\mu \otimes_{\mathcal{O}} \mathbb{F})_{\operatorname{red}}$ is contained in $\bigcup_{\lambda =(\lambda_{\kappa_0})} \prod \overline{\operatorname{Gr}}_{\lambda}$ where the product runs over $\lambda = (\lambda_{\kappa_0})$ with $\lambda_{\kappa_0} \leq \sum_{\kappa|_{k} = \kappa_0} \mu_\kappa$. Since $M_\mu \subset \operatorname{Gr}^\nabla_{\mathcal{O}}$ and each $\mu_\kappa-\rho$ is dominant the dimension calculations from Lemma~\ref{dimC} imply that
	$$
	(M_\mu \otimes_{\mathcal{O}} \mathbb{F})_{\operatorname{red}} \subset \bigcup_{\lambda = (\lambda_{\kappa_0})} \mathcal{C}_{\lambda +e\rho}
	$$
	where the union now runs over $\lambda =(\lambda_{\kappa_0})$ with $\lambda_{\kappa_0} +e\rho\leq \sum_{\kappa|_{k} = \kappa_0} \mu_\kappa $ and where we write $\mathcal{C}_{\lambda+e\rho} = \prod_{\kappa_0} \mathcal{C}_{\lambda_{\kappa_0}+e\rho}$. Thus, one can write 
	$$
	[M_\mu \otimes_{\mathcal{O}} \mathbb{F}] = \sum_{\lambda} n(\lambda,\mu) [\mathcal{C}_{\lambda +e\rho}]
	$$
	as cycles, for integers $n(\lambda,\mu) \geq 0$. Furthermore, since $\overline{\operatorname{Gr}}_{\sum_{\kappa|_{k} = \kappa_0} \mu_\kappa} \subset \overline{\operatorname{Gr}}_{\leq \sum_{\kappa|_{k} = \kappa_0} \mu_\kappa}$ is open it follows that $\mathcal{C}_{\lambda+e\rho} \cap (M_\mu \otimes_{\mathcal{O}} \mathbb{F})$ is open in $(M_\mu \otimes_{\mathcal{O}} \mathbb{F})$ for $\lambda = (\lambda_{\kappa_0})$ with $\lambda_{\kappa_0} = \sum_{\kappa|_k = \kappa_0} (\mu_{\kappa} -\rho)$. Since this intersection is clearly non-empty it follows that $n(\lambda,\mu) =1$ for this particular $\lambda$.
	
	This shows that
	$$
	[M_{\widetilde{\lambda}} \otimes_{\mathcal{O}} \mathbb{F}] - [\mathcal{C}_{\lambda + e\rho}]
	$$
	can be expressed as a $\mathbb{Z}_{\geq 0}$-linear combination of $[\mathcal{C}_{\lambda'+e\rho}]$'s for $\lambda' = (\lambda'_{\kappa_0})$ with $\lambda_{\kappa_0}' \leq \lambda_{\kappa_0}$. Arguing by induction we conclude that we can always write $[\mathcal{C}_{\lambda+e\rho}] = \sum_{\lambda'} n_{\lambda'}[M_{\widetilde{\lambda}'} \otimes_{\mathcal{O}} \mathbb{F}]$ for $\lambda' = (\lambda'_{\kappa_0})$ satisfying $\lambda'_{\kappa_0} \leq \lambda_{\kappa_0}$ and some $n_{\lambda'} \in \mathbb{Z}$. This proves the first part of the proposition.
	
	The second part follows from the first provided we can show $M_{\widetilde{\lambda}}$ is irreducible whenever $\lambda_{\kappa_0,1}-\lambda_{\kappa_0,d} \leq p-1$. To establish this irreducibility we require $d=2$. Choose an indexing $\kappa_{0,1},\ldots,\kappa_{0,e}$ of those $\kappa$ with $\kappa|_k =\kappa_0$ so that $\kappa_{0,1} = \widetilde{\kappa}_0$. Then construct a scheme $X$ which classifies tuples $(\mathcal{E}_e \subset \ldots \subset \mathcal{E}_1)$ with $\mathcal{E}_1 \in \prod_{\kappa_0} (G/P_{\kappa_{0,1}} \otimes_{\mathcal{O}_K,\kappa_{0,1}} \mathcal{O})$ and
	$$
	\left( \prod_{\kappa_0} E_{\kappa_{0,i}}(u)\right) \mathcal{E}_i \subset \mathcal{E}_{i+1}\subset \mathcal{E}_i
	$$
	with $\mathcal{E}_i/\mathcal{E}_{i+1}$ of rank one over $(A\otimes_{\mathbb{Z}_p} W(k))$ for each $i$. Then the map $(\mathcal{E}_i) \mapsto \mathcal{E}_e$ produces a proper morphism $X \rightarrow \operatorname{Gr}_{\mathcal{O}}$ which on the generic fibre identifies $X \otimes_{\mathcal{O}} E$ with $M_{\widetilde{\lambda}} \otimes_{\mathcal{O}} E$. In particular, this shows that $X \otimes_{\mathcal{O}} \mathbb{F} \rightarrow M_{\widetilde{\lambda}} \otimes_{\mathcal{O}} \mathbb{F}$ is surjective. On the other hand, $X$ is a successive extension of (products of) grassmannians over a (product of) flag varieties. Thus $X$ is $\mathcal{O}$-smooth, and so $X \otimes_{\mathcal{O}} \mathbb{F}$ is irreducible. We conclude the same is true of $M_{\widetilde{\lambda}} \otimes_{\mathcal{O}} \mathbb{F}$. 
\end{proof}
\section{Duality}\label{dual}

In this section we introduce an involution of $\operatorname{Gr}_{\mathcal{O}}$ which is useful when dealing with certain normalisation issues which arise when passing between the affine grassmannian and moduli of Breuil--Kisin modules.
\begin{sub}
	If $\mathcal{E}$ corresponds to an $A$-valued point of $\operatorname{Gr}_{\mathcal{O}}$ then 
	$$
	\mathcal{E}^* := \operatorname{Hom}_{(A \otimes_{\mathbb{Z}_p} W(k))[u]}(\mathcal{E}, (A\otimes_{\mathbb{Z}_p} W(k))[u])
	$$
	is again $(A\otimes_{\mathbb{Z}_p} W(k))[u]$-projective and, under the natural identification $(A\otimes_{\mathbb{Z}_p} W(k))[u]^{d,*} \cong  (A\otimes_{\mathbb{Z}_p} W(k))[u]^{d}$, we can view $\mathcal{E}^*$ as an $A$-valued point of $\operatorname{Gr}_{\mathcal{O}}$. Since $\mathcal{E}^{**} = \mathcal{E}$ the endomorphism of $\operatorname{Gr}_{\mathcal{O}}$ induced by
	$$
	\mathcal{E} \mapsto \mathcal{E}^*
	$$
	is an automorphism.
\end{sub}

\begin{lemma}\label{dual2}
	The above automorphism identifies $M_{\lambda}$ with $M_{-w_0\lambda}$ where $w_0 \in W$ denotes the longest element. 
\end{lemma}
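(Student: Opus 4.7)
The plan is as follows. Since $(-)^*$ is an involutive automorphism of $\operatorname{Gr}_{\mathcal{O}}$ it preserves scheme-theoretic closures, and $M_\lambda$ is by definition the closure in $\operatorname{Gr}_{\mathcal{O}}$ of
$$
\prod_\kappa (G/P_{\lambda_\kappa} \otimes_{\mathcal{O}_K,\kappa} E) \subset \operatorname{Gr}_{\mathcal{O}} \otimes_{\mathcal{O}} E,
$$
so it suffices to show that the duality map carries this generic fibre isomorphically onto the analogous subscheme for $-w_0\lambda$.

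First I would reduce to the one-embedding version of the statement. Formation of $(A\otimes_{\mathbb{Z}_p} W(k))[u]$-linear duals is compatible with the integral factorisation $\operatorname{Gr}_{\mathcal{O}} \cong \prod_{\kappa_0} \operatorname{Gr}_{\kappa_0}$ of \ref{embeddings}, and, after inverting $p$, also with the refinement $\operatorname{Gr}_{\kappa_0} \otimes_{\mathcal{O}} E \cong \prod_{\kappa|_k = \kappa_0} (\operatorname{Gr}\otimes_{\mathcal{O}_K,\kappa} E)$ of Lemma~\ref{generic}, because the corresponding idempotent decomposition of $(E\otimes_{\mathbb{Z}_p} \mathcal{O}_K)[u]$ commutes with $\operatorname{Hom}$. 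The claim therefore reduces to showing, for every dominant $\lambda \in X(T)$, that the duality involution of $\operatorname{Gr}\otimes_{\mathcal{O}_K} E$ sends $G/P_\lambda$ isomorphically onto $G/P_{-w_0\lambda}$.

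I would handle the single-factor statement by a direct computation at the base point. Under the standard pairing on $K[u]^d$ the lattice $\mathcal{E}_\lambda$ generated by the $(u-\pi)^{\lambda_i} e_i$ has dual generated by the $(u-\pi)^{-\lambda_i} e_i$, so $\mathcal{E}_\lambda^* = \mathcal{E}_{-\lambda}$. Although $-\lambda$ is not dominant, the permutation matrix $w_0$ reversing the standard basis satisfies $w_0 \mathcal{E}_{-w_0\lambda} = \mathcal{E}_{-\lambda}$, which places $\mathcal{E}_\lambda^*$ inside the orbit $G \cdot \mathcal{E}_{-w_0\lambda} = G/P_{-w_0\lambda}$. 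The involution is $G$-equivariant in the twisted sense $(g\mathcal{E})^* = g^{-T}\mathcal{E}^*$, so the whole orbit $G/P_\lambda$ lands inside $G/P_{-w_0\lambda}$; applying $(-)^*$ once more gives the opposite inclusion, and since both orbits are projective the resulting morphism is an isomorphism. The only real obstacle is bookkeeping with the product decompositions (integrally $\operatorname{Gr}_{\mathcal{O}}$ only splits over embeddings $\kappa_0 : k \to \mathbb{F}$, while the refinement over $\kappa:K \to E$ is available only after inverting $p$), but this is harmless here because the entire argument is performed on the generic fibre.
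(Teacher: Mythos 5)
Your argument is essentially identical to the paper's: reduce to the generic fibre, reduce further to the single-factor statement about $G/P_\lambda \hookrightarrow \operatorname{Gr}$, compute $\mathcal{E}_\lambda^* = w_0\mathcal{E}_{-w_0\lambda}$ at the base point, and invoke the twisted $G$-equivariance $(g\mathcal{E})^* = (g^{-1})^t\mathcal{E}^*$ to move the computation over the whole orbit. The only cosmetic difference is that you spell out the compatibility of $(-)^*$ with the two product decompositions, which the paper leaves implicit.
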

In other words, $-w_0\mu = (-w_0\mu_\kappa)$ where $-w_0\mu_\kappa = (-\mu_{\kappa,d},\ldots,-\mu_{\kappa,1}) \in X(T)$.
\begin{proof}
	It suffices to prove this on the generic fibre. Thus, one is reduced to prove that for any $\lambda \in X(T)$, $G/P_{\lambda} \subset \operatorname{Gr}$ is identified with $G/P_{-w_0\lambda}$ by the version of $\mathcal{E} \mapsto \mathcal{E}^*$ on $\operatorname{Gr}$. But this follows easily from the fact that if $\mathcal{E}$ is generated by $(e_1,\ldots,e_d)X$ then $\mathcal{E}^*$ is generated by $(e_1,\ldots,e_d)(X^{-1})^t$ for $(X^{-1})^{t}$ the conjugate transpose. In particular, the $G$-orbit of any $\mathcal{E}$ is mapped onto the $G$-orbit of $\mathcal{E}^*$. Since $\mathcal{E}_{\lambda}^* = w_0 \mathcal{E}_{-w_0\lambda}$ the lemma follows.
\end{proof}
\section{Breuil--Kisin modules}\label{BKm}

	\begin{sub}
		Let $A$ be a $p$-adically complete $\mathcal{O}$-algebra. Then a \emph{Breuil--Kisin module} $\mathfrak{M}$ over $A$ is a finite projective $\mathfrak{S}_A$-module equipped with an $\mathfrak{S}_A$-linear homomorphism
	$$
	\varphi_{\mathfrak{M}}= \varphi :\mathfrak{M} \otimes_{\varphi,\mathfrak{S}_A} \mathfrak{S}_A \rightarrow \mathfrak{M}
	$$
	whose cokernel is killed by a power of $E(u)$. We say $\mathfrak{M}$ has height $\leq h$ if the cokernel is killed by $E(u)^{h}$. We write $\mathfrak{M}^\varphi$ for the image of $\varphi_{\mathfrak{M}}$ and $\varphi(\mathfrak{M})$ for the image of the composite $\mathfrak{M} \xrightarrow{m\mapsto m \otimes 1} \mathfrak{M} \otimes_{\varphi,\mathfrak{S}_A} \mathfrak{S}_A \rightarrow \mathfrak{S}$. Thus $\varphi(\mathfrak{M})$ is an $\varphi(\mathfrak{S}_A)$-submodule of $\mathfrak{M}^\varphi$ which generates $\mathfrak{M}^\varphi$ over $\mathfrak{S}_A$. 
	\end{sub}
	\begin{definition}
		For any $p$-adically complete $\mathcal{O}$-algebra $A$ write 
		\begin{itemize}
			\item $Z^{\leq h}_d(A)$ for the groupoid of rank $d$ Breuil--Kisin modules over $A$ with height $\leq h$. Morphisms are $\mathfrak{S}_A$-linear isomorphisms compatible with the Frobenius.
			\item $\widetilde{Z}^{\leq h}_d(A)$ for the groupoid of pairs $(\mathfrak{M},\beta)$ with $\mathfrak{M} \in Z^{\leq h}_d(A)$ and $\beta= (\beta_1,\ldots,\beta_d)$ an $\mathfrak{S}_A$-basis of $\mathfrak{M}$. Morphisms are $\mathfrak{S}_A$-linear isomorphisms compatible with the Frobenius and identifying the bases.
		\end{itemize}
	 With pull-backs defined by base-change these categories form an fpqc stacks $Z^{\leq h}, \widetilde{Z}^{\leq h}_d$ over $\operatorname{Spf}\mathcal{O}$.
	\end{definition}
	
	\begin{sub}\label{GammaandPsiconstruction}
		We consider the following diagram:
		\begin{equation*}\label{PRconst}
			\begin{tikzcd}
				& \ar[dl,"\Gamma"] \ar[dr,"\Psi"] \widetilde{Z}^{\leq h}_d & \\
					Z^{\leq h}_d  & &  \operatorname{Gr}_{\mathcal{O}}
			\end{tikzcd}
		\end{equation*}
		where $\Gamma$ forgets the choice of basis and, with $\operatorname{Gr}_{\mathcal{O}}$ viewed as a formal scheme over $\operatorname{Spf}\mathcal{O}$, the morphism $\Psi$ sends $(\mathfrak{M},\beta)$ onto $\mathcal{E} \in \operatorname{Gr}_{\mathcal{O}}$ obtained via
		$$
		\mathcal{E} := \mathfrak{M} \hookrightarrow \frac{1}{E(u)^h}\mathfrak{M}^\varphi \xrightarrow{\varphi(\beta)} \frac{1}{E(u)^h}\mathfrak{S}_A^d
		$$
		(here $\varphi(\beta)$ is interpreted as an identification $\mathfrak{M}^\varphi \cong \mathfrak{S}_A^d$). Concretely, if $C$ is the matrix with entries in $\mathfrak{S}_A$ for which $\varphi(\beta) = \beta C$ then, since $\mathfrak{M}$ is generated by $\varphi(\beta) C^{-1}$, we have $\Psi(\mathfrak{M},\beta) = \mathcal{E}$ for $\mathcal{E} \subset \mathfrak{S}_A[\frac{1}{E(u)}]^d$ the submodule generated by $C^{-1}$. 
		
		Writing $L^+\operatorname{GL}_d$ for the group scheme given by $A \mapsto \operatorname{GL}_d(\mathfrak{S}_A)$ we see that $\Gamma$ is an $L^+\operatorname{GL}_d$-torsor for the action on $\widetilde{Z}^{\leq h}_d$ given by $g \cdot (\mathfrak{M},\beta) = (\mathfrak{M},\beta g)$. A second action of $L^+\operatorname{GL}_d$ on $\widetilde{Z}^{\leq h}$ can be given by 
		$$
		g* (\mathfrak{M},\beta)  = (\mathfrak{M}_g,\beta)
		$$
		where $\mathfrak{M}_g = \mathfrak{M}$ as an $\mathfrak{S}_A$-module, with Frobenius given by $\varphi_g(\beta) = \beta gC$ for $C$ the matrix determined by $\varphi(\beta) = \beta C$. It is easy to see that this action makes $\Psi$ into an $L^+\operatorname{GL}_d$-torsor over its image in $\operatorname{Gr}_{\mathcal{O}}$.
	\end{sub}
\begin{sub}\label{GammaandPhivariant}
	An alternative viewpoint on~\ref{GammaandPsiconstruction} is as follows. Let $L^{\leq h}\operatorname{GL}_d$ denote the group scheme over $\mathcal{O}$ given by
	$$
	A \mapsto \lbrace C \in \operatorname{GL}_d(\mathfrak{S}_A[\tfrac{1}{E(u)}] \mid  E(u)^h C^{-1}, C \in \operatorname{Mat}(\mathfrak{S}_A) \rbrace
	$$
	Then the morphism $\widetilde{Z}^{\leq h}_d \rightarrow L^{\leq h}\operatorname{GL}_d$ given by $(\mathfrak{M},\beta) \mapsto C$, for $C$ defined by $\varphi(\beta) = \beta C$, is an isomorphism (here we view $L^\leq h \operatorname{GL}_d$ as a formal scheme over $\operatorname{Spf}\mathcal{O}$). Under this isomorphism the action $g \cdot (\mathfrak{M},\beta)$ corresponds to (right) action of $L^+\operatorname{GL}_d$ by $\varphi$-conjugation: $C \mapsto g^{-1} C \varphi(g)$. The action $g * (\mathfrak{M},\beta)$ corresponds to the (left) multiplication action: $C \mapsto gC$. Thus~\eqref{PRconst} identifies with the diagram
	\begin{equation*}
	\begin{tikzcd}
		& \ar[dl] \ar[dr,"C \mapsto C^{-1}"] L^{\leq h} \operatorname{GL}_d & \\
		\left[  L^{\leq h} \operatorname{GL}_d / \prescript{}{\varphi}{L^+\operatorname{GL}_d} \right]   & & \left[ L^{\leq h}\operatorname{GL}_d / L^+\operatorname{GL}_d \right] \hookrightarrow \operatorname{Gr}_{\mathcal{O}}
	\end{tikzcd}
\end{equation*}
Here $L^{\leq h} \operatorname{GL}_d / \prescript{}{\varphi}{L^+\operatorname{GL}_d}$ indicates the quotient by Frobenius conjugation
\end{sub}

The issue with the construction in~\ref{GammaandPsiconstruction} is that $\widetilde{Z}^{\leq h}_d \cong L^{\leq h}\operatorname{GL}_d$ is not  of finite type over $\operatorname{Spf}\mathcal{O}$. To address this we instead consider:
\begin{definition}
	For $N \geq 1$ let $\widetilde{Z}^{\leq h,N}_d(A)$ denote the category of pairs $(\mathfrak{M},\beta) \in \widetilde{Z}_d^{\leq h}(A)$ whose morphisms are $\varphi$-equivariant morphisms of $\mathfrak{S}_A$-modules which identify the bases modulo $u^N$. Equivalently, $\widetilde{Z}^{\leq h,N}_d$ is the quotient of $\widetilde{Z}^{\leq h}_d$ under the action of the group scheme 
	$$
	U_N : A \mapsto 1 + u^N\operatorname{Mat}(\mathfrak{S}_A)
	$$ 
	Thus $\widetilde{Z}^{\leq h}_d \cong [L^{\leq h}\operatorname{GL}_d/U_N]$ (the quotient being by Frobenius conjugation). 
\end{definition}
The following is a essentially \cite[2.1]{PR09}.
\begin{proposition}\label{basiclocalmodel}
 Fix $n \geq 1$. Then, for $N$ sufficiently large, \eqref{GammaandPsiconstruction} induces a diagram
 \begin{equation*}\label{PRconst}
 	\begin{tikzcd}
 		& \ar[dl,"\Gamma_N"] \ar[dr,"\Psi_N"] \widetilde{Z}^{\leq h,N}_d  \otimes_{\mathcal{O}} \mathcal{O}/\pi^n& \\
 		Z^{\leq h}_d \otimes_{\mathcal{O}} \mathcal{O}/\pi^n & &  \operatorname{Gr}_{\mathcal{O}} \otimes_{\mathcal{O}} \mathcal{O}/\pi^n
 	\end{tikzcd}
 \end{equation*}
 in which $\Gamma_N$ is a $\mathcal{G}_N$-torsor for $\mathcal{G}_N$ the group scheme defined by $A\mapsto \operatorname{GL}_d(\mathfrak{S}_A/u^N)$ and $\Psi_N$ is smooth of relative dimension $\operatorname{dim}_{\mathcal{O}}\mathcal{G}_N$ with irreducible fibres. More precisely, the action $g * (\mathfrak{M},\beta)$ from~\ref{GammaandPsiconstruction} induces an action of $\mathcal{G}_N$ on $\widetilde{Z}^{\leq h,N}_{d} \otimes_{\mathcal{O}} \mathcal{O}/\pi^n$ making $\Psi_N$ into a $\mathcal{G}_N$-torsor over its image in $\operatorname{Gr}_{\mathcal{O}} \otimes_{\mathcal{O}} \mathcal{O}/\pi^n$.
\end{proposition}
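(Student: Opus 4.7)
The approach is to reduce everything to group-theoretic data via the isomorphism $\widetilde{Z}^{\leq h}_d \cong L^{\leq h}\operatorname{GL}_d$ of~\ref{GammaandPhivariant}. Under this identification one has $\widetilde{Z}^{\leq h,N}_d = [L^{\leq h}\operatorname{GL}_d/\prescript{}{\varphi}{U_N}]$, $\Gamma_N$ becomes the further quotient to $Z^{\leq h}_d = [L^{\leq h}\operatorname{GL}_d/\prescript{}{\varphi}{L^+\operatorname{GL}_d}]$, and $\Psi_N$ is induced by $C\mapsto C^{-1}$, which exhibits $L^{\leq h}\operatorname{GL}_d$ as an $L^+\operatorname{GL}_d$-torsor under the left-multiplication ($*$) action over its image in $\operatorname{Gr}_{\mathcal{O}}$. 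The proposition then becomes a statement about how the $\varphi$-conjugation and $*$-actions of $L^+\operatorname{GL}_d$ interact with the descending filtration by the $U_N$.

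The claim about $\Gamma_N$ is essentially formal. A direct matrix computation shows that $U_N$ is normal in $L^+\operatorname{GL}_d$ with quotient $\mathcal{G}_N$, so the $\varphi$-conjugation action descends to an action of $\mathcal{G}_N$ on $\widetilde{Z}^{\leq h,N}_d$ making $\Gamma_N$ a $\mathcal{G}_N$-torsor; this holds integrally, with no largeness hypothesis on either $n$ or $N$. The same normality ensures that the $*$-action of $L^+\operatorname{GL}_d$ on $L^{\leq h}\operatorname{GL}_d$ also descends to an action on $\widetilde{Z}^{\leq h,N}_d$.

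The substantive part is to show that, after reducing modulo $\pi^n$ and taking $N$ large enough, (i) this descended $*$-action factors through $\mathcal{G}_N$, i.e.\ $U_N$ acts trivially, and (ii) the resulting $\mathcal{G}_N$-action is free on fibres of $\Psi_N$. Both reduce, for given $u\in U_N$ and $C\in L^{\leq h}\operatorname{GL}_d$, to solving the fixed-point equation
\begin{equation*}
    h \equiv uC\varphi(h)C^{-1}\pmod{\pi^n}, \qquad h\in U_N,
\end{equation*}
which expresses the desired identification $uC \sim hC\varphi(h)^{-1}$ in $\widetilde{Z}^{\leq h,N}_d$. I would solve this by successive approximation from $h_0 = 1$: since $\varphi(1+u^N\mathrm{Mat})\subset 1+u^{Np}\mathrm{Mat}$, each iterate $h_{k+1} = uC\varphi(h_k)C^{-1}$ improves on the previous by a factor of $u^{N(p-1)}$. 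Freeness of the $\mathcal{G}_N$-action is then a parallel, more elementary computation.

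The main obstacle is controlling the $E(u)$-poles of $C^{-1}$, which are of order $\leq h$ and eat into the $u$-adic gain from $\varphi$. Modulo $\pi^n$ the polynomial $E(u)$ is $\equiv u^e$ up to terms of bounded $u$-degree with coefficients in $p\mathcal{O}/\pi^n$, so $C^{-1}$ mod $\pi^n$ has $u$-adic poles of order bounded in terms of $h,e,n$. One must choose $N$ large enough that $N(p-1)$ dominates the cumulative pole order across all $\pi$-adic layers, which is the essential input already present in~\cite[2.1]{PR09}. Once the iteration converges modulo $(\pi^n,u^M)$ for every $M$, smoothness of $\Psi_N$ and irreducibility of its fibres of relative dimension $\dim_{\mathcal{O}}\mathcal{G}_N$ follow immediately from the $\mathcal{G}_N$-torsor structure, because $\mathcal{G}_N = \operatorname{GL}_d(\mathfrak{S}/u^N)$ is a smooth connected group scheme over $\mathcal{O}$.
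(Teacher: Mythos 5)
Your proposal is correct and follows essentially the same route as the paper: reduce to the group-theoretic picture via $\widetilde{Z}^{\leq h}_d \cong L^{\leq h}\operatorname{GL}_d$, observe that the $\Gamma_N$-torsor claim is formal from $U_N \triangleleft L^+\operatorname{GL}_d$, and then prove that the Frobenius-conjugation and left-multiplication quotients by $U_N$ agree after reducing mod $\pi^n$ for $N\gg 0$, by a convergent iteration controlled by the divisibility of $u^{(p-1)N-1}$ by $E(u)^h$ in $\mathfrak{S}_{\mathcal{O}/\pi^n}$. The only cosmetic difference is the form of the iteration: you solve the fixed-point equation $h=uC\varphi(h)C^{-1}$ directly from $h_0=1$, whereas the paper writes the solution as the limit of the products $I_nJ_n^{-1}$ with $I_n=C\varphi(C)\cdots\varphi^{n-1}(C)$ and $J_n=(gC)\varphi(gC)\cdots\varphi^n(gC)$; these are equivalent, and both convergences rest on exactly the same estimate $u^{pN}C^{-1}\in u^{N+1}\operatorname{Mat}(\mathfrak{S}_A)$.
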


 We will see in the proof that $N$ is required large enough that $E(u)^h$ divides $u^{(p-1)N-1}$ in $\mathfrak{S}_{\mathcal{O}/\pi^n}$ (this can be made explicit using e.g. \cite[5.2.6]{EG21}).
\begin{proof}
	The crucial observation is that if $E(u)^h$ divides $u^{(p-1)N-1}$ in $\mathfrak{S}_{\mathcal{O}/\pi^n}$ then, as stacks over $\operatorname{Spec}\mathcal{O}/\pi^n$, the identity map $L^{\leq h}\operatorname{GL}_d \rightarrow L^{\leq h}\operatorname{GL}_d$ induces an isomorphism
	$$
	\left[ L^{\leq h}\operatorname{GL}_d / \prescript{}{\varphi}{U_N} \right] \cong [U_N\backslash L^{\leq h}\operatorname{GL}_d]
	$$
	(the quotient on the left being by Frobenius conjugation and that on the right being by left multiplication). This immediately shows that the morphism $\Psi$ induces a morphism $\Psi_N$ as claimed, and that $\Psi_N$ is a $\mathcal{G}_N = L^+\operatorname{GL}_d/U_N$-torsor over the closed $\mathcal{O}/\pi^a$-subscheme $[L^{\leq h}\operatorname{GL}_d/L^+\operatorname{GL}_d] \hookrightarrow \operatorname{Gr}_{\mathcal{O}}$. 
	
	Concretely, the claimed isomorphism follows from the two assertions: If $C \in L^{\leq h}\operatorname{GL}_d(A)$
	\begin{itemize}
		\item For each $g_0 \in U_N(A)$ we have $g_0^{-1}C \varphi(g_0) = gC$ for a unique $g \in U_N(A)$.
		\item For each $g \in U_N(A)$ there exists a unique $g_0 \in U_N(A)$ with $g_0^{-1} C\varphi(g_0) = gC$.
	\end{itemize}
	Note these are exactly the same assertions as (1) and (2) in \cite[2.2]{PR09}. The first point is easy because $g_0 \in U_N(A)$ implies
	$$
	g = g_0^{-1}C\varphi(g_0)C^{-1} = g_0^{-1} \left( 1 + u^{pN}C_1 C^{-1} \right) = 1 + u^N C_2 + u^{pN}C_3 C^{-1}
	$$
	Therefore, the fact that $E(u)^hC^{-1} \in \operatorname{Mat} (\mathfrak{S}_A)$ and that $E(u)^h$ divides $u^{pN}$ ensures $g \in U_N(A)$. For the second point notice that, if $J_{n} = (gC)\varphi(gC)\ldots\varphi^{n}(gC)$ and $I_{n} = C\varphi(C)\ldots\varphi^{n-1}(C)$, then any such $g_0$ satisfies
	$$
	g_0 = I_n \varphi^n(g_0) J_n^{-1} = I_n J_{n}^{-1} + I_n( \varphi^n(g_0) -1) J_n^{-1}
	$$
	for every $n \geq 1$. We claim that for any $g_1 \in U_N(A)$ we have $I_n( \varphi^n(g_1) -1) J_n^{-1} \in u^N\operatorname{Mat}(\mathfrak{S}_A)$ and that this element conveges $u$-adically to zero as $n \rightarrow \infty$. This ensures that $I_n J_n^{-1}$ converges in $U_N(A)$ as $n \rightarrow \infty$ and that this limit is the unique $g_0$ satisfying $g_0^{-1} C \varphi(g_0) = gC$. Since $g_1\equiv 1$ modulo $u^N$ we can write $\varphi^n(g_1)-1 = u^{N + (p^n-1)N}g_1'$. Therefore the claim will follow if  $u^{(p^n-1)N}J_{n-1}^{-1} \in \operatorname{Mat}(\mathfrak{S}_A)$ converges $u$-adically to zero. Since $\varphi^i(E(u)^h)$ divides $u^{((p-1)N-1)p^i}$ and 
	$$
	E(u)^{h} \varphi(E(u))^{h} \ldots \varphi^{n-1}(E(u)^{h}) I_{n-1}^{-1} \in \operatorname{Mat}(\mathfrak{S}_A)
	$$
	the claim follows from the observation that $(p^n-1)N -((p-1)N-1)(1+\ldots+p^{n-1}) = 1+\ldots+ p^{n-1}$ is $\geq 0$ and $\rightarrow \infty$.
\end{proof}
\begin{corollary}
	$\widetilde{Z}^{\leq h,N}_d \times_{\mathcal{O}} \mathcal{O}/\pi^n$ is a finite type $\mathcal{O}$-scheme for $N>>0$ and $Z^{\leq h}_d$ is a $p$-adic formal algebraic stack (in the sense of \cite[A7]{EG19}) of finite type over $\operatorname{Spf}\mathcal{O}$.
\end{corollary}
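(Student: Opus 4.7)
The plan is to deduce both assertions directly from Proposition~\ref{basiclocalmodel}. Fix $n \geq 1$ and choose $N$ large enough for that proposition to apply.

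For the first assertion, I would use $\Psi_N$: by Proposition~\ref{basiclocalmodel} it makes $\widetilde{Z}^{\leq h,N}_d \otimes_{\mathcal{O}} \mathcal{O}/\pi^n$ into a $\mathcal{G}_N$-torsor over its image in $\operatorname{Gr}_{\mathcal{O}} \otimes_{\mathcal{O}} \mathcal{O}/\pi^n$. As described in~\ref{GammaandPhivariant}, this image is $[L^{\leq h}\operatorname{GL}_d / L^+\operatorname{GL}_d]$, and the height condition defining $L^{\leq h}\operatorname{GL}_d$ places the image inside the bounded locus $\operatorname{Gr}^{\leq h}_{\mathcal{O}} \subset \operatorname{Gr}_{\mathcal{O}}$ classifying lattices $\mathcal{E}$ with $E(u)^h \mathfrak{S}^d_A \subset \mathcal{E} \subset E(u)^{-h} \mathfrak{S}^d_A$. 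This bounded locus is a closed subscheme of the classical Grassmannian of $\mathfrak{S}_A$-submodules of $E(u)^{-h}\mathfrak{S}^d_A / E(u)^{h}\mathfrak{S}^d_A$, hence projective of finite type over $\mathcal{O}$. Combining this with the fact that $\mathcal{G}_N$ is an affine group scheme of finite type over $\mathcal{O}$ (torsors under such groups are representable by schemes), we conclude that $\widetilde{Z}^{\leq h,N}_d \otimes_{\mathcal{O}} \mathcal{O}/\pi^n$ is a finite type $\mathcal{O}/\pi^n$-scheme.

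For the second assertion, I would instead use the morphism $\Gamma_N$, which is a $\mathcal{G}_N$-torsor. This identifies
\[
Z^{\leq h}_d \otimes_{\mathcal{O}} \mathcal{O}/\pi^n \;\cong\; \bigl[ \widetilde{Z}^{\leq h,N}_d \otimes_{\mathcal{O}} \mathcal{O}/\pi^n \,\bigm/\, \mathcal{G}_N \bigr],
\]
a quotient of a finite type scheme by a finite type affine group scheme, hence a finite type algebraic stack over $\mathcal{O}/\pi^n$. Passing to the colimit as $n$ varies, the compatible system $\{Z^{\leq h}_d \otimes_{\mathcal{O}} \mathcal{O}/\pi^n\}_{n \geq 1}$ exhibits $Z^{\leq h}_d$ as a $p$-adic formal algebraic stack in the sense of \cite[A7]{EG19}. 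Its finite-typeness over $\operatorname{Spf}\mathcal{O}$ follows because its reduction modulo $\pi$ is already a finite type algebraic stack.

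The argument is essentially formal; no step is substantive, since all the real work has been done in Proposition~\ref{basiclocalmodel}. The only point worth underlining is that the height bound is precisely what cuts the image of $\Psi_N$ down from the full ind-scheme $\operatorname{Gr}_{\mathcal{O}}$ to a finite type subscheme, which is ultimately what makes the torsor total space finite type rather than merely ind-finite type.
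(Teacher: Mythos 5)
Your proof is correct and follows the same route the paper takes: both assertions are read off directly from Proposition~\ref{basiclocalmodel}, using $\Psi_N$ (torsor over a bounded, hence finite type, closed subscheme of $\operatorname{Gr}_{\mathcal{O}}\otimes\mathcal{O}/\pi^n$) for the first part and $\Gamma_N$ (presenting $Z^{\leq h}_d\otimes\mathcal{O}/\pi^n$ as a quotient stack by $\mathcal{G}_N$) for the second. One tiny inaccuracy worth noting: the image of $\Psi_N$ actually sits inside the locus $\mathfrak{S}_A^d\subset\mathcal{E}\subset E(u)^{-h}\mathfrak{S}_A^d$ rather than $E(u)^h\mathfrak{S}_A^d\subset\mathcal{E}\subset E(u)^{-h}\mathfrak{S}_A^d$, since $C\in\operatorname{Mat}(\mathfrak{S}_A)$ forces $\mathfrak{S}_A^d\subset\mathcal{E}=C^{-1}\mathfrak{S}_A^d$; this is a slightly smaller bounded locus and does not affect the argument.
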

\begin{proof}
	The first part follows since we've just seen that  $\widetilde{Z}^{\leq h,N}_d \times_{\mathcal{O}} \mathcal{O}/\pi^n$ is a torsor for a finite type group scheme over a finite type $\mathcal{O}/\pi^n$-scheme. The second part follows from the first and the definition of a $p$-adic formal algebraic stack.
\end{proof}

\section{Crystalline Breuil--Kisin modules}

\begin{sub}
	If $A$ is a $p$-adically complete $\mathcal{O}$-algebra which is of topologically finite type then a \emph{crystalline Breuil--Kisin module over $A$} is a pair $(\mathfrak{M},\sigma)$ with $\mathfrak{M}$ a Breuil--Kisin module over $A$ and $\sigma$ a continuous $\varphi$-equivariant $A_{\operatorname{inf},A}$-semilinear action of $G_K$ on $\mathfrak{M} \otimes_{\mathfrak{S}_A} A_{\operatorname{inf},A}$ satisfying
$$
(\sigma-1)(m) \in \mathfrak{M} \otimes_{\mathfrak{S}_A} [\pi^\flat]\varphi^{-1}(\mu)A_{\operatorname{inf},A}, \qquad (\sigma_{\infty} -1)(m) = 0
$$
for every $m \in \mathfrak{M}$ and every $\sigma \in G_K, \sigma_{\infty} \in G_{K_\infty}$.
\end{sub}
\begin{definition}
	Write $Y^{\leq h}_d(A)$ for the groupoid consisting of rank $d$ crystalline Breuil--Kisin modules over $A$ with height $\leq h$.
\end{definition}

\begin{sub}
	One can attach a Hodge type to crystalline Breuil--Kisin modules (at least over coefficient rings which are $\mathcal{O}$-flat): For $n \in \mathbb{Z}$ one defines $\operatorname{Fil}^n(\mathfrak{M}^\varphi) = \mathfrak{M}^\varphi \cap E(u)^n\mathfrak{M}$ and equips the finite projective $\mathcal{O}_K\otimes_{\mathbb{Z}_p} A$-module $\mathfrak{M}^\varphi/E(u)$ with the filtration whose $n$-th filtered piece is the image of $\operatorname{Fil}^n(\mathfrak{M}^\varphi)$. The graded pieces of this filtration become $(A \otimes_{\mathbb{Z}_p} W(k)[\frac{1}{p}]$-projective after inverting $p$. This allows us to say that $(\mathfrak{M},\sigma)$ has Hodge type $\mu = (\mu_{\kappa})$ if the part of $\operatorname{gr}^n\mathfrak{M}^\varphi[\frac{1}{p}]/E(u)$ on which $K$ acts via $\kappa$ has $E$-dimension equal the multiplicity of $n$ in $\mu_\kappa$.
\end{sub}
\begin{remark}
	In other words, $\mathfrak{M}$ has Hodge type $\mu$ if the part of $\mathfrak{M}^\varphi[\frac{1}{p}]/E(u)$ on which $K$ acts via $\kappa$ is a filtration of type 
	$$
	-w_0\mu_\kappa = (-\mu_{\kappa,d},\ldots,-\mu_{\kappa,1})
	$$
	in the sense of \ref{flags}.
\end{remark}
 
 \begin{theorem}\label{crys}
 	If $(\mathfrak{M},\sigma) \in Y^{\leq h}_d(A)$ with $A$ a finite flat $\mathcal{O}$-algebra then
 	$$
 	V = (\mathfrak{M} \otimes_{\mathfrak{S}_A} W(C^\flat)_A)^{\varphi=1}
 	$$
 	equipped with the $G_K$-action induced by $\sigma$ is a crystalline representation of $G_K$ on a finite projective $A$-module. Furthermore, the Hodge type of $(\mathfrak{M},\sigma)$ coincides with that attached to $V$ via the filtered module $D_{\operatorname{crys}}(V)_K := (V \otimes_{\mathbb{Z}_p} B_{\operatorname{dR}})^{G_K}$ with $n$-th filtered piece given by $(V\otimes_{\mathbb{Z}_p} t^nB_{\operatorname{dR}}^+)^{G_K}$.
 \end{theorem}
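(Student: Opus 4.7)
The plan is to follow the classical Kisin-style approach relating Breuil--Kisin modules with a compatible Galois action to lattices in crystalline representations, adapted to the coefficient ring $A$.

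\emph{Reduction.} Since $A$ is finite flat over $\mathcal{O}$, the $E$-algebra $A[\tfrac{1}{p}]$ decomposes as a finite product of local Artinian factors. By a limit / devissage argument it suffices to verify the claims after base-change to the ring of integers in the residue field of each local factor. I may therefore assume throughout that $A = \mathcal{O}'$ is the ring of integers in a finite extension of $E$.

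\emph{Construction of $V$ as a $G_K$-module.} For such $A$, Fontaine's theory of étale $\varphi$-modules yields that $V = (\mathfrak{M}\otimes_{\mathfrak{S}_A} W(C^\flat)_A)^{\varphi=1}$ is finite free over $A$ of rank $d$ and that the natural map $V \otimes_A W(C^\flat)_A \to \mathfrak{M} \otimes_{\mathfrak{S}_A} W(C^\flat)_A$ is an isomorphism. The trivial $G_{K_\infty}$-action on $\mathfrak{M}$ induces a continuous $G_{K_\infty}$-action on $V$, and the assumption $(\sigma_\infty-1)(m)=0$ for $m \in \mathfrak{M}$ ensures this agrees with the restriction of $\sigma$. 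Thus $\sigma$ extends to a continuous $G_K$-action on $V$.

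\emph{Crystallinity and matching of the Hodge type.} The divisibility condition $(\sigma-1)(m) \in [\pi^\flat]\varphi^{-1}(\mu)\,\mathfrak{M}\otimes_{\mathfrak{S}_A} A_{\operatorname{inf},A}$ combined with Lemma~\ref{mu} and the fact that $\varphi^{-1}(\mu)$ and $E(u)$ generate the same ideal in $A_{\operatorname{inf}}$ up to a unit, together imply that $\sigma$ acts trivially on $\mathfrak{M}^\varphi\otimes_{\mathfrak{S}_A} A_{\operatorname{inf},A}/E(u)$. Extending scalars along $A_{\operatorname{inf},A} \to B_{\operatorname{crys}}^+\otimes_{\mathbb{Z}_p} A$ and invoking the standard $B_{\operatorname{crys}}$-formalism (i.e. a coefficient-ring version of Kisin's comparison theorem, as already used in \cite{EG19}) produces a $(\varphi, G_K)$-equivariant isomorphism
$$
\mathfrak{M}^\varphi \otimes_{\mathfrak{S}_A} B_{\operatorname{crys}}^+ \;\cong\; V \otimes_A B_{\operatorname{crys}}^+.
$$
Taking Galois invariants and inverting $p$ identifies $D_{\operatorname{crys}}(V)_K$ with $\mathfrak{M}^\varphi[\tfrac{1}{p}]/E(u)\mathfrak{M}^\varphi[\tfrac{1}{p}]$ and shows $V[\tfrac{1}{p}]$ is crystalline of rank $d$. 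Under this identification, multiplication by $E(u)$ on $\mathfrak{M}^\varphi$ corresponds, after base-change to $B_{\operatorname{dR}}^+$, to multiplication by a unit times $t$, so the filtration $\operatorname{Fil}^n\mathfrak{M}^\varphi = \mathfrak{M}^\varphi \cap E(u)^n\mathfrak{M}$ maps isomorphically onto $(V\otimes t^n B_{\operatorname{dR}}^+)^{G_K}$. Decomposing by embeddings $\kappa:K \hookrightarrow E$ then matches the two definitions of the Hodge type.

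The principal obstacle is the third step: producing the $B_{\operatorname{crys}}^+$-linear comparison isomorphism and controlling dimensions with general $A$-coefficients. I would follow the coefficient-ring generalisations of Kisin's original arguments already present in the literature on moduli of Breuil--Kisin modules; the precise divisibility condition imposed on $\sigma-1$ is tailored so that the usual convergence and comparison arguments go through essentially without modification.
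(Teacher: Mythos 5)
The paper does not reprove this statement; it simply cites \cite[2.1.12]{B19} and notes the ideas go back to \cite{Kis06,GLS,Ozeki}. Your sketch is in the same spirit as that literature, so it is not ``a different route'' so much as an attempt to reconstruct the cited argument. Judged on its own terms, however, it has two genuine problems.

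First, the opening reduction to $A = \mathcal{O}'$ (the ring of integers of a finite extension of $E$) is not justified and is in fact the crux of the statement. A finite flat $\mathcal{O}$-algebra $A$ need not be reduced (e.g. $\mathcal{O}[x]/(x^2)$), so $A[\tfrac{1}{p}]$ can be a non-reduced Artin local $E$-algebra; the assertions that $V$ is finite projective over $A$, that $V[\tfrac{1}{p}]$ is crystalline \emph{with its full $A[\tfrac{1}{p}]$-module structure}, and that the filtrations match are precisely the ``families'' content of \cite[2.1.12]{B19} and do not follow by base change to residue fields. The paper relies on this generality: in the proof of Lemma~\ref{lift} the theorem is used to conclude $\mathcal{L}^{\mu}_{\overline{\rho}}[\tfrac{1}{p}] = \operatorname{Spec}R^\mu_{\overline{\rho}}$ \emph{as schemes} and to deduce that $\mathcal{L}^\mu_{\overline{\rho}}$ is reduced --- information invisible on field-valued points. ``Devissage'' is invoked but not carried out, and it is not clear what the inductive step would be. Second, the auxiliary claim that ``$\varphi^{-1}(\mu)$ and $E(u)$ generate the same ideal in $A_{\operatorname{inf}}$ up to a unit'' is false: $\varphi^{-1}(\mu) = [\epsilon^{1/p}]-1$ vanishes at all higher $p$-power roots of unity, while $E(u)$ vanishes only along $\pi^\flat$. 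The correct comparison, used in the paper just before Remark~\ref{explanation} and following from Lemma~\ref{mu}, is that $\mu/\varphi^{-1}(\mu)$ and $E(u)$ generate the same ideal. The downstream conclusion you want (that $\sigma$ acts trivially on $\mathfrak{M}^\varphi \otimes A_{\operatorname{inf},A}/E(u)$) is salvageable via $\varphi$-equivariance together with the correct identity, but as written the step rests on a wrong fact. Finally, the production of the $B_{\operatorname{crys}}^+$-comparison isomorphism with $A$-coefficients --- the genuine technical heart, requiring convergence arguments over Artinian coefficient rings in the style of Kisin and Liu --- is acknowledged but entirely outsourced, so the proposal remains a plan rather than a proof.
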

\begin{proof}
	The theorem as stated is taken from \cite[2.1.12]{B19}, but the result originates from a combination of ideas appearing in \cite{Kis06,GLS,Ozeki}.
\end{proof}
\begin{remark}
	These conventions mean that the Hodge type of the cyclotomic character is $-1$.
\end{remark}
\begin{proposition}
	There exists a limit preserving $p$-adic algebraic formal stack $Y_d^{\leq h}$ of topologically finite type over $\mathcal{O}$ whose groupoid of $A$-valued points, for any $p$-adically complete $\mathcal{O}$-algebra topologically of finite type, is canonically equivalent to $Y^{\leq h}_d(A)$.
		
	For each Hodge type $\mu$ there exists a unique $\mathcal{O}$-flat closed substack $Y^\mu_d$ of $Y^{\leq h}_d$ with the property that the full subcategory $Y^\mu_d(A)$ of $Y^{\leq h}_d(A)$ consists of all crystalline Breuil--Kisin modules with height $\leq h$ and Hodge type $\mu$.
\end{proposition}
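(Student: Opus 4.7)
The plan is to construct $Y_d^{\leq h}$ by adjoining the crystalline Galois-action datum as a closed condition over the Breuil--Kisin stack $Z_d^{\leq h}$ of the previous section, and then to carve out each $Y_d^\mu$ as the $\mathcal{O}$-flat scheme-theoretic closure inside $Y_d^{\leq h}$ of the Hodge-type-$\mu$ locus in the generic fibre.

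For the first part, I would work over the trivialization $\widetilde{Z}_d^{\leq h,N} \otimes_{\mathcal{O}} \mathcal{O}/\pi^n$ from Proposition~\ref{basiclocalmodel}, on which a basis $\beta$ of $\mathfrak{M}$ is fixed modulo $u^N$. The Frobenius is then recorded as a matrix $C$ via $\varphi(\beta) = \beta C$, and a $\varphi$-equivariant $G_K$-action on $\mathfrak{M} \otimes_{\mathfrak{S}_A} A_{\operatorname{inf},A}$ becomes a continuous cocycle $\sigma \mapsto G_\sigma \in \operatorname{GL}_d(A_{\operatorname{inf},A})$ satisfying $\varphi(G_\sigma)\, C = C\, \sigma(G_\sigma)$. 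The crystalline divisibility $G_\sigma - 1 \in [\pi^\flat]\varphi^{-1}(\mu) \operatorname{Mat}_d(A_{\operatorname{inf},A})$ for every $\sigma \in G_K$ (with $\mu = [\epsilon]-1$ as in~\ref{mathfrak}) and the triviality $G_{\sigma_\infty}=1$ for $\sigma_\infty \in G_{K_\infty}$ are closed conditions by a variant of \cite[B.29]{EG19}, as already invoked to prove $\operatorname{Gr}^{\nabla_{\sigma}}_{\mathcal{O}} \subset \operatorname{Gr}_{\mathcal{O}}$ is closed. Combined with $\varphi$-equivariance and a convergence argument parallel to that in the proof of Proposition~\ref{basiclocalmodel}, the cocycle modulo $\pi^n$ is controlled by finitely many pieces of data, so the augmented moduli problem is represented by a closed formal subscheme $\widetilde{Y}_d^{\leq h,N} \otimes_{\mathcal{O}} \mathcal{O}/\pi^n$ of $\widetilde{Z}_d^{\leq h,N} \otimes_{\mathcal{O}} \mathcal{O}/\pi^n$. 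Descending along the $\mathcal{G}_N$-torsor then produces $Y_d^{\leq h}$ as a $p$-adic formal algebraic stack topologically of finite type over $\operatorname{Spf}\mathcal{O}$, with limit preservation following from the finite presentation at each truncation.

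For the construction of $Y_d^\mu$, I would use that the Hodge type of $(\mathfrak{M},\sigma)$ is recorded by the ranks of the $\kappa$-graded pieces of $\mathfrak{M}^\varphi/E(u)$, giving locally constant functions on $Y_d^{\leq h} \otimes_{\mathcal{O}} E$. Hence the Hodge-type-$\mu$ locus is open-and-closed in the generic fibre, and I would define $Y_d^\mu$ as its $\mathcal{O}$-flat scheme-theoretic closure in $Y_d^{\leq h}$. Theorem~\ref{crys} together with the valuative criterion for properness (applied to finite flat $\mathcal{O}$-algebras by mapping to the generic fibre first and then spreading out) shows that the $A$-points of $Y_d^\mu$ for such $A$ are exactly the crystalline Breuil--Kisin modules of height $\leq h$ and Hodge type $\mu$. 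Uniqueness follows because any $\mathcal{O}$-flat closed substack of $Y_d^{\leq h}$ is the scheme-theoretic closure of its generic fibre, and that generic fibre is determined by the Hodge-type condition on closed points.

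The main obstacle will be rigorously representing the data of a continuous $G_K$-action by a finite-type closed formal subscheme, given that $G_K$ is not finitely generated. The standard resolution (as in \cite{EG19}) is to show that, after imposing the crystalline divisibility by $[\pi^\flat]\varphi^{-1}(\mu)$ and working modulo a fixed power of $p$, the matrix $G_\sigma$ is recovered from its reduction modulo a sufficiently high power of $(u,p)$, and hence depends only on finitely many topological generators of the relevant quotient of $G_K$. Pinning down these finitely presented equations is exactly what is needed to make $\widetilde{Y}_d^{\leq h,N} \otimes_{\mathcal{O}} \mathcal{O}/\pi^n \hookrightarrow \widetilde{Z}_d^{\leq h,N} \otimes_{\mathcal{O}} \mathcal{O}/\pi^n$ a closed immersion.
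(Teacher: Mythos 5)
The paper's own proof is a two-line citation: the first part is \cite[\S4.5]{EG19} (where the stacks $\mathcal{C}^a_{\pi^\flat,s,d,h}$ of Breuil--Kisin--Fargues $G_K$-modules over $\operatorname{Spec}\mathcal{O}/\pi^a$ are built, and $Y^{\leq h}_d \otimes_{\mathcal{O}} \mathcal{O}/\pi^a$ is realised as a closed substack), and the second part is \cite[4.8.2]{EG19}. You instead propose to rebuild $Y_d^{\leq h}$ from scratch over the trivialisation $\widetilde{Z}_d^{\leq h,N}$ and then descend. That is a reasonable-looking plan, but there is a genuine gap at exactly the step you flag as ``the main obstacle.'' The assertion that, after imposing $G_\sigma - 1 \in [\pi^\flat]\varphi^{-1}(\mu)\operatorname{Mat}_d(A_{\operatorname{inf},A})$ and working modulo $\pi^n$, the whole continuous cocycle $\sigma \mapsto G_\sigma$ is recovered from its reduction modulo a fixed power of $(u,p)$ and from finitely many topological generators of $G_K$ is precisely the technical content of \cite[\S4.5]{EG19}: it is what the auxiliary parameter $s$ and the truncation $W_{s,a}$ in their construction are designed to control, and establishing that the resulting functor is limit preserving, algebraic, and of finite type occupies several pages there. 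Invoking ``a variant of \cite[B.29]{EG19}'' does not substitute for this; that result shows a given condition is closed on a fixed Noetherian test scheme, but representability of the moduli problem of cocycles by a finite-type stack is a different and harder issue. Likewise, the claim that a ``convergence argument parallel to that in the proof of Proposition~\ref{basiclocalmodel}'' supplies the needed finiteness does not hold up: that convergence argument controls the Frobenius-conjugation action of $U_N$, not the $G_K$-cocycle. (It is instructive that the paper \emph{does} run such a contraction argument for the $G_K$-action in Proposition~\ref{isom}, but only over the special fibre and only after the divisibility conditions in \ref{Grnablasigmar} have already been imposed on a space that was constructed by other means.) So the proposal's first paragraph ultimately reduces to ``do what \cite{EG19} does,'' which is the same dependency the paper has, just written out less honestly.

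Your treatment of the second part is essentially correct and matches the logic behind \cite[4.8.2]{EG19}: the Hodge type is locally constant on the generic fibre, giving an open-and-closed locus; define $Y^\mu_d$ as the $\mathcal{O}$-flat closure; and note that for $A$ finite flat over $\mathcal{O}$ a map $\operatorname{Spec}A \rightarrow Y^{\leq h}_d$ with Hodge type $\mu$ factors through the closure because $\operatorname{Spec}A[\tfrac{1}{p}]$ is dense in $\operatorname{Spec}A$ (this is just $\mathcal{O}$-flatness of $A$, not the valuative criterion). Uniqueness is as you say. One caveat: the proposition as worded characterises $Y^\mu_d(A)$ for all $p$-adically complete topologically finite type $A$, but the Hodge type is only defined when $A$ is $\mathcal{O}$-flat, so you should restrict the characterisation accordingly, as \cite{EG19} does.
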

\begin{proof}
	The first part follows from \cite[\S4.5]{EG19}. There, algebraic stacks $\mathcal{C}^a_{\pi^\flat,s,d,h}$ over $\operatorname{Spec}\mathcal{O}/\pi^a$ are constructed \cite[4.5.8]{EG19} with $\pi^\flat = (\pi,\pi^{1/p},\ldots)$ and $s$ some sufficiently large integer. In the proof of \cite[4.5.15]{EG19} it is explained how $Y^{\leq h}_\mu \times_{\mathcal{O}} \mathcal{O}/\pi^a$ can be realised as a closed substack of $\mathcal{C}^a_{\pi^\flat,s,d,h}$. The second part is \cite[4.8.2]{EG19}.
\end{proof}

We conclude with a useful lemma giving a description of the points of $Y^\mu_d$ valued in a finite local $\mathbb{F}$-algebra:
\begin{lemma}\label{lift}
	Assume that $\mu_{\kappa} \subset [0,h]$ for each $\kappa$ and suppose that $(\overline{\mathfrak{M}},\overline{\sigma})$ corresponds to an $\overline{A}$-valued point of $\overline{Y}^\mu_d$ for $\overline{A}$ some finite local $\mathbb{F}$-algebra. Then there exists a local finite flat $\mathcal{O}$-algebra $A$ with $\overline{A} = A \otimes_{\mathcal{O}} \mathbb{F}$ and an $A$-valued point $(\mathfrak{M},\sigma)$ of $Y^\mu_d$ with special fibre $(\overline{\mathfrak{M}},\overline{\sigma})$.
\end{lemma}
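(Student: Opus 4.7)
The plan is to use the local model diagram of~\ref{GammaandPsiconstruction} to reduce the lifting question on $Y^\mu_d$ to a trivial lifting on an affine Schubert cell of $\operatorname{Gr}_{\mathcal{O}}$.

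First I would absorb the residue field of $\overline{A}$ into an unramified extension of $\mathcal{O}$: letting $\mathbb{F}'$ be the residue field of $\overline{A}$ and setting $\mathcal{O}' = W(\mathbb{F}') \otimes_{W(\mathbb{F})} \mathcal{O}$, any finite flat local $\mathcal{O}$-algebra $A$ with $A/\pi = \overline{A}$ is automatically an $\mathcal{O}'$-algebra, so I may replace $\mathcal{O}$ by $\mathcal{O}'$ and assume $\overline{A}$ has residue field $\mathbb{F}$. Next, equipping $\overline{\mathfrak{M}}$ with any $\mathfrak{S}_{\overline{A}}$-basis $\overline{\beta}$ (which exists since $\overline{\mathfrak{M}}$ is free) produces an $\overline{A}$-point $\widetilde{x}$ of $\widetilde{Y}^{\mu,N}_d$ lifting $\overline{x}$, for $N$ as in Proposition~\ref{basiclocalmodel}. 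Its image $\overline{e} = \Psi_N(\widetilde{x})$ in $\operatorname{Gr}_{\mathcal{O}} \otimes \mathcal{O}/\pi^n$ lies in some $L^+\operatorname{GL}_d$-orbit; after acting by a suitable element of $L^+\operatorname{GL}_d(\overline{A})$ (which corresponds to altering $\overline{\beta}$ and so does not change the underlying $\overline{x}$), I may place $\overline{e}$ inside the affine-space open $\mathcal{U}_\lambda \subset \operatorname{Gr}_{\mathcal{O}}$ of Lemma~\ref{dimC} for some dominant $\lambda$ bounded by $h$.

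Since $\mathcal{U}_\lambda \cong \mathbb{A}^{N'}_{\mathcal{O}}$ is $\mathcal{O}$-flat affine space, the $\overline{A}$-point $\overline{e}$ lifts trivially to an $A$-point $e$ of $\mathcal{U}_\lambda$ for any local finite flat $\mathcal{O}$-algebra $A$ with $A/\pi = \overline{A}$: one simply lifts each coordinate of $\overline{e}$ to an element of $A$, taking for instance $A$ to be the image of $\mathcal{O}[T_1,\ldots,T_r]$ in the natural presentation of $\overline{A}$ modulo a $\pi$-torsion-free lift of the relations. Pulling $e$ back along the smooth $\mathcal{G}_N$-torsor $\Psi_N$ of Proposition~\ref{basiclocalmodel} gives an $A$-point of $\widetilde{Z}^{\leq h,N}_d$ reducing to the image of $\widetilde{x}$ in $\widetilde{Z}^{\leq h,N}_d \otimes_{\mathcal{O}} \mathbb{F}$, and hence (after possibly increasing $n$ and $N$ and passing to the formal limit) a Breuil--Kisin module $\mathfrak{M}$ over $A$ lifting $\overline{\mathfrak{M}}$ together with a basis $\beta$ lifting $\overline{\beta}$.

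The remaining step is to upgrade $\mathfrak{M}$ to a crystalline Breuil--Kisin module lifting $(\overline{\mathfrak{M}},\overline{\sigma})$. Because $A$ is finite flat over $\mathcal{O}$, the Kisin--Liu theory recalled in Theorem~\ref{crys} attaches to $\mathfrak{M}$ a canonical crystalline $G_K$-representation and hence a canonical $\varphi$-equivariant $G_K$-action $\sigma$ on $\mathfrak{M} \otimes_{\mathfrak{S}_A} A_{\operatorname{inf},A}$ satisfying the crystalline divisibility, giving $(\mathfrak{M},\sigma) \in Y^{\leq h}_d(A)$; by uniqueness of the crystalline structure on a given BK module (and existence of $\overline{\sigma}$ on the reduction), one has $\sigma \equiv \overline{\sigma}$ modulo $\pi$. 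Finally, since the $A$-point $(\mathfrak{M},\sigma)$ has Hodge type that specialises from $\mu$ at the closed point and is locally constant on the $\mathcal{O}$-flat substack $Y^\mu_d \subset Y^{\leq h}_d$, the lift in fact lies in $Y^\mu_d(A)$. The main obstacle is the compatibility $\sigma \mod \pi = \overline{\sigma}$ and the Hodge-type check, both of which rely on the fact that $Y^\mu_d$ is the unique $\mathcal{O}$-flat closed substack of $Y^{\leq h}_d$ whose finite-flat-$\mathcal{O}$-algebra points are the crystalline BK modules of Hodge type $\mu$; once one knows this, the specialisation from the generic fibre forces the lift to belong to $Y^\mu_d$.
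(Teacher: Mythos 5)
There is a serious gap in the final paragraph. You write that ``Because $A$ is finite flat over $\mathcal{O}$, the Kisin--Liu theory recalled in Theorem~\ref{crys} attaches to $\mathfrak{M}$ a canonical crystalline $G_K$-representation and hence a canonical $\varphi$-equivariant $G_K$-action $\sigma$.'' This is not what Theorem~\ref{crys} says, and it is not true. Theorem~\ref{crys} asserts that a \emph{pair} $(\mathfrak{M},\sigma)\in Y^{\leq h}_d(A)$ gives rise to a crystalline representation; it does not say that a bare Breuil--Kisin module $\mathfrak{M}\in Z^{\leq h}_d(A)$ carries a canonical crystalline $G_K$-action. A generic Breuil--Kisin module over a finite flat $\mathcal{O}$-algebra does \emph{not} come from a crystalline representation, and the existence of a compatible crystalline $\sigma$ lifting $\overline{\sigma}$ is exactly the non-trivial content of the lemma. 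This is also why Section~\ref{naivesection} exists in the paper: the morphism $Y^{\leq h}_d\rightarrow Z^{\leq h}_d$ is not an isomorphism, and showing that a given $\mathfrak{M}$ admits a (unique) crystalline $G_K$-action requires extra hypotheses (Proposition~\ref{isom}) that are not available here. Your argument therefore produces a lift $\mathfrak{M}$ in $Z^{\leq h}_d(A)$, i.e.\ a lift forgetting the Galois structure, which is easy but not the statement. The assertion that ``$\sigma\equiv\overline{\sigma}$ modulo $\pi$ by uniqueness of the crystalline structure'' is likewise unjustified: even uniqueness, when it holds, relies on a topological nilpotence argument with precise bounds.

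There is a secondary issue with the lifting through the local model: you want to use the cell $\mathcal{U}_\lambda$ from Lemma~\ref{dimC}, but that cell sits inside $\overline{\operatorname{Gr}}_\lambda$, which is a stratum of the special fibre only; to produce a flat lift to characteristic zero you would have to lift to a scheme over $\mathcal{O}$ whose generic fibre lands in $M_{-w_0\mu}\otimes_{\mathcal{O}}E$, and your argument does not control the Hodge type of the lift on the generic fibre. The paper avoids all of these difficulties by working with the projective $R_{\overline{\rho}}$-scheme $\mathcal{L}^{\leq h}_{\overline{\rho}}$, whose points by construction carry both the Breuil--Kisin module and the Galois representation; after observing that $\mathcal{L}^{\mu}_{\overline{\rho}}$ is $\mathcal{O}$-flat (because $\mathcal{L}^{\leq h}_{\overline{\rho}}\to Y^{\leq h}_d$ is formally smooth and $Y^{\mu}_d$ is $\mathcal{O}$-flat by definition) and reduced, one applies a lifting lemma from \cite{B19} for $\mathcal{O}$-flat Noetherian local rings directly to the point of $\mathcal{L}^{\mu}_{\overline{\rho}}$ determined by $(\overline{\mathfrak{M}},\overline{\sigma})$. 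That approach packages the Galois action and the Hodge-type constraint into the flatness of a single scheme, which is what your route is missing.
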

\begin{proof}
	Let $\mathbb{F}'/\mathbb{F}$ be a finite extension and write $R_{\overline{\rho}}$ for the framed $\mathcal{O}$-deformation ring corresponding to some $\overline{\rho}:G_K\rightarrow \operatorname{GL}_d(\mathbb{F}')$. In \cite[2.2.11]{B19} a projective $R_{\overline{\rho}}$-scheme $\mathcal{L}^{\leq h}_{\overline{\rho}}$ is constructed with $A'$-points, for $A'$ any $p$-adically complete $\mathcal{O}$-algebra, classifying pairs $(\mathfrak{M},\rho)$ with $\rho$ a framed deformation of $\overline{\rho}$ to $A'$ and $\mathfrak{M} \in Z^{\leq h}_d(A')$ satisfying
	$$
	\mathfrak{M} \otimes_{\mathfrak{S}_A} W(C^\flat)_A = \rho \otimes_A W(C^\flat)_A
	$$
	so that $\varphi$ (induced semilinearly from that on $\mathfrak{M}$) is the identity on $\overline{\rho}$ and so that the $G_{K}$-action (induced semilinearly from that on $\rho$) satisfies
	$$
	(\sigma-1)(m) \in \mathfrak{M} \otimes_{\mathfrak{S}_A} [\pi^\flat]\varphi^{-1}(\mu)A_{\operatorname{inf},A},\qquad (\sigma_{\infty}-1)(m) =0
	$$
	for every $\sigma \in G_K,\sigma_{\infty} \in G_{K_\infty}$ and $m \in \mathfrak{M}$.
	The morphism $\mathcal{L}^{\leq h}_{\overline{\rho}} \rightarrow Y^{\leq h}_d$ given by $(\mathfrak{M},\rho) \mapsto (\mathfrak{M},\sigma)$ with $\sigma$ the $G_K$-action induced by $\rho$ is easily seen to be formally smooth. Therefore, the preimage $\mathcal{L}^{\mu}_{\overline{\rho}}$ of $Y^\mu_d$ in $\mathcal{L}^{\leq h}_{\overline{\rho}}$ is $\mathcal{O}$-flat. The map $\mathcal{L}^{\leq h}_{\overline{\rho}} \rightarrow \operatorname{Spec}R_{\overline{\rho}}$ becomes a closed immersion after inverting $p$ \cite[2.2.14]{B19}. Theorem~\ref{crys} therefore implies that $\mathcal{L}^{\mu}_{\overline{\rho}}[\frac{1}{p}] = \operatorname{Spec}R^\mu_{\overline{\rho}}$ where $R^\mu_{\overline{\rho}}$ is the reduced $\mathcal{O}$-flat quotient of $R_{\overline{\rho}}$ classifying crystalline representations of Hodge type $\mu$ \cite[3.3.8]{Kis08}. In particular, $\mathcal{L}^{\mu}_{\overline{\rho}}$ is reduced. 
	
	Now apply the above construction with $\overline{\rho} = (\overline{\mathfrak{M}} \otimes_{\mathfrak{S}_{\overline{A}}} W(C^\flat)_{\overline{A}})^{\varphi=1} \otimes_{\overline{A}} \mathbb{F}'$ where $\mathbb{F}'$ denotes the residue field of $\overline{A}$. Then $(\overline{\mathfrak{M}},\overline{\sigma})$ induces an $\overline{A}$-valued point of $\mathcal{L}^{\mu}_{\overline{\rho}}$. Applying \cite[4.1.2]{B19} to the local ring of $\mathcal{L}^{\mu}_{\overline{\rho}}$ at this points produces a finite flat $\mathcal{O}$-algebra $A$ with $A\rightarrow \overline{A}$ and an $A$-valued point of $\mathcal{L}^{\leq \mu}_{\overline{\rho}}$ pulling back to our $\overline{A}$-valued point. The image of this $A$-valued point in $Y^\mu_d$ then corresponds to $(\mathfrak{M},\sigma)$ as desired.
\end{proof}

\begin{corollary}\label{dim}
	Assume $\mu_{\kappa} \subset [0,h]$ for each $\kappa$. Then $Y^{\mu}_d$ has relative dimension $\sum_{\kappa} \operatorname{dim} G/P_{\mu_{\kappa}}$ over $\mathcal{O}$.
\end{corollary}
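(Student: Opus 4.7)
The plan is to reduce the relative dimension computation to Kisin's computation of the dimension of the crystalline framed deformation ring $R^\mu_{\overline{\rho}}[\tfrac{1}{p}]$, using the auxiliary moduli space $\mathcal{L}^\mu_{\overline{\rho}}$ constructed in the proof of Lemma~\ref{lift}.

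First, since $Y^\mu_d$ is $\mathcal{O}$-flat and of topologically finite type, its relative dimension over $\mathcal{O}$ at a given closed point equals the dimension of the generic fibre $Y^\mu_d\otimes_{\mathcal{O}} E$ at a lift of that point to characteristic zero, and Lemma~\ref{lift} ensures that every closed point of $\overline{Y}^\mu_d$ is indeed the specialisation of such a generic-fibre point, so no component of $\overline{Y}^\mu_d$ is missed. It therefore suffices to show
$$
\dim\bigl(Y^\mu_d \otimes_{\mathcal{O}} E\bigr) = \sum_{\kappa} \dim G/P_{\mu_\kappa}
$$
along every component.

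To access this dimension, fix a residual representation $\overline{\rho}: G_K \to \operatorname{GL}_d(\mathbb{F}')$ arising from some closed point of $\overline{Y}^\mu_d$ and consider the forgetful morphism
$$
\mathcal{L}^\mu_{\overline{\rho}} \longrightarrow Y^\mu_d,\qquad (\mathfrak{M},\rho)\mapsto (\mathfrak{M},\sigma)
$$
from the proof of Lemma~\ref{lift}. This morphism is formally smooth, and its fibres are fpqc-locally $\operatorname{GL}_d$-torsors: by Theorem~\ref{crys} the datum of $\sigma$ recovers the Galois representation $V = (\mathfrak{M}\otimes_{\mathfrak{S}_A} W(C^\flat)_A)^{\varphi=1}$ canonically, and specifying the framing $\rho$ is nothing other than the choice of a basis of the rank $d$ projective $A$-module $V$. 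Hence this morphism has relative dimension $d^2$. By \cite[2.2.14]{B19}, $\mathcal{L}^\mu_{\overline{\rho}}[\tfrac{1}{p}]$ identifies with $\operatorname{Spec} R^\mu_{\overline{\rho}}[\tfrac{1}{p}]$, and Kisin's dimension formula \cite[3.3.8]{Kis08} gives
$$
\dim R^\mu_{\overline{\rho}}\bigl[\tfrac{1}{p}\bigr] = d^2 + \sum_\kappa \dim G/P_{\mu_\kappa}
$$
under the assumption that $\mu_\kappa \subset [0,h]$. Subtracting $d^2$ for the framing yields the displayed equality for $Y^\mu_d\otimes_{\mathcal{O}} E$, and hence the claimed relative dimension of $Y^\mu_d$ over $\mathcal{O}$.

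The main (and rather mild) obstacle is the stack-theoretic bookkeeping: one must verify the formal smoothness of $\mathcal{L}^\mu_{\overline{\rho}} \to Y^\mu_d$ together with the $\operatorname{GL}_d$-torsor structure of its fibres, which is routine once Theorem~\ref{crys} is invoked, and one must check that letting $\overline{\rho}$ range over all residual representations produces a smooth atlas meeting every component of $Y^\mu_d$, which is precisely what Lemma~\ref{lift} supplies.
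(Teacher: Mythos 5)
Your proof is correct and follows essentially the same route as the paper: pull back along the formally smooth, relative-dimension-$d^2$ morphism $\mathcal{L}^\mu_{\overline{\rho}} \to Y^\mu_d$, identify $\mathcal{L}^\mu_{\overline{\rho}}[\tfrac{1}{p}]$ with $\operatorname{Spec}R^\mu_{\overline{\rho}}[\tfrac{1}{p}]$, and invoke Kisin's dimension formula. The only cosmetic difference is that your opening paragraph phrases the flatness reduction in terms of a generic fibre of the $p$-adic formal stack $Y^\mu_d$ itself, whereas the paper routes the entire dimension count through the scheme $\mathcal{L}^\mu_{\overline{\rho}}$ so that ``generic fibre'' always means something concrete; since you also ultimately pass to $\mathcal{L}^\mu_{\overline{\rho}}$, this does not affect the argument.
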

\begin{proof}
	We saw in the proof of Lemma~\ref{lift} that $\mathcal{L}^{\mu}_{\overline{\rho}} \rightarrow Y^\mu_d$ is formally smooth with fibres classifying framings of the corresponding Galois representation, and so of relative dimension $d^2$. Hence $Y^\mu_d$ has dimension (in the sense of e.g. \cite[0DRE]{stacks-project}) $\operatorname{dim}\mathcal{L}^\mu_{\overline{\rho}} - d^2$ at the image of the closed point of $\mathcal{L}^{\mu}_{\overline{\rho}}$. Since $\mathcal{L}^\mu_{\overline{\rho}}[\frac{1}{p}] = \operatorname{Spec}R^{\mu}_{\overline{\rho}}$ it follows from \cite[3.3.8]{Kis08} that $\mathcal{L}^{\mu}_{\overline{\rho}}$ has relative dimension $d^2 + \sum_{\kappa} \operatorname{dim} G/P_{\mu_{\kappa}}$ over $\mathcal{O}$. 
\end{proof}
\section{Naive Galois actions}\label{naivesection}

In this section we consider the morphism $Y^{\leq h}_d \rightarrow Z^{\leq h}_d$ which forgets the Galois action. More precisely, we consider its base-change $Y^{\leq h}_d \times_{Z^{\leq h}_d} \widetilde{Z}^{\leq h,N}_d \rightarrow \widetilde{Z}^{\leq h,N}_d$ for $N>>0$, and show this is an isomorphism over certain closed subschemes in the special fibre of $\widetilde{Z}^{\leq h,N}_d$.
 
\begin{construction}
	The aim is to establish conditions which allow the following ``naive'' crystalline $G_K$-action on $(\mathfrak{M},\beta) \in \widetilde{Z}^{\leq h,N}_d(A)$ to be perturbed into one which is $\varphi$-equivariant. Let $\sigma_{\operatorname{naive},\beta}$ denote the continuous $A_{\operatorname{inf},A}$-semilinear action of $G_K$ on $\mathfrak{M} \otimes_{\mathfrak{S}_A} A_{\operatorname{inf},A}$ obtained from the coordinate-wise action on $A_{\operatorname{inf},A}^d$ via the identification
	$$
	\mathfrak{M}^\varphi \otimes_{\mathfrak{S}_A} A_{\operatorname{inf},A} \cong A_{\operatorname{inf},A}^d
	$$
	induced by $\varphi(\beta)$.  Thus, $\sigma_{\operatorname{naive},\beta}$ is uniquely determined as the semilinear $G_K$-action fixing $\varphi(\beta)$.
\end{construction}

\begin{sub}\label{Grnablasigmar}
Let us fix integers $0 \leq r_\kappa \leq h$ for each $\kappa$. Then we consider the closed subfunctor $\operatorname{Gr}_{\mathcal{O}}^{\nabla_{\sigma},r} \subset \operatorname{Gr}_{\mathcal{O}}^{\nabla_{\sigma}}$ defined by requiring that
$$
\mathfrak{S}_A^d \subset \mathcal{E} \subset \prod_\kappa E_{\kappa}(u)^{-r_\kappa}\mathfrak{S}_A^d
$$
Define $\widetilde{Z}^{\nabla_{\sigma},r}_{d}$ as the preimage of $\operatorname{Gr}^{\nabla_{\sigma},r}_{\mathcal{O}}$ under the morphism $\Phi: \widetilde{Z}^{\leq h,N}_{d} \rightarrow \operatorname{Gr}_\mathcal{O}$ (for the moment $N$ is arbitrary). Then the $A$-points of $\widetilde{Z}^{\nabla_{\sigma},r}_{d}$ for $A$ a $p$-adically complete $\mathcal{O}$-algebra of topologically finite type are precisely those $(\mathfrak{M},\beta) \in \widetilde{Z}^{\leq h,N}_d(A)$ satisfying
	\begin{enumerate}
		\item For every $\sigma \in G_K$
		$$(\sigma_{\operatorname{naive},\beta}-1)(\mathfrak{M}) \subset \mathfrak{M} \otimes_{\mathfrak{S}_A} [\pi^\flat]\varphi^{-1}(\mu)A_{\operatorname{inf},A,i}$$ 
		See Remark~\ref{explanation}.
		\item For each $i=1,\ldots,f$ 
		$$
		\prod_j E_{\kappa}(u)^{r_\kappa} \mathfrak{M}_i \subset \mathfrak{M}^\varphi_i \subset \mathfrak{M}_i
		$$
	\end{enumerate}
\end{sub}
\begin{proposition}\label{isom}
	Assume that $\sum_{\kappa|_k = \kappa_0} r_{\kappa} \leq e+p-1$ for each $\kappa_0$ with at least one inequality strict. Then
	$$
	Y^{\leq h}_d \times_{Z^{\leq h}_d} \left( \widetilde{Z}^{\nabla_\sigma,r}_d \otimes_{\mathcal{O}} \mathbb{F} \right) \rightarrow \widetilde{Z}^{\nabla_\sigma,r}_d \otimes_{\mathcal{O}} \mathbb{F}
	$$
	is an isomorphism.
\end{proposition}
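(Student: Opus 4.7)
The plan is to construct, for every $(\mathfrak{M},\beta) \in \widetilde{Z}^{\nabla_\sigma,r}_d(A)$ with $A$ a finite-type $\mathbb{F}$-algebra, a unique $\varphi$-equivariant continuous crystalline $G_K$-action $\sigma$ on $\mathfrak{M}\otimes_{\mathfrak{S}_A}A_{\operatorname{inf},A}$, and to do this functorially. Since $Y^{\leq h}_d$ and $\widetilde{Z}^{\nabla_\sigma,r}_d$ are of topologically finite type, such a functorial bijection on groupoid-valued points produces the claimed inverse morphism; moreover both stacks are representable by algebraic spaces after the basis is added, so checking on groupoid points suffices.

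For existence, the starting point is $\sigma_{\operatorname{naive},\beta}$, which by definition of $\widetilde{Z}^{\nabla_\sigma,r}_d$ (condition (1) of \ref{Grnablasigmar}) already satisfies the crystalline divisibility
$$
(\sigma_{\operatorname{naive}}-1)(\mathfrak{M}) \subset \mathfrak{M}\otimes_{\mathfrak{S}_A} [\pi^\flat]\varphi^{-1}(\mu)A_{\operatorname{inf},A},
$$
and also satisfies $(\sigma_\infty-1)(m)=0$ for $\sigma_\infty \in G_{K_\infty}$ (since $\sigma_{\operatorname{naive}}$ was defined by fixing $\varphi(\beta)$ and $G_{K_\infty}$ fixes $u$, hence $\varphi(\beta)$). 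What $\sigma_{\operatorname{naive}}$ lacks is $\varphi$-equivariance. Following the outline in the introduction, the plan is to define
$$
\sigma^{(n+1)} := \varphi \circ \sigma^{(n)} \circ \varphi^{-1}
$$
on $\mathfrak{M}\otimes A_{\operatorname{inf},A}[\tfrac{1}{E(u)}]$ and prove that under the hypothesis $\sum_{\kappa|_k=\kappa_0} r_\kappa \leq e+p-1$ (with at least one strict inequality) each $\sigma^{(n)}$ in fact preserves the integral lattice $\mathfrak{M}\otimes A_{\operatorname{inf},A}$, and that the sequence converges in the $(u,[\pi^\flat])$-adic topology to a semilinear action $\sigma$. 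The key estimate is a recursive improvement: if $(\sigma^{(n)}-1)(\mathfrak{M}) \subset \mathfrak{M}\otimes [\pi^\flat]^{a}\varphi^{-1}(\mu)A_{\operatorname{inf},A}$, then the height condition (2) of \ref{Grnablasigmar} controls how much is lost when applying $\varphi^{-1}$ (we pick up a factor of $\prod_\kappa E_\kappa(u)^{r_\kappa}$ in the denominator), while applying $\varphi$ afterwards multiplies the exponent of $[\pi^\flat]$ by $p$. The strict inequality in $\sum r_\kappa \leq e+p-1$ is precisely the room needed to ensure $pa > a + \sum r_\kappa$ (after translating $E_\kappa(u)$-divisibility to $[\pi^\flat]$-divisibility using Lemma~\ref{mu}), so the divisibility exponent strictly increases at each iteration. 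The limit $\sigma$ is then $\varphi$-equivariant by construction (as a fixed point of $\tau\mapsto\varphi\tau\varphi^{-1}$), retains the crystalline divisibility by passing to the limit, and still satisfies $(\sigma_\infty-1)(m)=0$ because this identity is preserved by Frobenius conjugation (since $G_{K_\infty}$ and $\varphi$ commute on $A_{\operatorname{inf},A}$ and fix the basis $\varphi(\beta)$).

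For uniqueness, suppose $\sigma$ and $\sigma'$ both satisfy all the requirements. Writing $\tau = \sigma^{-1}\sigma'$, the cocycle $\tau-1$ lands inside $\mathfrak{M}\otimes [\pi^\flat]\varphi^{-1}(\mu)A_{\operatorname{inf},A}$ (since the crystalline ideal is closed under the operation), and $\tau$ is $\varphi$-equivariant. The same divisibility improvement estimate, applied now to the difference, shows $\tau-1$ must land in arbitrarily high powers of $[\pi^\flat]$, hence $\tau = 1$ by $(u,[\pi^\flat])$-adic separatedness. This gives the desired functorial inverse.

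The main obstacle is the divisibility bookkeeping in the recursive estimate. One must simultaneously track: (a) the power of $\varphi^{-1}(E(u))$ picked up each time $\varphi^{-1}$ is applied (controlled by $r$), (b) the conversion between $E(u)$-divisibility and $[\pi^\flat]\varphi^{-1}(\mu)$-divisibility provided by Lemma~\ref{mu}, and (c) the effect of applying $\varphi$ which raises $[\pi^\flat]$-exponents by a factor of $p$. The hypothesis $\sum r_\kappa \leq e+p-1$ strict at some $\kappa_0$ is exactly what makes the net effect at each iteration a strict gain in $[\pi^\flat]$-divisibility (rather than a wash), forcing convergence; the delicate part is carrying out this calculation in a manner that descends from the localization $\mathfrak{M}\otimes A_{\operatorname{inf},A}[\tfrac{1}{E(u)}]$ back to the integral $\mathfrak{M}\otimes A_{\operatorname{inf},A}$, which is where the ``Steinberg'' edge case is ruled out by strictness of the inequality.
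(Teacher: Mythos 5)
Your proposal takes essentially the same route as the paper: reduce to groupoid points over suitable $\mathbb{F}$-algebras, start from $\sigma_{\operatorname{naive},\beta}$, iterate Frobenius conjugation, and use the height bound (with the strict inequality ruling out the Steinberg boundary case) to prove topological nilpotence of the error, whence convergence and uniqueness. The paper organizes the estimate slightly more cleanly by fixing the lattice $\mathcal{H} = \operatorname{Hom}(\mathfrak{M},\mathfrak{M})\otimes[\pi^\flat]\varphi^{-1}(\mu)A_{\operatorname{inf},\overline{A}}$ and showing that $\varphi_{\operatorname{Hom}}$ carries $\mathcal{H}_{\kappa_0}$ into $u^{e+p-1-\sum_{\kappa|_k=\kappa_0} r_\kappa}\mathcal{H}_{\kappa_0\circ\varphi^{-1}}$ (so nilpotence comes from a single strict inequality over one full cycle through the $f$ residual embeddings, rather than from the growth of a $[\pi^\flat]$-exponent as in your bookkeeping), and it invokes Lemma~\ref{mono} to justify the reduction to local finite $\mathbb{F}$-algebras; but these are refinements of the same argument.
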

\begin{proof}
	By Lemma~\ref{mono} is enough to show that this morphism induces equivalences on groupoids of $\overline{A}$-valued points for any local finite type $\mathbb{F}$-algebra $\overline{A}$. Equivalently, we must show that for any $(\mathfrak{M},\beta) \in \widetilde{Z}^{\nabla_\sigma,r}_d(\overline{A})$ there exists a unique action $\sigma$ of $G_K$ making $(\mathfrak{M},\sigma)$ into an object of $Y^{\leq h}_d$. Existence implies essential surjectivity on $\overline{A}$-valued points and full-faithfullness follows from the uniqueness.
	
	Write $\operatorname{Hom}(\mathfrak{M},\mathfrak{M})$ for the $\mathfrak{S}_A$-module of $\mathfrak{S}_A$-linear endomorphisms of $\mathfrak{M}$ and equip $\operatorname{Hom}(\mathfrak{M},\mathfrak{M})$ with the Frobenius $\varphi_{\operatorname{Hom}}$ given by $h \mapsto \varphi \circ h \circ \varphi^{-1}$. If we identify $\operatorname{Hom}(\mathfrak{M},\mathfrak{M})$ with matrices in $\mathfrak{S}_A$ using the basis $\beta$ then $\varphi_{\operatorname{Hom}}$ acts by
	$$
	M\mapsto C\varphi(M)C^{-1}
	$$
	where $\varphi$ acts entrywise on the matrix $M$ and $C$ is such that $\varphi(\beta)=\beta C$. The following claim shows that, after extending scalars to $A_{\operatorname{inf},A}$, this operator is topologically nilpotent on matrices with entries divisible by $[\pi^\flat]\varphi^{-1}(\mu)$. 
	
	\begin{claim}
		Set $\mathcal{H} :=\operatorname{Hom}(\mathfrak{M},\mathfrak{M}) \otimes [\pi^\flat]\varphi^{-1}(\mu)A_{\operatorname{inf},\overline{A}}$. Then $\mathcal{H}$ is $\varphi_{\operatorname{Hom}}$-stable and $\varphi_{\operatorname{Hom}}$ is topologically nilpotent on $\mathcal{H}$.
	\end{claim}
	\begin{proof}[Proof of claim]
		Recall that $\mathfrak{M}_{\kappa_0} \subset \mathfrak{M}$ is the submodule on which $W(k)$ acts via $\kappa_0$ and $\varphi$ on $\mathfrak{M}$ restricts to a semilinear map $\mathfrak{M}_{\kappa_0} \rightarrow \mathfrak{M}_{\kappa_0 \circ \varphi^{-1}}[\frac{1}{E(u)}]$. Our assumption on $\mathfrak{M}$ implies $(\prod_{\kappa|_k = \kappa_0} E_\kappa(u)^{r_\kappa} )\mathfrak{M}_{\kappa_0} \subset \mathfrak{M}^\varphi_{\kappa_0}$ and therefore
		$$ 
		\varphi_{\operatorname{Hom}}(\operatorname{Hom}(\mathfrak{M},\mathfrak{M})_{\kappa_0}) \subset \left( \prod_{\kappa|_k = \kappa_0} E_\kappa(u)^{-r_\kappa}\right)\operatorname{Hom}(\mathfrak{M},\mathfrak{M})_{\kappa_0 \circ \varphi^{-1}}
		$$
		Since $\frac{[\pi^\flat]^p\mu }{[\pi^\flat] \varphi^{-1}(\mu)} A_{\operatorname{inf},\overline{A}} = [\pi^\flat]^{p-1}E(u)A_{\operatorname{inf},\overline{A}}$ we also have $$
		\varphi_{\operatorname{Hom}}(\mathcal{H}_{\kappa_0}) \subset [\pi^\flat]E(u)\left( \prod_{\kappa|_k = \kappa_0} E_\kappa(u)^{-r_\kappa}\right) \mathcal{H}_{\kappa_0 \circ \varphi^{-1}} = u^{e+p-1-\sum_j r_{ij}}\mathcal{H}_{i-1}
		$$
		(the last equality uses that $\overline{A}$ is an $\mathbb{F}$-algebra) and so, as $\sum_{\kappa_{\kappa|_k =\kappa_0}} r_{\kappa} \leq e+p-1$, it follows that $\mathcal{H}$ is $\varphi_{\operatorname{Hom}}$-stable. Since the inequality is strict at least once we have $\varphi_{\operatorname{Hom}}(\mathcal{H}_\kappa) \subset u\mathcal{H}_{\kappa_0 \circ \varphi^{-1}}$ for at least one $\kappa_0$. In particular $\varphi_{\operatorname{Hom}}$ is topologically nilpotent.
	\end{proof}
	Set $\mathfrak{M}_{\operatorname{inf}} = \mathfrak{M} \otimes_{\mathfrak{S}_A} A_{\operatorname{inf},A}$. Note that for each $\sigma \in G_K$ the endomorphism $\sigma_{\operatorname{naive},\beta}$ is $\sigma$-semilinear and so defines an element of 
	$$
\operatorname{Hom}(\mathfrak{M}_{\operatorname{inf}},\mathfrak{M}_{\operatorname{inf}}^\sigma)
$$
where $\mathfrak{M}_{\operatorname{inf}}^\sigma := \mathfrak{M}_{\operatorname{inf}} \otimes_{A_{\operatorname{inf},A},\sigma} A_{\operatorname{inf},A}$. Any such semilinear map is determined by where $\beta$ is sent; thus we obtain an (additive) identification $\operatorname{Hom}(\mathfrak{M}_{\operatorname{inf}},\mathfrak{M}_{\operatorname{inf}}^\sigma) = \operatorname{Hom}(\mathfrak{M},\mathfrak{M}) \otimes_{\mathfrak{S}_A} A_{\operatorname{inf},A}$ which identifies $\operatorname{Hom}(\mathfrak{M}_{\operatorname{inf}},[\pi^\flat]\varphi^{-1}(\mu)\mathfrak{M}_{\operatorname{inf}}^\sigma)$ and $\mathcal{H}$. Via this identification we view $\varphi_{\operatorname{Hom}}$ as a Frobenius on $\operatorname{Hom}(\mathfrak{M}_{\operatorname{inf}},\mathfrak{M}_{\operatorname{inf}}^\sigma)$. We claim that
$$
\varphi_{\operatorname{Hom}}^n(\sigma_{\operatorname{naive},\beta}) \in \operatorname{Hom}(\mathfrak{M}_{\operatorname{inf}},\mathfrak{M}_{\operatorname{inf}}^\sigma)
$$
for all $n\geq 0$ and that this sequence converges to a $\sigma$-semilinear endomorphism which we simply write as $\sigma$. By construction this endomorphism is $\varphi$-equivariant. To see this claim note that by assumption $\sigma_{\operatorname{naive},\beta} - 1^\sigma \in \operatorname{Hom}(\mathfrak{M}_{\operatorname{inf}},[\pi^\flat]\varphi^{-1}(\mu)\mathfrak{M}_{\operatorname{inf}}^\sigma)$ where $1^\sigma$ the $\sigma$-semilinear extension of the map $\beta \mapsto \beta$. Since we can write
$$
\operatorname{lim}_{n\rightarrow \infty} \varphi_{\operatorname{Hom}}^n(\sigma_{\operatorname{naive},\beta}) = \sigma_{\operatorname{naive},\beta} + \sum_{n \geq 1} \left(  \varphi^n_{\operatorname{Hom}}(\sigma_{\operatorname{naive},\beta}-1^\sigma) - \varphi^{n-1}_{\operatorname{Hom}}(\sigma_{\operatorname{naive},\beta}-1^{\sigma}) \right)
$$
the claimed convergence follows from the topological nilpotence of the operator $\varphi_{\operatorname{Hom}}$ on $\operatorname{Hom}(\mathfrak{M}_{\operatorname{inf}},[\pi^\flat]\varphi^{-1}(\mu)\mathfrak{M}_{\operatorname{inf}}^\sigma) = \mathcal{H}$.
This formula also shows that $\sigma(x) -x \in \mathfrak{M} \otimes_{\mathfrak{S}_A} [\pi^\flat]\varphi^{-1}(\mu)A_{\operatorname{inf},A}$ for each $x \in \mathfrak{M}$. Since $\sigma_{\operatorname{naive},\beta}$ defines a $G_K$-action so to does $\sigma$. Similarly, continuity of $\sigma_{\operatorname{naive},\beta}$ implies continuity of $\sigma$.

Finally, to see uniqueness suppose that $\sigma'$ was another such $G_K$-action. Then for each $\sigma \in G_K$ one has
$$
\sigma-\sigma' \in \operatorname{Hom}(\mathfrak{M}_{\operatorname{inf}},[\pi^\flat]\varphi^{-1}(\mu)\mathfrak{M}_{\operatorname{inf}}^\sigma) = \mathcal{H}
$$
Since $\sigma - \sigma'$ is $\varphi_{\operatorname{Hom}}$-fixed the topological nilpotence of $\varphi_{\operatorname{Hom}}$ on $\mathcal{H}$ implies $\sigma-\sigma' = 0$.
\end{proof}

\section{Comparison with local models}\label{comparison}
Set $\overline{Y}^{\mu}_d = Y^\mu_d \otimes_{\mathcal{O}} \mathbb{F}$.
\begin{theorem}\label{factor}
	Assume that $\mu_{\kappa} \subset [0,r_\kappa]$ for integers $r_{\kappa} \leq h$ satisfying 
	$$
	\sum_{\kappa|_k =\kappa_0} r_{\kappa} \leq \frac{p-1}{\nu_{\kappa_0}} +1, \qquad \nu_{\kappa_0} = \operatorname{max}_{\kappa|_k =\kappa_0}\lbrace v_\pi(\pi_{\kappa} - \pi_{\kappa'}) \rbrace
	$$
	for all $\kappa_0:k\rightarrow \mathbb{F}$. Then the composite
	$$
	\overline{Y}^\mu_d \times_{Z^{\leq h}_d} \widetilde{Z}^{\leq h,N}_d \rightarrow \widetilde{Z}^{\leq h,N}_d \xrightarrow{\Psi} \operatorname{Gr}_{\mathcal{O}}
	$$
	factors through $\overline{M}_{-w_0\mu}$ (recall this notation from Lemma~\ref{dual2})
\end{theorem}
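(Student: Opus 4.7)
The plan is to reduce the factorisation to a pointwise check on finite local $\mathbb{F}$-algebras, lift to characteristic zero using Lemma~\ref{lift}, and then apply embedding-by-embedding normal forms in the spirit of \cite{GLS15}. Since $M_{-w_0\mu} \hookrightarrow \operatorname{Gr}_\mathcal{O}$ is a closed immersion, it suffices to show that for any finite local $\mathbb{F}$-algebra $\overline{A}$ and any $\overline{A}$-valued point $(\overline{\mathfrak{M}}, \overline{\sigma}, \overline{\beta})$ of $\overline{Y}^\mu_d \times_{Z^{\leq h}_d} \widetilde{Z}^{\leq h,N}_d$, the image $\Psi(\overline{\mathfrak{M}}, \overline{\beta}) \in \operatorname{Gr}_\mathcal{O}(\overline{A})$ lies in $\overline{M}_{-w_0\mu}$. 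Applying Lemma~\ref{lift} would produce a local finite flat $\mathcal{O}$-algebra $A$ with $A\otimes_\mathcal{O}\mathbb{F} = \overline{A}$ and a lift $(\mathfrak{M},\sigma)\in Y^\mu_d(A)$; lifting the basis gives $\Psi(\mathfrak{M},\beta) \in \operatorname{Gr}_\mathcal{O}(A)$ whose generic fibre lies in $M_{-w_0\mu}\otimes_\mathcal{O} E$ (the relative position of $\mathfrak{M}$ inside $\mathfrak{M}^\varphi$ is controlled by the Hodge filtration on $\mathfrak{M}^\varphi/E(u)$, which has type $-w_0\mu_\kappa$ in the $\kappa$-part per the remark preceding Theorem~\ref{crys}). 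By part (3) of Lemma~\ref{points} it then suffices to produce integral filtrations $\operatorname{Fil}_\kappa^\bullet \subset A^d$ realising $\Psi(\mathfrak{M},\beta)$ through the intersection formula of Lemma~\ref{points}(2).

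For each embedding $\kappa$, I would invoke the results of \cite{GLS15} (extended from $d=2$ to general $d$) to produce matrices $g_\kappa \in \operatorname{GL}_d(A)$ and ``error'' matrices $X_{\mathrm{err},\kappa}$ such that $\mathfrak{M}^\varphi \cap E_\kappa(u)^{r_\kappa}\mathfrak{M}$ is generated by the columns of $\varphi(\beta) g_\kappa(D_\kappa + X_{\mathrm{err},\kappa})$. Here $D_\kappa$ is the diagonal matrix carrying the entries $(u-\pi_\kappa)^{r_\kappa - \mu_{\kappa,j}}$ in its $\kappa|_k$-component (and $1$'s in the other factors under the product decomposition of~\ref{Ekappa}), and $X_{\mathrm{err},\kappa}$ has explicit $\pi$-divisibility controlled by $r_\kappa - \mu_{\kappa,1} + \mu_{\kappa,d}$. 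Taking the intersection over all $\kappa$ picks out $\prod_\kappa E_\kappa(u)^{r_\kappa}\mathfrak{M}$, and letting $\operatorname{Fil}_\kappa^\bullet$ denote the filtration on $A^d$ read off from $g_\kappa$ modulo $E_\kappa(u)$ (which has type $-w_0\mu_\kappa$, the shift by $r_\kappa$ in the exponents of $D_\kappa$ precisely realising the $-w_0$ operation on weights), part (2) of Lemma~\ref{points} would identify the reduction of $\Psi(\mathfrak{M},\beta)$ modulo $\pi$ with the desired point of $\overline{M}_{-w_0\mu}$---\emph{provided} the contributions of the $X_{\mathrm{err},\kappa}$ vanish modulo $\pi$ in the intersection.

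The hard step is precisely this mod $\pi$ vanishing. When an error $X_{\mathrm{err},\kappa}$ is localised away from $\kappa$ at another embedding $\kappa'$ above the same $\kappa_0$, the need to invert factors of $(u - \pi_\kappa)$ introduces a loss of $\pi$-divisibility at a rate $v_\pi(\pi_\kappa - \pi_{\kappa'}) \leq \nu_{\kappa_0}$, which compounds as one iterates through the embeddings in the intersection. The numerical hypothesis
\[
\sum_{\kappa|_k = \kappa_0} r_\kappa \leq \frac{p-1}{\nu_{\kappa_0}} + 1
\]
is calibrated precisely so that the cumulative loss $\nu_{\kappa_0}\sum r_\kappa$ is still outweighed by the baseline divisibility (essentially $p-1$) coming from the GLS estimate for $X_{\mathrm{err},\kappa}$, leaving positive residual divisibility in $\pi$. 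Making this numerical bookkeeping precise---and in particular coupling $X_{\mathrm{err},\kappa}$ with the $g_{\kappa'}$ ($\kappa'\neq\kappa$) correctly through the iterated intersection---is the main technical obstacle, and is where the generalisation of \cite{GLS15} to arbitrary $d$ together with the sharp bound on $\nu_{\kappa_0}$ must be put to work.
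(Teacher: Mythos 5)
Your proposal follows essentially the same route as the paper: reduce to $\overline{A}$-valued points for finite local $\mathbb{F}$-algebras (Corollary~\ref{factorising}), lift to characteristic zero via Lemma~\ref{lift}, produce per-embedding filtrations with controlled error terms in the spirit of Galois--Liu--Savitt, and invoke the intersection description from Lemma~\ref{points}. You also correctly identify the mod-$\pi$ vanishing of the errors as the crux and diagnose the role of $\nu_{\kappa_0}$. However, you leave exactly this step as a sketch, and it is where all the content lies; moreover, the precise mechanism in the paper is somewhat different from the ``iterated conjugation'' bookkeeping you gesture at, and the key estimates are drawn from \cite[\S5]{B19b} rather than a direct generalisation of \cite{GLS15}.

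Concretely, the paper does not track error matrices $X_{\operatorname{err},\kappa}$ through a product of intersections. Instead, Proposition~\ref{GLS} produces, for each $\kappa$, a filtration $\operatorname{Fil}^\bullet_\kappa$ of $M_\kappa = \mathfrak{M}^\varphi/E_\kappa(u)$ satisfying
$$
\sum_{n=0}^{r_\kappa} E_\kappa(u)^{r_\kappa-n}\mathfrak{S}_A\operatorname{Fil}^n_\kappa + \mathfrak{M}^\varphi_{\operatorname{err},\kappa} = \mathfrak{M}^\varphi\cap E_\kappa(u)^{r_\kappa}\mathfrak{M} + \mathfrak{M}^\varphi_{\operatorname{err},\kappa},
\qquad
\mathfrak{M}^\varphi_{\operatorname{err},\kappa} := \sum_{l=1}^{p-1}\pi^{p-l}E_\kappa(u)^l\mathfrak{M}^\varphi,
$$
and the mod-$\pi$ comparison of $\mathcal{E}$ with $\mathfrak{M}$ reduces to an \emph{interpolation} problem: given for each $\kappa$ an error $m_\kappa\in\mathfrak{M}^\varphi_{\operatorname{err},\kappa}$, find a single $m\in\pi\mathfrak{M}^\varphi$ with $m\equiv m_\kappa$ modulo $E_\kappa(u)^{r_\kappa}\mathfrak{M}^\varphi$ for every $\kappa$. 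This is Lemma~\ref{claim}, solved explicitly by expanding $(u-\pi_{\kappa'})^{-r_{\kappa'}}$ as a truncated binomial series in $(u-\pi_\kappa)$; the resulting interpolant has coefficients of valuation $\geq p-(\sum_{\kappa'\neq\kappa}r_{\kappa'}+n)\nu_{\kappa_0}$, and requiring this to be positive for all $n<r_\kappa$ is exactly the stated bound. So your intuition about the loss rate is right, but you need this CRT-style argument, not just the per-embedding normal form. You would also need to verify that the $\mathcal{E}$ you construct is $\mathfrak{S}_A$-projective (not automatic): the paper deduces this from $p$-torsionfreeness of $\mathfrak{M}^\varphi/\prod_\kappa E_\kappa(u)^{r_\kappa}\mathcal{E}$, the mod-$\pi$ identification, and \cite[00ML]{stacks-project}.
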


\begin{remark}\label{tame}
	If $K$ is tamely ramified, i.e. if $e$ is not divisible by $p$, then $\pi_{\kappa} - \pi_{\kappa'}$ generates the same ideal of $\mathcal{O}_K$ as $\pi$ whenever $\kappa' \neq \kappa$. Therefore $v_{\kappa_0} = 0$ in this case. To see this consider the $\pi$-adic valuation of $\frac{d}{du}\kappa_0(E(u))|_{u=\pi_{\kappa}} = \prod_{\kappa \neq \kappa',\kappa'|_k = \kappa_0} (\pi_{\kappa} - \pi_{\kappa'})$.
\end{remark}

The following proposition is the key technical result which goes into the proof of the theorem. It is a reworking of techniques originally developed in \cite{GLS,GLS15}.
\begin{proposition}\label{GLS}
	Let $A$ be a finite flat $\mathcal{O}$-algebra and suppose $(\mathfrak{M},\sigma,\beta) \in Y^{\mu}_d(A)$. Define 
	$$
	M_{\kappa} = \mathfrak{M}^\varphi /E_{\kappa}(u)
	$$
	Use the $\varphi(\mathfrak{S}_A)$-basis $\varphi(\beta)$ to define a section $s$ of $\varphi(\mathfrak{M}) \rightarrow \mathfrak{M}^\varphi \rightarrow M_{\kappa}$. Then there exists a filtration $\operatorname{Fil}^{\bullet}_{\kappa}$ on $M_{\kappa}$ by $A$-submodules with $p$-torsionfree graded pieces such that
	$$
	\sum_{n=0}^{r_\kappa} E_{\kappa}(u)^{r_\kappa-n} \mathfrak{S}_A\operatorname{Fil}^n_{\kappa} + \mathfrak{M}^\varphi_{\operatorname{err},\kappa} = \mathfrak{M}^\varphi \cap E_{\kappa}(u)^{r_\kappa}\mathfrak{M} + \mathfrak{M}_{\operatorname{err},\kappa}^\varphi 
	$$
	when $\operatorname{Fil}^n_{\kappa}$ is viewed as a submodule of $\mathfrak{M}^\varphi$ via $s$ and $\mathfrak{M}_{\operatorname{err},\kappa}^\varphi := \sum_{l=1}^{p-1} \pi^{p-l} E_{\kappa}(u)^{l}\mathfrak{M}^\varphi$. 
\end{proposition}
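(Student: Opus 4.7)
The plan is to take $\operatorname{Fil}^n_\kappa \subset M_\kappa$ to be the image of the intersection filtration $\mathfrak{M}^\varphi \cap E_\kappa(u)^n \mathfrak{M}$ under the projection $\mathfrak{M}^\varphi \twoheadrightarrow M_\kappa = \mathfrak{M}^\varphi/E_\kappa(u)$, replaced by its saturation in $M_\kappa$ if necessary so that the graded pieces are $p$-torsion-free. The fact that this agrees, after inverting $p$, with the Hodge filtration of type $-w_0\mu_\kappa$ on $M_\kappa[\tfrac{1}{p}]$ would follow from the definition of Hodge type together with the factorization $E(u) = \prod_\kappa E_\kappa(u)$ and the component decomposition of \ref{embeddings}; in particular $\operatorname{Fil}^n_\kappa = M_\kappa$ for $n \leq 0$ and $\operatorname{Fil}^n_\kappa = 0$ for $n > r_\kappa$.

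With this definition, the containment $\subset$ in the claimed equality is the more elementary direction: if $\tilde{x} \in \mathfrak{M}^\varphi \cap E_\kappa^n \mathfrak{M}$ lifts $x \in \operatorname{Fil}^n_\kappa$, then $E_\kappa^{r_\kappa - n} \tilde{x}$ lies in $\mathfrak{M}^\varphi \cap E_\kappa^{r_\kappa} \mathfrak{M}$. The section $s(x) \in \varphi(\mathfrak{M})$ specified in the proposition differs from $\tilde{x}$ by an element of $E_\kappa \mathfrak{M}^\varphi$, and so $E_\kappa^{r_\kappa - n} s(x)$ differs from $E_\kappa^{r_\kappa - n}\tilde{x}$ by an element of $E_\kappa^{r_\kappa - n + 1}\mathfrak{M}^\varphi$. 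Iterating this, and using the Frobenius identities below to absorb the successive remainders, gives the containment modulo $\mathfrak{M}^\varphi_{\operatorname{err}, \kappa}$.

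The main obstacle is the reverse containment $\supset$, which is the true GLS-style content. The strategy, adapting the techniques of \cite{GLS, GLS15}, is to choose an adapted basis $f_1, \ldots, f_d$ of $\mathfrak{M}^\varphi$ whose images modulo $E_\kappa(u)$ split the filtration $\operatorname{Fil}^\bullet_\kappa$, so that each $f_i$ can be taken in $\mathfrak{M}^\varphi \cap E_\kappa^{n_i}\mathfrak{M}$ for an appropriate $n_i$. Any $x \in \mathfrak{M}^\varphi \cap E_\kappa^{r_\kappa}\mathfrak{M}$ can then be written as $\sum a_i f_i$ with $a_i \in \mathfrak{S}_A$, and the condition $x \in E_\kappa^{r_\kappa}\mathfrak{M}$ forces specific $E_\kappa$-divisibility on the $a_i$ after inspecting the expansion Frobenius-component by Frobenius-component.

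The key technical input — and the step I expect to be hardest — is matching the error term. Under Frobenius, the cross-factors $E_{\kappa'}(u) = u - \pi_{\kappa'}$ for $\kappa'\ne \kappa$ produce expressions $\varphi(E_{\kappa'})$ that agree with $\varphi(E_\kappa)$ modulo $\pi$-divisible contributions (since $\pi_\kappa - \pi_{\kappa'}$ is $\pi$-divisible, with valuation controlled by the ramification). Expanding and regrouping then produces precisely the terms $\pi^{p-l}E_\kappa^l \mathfrak{M}^\varphi$ for $l \in [1, p-1]$ comprising $\mathfrak{M}^\varphi_{\operatorname{err}, \kappa}$; the height bound $\mu_\kappa \subset [0, r_\kappa]$ ensures that the range of $l$ appearing stays within $[1, p-1]$ and that nothing outside the error needs to be absorbed. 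The remaining work is the careful bookkeeping of $\pi$- and $E_\kappa$-valuations of the Frobenius expansions — delicate but conceptually a direct translation of the GLS methodology to the present setting with multiple embeddings.
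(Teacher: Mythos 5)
Your proposal has the orientation of the argument backwards, and this masks the genuine gap. In the paper's proof, the containment $\subset$ is where the GLS-style content lives: the key Claim (imported from \cite{B19b}, Lemma 5.2.5, which reworks \cite{GLS,GLS15}) is that for $\overline{x} \in \operatorname{Fil}^n_\kappa$ with $n \leq p$ there exist $x_1,\ldots,x_{p-1}\in\mathfrak{M}^\varphi$ with
$$
s(\overline{x}) + E_\kappa(u)\pi^{p-1}x_1 + \ldots + E_\kappa(u)^{p-1}\pi x_{p-1} \in \mathfrak{M}^\varphi \cap E_\kappa(u)^n\mathfrak{M}.
$$
Once this is in hand, the reverse containment $\supset$ is the formal direction, proved by a short downward induction on $m$ by comparing images in $M_\kappa$. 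You call $\subset$ ``the more elementary direction'' and say the ``true GLS-style content'' is $\supset$; that is the wrong way around.

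Your sketch of $\subset$ does not close. Writing $s(x) - \tilde{x} = E_\kappa(u) y$ with $y \in \mathfrak{M}^\varphi$, you would need $E_\kappa^{r_\kappa-n+1}y$ to land in $\mathfrak{M}^\varphi_{\operatorname{err},\kappa} = \sum_{l=1}^{p-1}\pi^{p-l}E_\kappa(u)^l\mathfrak{M}^\varphi$, which requires a $\pi$-divisibility of $y$ you have no control over; ``iterating'' does not make the remainder converge into the error term, and the ``Frobenius identities below'' you invoke are never stated. The crucial missing input is that your argument never uses the crystalline structure: the Galois action $\sigma$ appears nowhere in the proposal, yet the proposition is false for a general Breuil--Kisin module. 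It is precisely the crystalline condition — exploited via the period ring $S$ and the filtration comparison in \cite{B19b}, \S 5 — that forces the Frobenius-adapted section $s$ (built from $\varphi(\beta)$) to approximately split the Hodge filtration with error confined to the prescribed $\pi$-divisible terms. Finally, your attribution of the error term to the cross-factors $E_{\kappa'}(u) - E_\kappa(u) = \pi_\kappa-\pi_{\kappa'}$ is misplaced: the valuations of those differences are a separate matter (controlled by $\nu_{\kappa_0}$) and are handled in Lemma~\ref{claim} and Theorem~\ref{factor}, not in this proposition.
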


Note that, by construction, the image of the section $s$ generates $\mathfrak{M}^\varphi$ over $\mathfrak{S}_A$.
\begin{proof}
	First we define the filtration $\operatorname{Fil}_{\kappa}^\bullet$. Recall that we equipped $M :=\mathfrak{M}^\varphi/E(u)$ with the filtration whose $n$-th piece is the image of $\mathfrak{M}^\varphi \cap E(u)^n\mathfrak{M}$. Then $D_K:= M[\frac{1}{p}]$ is a filtered $A\otimes_{\mathbb{Z}_p} K$-module and can be written as $\prod_{\kappa} D_{K,\kappa}$ with each $D_{K,\kappa}$ a filtered $A[\frac{1}{p}]$-module. The composite $\mathfrak{M}^\varphi \rightarrow D_K \rightarrow D_{K,\kappa}$ is obtained by base-change along the map $\mathfrak{S} \rightarrow A[\frac{1}{p}]$ given by $u \mapsto \pi_{\kappa} $. Therefore, its kernel is $E_{\kappa}(u)\mathfrak{M}^\varphi$. This means $M_{\kappa}$ can be viewed as a submodule of $D_{K,\kappa}$ and $M_{\kappa}[\frac{1}{p}]  =D_{K,\kappa}$. Define 
	$$
	\operatorname{Fil}_{\kappa}^n = \operatorname{Fil}^n(D_{K,\kappa}) \cap M_{\kappa}
	$$
	The filtered pieces of $D_{K,\kappa}$ are $\mathbb{Q}_p$-vector spaces so the graded pieces of $\operatorname{Fil}^\bullet_{\kappa}$ are $p$-torsionfree. This also shows that $\operatorname{Fil}_{\kappa}^n[\frac{1}{p}] = \operatorname{Fil}^n(D_{K,\kappa})$ which proves Corollary~\ref{filtype} below.
	
	Next we use:
	\begin{claim}
		For $\overline{x} \in \operatorname{Fil}_{\kappa}^n$ with $n\leq p$ there exists $x_1,\ldots,x_{p-1} \in \mathfrak{M}^\varphi$ such that
		$$
		s(\overline{x}) + E_{\kappa}(u)\pi^{p-1}x_1 + \ldots + E_{\kappa}(u)^{p-1} \pi x_{p-1} \in \mathfrak{M}^\varphi \cap E_{\kappa}(u)^{n}\mathfrak{M}
		$$
	\end{claim}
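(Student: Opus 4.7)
The plan is to argue by induction on $n$, the case $n = 0$ being immediate (take all $x_j = 0$, giving $s(\overline{x}) \in \mathfrak{M}^\varphi \subset \mathfrak{M}$).

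For the inductive step, applying the claim for $n-1$ to $\overline{x} \in \operatorname{Fil}^n_\kappa \subseteq \operatorname{Fil}^{n-1}_\kappa$ yields $x'_1, \ldots, x'_{p-1} \in \mathfrak{M}^\varphi$ with
\[
z := s(\overline{x}) + \sum_{j=1}^{p-1} E_\kappa(u)^j \pi^{p-j} x'_j \; \in \; \mathfrak{M}^\varphi \cap E_\kappa(u)^{n-1}\mathfrak{M}.
\]
I will perturb the $x'_j$ additively, by elements of $\mathfrak{M}^\varphi$, so the resulting sum lands in $E_\kappa(u)^n\mathfrak{M}$. Since the condition is vacuous outside the $\kappa_0 := \kappa|_k$ factor, I pass to that component, set $t := u - \pi_\kappa$, and choose locally at $(t)$ a basis $e_1,\ldots,e_d$ of $\mathfrak{M}_{\kappa_0}$ adapted to $\mathfrak{M}^\varphi_{\kappa_0} \subset \mathfrak{M}_{\kappa_0}$, so that $\mathfrak{M}^\varphi_{\kappa_0}$ is locally spanned by $t^{a_i} e_i$ with $a_i \in [0, r_\kappa]$ the Hodge numbers. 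The hypothesis $\overline{x} \in \operatorname{Fil}^n_\kappa$ then reads $\overline{x} = \sum_{a_i \geq n} \lambda_i [t^{a_i}e_i]$ with $\lambda_i \in A$.

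Writing $z_{\kappa_0} = \sum_i s_i e_i$ with $s_i \in t^{a_i} A[[u]]$, a direct computation shows the task reduces to solving, for each $i$ with $a_i < n$, the congruence
\[
s_i \; \equiv \; \sum_{j=1}^{n - a_i - 1} \pi^{p-j}\, t^{j + a_i}\, b_{j,i} \pmod{t^n A[[u]]}
\]
for some $b_{j,i} \in A[[u]]$, which is in turn equivalent to the Taylor coefficient $d_{a_i + j}$ of $s_i$ at $\pi_\kappa$ lying in $\pi^{p-j}A$ for $j = 1, \ldots, n - a_i - 1$ (the leading coefficient $d_{a_i}=0$ is automatic from $\lambda_i = 0$). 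Here the hypothesis $n \leq p$ is used to guarantee $n - a_i - 1 \leq p - 1$, placing the required $j$'s in the admissible range $[1,p-1]$.

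To verify this $\pi$-divisibility I will produce an explicit $\mathbb{Q}_p$-rational lift of $\overline{x}$: by Chinese Remainder applied to the factorization $E(u)_{\kappa_0} = \prod_{\kappa'|_k = \kappa_0}(u - \pi_{\kappa'})$, set
\[
y := \sum_{a_i \geq n} \lambda_i\, (u - \pi_\kappa)^{a_i}\, \Bigl(\prod_{\kappa' \neq \kappa,\, \kappa'|_k = \kappa_0}\!\tfrac{u - \pi_{\kappa'}}{\pi_\kappa - \pi_{\kappa'}}\Bigr)^n\, e_i,
\]
which lies in $\mathfrak{M}^\varphi[\tfrac{1}{p}] \cap E(u)^n \mathfrak{M}[\tfrac{1}{p}]$, reduces to $\overline{x}$ in $D_{K,\kappa}$, and whose $\pi$-adic denominators are bounded by powers of $\prod_{\kappa' \neq \kappa}(\pi_\kappa - \pi_{\kappa'})$ -- a quantity measurable by the different. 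Comparing the integral element $z$ with this rational $y$ (both reducing to $\overline{x}$), and transferring the quantitative $\pi$-divisibility of $y - z$ (computed via $p = \pi^e \cdot \mathrm{unit}$) to the Taylor coefficients of $s_i$, yields the required bound $d_{a_i + j} \in \pi^{p-j} A$. Solving the resulting congruence for the perturbations $b_{j,i}$ then gives the new $x_j$ required by the claim.

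The main obstacle is the precise bookkeeping of $\pi$-adic denominators in the comparison $s(\overline{x}) \leftrightarrow y$: in the tamely ramified case the factors $\pi_\kappa - \pi_{\kappa'}$ are units by Remark~\ref{tame} and the argument is essentially immediate, whereas in the wild case one must carefully control the different. The bound $n \leq p$ is precisely what ensures that the resulting denominators fit within the range of correction exponents $\{p-1, p-2, \ldots, 1\}$ made available by the $p - 1$ allowed terms $E_\kappa(u)^j \pi^{p-j}$.
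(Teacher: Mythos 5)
The paper's own proof of this claim is a direct citation to [B19b, \S5], specifically Lemma 5.2.5 applied to $x = s(\overline{x})$ together with Lemmas 5.2.2 and 5.1.3 to relate the modified element $x^{(n)}$ to the multi-index filtration $\operatorname{Fil}^{\{n,0,\ldots,0\}}$, with a final observation that $E_1(u)$ can be replaced by $E_\kappa(u)$. Your proposal takes a genuinely different, would-be-elementary route via an adapted local basis, Taylor coefficients, and an explicit Lagrange-type rational lift. Unfortunately I believe there are genuine gaps.

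The most serious issue is that your argument never invokes the crystalline $G_K$-action $\sigma$ on $\mathfrak{M}\otimes_{\mathfrak{S}_A}A_{\operatorname{inf},A}$. The section $s$ is defined by the $\varphi(\mathfrak{S}_A)$-basis $\varphi(\beta)$ for an essentially arbitrary basis $\beta$ of $\mathfrak{M}$, so there is no a priori reason that $s(\overline{x})$ should lie $\pi$-adically close to the filtration $\mathfrak{M}^\varphi\cap E_\kappa(u)^n\mathfrak{M}$. The content of [B19b, 5.2.5] is exactly this: the crystalline condition (encoded via $N_\nabla$ or equivalently the divisibility of $\sigma-1$) forces the deviation of $s(\overline{x})$ from the filtration to have a controlled $\pi$-power. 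Your argument, which uses only the $\varphi$-module structure and the Hodge filtration on $D_{K,\kappa}$, cannot see this.

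This shows up concretely in your final step. You claim that because $y$ and $z$ both reduce to $\overline{x}$ in $D_{K,\kappa}$, the controlled $\pi$-denominators of $y$ transfer to bounds on the Taylor coefficients $d_{a_i+j}$ of the $s_i$'s defining $z$. But agreement in $D_{K,\kappa}$ only says $y-z$ vanishes at $u=\pi_\kappa$ after inverting $p$; it gives no control at all on the Taylor coefficients $(y-z)$ at $t^1,\ldots,t^{n-1}$. For an arbitrary $\mathfrak{M}^\varphi$-element $z$ lifting $\overline{x}$, these coefficients can have $\pi$-valuation $0$. Some additional structural input is required, and that input is exactly the crystalline condition. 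A secondary concern is the adapted basis: you choose $e_1,\ldots,e_d$ of $\mathfrak{M}_{\kappa_0}$ so that $\mathfrak{M}^\varphi_{\kappa_0}$ is spanned by $t^{a_i}e_i$ locally at $(t)$. After inverting $p$ this exists (completed local ring is a product of DVRs), but integrally --- which is what your expression $\overline{x}=\sum_{a_i\geq n}\lambda_i[t^{a_i}e_i]$ with $\lambda_i\in A$ requires --- such a basis need not exist for a general finite flat $\mathcal{O}$-algebra $A$, and the integral Hodge filtration $\operatorname{Fil}^n_\kappa$ need not be spanned by the $[t^{a_i}e_i]$ with $a_i\geq n$.
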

	\begin{proof}[Proof of Claim]
	This follows from results in \cite[\S5]{B19b}. To apply these first note that in \emph{loc. cit.} the embeddings $K\rightarrow E$ are indexed by integers $1\leq i \leq f$ and $1 \leq j \leq e$ so that $\kappa_{ij}|_k$ depends only on $i$. This labelling can be chosen so that $\kappa$ from the proposition equals $\kappa_{i1}$ for some $i$. In \cite[5.2.5]{B19b} it is shown that for any $x \in \varphi(\mathfrak{M})$ there exist $x_1,\ldots,x_{p-1} \in \mathfrak{M}^\varphi$ so that
	$$
	x^{(n)} - x + E_{1}(u)\pi^{p-1}x_1 + \ldots + E_{1}(u)^{p-1} \pi x_{p-1} \in E_1(u)^p \mathfrak{M}^\varphi \otimes_{\mathfrak{S}} S[\tfrac{1}{p}]
	$$
	where
	\begin{itemize}
		\item  $S$ is the ring defined in \cite[\S5.1]{B19b},
		\item $x^{(i)}$ is defined as in \cite[\S 5.2]{B19b},
		\item $E_1(u) = \prod_{i=1}^f E_{i1}(u) \in \mathfrak{S}_{\mathcal{O}}$ for $E_{ij}(u) := E_{\kappa_{ij}}(u)$.
	\end{itemize}
We apply this to $x = s(\overline{x})$. Then the image of $x$ in $D_{K,\kappa}$ is contained in $\operatorname{Fil}^n(D_{K,\kappa})$ and so \cite[5.2.2]{B19b} implies $x^{(n)}$ is contained in a submodule of $\mathfrak{M}^\varphi \otimes_{\mathfrak{S}} S[\frac{1}{p}]$ denoted $\operatorname{Fil}^{\lbrace n,0,\ldots,0\rbrace}$. In \cite[5.1.3]{B19b} it is shown that $\operatorname{Fil}^{\lbrace n,0,\ldots,0\rbrace} \cap \mathfrak{M}^\varphi = \mathfrak{M}^\varphi \cap E_{1}(u)^n\mathfrak{M}$. Therefore
	$$
	s(\overline{x}) + E_{1}(u)\pi^{p-1}x_1 + \ldots + E_{1}(u)^{p-1} \pi x_{p-1} \in \mathfrak{M}^\varphi\cap E_{1}(u)^{n}\mathfrak{M} \subset \mathfrak{M}^\varphi \cap E_{\kappa}(u)^n\mathfrak{M}
	$$
	(the inclusion following because $E_\kappa(u)$ divides $E_1(u)$). Under the identification $\mathfrak{M}^\varphi = \prod_{\kappa_0} \mathfrak{M}^\varphi_{\kappa_0}$ the $\kappa_0$-th part of $\mathfrak{M}^\varphi \cap E_{\kappa}(u)^n\mathfrak{M}$ is just $\mathfrak{M}^\varphi_{\kappa_0}$ for $\kappa_0 \neq \kappa|_k$. Therefore, in the above identity we can replace each $E_1(u)$ with $E_\kappa(u)$, and the claim follows.
	\end{proof}
	The claim shows that
	$$
\underbrace{\sum_{n=0}^m E_{\kappa}(u)^{m-n}\mathfrak{S}_A \operatorname{Fil}^n_{\kappa}}_{:=Y_m} + \mathfrak{M}^\varphi_{\operatorname{err},\kappa}  \subset \mathfrak{M}^\varphi \cap E_{\kappa}(u)^m \mathfrak{M}+ \mathfrak{M}^\varphi_{\operatorname{err},\kappa}
	$$
	for any $0 \leq m\leq r_\kappa$ and we want to prove the opposite inclusion for $0 \leq m \leq r_\kappa$ by induction on $m$. When $m=0$ this is clear since both sides equal $\mathfrak{M}^\varphi$ (recall that the section $s$ was chosen so that $s(M_{\kappa})$ generates $\mathfrak{M}^\varphi$ over $\mathfrak{S}_A$). For $m>0$ note that the image of $\mathfrak{M}^\varphi \cap E_{\kappa}(u)^m\mathfrak{M}$ in $M_{\kappa}$ is contained in $\operatorname{Fil}_\kappa^m$, while $\operatorname{Fil}^m_\kappa$ equals the image of $Y_m$. The above inclusion therefore shows these images are equal. As a consequence, if $x \in \mathfrak{M}^\varphi \cap E_{\kappa}(u)^m \mathfrak{M}$ then there exists $x' \in Y_m$ so that
	$$
	\begin{aligned}
	x- x' &\in E_{\kappa}(u)\mathfrak{M}^\varphi \cap \left( \mathfrak{M}^\varphi \cap E_{\kappa}(u)^m\mathfrak{M} + \mathfrak{M}^{\varphi}_{\operatorname{err},\kappa} \right)\\
	 &= \left( E_{\kappa}(u) \mathfrak{M}^\varphi \cap E_{\kappa}(u)^m\mathfrak{M}\right)+ \mathfrak{M}^{\varphi}_{\operatorname{err},\kappa}\\
	 &= E_{\kappa}(u) \left( \mathfrak{M}^\varphi \cap E_\kappa(u^{m-1})\mathfrak{M}\right) + \mathfrak{M}^{\varphi}_{\operatorname{err},\kappa}
\end{aligned}
	$$
	The second equality uses that $\mathfrak{M}^\varphi_{\operatorname{err},\kappa} \subset E_\kappa(u)\mathfrak{M}^\varphi$. The inductive hypothesis therefore gives that $x-x' \in E_{\kappa}(u)Y_{m-1} + \mathfrak{M}^\varphi_{\operatorname{err},\kappa}$. Since $E_{\kappa}(u)Y_{m-1} \subset Y_m$ it follows that $x \in Y_m+\mathfrak{M}^{\varphi}_{\operatorname{err},\kappa}$ as desired.
\end{proof}
\begin{corollary}\label{filtype}
	The graded pieces of $\operatorname{Fil}^\bullet_{\kappa}$ become $A[\frac{1}{p}]$-projective after inverting $p$ and $\operatorname{Fil}_{\kappa}^\bullet[\frac{1}{p}]$ has type $-w_0\mu_{\kappa} = (-\mu_{\kappa,d}\geq \ldots \geq -\mu_{\kappa,1})$.
\end{corollary}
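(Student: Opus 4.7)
The plan is to observe that everything required is already implicit in the proof of Proposition~\ref{GLS} together with the conventions recorded just before Theorem~\ref{crys}, and to assemble these pieces. First, recall from the proof of Proposition~\ref{GLS} that the filtration is constructed in two steps: one first considers the filtered $A\otimes_{\mathbb{Z}_p} K$-module $D_K := M[\tfrac{1}{p}]$ (where $M = \mathfrak{M}^\varphi/E(u)$), decomposes it as $\prod_\kappa D_{K,\kappa}$, and then sets $\operatorname{Fil}^n_\kappa := \operatorname{Fil}^n(D_{K,\kappa}) \cap M_\kappa$, viewing $M_\kappa$ inside $D_{K,\kappa}$ via the identification $M_\kappa[\tfrac{1}{p}] = D_{K,\kappa}$ coming from the fact that the kernel of $\mathfrak{M}^\varphi \to D_{K,\kappa}$ is $E_\kappa(u)\mathfrak{M}^\varphi$.

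The first assertion of the corollary is therefore nothing more than the identity
\[
\operatorname{Fil}^n_\kappa[\tfrac{1}{p}] = \operatorname{Fil}^n(D_{K,\kappa}),
\]
which was explicitly noted in the proof of Proposition~\ref{GLS}. Indeed, since $\operatorname{Fil}^n(D_{K,\kappa}) \subset D_{K,\kappa} = M_\kappa[\tfrac{1}{p}]$, and since $M_\kappa$ is finitely generated as an $A$-module, intersecting with $M_\kappa$ and then inverting $p$ recovers the original filtered piece. Consequently $\operatorname{gr}^n_\kappa[\tfrac{1}{p}] = \operatorname{gr}^n(D_{K,\kappa})$, and the latter is $A[\tfrac{1}{p}]$-projective because $D_K$ is a filtered finite projective $A\otimes_{\mathbb{Z}_p} K$-module in the usual sense attached to the crystalline Breuil--Kisin module $(\mathfrak{M},\sigma)$ via Theorem~\ref{crys}.

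For the second assertion, the type of $\operatorname{Fil}^\bullet_\kappa[\tfrac{1}{p}] = \operatorname{Fil}^\bullet(D_{K,\kappa})$ is exactly what is recorded in the convention fixed in the remark immediately following the definition of Hodge type: a crystalline Breuil--Kisin module $(\mathfrak{M},\sigma)$ has Hodge type $\mu = (\mu_\kappa)$ precisely when, for each $\kappa$, the $\kappa$-component of $\mathfrak{M}^\varphi[\tfrac{1}{p}]/E(u)$ is a filtration of type $-w_0\mu_\kappa = (-\mu_{\kappa,d},\ldots,-\mu_{\kappa,1})$ in the sense of~\ref{flags}. Since by hypothesis $(\mathfrak{M},\sigma) \in Y^\mu_d(A)$, this is exactly the statement that $\operatorname{Fil}^\bullet_\kappa[\tfrac{1}{p}]$ has type $-w_0\mu_\kappa$, which is what we wanted.

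There is no substantive obstacle: the corollary is a compatibility between two definitions already made in the paper, and the only work is verifying that generic fibre intersection commutes with inverting $p$, which is automatic because the filtered pieces of $D_{K,\kappa}$ are already contained in $M_\kappa[\tfrac{1}{p}]$.
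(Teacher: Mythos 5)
Your proof is correct and follows essentially the same path the paper does: the identity $\operatorname{Fil}^n_\kappa[\tfrac{1}{p}] = \operatorname{Fil}^n(D_{K,\kappa})$ is exactly the observation recorded at the end of the construction of $\operatorname{Fil}^\bullet_\kappa$ in the proof of Proposition~\ref{GLS}, and the type assertion then comes from the remark identifying the Hodge type of $(\mathfrak{M},\sigma)\in Y^\mu_d(A)$ with the filtration type on $D_{K,\kappa}$. The only stylistic difference is that you spell out the localisation-commutes-with-intersection step and cite Theorem~\ref{crys} for the projectivity of $\operatorname{gr}^\bullet D_{K,\kappa}$, both of which are left implicit in the paper.
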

\begin{proof}[Proof of Theorem~\ref{factor}]
	By Corollary~\ref{factorising} it suffices to prove the factorisation on the level of $\overline{A}$-valued points for $\overline{A}$ any finite local $\mathbb{F}$-algebra. Let $(\overline{\mathfrak{M}},\overline{\sigma},\overline{\beta})$ be such a point. Applying Lemma~\ref{lift} we obtain a local finite flat $\mathcal{O}$-algebra $A$ with a map $A\rightarrow \overline{A}$ and $(\mathfrak{M},\sigma) \in Y^{\mu}_d(A)$ lifting $(\overline{\mathfrak{M}},\overline{\sigma})$. Additionally, choose an $\mathfrak{S}_A$-basis $\beta$ lifting $\overline{\beta}$. We will then be done if we can show that the special fibre of $(\mathfrak{M},\sigma,\beta)$ is mapped into $\overline{M}_{-w_0\mu}$ by $\Psi$. We can assume that $\overline{A} = A \otimes_{\mathcal{O}} \mathbb{F}$.
	
	Applying Proposition~\ref{GLS} for each $\kappa$ we obtain filtrations $\operatorname{Fil}^\bullet_{\kappa}$. Define $\mathcal{E}$ by  
	$$
	\left( \prod_\kappa E_\kappa(u)^{r_\kappa} \right) \mathcal{E} = \bigcap_{\kappa} \left(  \sum_{n=0}^{r_\kappa} E_{\kappa}(u)^{r_\kappa-n}\mathfrak{S}_A\operatorname{Fil}^n_{\kappa} \right)
	$$
	As in Proposition~\ref{GLS} the $\operatorname{Fil}_\kappa^n$'s are viewed as submodules of $\mathfrak{M}^\varphi$ using the basis $\varphi(\beta)$. Corollary~\ref{filtype} together with part (2) of Lemma~\ref{points} (taking $n_\kappa=r_\kappa$) shows that $\mathcal{E}[\frac{1}{p}]$ defines an $A[\frac{1}{p}]$ point of $M_{-w_0\mu}$ under the identification $\mathfrak{M}^\varphi= \mathfrak{S}_A^d$ induced by $\varphi(\beta)$.
	Note, however, that it is not a priori clear $\mathcal{E}$ defines an $A$-valued point. We will be done if we can show this is the case, and if we can show that $\mathcal{E} \otimes_{\mathcal{O}} \mathbb{F} = \mathfrak{M} \otimes_{\mathcal{O}} \mathbb{F}$.
	
	 We begin with the second assertion. Take $z \in \left( \prod_\kappa E_\kappa(u)^{r_\kappa} \right)\mathfrak{M}$. Then $z \in \mathfrak{M}^\varphi \cap E_\kappa(u)^{r_\kappa} \mathfrak{M}$ for each $\kappa$ and so Proposition~\ref{GLS} ensures the existence of $m_{\kappa} \in \mathfrak{M}^\varphi_{\operatorname{err},\kappa}$ such that 
	$$
	z-m_{\kappa} \in \left(  \sum_{n=0}^{r_\kappa} E_{\kappa}(u)^{r_\kappa-n}\mathfrak{S}_A\operatorname{Fil}^n_{\kappa} \right)
	$$
	We claim there then exists $m \in \pi\mathfrak{M}^\varphi$ such that 
	$$
	m \equiv m_{\kappa}~\operatorname{mod}~E_{\kappa}(u)^{r_{\kappa}}\mathfrak{M}^\varphi
	$$
	for each $\kappa$. Since $\mathfrak{M}^\varphi$ is $\mathfrak{S}_A$-free this claim follows from Lemma~\ref{claim} below. This is where we use the bound on the $r_{\kappa}$. Since 
	$$
	E_{\kappa}(u)^{r_{\kappa}}\mathfrak{M}^\varphi \subset \left(  \sum_{n=0}^{r_\kappa} E_{\kappa}(u)^{r_\kappa-n}\mathfrak{S}_A\operatorname{Fil}^n_{\kappa} \right)
	$$ 
	for each $\kappa$ (due to the filtration $\operatorname{Fil}_{\kappa}^n$ being concentrated in degrees $[0,r_{\kappa}]$ we have $\operatorname{Fil}^0_\kappa = M_\kappa$) it follows that $z-m \in \left( \prod_\kappa E_\kappa(u)^{r_\kappa} \right)\mathcal{E}$. Since $m \in \pi\mathfrak{M}^\varphi$ the image of $z$ in $\mathfrak{M}^\varphi \otimes_{\mathcal{O}}\mathbb{F}$ is contained in the image of $\left( \prod_\kappa E_\kappa(u)^{r_\kappa} \right)\mathcal{E}$. A symmetrical argument shows also that if $z \in \left( \prod_\kappa E_\kappa(u)^{r_\kappa} \right)\mathcal{E}$ then its image in $\mathfrak{M}^\varphi \otimes_{\mathcal{O}} \mathbb{F}$ is contained in the image of $\left( \prod_\kappa E_\kappa(u)^{r_\kappa} \right)\mathfrak{M}$. 
	
	Next we show that $\mathcal{E}$ is $\mathfrak{S}_A$-projective. This is equivalent to $\mathfrak{M}^\varphi / \left( \prod_\kappa E_\kappa(u)^{r_\kappa} \right)\mathcal{E}$ being $A$-projective. From the definitions we see that $\mathfrak{M}^\varphi /\left( \prod_\kappa E_\kappa(u)^{r_\kappa} \right)\mathcal{E}$ is $p$-torsionfree. This means that $\left( \prod_\kappa E_\kappa(u)^{r_\kappa} \right)\mathcal{E} \otimes_{\mathcal{O}} \mathbb{F}$ equals its image in $\mathfrak{M}^\varphi \otimes_{\mathcal{O}} \mathbb{F}$. It also means that, by \cite[00ML]{stacks-project}, $A$-projectivity of $\mathfrak{M}^\varphi / \left( \prod_\kappa E_\kappa(u)^{r_\kappa} \right)\mathcal{E}$ follows from $A \otimes_{\mathcal{O}} \mathbb{F}$-projectivity of $\mathfrak{M}^\varphi \otimes_{\mathcal{O}} \mathbb{F} / \left( \prod_\kappa E_\kappa(u)^{r_\kappa} \right)\mathcal{E} \otimes_{\mathcal{O}}\mathbb{F}$. But we saw in the previous paragraph that $\left( \prod_\kappa E_\kappa(u)^{r_\kappa} \right)\mathcal{E} \otimes_{\mathcal{O}} \mathbb{F} = \left( \prod_\kappa E_\kappa(u)^{r_\kappa} \right)\mathfrak{M} \otimes_{\mathcal{O}} \mathbb{F}$. Since $\mathfrak{M}^\varphi /\left( \prod_\kappa E_\kappa(u)^{r_\kappa} \right)\mathfrak{M}$ is $A$-projective the claimed $\mathfrak{S}_A$-projectivity of $\mathcal{E}$ follows. This establishes the two required conditions mentioned in the second paragraph, and therefore finishes the proof.
\end{proof}
\begin{lemma}\label{claim}
	Let $A$ be a finite flat $\mathcal{O}$-algebra and suppose 
	$$
	m_\kappa = \sum_{l=1}^{p} E_{\kappa}(u)^{p-l}  \pi^{l}m_{\kappa,l} 
	$$
	are given with $m_{\kappa,l}\in A$. Then there exists $m \in \pi\mathfrak{S}_A$ with $m \equiv m_{\kappa}$ modulo $E_\kappa(u)^{r_\kappa}$ for each $\kappa$.
\end{lemma}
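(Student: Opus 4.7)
The plan is to decompose $\mathfrak{S}_A = \prod_{\kappa_0:k\to\mathbb{F}}A[[u]]$ and reduce to a Hermite-type polynomial interpolation on each $\kappa_0$-component, then bound the $\pi$-adic valuations of the resulting polynomial. Under the decomposition from \ref{embeddings}, the element $E_\kappa(u)$ corresponds to $u - \pi_\kappa$ on the $\kappa|_k$-component and to $1$ elsewhere, so the condition $m \equiv m_\kappa$ modulo $E_\kappa(u)^{r_\kappa}$ is automatic on components with index $\neq \kappa|_k$. Enumerating the embeddings over $\kappa_0$ as $\kappa_1, \ldots, \kappa_e$ and setting $\pi_i = \pi_{\kappa_i}$, $r_i = r_{\kappa_i}$, $T_i := \sum_{l=1}^p (u-\pi_i)^{p-l}\pi^l m_{\kappa_i,l}$, $R := \sum_i r_i$, and $\nu := \nu_{\kappa_0}$, it suffices to find $m_{\kappa_0} \in \pi A[[u]]$ with $m_{\kappa_0} \equiv T_i \pmod{(u - \pi_i)^{r_i}}$ for each $i$. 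The hypothesis of Theorem~\ref{factor} translates into $\nu(R - 1) \leq p - 1$.

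Over $K[u]$, the Hermite interpolating polynomial of degree $< R$ satisfying these congruences is unique and given by $m = \sum_{i=1}^e \sum_{k=0}^{r_i-1} c_k^{(i)} t_{i,k}(u)$, where $c_k^{(i)}$ is the $k$-th Taylor coefficient of $T_i$ at $\pi_i$ and $t_{i,k}$ is determined by $\tfrac{1}{l!}t_{i,k}^{(l)}(\pi_j) = \delta_{ij}\delta_{kl}$. By the form of $T_i$ one has $c_k^{(i)} = \pi^{p-k}m_{\kappa_i,p-k}$ (zero if $k \geq p$), of $\pi$-valuation $\geq p - k$; and the standard factorization reads $t_{i,k} = (u - \pi_i)^k \psi_i H_{i,k}$ with $\psi_i := \prod_{j \neq i}(u-\pi_j)^{r_j} \in \mathcal{O}[u]$ and $H_{i,k} \in K[u]$ of degree $< r_i - k$ uniquely determined by $H_{i,k}\psi_i \equiv 1 \pmod{(u-\pi_i)^{r_i-k}}$. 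Thus it suffices to show that this $m$ lies in $\pi A[u] \subset \pi A[[u]]$.

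The crux is the $\pi$-adic estimate. Expanding $\psi_i$ at $\pi_i$, its $(u-\pi_i)^l$-coefficient has valuation $\geq V_i - l\nu$ where $V_i := \sum_{j \neq i}v_\pi(\pi_i - \pi_j)r_j \leq \nu(R - r_i)$, and expanding $\psi_i^{-1}$ via a geometric series then yields that the $(u-\pi_i)^l$-coefficient of $H_{i,k}$ has valuation $\geq -V_i - l\nu$. Translating to the basis $\{u^n\}$ via $(u-\pi_i)^l = \sum_m \binom{l}{m}u^m(-\pi_i)^{l-m}$ (noting $\pi_i \in \pi\mathcal{O}$) and multiplying by $(u-\pi_i)^k\psi_i \in \mathcal{O}[u]$, one deduces that the $u^n$-coefficient of $t_{i,k}$ has $\pi$-valuation $\geq -V_i - \min(n, r_i-k-1) - (r_i-k-1)(\nu-1)$. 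Combined with the estimate for $c_k^{(i)}$, each $u^n$-coefficient of $m$ has valuation bounded below by $\min_{i,k}\bigl(p - k - V_i - \min(n, r_i-k-1) - (r_i-k-1)(\nu-1)\bigr)$; a case analysis on whether $n \geq r_i - k - 1$, together with the chain $V_i + r_i\nu \leq \nu R \leq p - 1 + \nu$, shows this minimum is always $\geq 1$. The main technical hurdle is precisely this valuation bookkeeping, especially in the wild case $\nu > 1$ where both Case-A (large $n$) and Case-B (small $n$) must be treated and the bound is saturated by the hypothesis.
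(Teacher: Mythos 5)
Your proof is correct and follows essentially the same route as the paper: decompose $\mathfrak{S}_A=\prod_{\kappa_0}A[[u]]$, construct an explicit Hermite-type interpolant on each component, and bound $\pi$-adic valuations of its coefficients using $\nu(R-1)\le p-1$. The paper streamlines the bookkeeping by reducing via linearity to a single nonzero $m_\kappa$ (and reducing to $A=\mathcal{O}$), writing $M=N\prod_{\kappa'\neq\kappa}(u-\pi_{\kappa'})^{r_{\kappa'}}$ with $N$ the truncation of $m_\kappa\prod_{\kappa'\neq\kappa}X_{\kappa'}$, and estimating in the $(u-\pi_\kappa)$-basis: the one-line bound $v_\pi\ge p-(\sum_{\kappa'\neq\kappa}r_{\kappa'}+n)\nu$ then finishes it. You instead expand the full Hermite basis $\{t_{i,k}\}$ over all embeddings at once and translate to the $u^n$-basis, which forces the extra $\min(n,r_i-k-1)$ and $(r_i-k-1)(\nu-1)$ terms and a case split, but arrives at the same inequality $p-\nu(R-1)\ge 1$; the two constructions produce the same polynomial by uniqueness of the Hermite interpolant, so this is a bookkeeping rather than a conceptual difference. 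One small simplification you could have borrowed from the paper: since $\pi_\kappa\in\pi\mathcal{O}$, having $\pi$-integral coefficients in the $(u-\pi_\kappa)$-basis is equivalent to having them in the $u$-basis, so the change of basis step can be skipped entirely.
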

\begin{proof}
Firstly, by choosing an $\mathcal{O}$-basis of $A$ we can reduce to the case $A = \mathcal{O}$. Secondly, we can fix $\kappa$ and assume that $m_{\kappa'} = 0$ for all $\kappa' \neq \kappa$. Using the identification $\mathfrak{S}_{\mathcal{O}} = \prod_{\kappa_0} \mathcal{O}[[u]]$ we are left proving that if $m \in \mathcal{O}[u]$ can be written as
$$
m = \sum_{l=1}^p (u-\pi_\kappa)^{p-l} \pi^{l} m_l,\qquad m_l \in \mathcal{O}
$$
then there exists $M \in \pi \mathcal{O}[[u]]$ with \begin{itemize}
	\item $M$ divisible by $(u-\pi_{\kappa'})^{r_{\kappa'}}$ for every $\kappa' \neq \kappa$ with $\kappa'|_k = \kappa|_k$.
	\item $M \equiv m$ modulo $(u-\pi_{\kappa})^{r_{\kappa}}$.
\end{itemize}
We will construct $M$ explicitly. For $\kappa'$ with $\kappa'|_k = \kappa|_k$ and $\kappa' \neq \kappa$ set
$$
X_{\kappa'} := \sum_{n=0}^{r_{\kappa}-1} \binom{r_{\kappa'}-1+j}{r_{\kappa'}-1} \frac{(u-\pi_{\kappa})^n}{(\pi_{\kappa} - \pi_{\kappa'})^{n+r_{\kappa'}}}
$$
Using the formal identity $\frac{1}{(1-y)^r} = \sum_{n=0}^\infty \binom{r-1+i}{r-1} y^i$ we see that
$$
X_{\kappa'}(u-\pi_{\kappa'})^{r_{\kappa'}} \equiv 1 \text{ modulo } (u-\pi_{\kappa})^{r_{\kappa}}
$$ 
Define $N \in E[u]$ to be the polynomial of degree $< r_\kappa$ when viewed as a polynomial in $(u-\pi_{\kappa})$, obtained by truncating  $m \prod_{\kappa' \neq \kappa} X_{\kappa'}$. Then $N \equiv m \prod_{\kappa' \neq \kappa} X_{\kappa'}$ modulo $(u-\pi_\kappa)^{r_\kappa}$ and so
$$
M := N \prod_{\kappa' \neq \kappa} (u-\pi_{\kappa'})^{r_{\kappa'}} 
$$
satisfies the two bullet points above. To finish it suffices to show that $N$, and hence $M$ also, is contained in $\pi\mathcal{O}[[u]]$.

For this view $N$ as a polynomial in $(u-\pi_\kappa)$. By assumption the coefficient of $(u-\pi_{\kappa})^n$ in $m$ has valuation $\geq p-n$. On the other hand, the coefficient of $(u-\pi_{\kappa})$ in $X_{\kappa'}$ has valuation $\geq -(n+r_{\kappa'})\nu$ for $\nu := \nu_{\kappa|_k}$. Since $\nu \geq 1$ we have $p-n \geq p - n \nu$ and as such the coefficient of $(u-\pi_{\kappa})^n$ in $m \prod_{\kappa' \neq \kappa} X_j$ has valuation
$$
\geq p -(\sum_{\kappa'\neq \kappa} r_{\kappa'} + n)\nu
$$
We will be done if $p -(\sum_{\kappa'\neq \kappa} r_{\kappa'} + n)\nu \geq 1$ for all $n =0,\ldots,r_{\kappa}-1$, i.e. if $p- (\sum_{\kappa'|_k = \kappa|_k} r_{\kappa'} -1)\nu \geq 1$. This is equivalent to asking that $\sum_{\kappa'|_k= \kappa|_k} r_{\kappa'} \leq \frac{p-1}{\nu} +1$ so we are done.
\end{proof}
\section{Lower bounds}

In this section we recall from \cite{GK15,EG19} the lower bound on the cycles appearing in the Breuil--M\'ezard conjecture attained when $d=2$. We do this in the context of cycles in deformation rings. We also give a minor improvement using the potential diagaonalisability established in \cite{B19b}.
\begin{sub}\label{absirred}
	First we recall that isomorphism classes of  absolutely irreducible $\mathbb{F}$-representations of $\operatorname{GL}_d(k)$ are in bijection with those tuples $\lambda = (\lambda_{\kappa_0})$ indexed by embeddings $\kappa_0:k\rightarrow \mathbb{F}$ for which
	$$
	\lambda_{\kappa_0,1} -\lambda_{\kappa_0,d} \leq p-1
	$$ 
	This bijection sends $(\lambda_{\kappa_0})$ onto the $\operatorname{GL}_d(k)$-representation obtained by evaluating the algebraic representation of $G=\operatorname{GL}_d$
	$$
	\bigotimes_{\kappa_0} \left( L(\lambda_{\kappa_0}) \otimes_{k,\kappa_0} \mathbb{F} \right)
	$$
	on $k$-points \cite{Her09}. Here $L(\lambda_{\kappa_0}) \subset H^0(G/P_{\lambda_{\kappa}},\mathcal{L}(\lambda_{\kappa})) \otimes_{\mathcal{O}_K} k$ denotes the unique irreducible algebraic $G$-submodule.
\end{sub}
\begin{sub}
	We also recall from the introduction the $\mathbb{F}$-representation $V(\mu,\tau)$ of $\operatorname{GL}_d(k)$ attached to any pair $(\mu,\tau)$ with $\mu$ a Hodge type with each $\mu_\kappa - \rho$ dominant  and $\tau$ an inertial type. Taking $\tau =1$ we obtain $V(\mu,1)$ by evaluating the algebraic representation
	$$
	\bigotimes_{\kappa} \left( H^0(G/P_{\mu_\kappa -\rho},\mathcal{L}(\mu_\kappa-\rho)) \otimes_{\mathcal{O}_K,\kappa} \mathbb{F} \right)
	$$
	(here we write $\kappa$ also for its composite with the surjection $\mathcal{O} \rightarrow \mathbb{F}$) on $k$-points. Since the exact definition of $V(\mu,\tau)$ will not be needed for $\tau \neq 1$ we refer to \cite[8.2]{EG19} for the general construction.
\end{sub}

\begin{lemma}\label{compare}
	Suppose $d=2$ and $\mu$ is a Hodge type with $\mu_\kappa \subset [0,r_\kappa]$ for $r_\kappa \geq 0$ satisfying
	$$
	\sum_{\kappa|_{k} = \kappa_0} r_\kappa \leq e+p-1
	$$
	Then the multiplicity of $\lambda$ in $V(\mu,1)$, for $\lambda$ an absolutely irreducible $\mathbb{F}$-representation of $\operatorname{GL}_d(k)$ corresponding to $(\lambda_{\kappa_0})$ under \ref{absirred}, equals the product
	$$
	\prod_{\kappa_0} m(\lambda_{\kappa_0}, (\mu_{\kappa})_{\kappa|_k = \kappa_0})
	$$
	from Proposition~\ref{upperbound}.
\end{lemma}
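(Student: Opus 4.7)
The plan is to separate $V(\mu,1)$ into outer tensor factors indexed by the residual embeddings $\kappa_0 : k \hookrightarrow \mathbb{F}$, and then reduce each factor to an elementary Clebsch--Gordan calculation for $\operatorname{GL}_2$ in the $p$-restricted range.

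First I would group the tensor product defining $V(\mu,1)$ according to the restriction $\kappa_0 = \kappa|_k$, writing
\[
V(\mu,1) = \bigotimes_{\kappa_0} W_{\kappa_0}, \qquad W_{\kappa_0} := \bigotimes_{\kappa:\,\kappa|_k = \kappa_0} H^0(G/P_{\mu_\kappa-\rho},\mathcal{L}(\mu_\kappa-\rho)) \otimes_{\mathcal{O}_K,\kappa}\mathbb{F},
\]
where each $W_{\kappa_0}$ is viewed as a $\operatorname{GL}_2(k)$-representation with the $\kappa_0$-th tensorand acted on through $\kappa_0$. Since the Serre weight attached to $\lambda = (\lambda_{\kappa_0})$ is by construction $\bigotimes_{\kappa_0} L(\lambda_{\kappa_0}) \otimes_{k,\kappa_0} \mathbb{F}$ with $\kappa_0$-th factor acting through the same $\kappa_0$ (\ref{absirred}), the parametrization of irreducible $\mathbb{F}$-representations of $\operatorname{GL}_2(k)$ by tuples of $p$-restricted weights decouples the multiplicity computation: provided every composition factor of each $W_{\kappa_0}$, viewed as an algebraic $\operatorname{GL}_2$-representation over $\mathbb{F}$, is an irreducible $L(\nu)$ with $\nu_1-\nu_2 \leq p-1$, the multiplicity of $\lambda$ in $V(\mu,1)$ equals $\prod_{\kappa_0}[W_{\kappa_0} : L(\lambda_{\kappa_0})]$.

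Next I would verify this irreducibility hypothesis and compute the local multiplicities. The bound $\mu_\kappa \subset [0,r_\kappa]$ with $\sum_{\kappa|_k = \kappa_0} r_\kappa \leq e + p - 1$ yields
\[
\sum_{\kappa|_k = \kappa_0} (\mu_{\kappa,1}-1-\mu_{\kappa,2}) \leq \sum_{\kappa|_k=\kappa_0} r_\kappa - e \leq p-1.
\]
Each $H^0(\mu_\kappa-\rho) \otimes_{\mathcal{O}_K,\kappa}\mathbb{F}$ is isomorphic to $\operatorname{Sym}^{\mu_{\kappa,1}-1-\mu_{\kappa,2}}\mathbb{F}^2 \otimes \det^{\mu_{\kappa,2}}$, so $W_{\kappa_0}$ is an iterated tensor product of symmetric powers whose total degree is at most $p-1$. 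In this range iterated Clebsch--Gordan is valid over $\mathbb{F}$: every summand $\operatorname{Sym}^j\mathbb{F}^2 \otimes \det^l = H^0((j+l,l))$ that can appear has $j \leq p - 1$ and is therefore irreducible and equal to $L((j+l,l))$. Consequently $W_{\kappa_0}$ decomposes as a direct sum of irreducibles with the same multiplicities as the characteristic-zero decomposition from Lemma~\ref{polynomialsss}, giving $[W_{\kappa_0} : L(\lambda_{\kappa_0})] = m(\lambda_{\kappa_0}, (\mu_\kappa)_{\kappa|_k = \kappa_0})$; taking the product over $\kappa_0$ completes the argument.

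The main obstacle is the first step, the factorization of the $\operatorname{GL}_2(k)$-multiplicity as a product over residual embeddings; this uses essentially that an outer tensor product of $p$-restricted irreducibles along distinct embeddings remains irreducible upon restriction to $\operatorname{GL}_2(k)$, which is exactly the content of the parametrization in \ref{absirred}. The second step, once the bound is in hand, is routine $\operatorname{GL}_2$ representation theory, made available precisely by the total-degree-at-most-$(p-1)$ estimate.
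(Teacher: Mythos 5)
Your proof is correct and takes essentially the same approach as the paper: factor $V(\mu,1)$ as an outer tensor product over residual embeddings $\kappa_0$, note that the bound $\sum_{\kappa|_k=\kappa_0}(\mu_{\kappa,1}-\mu_{\kappa,2}-1)\leq p-1$ puts every composition factor of each $W_{\kappa_0}$ in the $p$-restricted range, and then use that for $\operatorname{GL}_2$ the Weyl module $H^0(\nu)$ with $\nu_1-\nu_2\leq p-1$ is simple (so the mod $p$ and characteristic zero multiplicities coincide). The paper phrases the middle step via formal characters and the Grothendieck group rather than iterated Clebsch--Gordan, but the content is the same.
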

\begin{proof}
	Recall from Lemma~\ref{polynomialsss} that each $m(\lambda_{\kappa_0}, (\mu_{\kappa})_{\kappa|_k = \kappa_0})$ can be interpreted as the multiplicity of $H^0(\lambda_{\kappa_0})$ in $ \bigotimes_{\kappa|_k = \kappa_0} H^0(\mu_\kappa -\rho)$ where $H^0(-)$ denotes the generic fibre of $H^0(G/P_{-},\mathcal{L}(-))$. This coincides with the corresponding multiplicities on the special fibre, i.e. the multiplicity of $H^0(G/P_{\lambda_{\kappa_0}},\mathcal{L}(\lambda_{\kappa_0})) \otimes_{\mathcal{O}_K} k$ in
	$$
	\bigotimes_{\kappa|_k = \kappa_0}\left( H^0(G/P_{\lambda_{\mu_\kappa -\rho}},\mathcal{L}(\mu_\kappa-\rho)) \otimes_{\mathcal{O}_K} k\right)
	$$
	The product of these multiplicities therefore equals the multiplicity of $$
	\bigotimes_{\kappa_0} H^0(G/P_{\lambda_{\kappa_0}},\mathcal{L}(\lambda_{\kappa_0})) \otimes_{\mathcal{O}_K,\kappa_0} \mathbb{F}$$
	inside
	$$
	\bigotimes_{\kappa}\left( H^0(G/P_{\lambda_{\mu_\kappa -\rho}},\mathcal{L}(\mu_\kappa-\rho)) \otimes_{\mathcal{O}_K,\kappa} \mathbb{F}\right)
	$$
	The lemma will therefore follow if each $ H^0(G/P_{\lambda_{\kappa_0}},\mathcal{L}(\lambda_{\kappa_0})) \otimes_{\mathcal{O}_K} k$ is simple, i.e. equals $L(\lambda_{\kappa_0})$. Since $d=2$ this can be seen from the explicit description given in, for example, \cite[II.2.16]{Janbook}.
\end{proof}
\begin{notation}
	If $\lambda$ denotes an isomorphism class of absolutely irreducible $\mathbb{F}$-representation of $\operatorname{GL}_d(k)$ then we write $\widetilde{\lambda}$ for the Hodge type obtained as in \ref{tilde} for $(\lambda_{\kappa_0})$ the tuple corresponding to $\lambda$ in \ref{absirred}.
\end{notation}

\begin{sub}
	In the next proposition we fix a continuous homomorphism $\overline{\rho}:G_K \rightarrow \operatorname{GL}_d(\mathbb{F})$ and, as in the proof of Lemma~\ref{lift}, we write $R_{\overline{\rho}}$ for the $\mathcal{O}$-framed deformation ring of $\overline{\rho}$. If $(\mu,\tau)$ is a pair consisting of a Hodge type $\mu$ and an inertial type $\tau$ then we also write $R_{\overline{\rho}}^{\mu,\tau}$ for the unique reduced $\mathcal{O}$-flat quotient of $R_{\overline{\rho}}$ whose points valued in a finite extension of $E$ are correspond to potentially crystalline representations of type $(\mu,\tau)$.

We also say that an absolutely irreducible representation $\mathbb{F}$-representation $\lambda$ of $\operatorname{GL}_2(k)$ is non-Steinberg if $\lambda$ corresponds to a tuple $(\lambda_{\kappa_0})$ with $\lambda_{\kappa_0,1} - \lambda_{\kappa_0,2} \neq p-1$ for at least one $\kappa_0$.
\end{sub}\begin{proposition}\label{global}
	Assume $p>2$, $d=2$, and that $\mu_\kappa - \rho$ is dominant for each $\kappa$. Then
	\begin{enumerate}
		\item There are cycles $\mathcal{C}_{\overline{\rho},\lambda}$ in $\operatorname{Spec}R_{\overline{\rho}}$, indexed by isomorphism classes of absolutely irreducible $\mathbb{F}$-representations $\lambda$ of $\operatorname{GL}_d(k)$, such that, for any pair $(\mu,\tau)$, one has an inequality
		$$
		[\operatorname{Spec}R^{\mu,\tau}_{\overline{\rho}} \otimes_{\mathcal{O}} \mathbb{F}] \geq \sum_\lambda m(\lambda,\mu,\tau) \mathcal{C}_{\overline{\rho},\lambda}
		$$
		for $m(\lambda,\mu,\tau)$ the multiplicity of $\lambda$ in $V(\mu,\tau)$ (where $V(\mu,\tau)$ is the $\operatorname{GL}_d(k)$-representation attached to $\mu,\tau$).
		\item If $\lambda$ is non-Steinberg then $\mathcal{C}_{\overline{\rho},\lambda} = [\operatorname{Spec}R_{\overline{\rho}}^{\widetilde{\lambda},1} \otimes_{\mathcal{O}} \mathbb{F}]$.
	\end{enumerate}
\end{proposition}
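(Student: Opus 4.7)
My plan is to deduce both parts from the global modularity lifting machinery, which is essentially the only available route to such lower bounds on cycles in deformation rings. The argument is largely a citation combining \cite{GK15}, \cite[\S8]{EG19}, and \cite{B19b}, and I would write it that way rather than reprove anything new.

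For part (1), I would follow the Taylor--Wiles--Kisin patching argument as implemented in \cite[\S8]{EG19}. After globalising $\overline{\rho}$ to a suitable automorphic mod $p$ representation $\overline{r}:G_F\to\operatorname{GL}_2(\mathbb{F})$ over a totally real field $F$ with a place $v$ at which the local completion recovers $K$ and $\overline{\rho}$ (using Khare--Wintenberger style potential modularity; this step requires $p>2$), one constructs a patched module $M_\infty$ over a ring $R_\infty$ that is a power series ring over the product of local framed deformation rings. For any $(\mu,\tau)$ and any choice of $\mathcal{O}$-lattice $\sigma^{\circ}(\mu,\tau)$ inside the $\operatorname{GL}_2(\mathcal{O}_K)$-representation corresponding to $(\mu,\tau)$, the module $M_\infty(\sigma^{\circ}(\mu,\tau))$ is maximal Cohen--Macaulay of the same dimension as $R_\infty^{\mu,\tau}$ and supported there. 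Reducing modulo $\varpi$ and using the Jordan--H\"older filtration of $V(\mu,\tau)$ with multiplicities $m(\lambda,\mu,\tau)$ yields the desired inequality, provided one defines $\mathcal{C}_{\overline{\rho},\lambda}$ to be the cycle on $\operatorname{Spec}R_{\overline{\rho}}$ extracted from $M_\infty(\lambda)/\varpi$ (after projecting away the auxiliary power series variables).

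For part (2), the key additional input is the potential diagonalisability of crystalline representations of Hodge type $\widetilde{\lambda}$ established in \cite{B19b}, which is valid exactly when $\lambda$ is non-Steinberg. This allows me to invoke the Gee--Kisin theorem \cite[4.5.9]{GK15}: one can find global potentially diagonalisable lifts of $\overline{r}$ whose local behaviour at $v$ hits any prescribed irreducible component of $\operatorname{Spec}R_{\overline{\rho}}^{\widetilde{\lambda},1}$. It follows that $M_\infty(\sigma^{\circ}(\widetilde{\lambda},1))$ is supported on the whole of $R_{\overline{\rho}}^{\widetilde{\lambda},1}$ rather than on a proper subscheme. Since the small-weight bound forces $V(\widetilde{\lambda},1)$ to be the single irreducible representation $\lambda$ (by the second bullet point recalled in the introduction), the patched module immediately yields $\mathcal{C}_{\overline{\rho},\lambda}=[\operatorname{Spec}R_{\overline{\rho}}^{\widetilde{\lambda},1}\otimes_{\mathcal{O}}\mathbb{F}]$.

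The main obstacle, such as it is, is arranging the global setup: the patching hypotheses require a suitable automorphic globalisation of $\overline{\rho}$ with sufficient genericity at auxiliary places, and the relevant local deformation rings at those places must be smooth or otherwise controlled so that $R_\infty$ really is a power series ring over the local factors. These conditions are standard in the $d=2$, $p>2$ setting treated in \cite{GK15} and \cite[\S8]{EG19}, but handling scalar or exceptional $\overline{\rho}$ (and checking that the non-Steinberg hypothesis aligns precisely with the range in which \cite{B19b} produces potentially diagonalisable lifts) is the only part where some care is needed beyond a direct quote.
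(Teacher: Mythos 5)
Your proposal is in the right circle of ideas, but part (2) as written has a genuine gap and the references you invoke are not quite the ones the paper uses.

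For part (1) the paper does not rehearse the Taylor--Wiles--Kisin argument: it simply cites the stack-level inequality $[\overline{\mathcal{X}}^{\mu,\tau}_2]\geq\sum_\lambda m(\lambda,\mu,\tau)\,\mathcal{C}_\lambda$ established in \cite[8.6.6]{EG19} and pulls it back along the formally smooth map $\operatorname{Spf}R_{\overline{\rho}}\otimes_{\mathcal{O}}\mathbb{F}\rightarrow\overline{\mathcal{X}}_2$ using \cite[8.3]{EG19}. That citation also comes with an explicit description of the cycles $\mathcal{C}_\lambda$ as (reduced) irreducible components of $\overline{\mathcal{X}}_2$ (from \cite{CEGS19} and \cite[5.5.11]{EG19}), which is essential for part (2). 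Your unfolding of the patching argument is correct in substance, but it is not what the paper does, and crucially you do not record the identification of $\mathcal{C}_\lambda$ with a reduced component --- without that, the multiplicity analysis in (2) cannot get started.

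For part (2) the step from ``$M_\infty(\sigma^\circ(\widetilde{\lambda},1))$ is supported on all of $\operatorname{Spec}R_{\overline{\rho}}^{\widetilde{\lambda},1}$'' to ``$\mathcal{C}_{\overline{\rho},\lambda}=[\operatorname{Spec}R_{\overline{\rho}}^{\widetilde{\lambda},1}\otimes_{\mathcal{O}}\mathbb{F}]$'' is not immediate. Full support together with the irreducibility of $V(\widetilde{\lambda},1)$ only gives equality of supports, not equality of cycles: both the generic rank of the patched module and the possible non-reducedness of $R_{\overline{\rho}}^{\widetilde{\lambda},1}\otimes_{\mathcal{O}}\mathbb{F}$ could force multiplicities different from one. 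The paper closes precisely this gap by a Hilbert--Samuel multiplicity computation: by \cite[8.3]{EG19} the desired equality reduces to showing $e(R_{\overline{\rho}}^{\widetilde{\lambda},1}\otimes_{\mathcal{O}}\mathbb{F})=1$ for $\overline{\rho}$ in a dense open subset of $\mathcal{C}_\lambda$ (the maximally non-split, niveau one $\overline{\rho}$ of weight $\lambda$, as in \cite[5.5.11]{EG19}). The lower bound $e\geq 1$ comes from \cite[5.5.4]{EG19}; the upper bound uses \cite[3.5.5]{GK15} --- not 4.5.9 --- which says that if every crystalline lift of $\overline{\rho}$ of Hodge type $\widetilde{\lambda}$ is potentially diagonalisable then $e(R_{\overline{\rho}}^{\widetilde{\lambda},1}\otimes_{\mathcal{O}}\mathbb{F})=e_\lambda(\overline{\rho})$, together with the fact that $e_\lambda(\overline{\rho})=1$ on the chosen dense open. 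The role of \cite{B19b} is exactly to supply that potential diagonalisability for non-Steinberg $\lambda$ (equivalently, $\overline{\rho}$ not an unramified twist of $1\oplus\chi_{\operatorname{cyc}}^{-1}$). You should replace the ``support everywhere, done'' step with this Hilbert--Samuel argument.
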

This is the single point where the assumption $p>2$ arises.
\begin{proof}
	In \cite[8.6.6]{EG19} it is shown that if $\mathcal{X}^{\mu,\tau}_2$ denotes the closed algebraic substack of the Emerton--Gee stack $\mathcal{X}_2$, whose $A$-points (for $A$ a finite flat $\mathcal{O}$-algebra) correspond to potentially crystalline $G_K$-representations of type $(\mu,\tau)$, then there are top dimensional cycles $\mathcal{C}_{\lambda} \subset \overline{\mathcal{X}}_2$ such that
	$$
	[\overline{\mathcal{X}}^{\mu,\tau}_2] \geq \sum_\lambda m(\lambda,\mu,\tau) \mathcal{C}_\lambda
	$$ 
	Furthermore, the cycles $\mathcal{C}_{\lambda}$ are described explicitly in \cite[8.6.2]{EG19} (using results from \cite{CEGS19}). In particular, if $\lambda$ is non-Steinberg then $\mathcal{C}_{\lambda}$ is the irreducible component of $\overline{\mathcal{X}}_2$ labelled by $\lambda$ as in \cite[5.5.11]{EG19}. 
	
	As explained in \cite[8.3]{EG19} the above inequality implies part (1) of the proposition by pulling back along the formally smooth morphism $\operatorname{Spf}R_{\overline{\rho}} \otimes_{\mathcal{O}} \mathbb{F} \rightarrow \overline{\mathcal{X}}_2$. Likewise, part (2) will follow if we can show $[\overline{\mathcal{X}}^{\widetilde{\lambda},1}_2] = \mathcal{C}_{\lambda}$ for $\lambda$ non-Steinberg. Furthermore, \cite[8.3]{EG19} explains that this equality is implied by the assertion that 
	$$
	e(R_{\overline{\rho}}^{\widetilde{\lambda},1} \otimes_{\mathcal{O}}\mathbb{F}) =1
	$$
	for every $\overline{\rho}$ contained in a dense open subset of $\mathcal{C}_{\lambda}$, where $e(R)$ denotes the Hilbert--Samuel multiplicity of a local ring $R$. We do this by considering the dense open consisting of $\overline{\rho}$ maximally non-split of niveau $1$ and weight $\lambda$, which is indicated in part (2) of \cite[5.5.11]{EG19}. By \cite[5.5.4]{EG19} we know $e(R_{\overline{\rho}}^{\widetilde{\lambda},1} \otimes_{\mathcal{O}}\mathbb{F}) \geq 1$ for such $\overline{\rho}$. From the definition given in \cite[5.5.1]{EG19} we also know that if $\lambda$ is non-Steinberg then any such $\overline{\rho}$ is not of the form
	$$
	\psi \otimes \begin{pmatrix}
		1 & * \\ 0 & \chi_{\operatorname{cyc}}^{-1}
	\end{pmatrix}
	$$ 
	for $\psi$ an unramified character and $\chi_{\operatorname{cyc}}$ the mod $p$ cyclotomic character.
	
	It remains to prove that $e(R_{\overline{\rho}}^{\widetilde{\lambda},1} \otimes_{\mathcal{O}}\mathbb{F}) \leq1$ for $\overline{\rho}$ maximally non-split of niveau $1$ and weight $\lambda$. For this we recall \cite[3.5.5]{GK15} which asserts that for any $\overline{\rho}$ there is a unique set of integers $e_{\lambda}(\overline{\rho}) \geq 0$ such that, if every potentially crystalline lift of $\overline{\rho}$ of type $\mu,\tau$ is potentially diagonalisable, then
	$$
	e(R^{\mu,\tau}_{\overline{\rho}} \otimes_{\mathcal{O}} \mathbb{F}) = \sum_{\lambda} m(\lambda,\mu,\tau) e_\lambda(\overline{\rho})
	$$ 
	The integers $e_\lambda(\overline{\rho})$ are the Hilbert--Samuel multiplicities of the cycles $\mathcal{C}_{\lambda,\overline{\rho}}$ and the above discussion implies these are $1$ when $\lambda$ is non-Steinberg. Therefore, part (2) will follow if we can show every crystalline lift of $\overline{\rho}$ with Hodge type $\widetilde{\lambda}$ is potentially diagaonalisable. This is the main result of \cite{B19b} (which applies since $\overline{\rho}$ is not an unramified twist of $\chi_{\operatorname{cyc}}^{-1}$ by $1$).
\end{proof}
\section{Main result}

We can now prove our main result.

\begin{theorem}
	Assume $p>2$ and $d=2$. Let $\mu$ be a Hodge type with each $\mu_{\kappa} - \rho$ dominant and
	$$
	\sum_{\kappa_{\kappa|_k =\kappa_0}} (\mu_{\kappa,1}- \mu_{\kappa,2} -1) \leq p
	$$
	for each $\kappa_0:k\rightarrow \mathbb{F}$. Then
	$$
	[\operatorname{Spec}R^{\mu,1}_{\overline{\rho}} \otimes_{\mathcal{O}} \mathbb{F}] = \sum_\lambda m(\lambda,\mu,1) [\operatorname{Spec}R^{\widetilde{\lambda},1}_{\overline{\rho}} \otimes_{\mathcal{O}} \mathbb{F}]
	$$
 	for $m(\lambda,\mu,1)$ the multiplicity of $\lambda$ in $V(\mu,1)$.
\end{theorem}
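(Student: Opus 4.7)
I would follow the strategy from Part 3 of the introduction, working with the framed deformation rings $R^{\mu,1}_{\overline{\rho}}$ in place of $\overline{\mathcal{X}}_2$. The first step is to appeal to Proposition~\ref{linearcomb}, whose hypothesis on $\mu$ is implied by the bound in the theorem, to obtain integers $n(\lambda,\mu) \geq 0$ with
$$[M_\mu \otimes_{\mathcal{O}} \mathbb{F}] = \sum_\lambda n(\lambda,\mu) [M_{\widetilde{\lambda}} \otimes_{\mathcal{O}} \mathbb{F}].$$
Applying the duality involution of Lemma~\ref{dual2} replaces $\mu$ by $-w_0\mu$ and each $\widetilde{\lambda}$ by $-w_0\widetilde{\lambda}$. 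Pick $N$ large enough that Proposition~\ref{basiclocalmodel} applies; then $\Psi_N$ is smooth over its image in $\operatorname{Gr}_{\mathcal{O}} \otimes_{\mathcal{O}} \mathbb{F}$, so setting $Y^{\nu,\operatorname{flag}}_2 = \Psi_N^{-1}(M_{-w_0\nu} \otimes_{\mathcal{O}} \mathbb{F})$, flat pullback gives
$$[Y^{\mu,\operatorname{flag}}_2] = \sum_\lambda n(\lambda,\mu) [Y^{\widetilde{\lambda},\operatorname{flag}}_2].$$

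The next step is to move this identity onto the Breuil--Kisin side. By Theorem~\ref{factor}, whose hypothesis on the $r_\kappa$ is guaranteed by the bound in the theorem (harmless in the tame case by Remark~\ref{tame}), the morphism $\widetilde{Y}^\mu_d \otimes_{\mathcal{O}} \mathbb{F} \to \operatorname{Gr}_{\mathcal{O}}$ factors through $M_{-w_0\mu} \otimes_{\mathcal{O}} \mathbb{F}$, so $\widetilde{Y}^\mu_d \otimes_{\mathcal{O}} \mathbb{F} \subseteq Y^{\mu,\operatorname{flag}}_2$. A dimension check using Corollary~\ref{dim} makes this an inequality of cycles $[\widetilde{Y}^\mu_d \otimes_{\mathcal{O}} \mathbb{F}] \leq [Y^{\mu,\operatorname{flag}}_2]$. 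When $\mu = \widetilde{\lambda}$, Proposition~\ref{linearcomb}(2) gives irreducibility of $M_{\widetilde{\lambda}} \otimes_{\mathcal{O}} \mathbb{F}$ and Proposition~\ref{isom} (applicable since $Y^{\widetilde{\lambda},\operatorname{flag}}_2 \subseteq \widetilde{Z}^{\nabla_\sigma,r}_d \otimes_{\mathcal{O}} \mathbb{F}$ by Proposition~\ref{nablacontain}) upgrades this to equality $[\widetilde{Y}^{\widetilde{\lambda}}_d \otimes_{\mathcal{O}} \mathbb{F}] = [Y^{\widetilde{\lambda},\operatorname{flag}}_2]$. Descending along the smooth surjection $\widetilde{Y}_d \to Y_d$ produces $[Y^\mu_2 \otimes_{\mathcal{O}} \mathbb{F}] \leq \sum_\lambda n(\lambda,\mu) [Y^{\widetilde{\lambda}}_2 \otimes_{\mathcal{O}} \mathbb{F}]$. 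Finally, pulling back along the formally smooth $\mathcal{L}^{\leq h}_{\overline{\rho}} \to Y^{\leq h}_d$ from the proof of Lemma~\ref{lift} and pushing forward along the proper morphism $\mathcal{L}^{\leq h}_{\overline{\rho}} \to \operatorname{Spec}R_{\overline{\rho}}$ (which becomes a closed immersion after inverting $p$, so Lemma~\ref{spec} applies to compare special fibres) yields
$$[\operatorname{Spec}R^{\mu,1}_{\overline{\rho}} \otimes_{\mathcal{O}} \mathbb{F}] \leq \sum_\lambda n(\lambda,\mu) [\operatorname{Spec}R^{\widetilde{\lambda},1}_{\overline{\rho}} \otimes_{\mathcal{O}} \mathbb{F}].$$

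To conclude, I combine this with the lower bound from Proposition~\ref{global}(1). The bound in the theorem ensures that all $\lambda$ with $m(\lambda,\mu,1) \neq 0$ are non-Steinberg (save for the trivial unramified case), so Proposition~\ref{global}(2) identifies $[\operatorname{Spec}R^{\widetilde{\lambda},1}_{\overline{\rho}} \otimes_{\mathcal{O}} \mathbb{F}]$ with the cycles $\mathcal{C}_{\overline{\rho},\lambda}$, which pull back from pairwise distinct irreducible components of $\overline{\mathcal{X}}_2$. Subtracting the lower from the upper bound and varying $\overline{\rho}$ so that it lies on each $\mathcal{C}_\lambda$ in turn forces $n(\lambda,\mu) \geq m(\lambda,\mu,1)$ for every $\lambda$. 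Lemma~\ref{compare} identifies $m(\lambda,\mu,1)$ with the product $\prod_{\kappa_0} m(\lambda_{\kappa_0}, (\mu_\kappa)_{\kappa|_k = \kappa_0})$ appearing in Proposition~\ref{upperbound}, which then forces equality $n(\lambda,\mu) = m(\lambda,\mu,1)$. Substituting this back into the cycle inequalities gives the required identity.

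The main obstacle is coordinating the bounds on $\mu$: Proposition~\ref{linearcomb} needs \eqref{naturalbound}, Theorem~\ref{factor} needs a bound involving the ramification invariants $\nu_{\kappa_0}$, Proposition~\ref{isom} needs a strict inequality to run the Frobenius-contraction argument, and Proposition~\ref{global}(2) needs every relevant $\lambda$ to be non-Steinberg. The hypothesis $\sum_{\kappa|_k = \kappa_0}(\mu_{\kappa,1} - \mu_{\kappa,2} - 1) \leq p$, together with $d=2$ and $p > 2$, is chosen so that all of these conditions hold simultaneously.
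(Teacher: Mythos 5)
Your proposal follows the same overall architecture as the paper's proof: cycle identity from Proposition~\ref{linearcomb}, dualise via Lemma~\ref{dual2}, pull back along the smooth local-model maps (Propositions~\ref{basiclocalmodel}, \ref{isom}), upper bound via Theorem~\ref{factor} with saturation from irreducibility of $M_{\widetilde{\lambda}}\otimes_{\mathcal{O}}\mathbb{F}$, descend to $\operatorname{Spec}R_{\overline{\rho}}$ via $\mathcal{L}^{\leq h}_{\overline{\rho}}$ and Lemma~\ref{spec}, and then squeeze between Proposition~\ref{global} and Proposition~\ref{upperbound}/Lemma~\ref{compare}. Two points deserve attention.

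First, you flag ``coordinating the bounds'' as the main obstacle but leave the verification implicit, whereas the paper opens with a concrete reduction that does genuine work: twist so that $\mu_{\kappa,2}=0$, then split into cases. If $e\geq p$ the hypothesis forces $\mu_{\kappa,1}=1$ for all $\kappa$ so $\mu=\widetilde{\lambda}$ and the identity is trivial; if $e=1$ the same holds. One may therefore assume $1<e<p$, which ensures $K/\mathbb{Q}_p$ is tame (so $\nu_{\kappa_0}=0$ and Theorem~\ref{factor} applies via Remark~\ref{tame}) and makes $\sum_{\kappa|_k=\kappa_0}r_\kappa < e+p-1$ \emph{strict}, which is a hypothesis of Proposition~\ref{isom} needed to make the Frobenius-contraction converge. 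Without this case analysis the chain of implications you invoke does not close up: for instance when $e=1$ the strict inequality in Proposition~\ref{isom} can fail. This is not merely bookkeeping; the trivial cases are being disposed of precisely because the machinery does not apply to them.

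Second, you descend the cycle inequality along $\widetilde{Y}_d \to Y_d$ and only afterwards pull back to $\mathcal{L}^{\leq h}_{\overline{\rho}}$. That intermediate step lands in the formal algebraic stack $Y_d$, so one would have to make sense of cycle groups there. The paper deliberately avoids this: it pulls back directly from the scheme $Y^{\leq h}_2\times_{Z^{\leq h}_2}(\widetilde{Z}^{\nabla_\sigma,r}_2\otimes\mathbb{F})$ along $\mathcal{L}^{\leq h}_{\overline{\rho}}\times_{Z^{\leq h}_2}\widetilde{Z}^{\leq h,N}_2\to Y^{\leq h}_2\times_{Z^{\leq h}_2}\widetilde{Z}^{\leq h,N}_2$ and then descends the $\mathcal{G}_N$-torsor, staying within schemes throughout. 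Your route is morally equivalent but would need the extra scaffolding of stacky cycles, which the paper's ordering of operations sidesteps.
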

\begin{proof}
	First, by a simple twisting argument, we can assume $\mu_{\kappa,2} = 0$ for each $\kappa$. Since $\mu_{\kappa,1} \geq 1$ the bound $\sum_{\kappa|_{k} = \kappa_0}  \mu_{\kappa,1} \leq p$ implies that either $e<p$ or $e=p$ and $\mu_{\kappa,1} =1$ for each $\kappa$. In the latter case $\mu = \widetilde{\lambda}$ for $\lambda$ the trivial representation and in this case there is nothing to prove. Thus, we can assume $e<p$. This means $K$ is tamely ramified over $\mathbb{Q}_p$ and so Remark~\ref{tame} indicates that  Theorem~\ref{factor} applies. We can also assume that $e>1$, since if $e =1$ then again $\mu = \widetilde{\lambda}$ and the theorem is again trivial. This means that $\sum_{\kappa|_{k} = \kappa_0} \mu_{\kappa,1} <p+e-1$ for each $\kappa$ and so Proposition~\ref{isom} also applies, with $r_\kappa = \mu_{\kappa,1}$.
	
	Proposition~\ref{linearcomb} gives an identity of cycles
	$$
	[M_{\mu} \otimes_{\mathcal{O}} \mathbb{F}] = \sum_{\lambda} n(\lambda,\mu) [M_{\widetilde{\lambda}}\otimes_{\mathcal{O}} \mathbb{F}]
	$$
	for integers $n(\lambda,\mu) \geq 0$ and $\lambda$ running over tuples $(\lambda_{\kappa_0})$ with $\lambda_{\kappa_0} \leq \sum_{\kappa|_{k} = \kappa_0} (\mu_\kappa-\rho)$.  Each such $\lambda_{\kappa_0}$ then satisfies $\lambda_{\kappa_0,1} - \lambda_{\kappa_0,2} \leq p-1$ so we can also view the sum as running over absolutely irreducible $\mathbb{F}$-representations of $\operatorname{GL}_2(k)$ by \ref{absirred}. Applying the automorphism from Section~\ref{dual} gives
		$$
	[M_{-w_0\mu}\otimes_{\mathcal{O}} \mathbb{F}] = \sum_{\lambda} n(\lambda,\mu) [M_{-w_0\widetilde{\lambda}}\otimes_{\mathcal{O}} \mathbb{F}]
	$$
	Proposition~\ref{nablacontain} allows us to view this identity of cycles as occurring within the closed subscheme $\operatorname{Gr}^{\nabla_{\sigma},r}_{\mathcal{O}} \otimes_{\mathcal{O}} \mathbb{F}$ from \ref{Grnablasigmar} for $r= (r_\kappa)$. We want to consider its preimage under the composite
	\begin{equation}\label{compo}
	Y^{\leq h}_2 \times_{Z^{\leq h}_2} (\widetilde{Z}^{\nabla_{\sigma},r}_2 \otimes_{\mathcal{O}} \mathbb{F}) \rightarrow\widetilde{Z}^{\nabla_{\sigma},r}_2 \otimes_{\mathcal{O}} \mathbb{F} \rightarrow \operatorname{Gr}^{\nabla_{\sigma},r}_{\mathcal{O}} \otimes_{\mathcal{O}} \mathbb{F}	
\end{equation}
	(here the auxilliary integer $N$ is chosen sufficiently large so that Proposition~\ref{basiclocalmodel} applies). To do this we need to show the composite is flat. As the first map is an isomorphism by Proposition~\ref{isom}, and $\mathcal{G}_N$ is a smooth and irreducible group scheme, this composite is smooth with irreducible fibres. Smooth morphisms are flat so the pull-back of cycles is well defined and we obtain
	$$
	[Y_2^{\mu,\operatorname{flag}}] = \sum_{\lambda} n(\lambda,\mu) [Y^{\widetilde{\lambda},\operatorname{flag}}_2]
	$$
	where $Y_2^{\mu,\operatorname{flag}}$ denotes the preimage of $M_{-w_0\mu}\otimes_{\mathcal{O}} \mathbb{F}$. These are identities of $\operatorname{dim} \mathcal{G}_N + \sum_{\kappa} \operatorname{dim} G/P_{\mu_\kappa}$-dimensional cycles. Theorem~\ref{factor} shows that 
	$$
	\overline{Y}^\mu_2 \times_{Z^{\leq h}_2} \widetilde{Z}^{\leq h,N}_2 \hookrightarrow Y^{\mu,\operatorname{flag}}_2 ,\qquad 
	$$
	from which we conclude that $[\overline{Y}^\mu_2 \times_{Z^{\leq h}_2} \widetilde{Z}^{\leq h,N}_2] \leq [Y^{\mu,\operatorname{flag}}_2]$ as cycles. We also point out that since each $M_{\widetilde{\lambda}}\otimes_{\mathcal{O}} \mathbb{F}$ is irreducible and generically reduced (see Proposition~\ref{linearcomb}) the same is true of $M_{-w_0\widetilde{\lambda}}\otimes_{\mathcal{O}} \mathbb{F}$. The same is then also true of $Y^{\widetilde{\lambda},\operatorname{flag}}_2$ since \eqref{compo} is smooth with irreducible fibres. In particular, this implies the inequality $[\overline{Y}^\mu_2 \times_{Z^{\leq h}_2} \widetilde{Z}^{\leq h,N}_2] \leq [Y^{\mu,\operatorname{flag}}_2]$ is an equality when $\mu = \widetilde{\lambda}$. As a consequence
	$$
	[\overline{Y}^\mu_2 \times_{Z^{\leq h}_2} \widetilde{Z}^{\leq h,N}_2] \leq \sum_{\lambda} n(\lambda,\mu)[\overline{Y}^{\widetilde{\lambda}}_2 \times_{Z^{\leq h}_2} \widetilde{Z}^{\leq h,N}_2]
	$$
	as $\operatorname{dim} \mathcal{G}_N + \sum_{\kappa} \operatorname{dim} G/P_{\mu_\kappa}$-dimensional cycles inside the scheme $Y^{\leq h}_2 \times_{Z^{\leq h}_2} (\widetilde{Z}^{\nabla_{\sigma},r}_2 \otimes_{\mathcal{O}} \mathbb{F})$.
	
	The next goal is to descend this identity to an inequality of cycles in $\operatorname{Spec}R_{\overline{\rho}} \otimes_{\mathcal{O}} \mathbb{F}$. For this we recall the projective $R_{\overline{\rho}}$-scheme $\mathcal{L}^{\leq h}_{\overline{\rho}}$ introduced in the proof of Lemma~\ref{lift}. There is a formally smooth morphism $\mathcal{L}^{\leq h}_{\overline{\rho}} \rightarrow Y^{\leq h}_2$ with relative dimension $d^2 = 4$. Pulling back the previous inequality along the special fibre of $\mathcal{L}^{\leq h}_2 \times_{Z^{\leq h}_2} \widetilde{Z}^{\leq h,N}_2 \rightarrow Y^{\leq h}_2 \times_{Z^{\leq h}_2} \widetilde{Z}^{\leq h,N}_2$ (being a formally smooth morphism between Noetherian schemes, this map is flat and so the pull-back is defined) gives an inequality 
	$$
	[\overline{\mathcal{L}}^\mu_{\overline{\rho}} \times_{Z^{\leq h}_2} \widetilde{Z}^{\leq h,N}_2] \leq \sum_{\lambda} n(\lambda,\mu)[\overline{\mathcal{L}}^{\widetilde{\lambda}}_{\overline{\rho}} \times_{Z^{\leq h}_2} \widetilde{Z}^{\leq h,N}_2]
	$$
	where $\overline{\mathcal{L}}^{\mu}_{\overline{\rho}} = \mathcal{L}^{\mu}_{\overline{\rho}} \otimes_{\mathcal{O}} \mathbb{F}$ for $\mathcal{L}^{\mu}_{\overline{\rho}}$ the preimage of $Y^{\mu}_2$ in $\mathcal{L}^{\leq h}_{\overline{\rho}}$ (defined just as in the proof of Lemma~\ref{lift}). This is an identity of $d^2+\operatorname{dim} \mathcal{G}_N + \sum_{\kappa} \operatorname{dim} G/P_{\mu_\kappa}$-dimensional cycles. Since the morphism $\mathcal{L}^{\leq h}_{\overline{\rho}} \times_{Z^{\leq h}_2} (\widetilde{Z}^{\leq h,N}_2 \otimes_{\mathcal{O}} \mathbb{F}) \rightarrow \mathcal{L}^{\leq h}_{\overline{\rho}}$ is a $\mathcal{G}_N$-torsor (in particular smooth, surjective, and of relative dimension $\operatorname{dim}\mathcal{G}_N$) it follows that
	$$
	[\overline{\mathcal{L}}^\mu_{\overline{\rho}} ] \leq \sum_{\lambda} n(\lambda,\mu)[\overline{\mathcal{L}}^{\widetilde{\lambda}}_{\overline{\rho}} ]
	$$
	as $d^2 + \sum_\kappa \operatorname{dim} G/P_\kappa$-dimensional cycles inside $\mathcal{L}^{\leq h}_{\overline{\rho}}$. Recall that the projective morphism $\Theta:\mathcal{L}^{\leq h}_{\overline{\rho}} \rightarrow \operatorname{Spec} R_{\overline{\rho}}$ becomes a closed immersion after inverting $p$ and this closed immersion identifies $\mathcal{L}^{\mu}_{\overline{\rho}}[\frac{1}{p}] = \operatorname{Spec}R^{\mu,1}_{\overline{\rho}}[\frac{1}{p}]$. This was discussed in the proof of Lemma~\ref{lift}. Since the $R^{\mu}_{\overline{\rho}}$ are $\mathcal{O}$-flat an application of Lemma~\ref{spec} shows that
	$$
	\Theta_*[\overline{\mathcal{L}}^\mu_{\overline{\rho}}] = [\operatorname{Spec}R^{\mu,1}_{\overline{\rho}} \otimes_{\mathcal{O}} \mathbb{F}]
	$$
	Therefore, pushing forward the previous inequality of cycles gives
	$$
	[\operatorname{Spec}R^{\mu,1}_{\overline{\rho}} \otimes_{\mathcal{O}} \mathbb{F}] \leq \sum_{\lambda} n(\lambda,\mu)[\operatorname{Spec}R^{\widetilde{\lambda},1}_{\overline{\rho}} \otimes_{\mathcal{O}} \mathbb{F}]
	$$
	now as $d^2 + \sum_\kappa \operatorname{dim} G/P_\kappa$-dimensional cycles inside $\operatorname{Spec}R_{\overline{\rho}} \otimes_{\mathcal{O}} \mathbb{F}$. Proposition~\ref{global} then gives that $n(\lambda,\mu) \geq m(\lambda,\mu,1)$. Combining Lemma~\ref{compare} and Proposition~\ref{upperbound} shows that this must be an equality. The theorem follows.
\end{proof}
\section{Miscellany}

Let $\mathcal{X}$ and $\mathcal{Y}$ be algebraic stacks of finite type over a field $k$ and let $f:\mathcal{X}\rightarrow \mathcal{Y}$ be a morphism of stacks.
\begin{lemma}\label{mono}
	If, for $A$ any local finite $k$-algebra, the induced functor $\mathcal{X}(A) \rightarrow \mathcal{Y}(A)$:
	\begin{enumerate}
		\item is fully faithful then $f$ is a monomorphism (which by our definition implies being representable by algebraic spaces and separated).
		\item is an equivalence then $\mathcal{X}\rightarrow \mathcal{Y}$ is an isomorphism.
	\end{enumerate}
\end{lemma}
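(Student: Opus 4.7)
My plan is to prove (2) first, then deduce (1) by applying (2) to the diagonal $\Delta_f \colon \mathcal{X} \to \mathcal{X} \times_{\mathcal{Y}} \mathcal{X}$.

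For (2), I would deduce two properties of $f$. First, $f$ is étale: the infinitesimal lifting criterion on local finite $k$-algebras is immediate, since given a square-zero surjection $A' \twoheadrightarrow A$ and a compatible square $(\mathrm{Spec}\,A \to \mathcal{X},\, \mathrm{Spec}\,A' \to \mathcal{Y})$, essential surjectivity of $f(A')$ produces a lift to $\mathcal{X}$, unique up to a unique $2$-isomorphism by full faithfulness of $f(A)$; Noetherian approximation for the finite-type stacks $\mathcal{X}, \mathcal{Y}$ upgrades this to formal étaleness, hence étaleness. Second, the hypothesis with $A = L$ a finite extension of $k$ gives that $f(L)$ is an equivalence; equivalently, $f$ is a bijection on closed points of $|\mathcal{X}|, |\mathcal{Y}|$ (since such closed points have residue fields finite over $k$) and induces isomorphisms of automorphism groups there. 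Combining, $\Delta_f$ is étale (as $f$ is étale) and an isomorphism on closed points of the Jacobson stack $\mathcal{X} \times_{\mathcal{Y}} \mathcal{X}$—hence an isomorphism. Thus $f$ is a monomorphism, so the étale morphism $f$ is an open immersion (étale monomorphism of locally finite presentation). Finally, essential surjectivity of $f(L)$ for finite $L/k$ together with the Jacobson property makes $f$ surjective, so the open immersion $f$ is an isomorphism.

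For (1), it suffices to verify the hypothesis of (2) for $\Delta_f$. Given only full faithfulness of $f(A)$, the functor $\Delta_f(A) \colon \mathcal{X}(A) \to (\mathcal{X} \times_{\mathcal{Y}} \mathcal{X})(A)$ is faithful automatically; essentially surjective, because any $(x_1, x_2, \alpha \colon f(x_1) \cong f(x_2))$ is isomorphic to $\Delta_f(x_1)$ via the unique lift $\beta \colon x_1 \cong x_2$ of $\alpha$ supplied by full faithfulness (noting that $f(A)$ reflects isomorphisms); and full, because a morphism $(\beta_1, \beta_2) \colon (x,x,\mathrm{id}) \to (x',x',\mathrm{id})$ in the target satisfies $f(\beta_1) = f(\beta_2)$, hence $\beta_1 = \beta_2$ by faithfulness of $f(A)$. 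Applying (2) to $\Delta_f$ then yields that $\Delta_f$ is an isomorphism, which is the definition of $f$ being a monomorphism.

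The principal technical obstacle is justifying that the infinitesimal criterion verified only on square-zero thickenings by local \emph{finite} $k$-algebras suffices for formal étaleness in general; this requires a standard Noetherian approximation argument exploiting that $\mathcal{X}, \mathcal{Y}$ are of finite type over the Jacobson field $k$, so that any Artinian thickening of relevance can be approximated by one with finite residue field over $k$.
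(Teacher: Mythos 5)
Your argument uses the same basic ingredients as the paper's (the infinitesimal lifting criterion to obtain \'etaleness, the fact that an unramified morphism has open-immersion diagonal, and Jacobsonness to upgrade surjectivity on closed points to surjectivity), but the logical ordering is reversed: you prove (2) for arbitrary $f$ and then deduce (1), whereas the paper first proves (2) under the extra assumption that $f$ is representable by algebraic spaces (reducing via a smooth atlas to a morphism of algebraic spaces), deduces (1) from this restricted form applied to $\Delta_f$ (which is always representable by algebraic spaces, \cite[04XS]{stacks-project}), and only then completes (2) in general by noting that (1) supplies the representability of $f$. The paper's ordering sidesteps the need to run the \'etale/open-immersion machinery on a not-yet-known-to-be-representable morphism of stacks.

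There is also a concrete slip in your proof of (2). From ``$f$ is \'etale'' you conclude that $\Delta_f$ is \emph{\'etale}, and then that \'etaleness together with bijectivity on closed points forces $\Delta_f$ to be an isomorphism. The correct and stronger fact to invoke is that $f$ unramified implies $\Delta_f$ is an \emph{open immersion} (\cite[05W1]{stacks-project}); it is this open-immersion property, not mere \'etaleness, that lets you pass from surjectivity on closed points of the Jacobson fibre product $\mathcal{X}\times_{\mathcal{Y}}\mathcal{X}$ to $\Delta_f$ being an isomorphism. As written, ``\'etale plus bijective on closed points'' would leave you needing precisely the injectivity statement you are trying to prove, one diagonal deeper. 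Once ``\'etale'' is upgraded to ``open immersion'' at this step, your chain of deductions---$\Delta_f$ an isomorphism, so $f$ a monomorphism, hence an open immersion by \cite[05W5]{stacks-project}, hence an isomorphism by surjectivity on closed points---matches the paper's.
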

\begin{proof}
	First we prove (2) under the additional assumption that $f$ is representable by algebraic spaces. Then, by choosing a smooth surjection $U \rightarrow \mathcal{Y}$ with $U$ an algebraic space, we can assume that $\mathcal{X}$ and $\mathcal{Y}$ are algebraic spaces. With this reduction the argument given in \cite[7.2.4]{LLBBM20} goes through with schemes replaced by algebraic spaces. Indeed, by \cite[0APP]{stacks-project} this morphism is smooth and quasi-finite, and hence \'etale. By \cite[05W1]{stacks-project} the diagonal $\mathcal{X}\rightarrow \mathcal{X}\times_{\mathcal{Y}} \mathcal{X}$ is an open immersion. Since it is surjective on finite type points it is an isomorphism and so $\mathcal{X}\rightarrow \mathcal{Y}$ is a monomorphism. By \cite[05W5]{stacks-project} it is an open immersion, and so an isomorphism, again by surjectivity on finite type points.
	
	Now we prove (1). By \cite[04XS]{stacks-project} the diagonal $\Delta_f:\mathcal{X}\rightarrow \mathcal{X}\times_{\mathcal{Y}}\mathcal{X}$ is representable by algebraic spaces. Full faithfullness of $f$ on $A$-valued points implies that $\Delta_f$ is an equivalence on such points. Therefore the first paragraph implies $\Delta_f$ is an isomorphism. From \cite[04ZZ]{stacks-project} we obtain (1).
	
	To finish the proof of (2) note that by (1) we have $f$ representable by algebraic spaces.
\end{proof}
\begin{corollary}\label{factorising}
	Suppose $\mathcal{Z}$ is a closed substack of $\mathcal{Y}$ and that for every morphism $\operatorname{Spec}A\rightarrow \mathcal{X}$, with $A$ any local finite $k$-algebra, the composite $\operatorname{Spec}A \rightarrow \mathcal{X} \rightarrow \mathcal{Y}$ factors through $\mathcal{Z}$. Then $\mathcal{X}\rightarrow \mathcal{Y}$ factors through $\mathcal{Z}$.
\end{corollary}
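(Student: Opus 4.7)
The natural plan is to form the pullback $\mathcal{X}' := \mathcal{X}\times_{\mathcal{Y}}\mathcal{Z}$ and show the projection $\pi:\mathcal{X}'\rightarrow \mathcal{X}$ is an isomorphism; then any inverse composed with the second projection $\mathcal{X}'\rightarrow \mathcal{Z}$ gives the desired factorisation. Since $\mathcal{Z}\hookrightarrow \mathcal{Y}$ is a closed immersion, $\pi$ is a closed immersion as well, and in particular is representable by algebraic spaces, separated, and a monomorphism, so that the induced functor $\mathcal{X}'(A)\rightarrow \mathcal{X}(A)$ is fully faithful for every $k$-algebra $A$.

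Thus by part (2) of Lemma~\ref{mono} it suffices to check that $\mathcal{X}'(A)\rightarrow \mathcal{X}(A)$ is essentially surjective when $A$ is a local finite $k$-algebra. But this is exactly the hypothesis: given $\operatorname{Spec}A\rightarrow \mathcal{X}$, the composite $\operatorname{Spec}A\rightarrow \mathcal{Y}$ comes equipped with a factorisation through $\mathcal{Z}$, which by the universal property of the fibre product yields a lift to $\mathcal{X}'(A)$. Combined with the automatic full faithfullness above, the functor $\mathcal{X}'(A)\rightarrow \mathcal{X}(A)$ is an equivalence, so Lemma~\ref{mono}(2) applies to give $\pi$ an isomorphism.

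There is essentially no obstacle beyond checking that Lemma~\ref{mono} is applicable; the only subtle point is that part (2) of that lemma is proved under no a priori representability hypothesis, which is what allows us to avoid worrying about whether $\mathcal{X}'\rightarrow \mathcal{X}$ is representable before invoking it (though here it is, being a closed immersion). Composing the inverse $\mathcal{X}\xrightarrow{\sim}\mathcal{X}'$ with the second projection $\mathcal{X}'\rightarrow \mathcal{Z}$ produces the required factorisation of $\mathcal{X}\rightarrow \mathcal{Y}$ through $\mathcal{Z}$.
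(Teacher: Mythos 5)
Your argument is correct and follows the same route as the paper: form $\mathcal{X}\times_{\mathcal{Y}}\mathcal{Z}$, use that the projection to $\mathcal{X}$ is a monomorphism (so automatically fully faithful on points) together with the hypothesis to check essential surjectivity on local finite $k$-algebra points, and then invoke Lemma~\ref{mono}(2) to conclude the projection is an isomorphism. The paper's one-line proof is just a compressed version of exactly this (with what looks like a harmless reversal of the arrow in the displayed map), so your write-up adds no new idea but usefully spells out the details.
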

\begin{proof}
	This follow since by Lemma~\ref{mono} the map $\mathcal{X} \rightarrow \mathcal{X} \times_{\mathcal{Y}} \mathcal{Z}$ is an isomorphism.
\end{proof}
\bibliography{/home/user/Dropbox/Maths/biblio.bib}
\end{document}